\let\hbarorig\hbar
\let\hbar\hbarorig
\newcommand{\refitem}[1] {\textit{\ref{#1}.)}}
\numberwithin{equation}{section}
\newcommand{\NN}{\mathbbm{N}}
\newcommand{\ZZ}{\mathbbm{Z}}
\newcommand{\RR}{\mathbbm{R}}
\newcommand{\CC}{\mathbbm{C}}
\newcommand{\PP}{\mathbbm{P}}
\newcommand{\DD}{\mathbbm{D}}
\newcommand{\Unit}{\mathbbm 1}
\newcommand{\CCx}{\CC^*}
\newcommand{\Smooth}{\mathscr{C}^\infty}
\newcommand{\Polynomials}{\mathscr{P}}
\newcommand{\Holomorphic}{\mathscr{O}}
\newcommand{\Analytic}{\mathscr{A}}
\newcommand{\SmoothSections}{\Gamma^\infty}
\newcommand{\Tangent}{\mathrm{T}}
\newcommand{\CoTangent}{\mathrm{T}^*}
\newcommand{\E}{\mathrm{e}}
\newcommand{\I}{\mathrm{i}}
\newcommand{\D}{\mathrm{d}}
\newcommand{\bigproj}{\Theta}
\newcommand{\StarDomain}{\Omega}
\newcommand{\momentmap}[1][]{\mathcal{J}^{#1}}
\newcommand{\momentmapExt}[1][]{{\hat{\mathcal{J}}^{#1}}}
\newcommand{\startilde}{\mathbin{\tilde{\star}}}
\newcommand{\starred}[1][]{\mathbin{\star_{\red #1}}}
\newcommand{\starreds}[2]{\mathbin{\star_{\red #1}^{#2}}}
\newcommand{\Mred}[1][]{M_{\red}^{#1}}
\newcommand{\Levelset}[1][]{Z^{#1}}
\newcommand{\MredExt}[1][]{{{\hat M}_{\red}}^{#1}}
\newcommand{\LevelsetExt}[1][]{{\hat Z}^{#1}}
\newcommand{\CCres}[1][1+n]{\CC_+^{#1}}
\newcommand{\Diag}{\Delta}
\newcommand{\DiagZ}{\Delta_Z}
\newcommand{\DiagM}{\Delta_{\red}}
\DeclareMathOperator{\pr}{pr}
\DeclareMathOperator{\bigpr}{Pr}
\DeclareMathOperator{\IM}{Im}
\DeclareMathOperator{\sgn}{sgn}
\DeclareMathOperator{\Deg}{Deg}
\DeclareMathOperator{\id}{id}
\newcommand{\Ten}[1][\bullet]{\mathcal{T}^{#1}}
\newcommand{\SymOp}[1][\bullet]{\mathrm{Sym}^{#1}}
\newcommand{\AsymOp}[1][\bullet]{\mathrm{Asym}^{#1}}
\newcommand{\SymSec}{\mathscr{S}}
\newcommand{\ASymSec}{\mathscr{A}}
\newcommand{\Symten}[1]{\mathrm{S}^{#1}\,}
\newcommand{\ASymten}[1]{\Lambda^{#1}\,}
\newcommand{\group}[1]{\mathrm{#1}}
\newcommand{\lie}[1]{\mathfrak{#1}}
\newcommand{\GL}[1][1+n]{\group{GL}(#1,\CC)}
\newcommand{\gl}[1][1+n]{\lie{gl}_{#1}(\CC)}
\newcommand{\Stab}[1][]{\group G_{\momentmap[{#1}]}}
\newcommand{\stab}{\lie g_{\momentmap}}
\newcommand{\StabExt}[1][]{\group G_{\momentmapExt[{#1}]}}
\newcommand{\acts}{\mathbin{\triangleright}}
\newcommand{\racts}{\mathbin{\triangleleft}}
\newcommand{\sym}{\mathrm{sym}}
\newcommand{\red}{\mathrm{red}}
\newcommand{\hol}{\mathrm{hol}}
\newcommand{\antihol}{\cc{\hol}}
\newcommand{\Monom}[2]{\mathrm{b}_{#1,#2}^{}}
\newcommand{\MonomRed}[3][]{\mathrm{b}_{#2,#3;\red}^{#1}}
\newcommand{\MonomRedFund}[3][]{\mathrm{c}_{#2,#3}^{#1}}
\newcommand{\expansionCoefficients}[3]{#1^{}_{#2,#3}}
\newcommand{\geomWickRotRed}[1][]{\alpha^{#1}}
\newcommand{\WickRot}[1][]{\Phi^{#1}}
\newcommand{\WickRotRed}[1][]{\Phi_\red^{#1}}
\newcommand{\komma}{\, ,}
\newcommand{\punkt}{\, .}
\newcommand{\cc}[1]{\overline{#1}}
\newcommand{\argument}{\,\cdot\,}
\newcommand{\formal}[1]{[\![#1]\!]}
\newcommand{\neu}[1]{\emph{#1}}  
\newcommand{\at}[2][]{#1|_{#2}}
\newcommand{\signature}[1][s]{(#1)}
\DeclareFontFamily{U}{FdSymbolF}{}
\DeclareFontShape{U}{FdSymbolF}{m}{n}{
	<-7.1> FdSymbolF-Book
	<7.1-> FdSymbolF-Book
}{}
\DeclareSymbolFont{delimiters}{U}{FdSymbolF}{m}{n}
\DeclareMathDelimiter{\llangle}{\mathopen}{delimiters}{"92}{delimiters}{"92}
\DeclareMathDelimiter{\rrangle}{\mathclose}{delimiters}{"98}{delimiters}{"98}
\DeclarePairedDelimiter{\ordinarySet}{\{}{\}}
\DeclarePairedDelimiter{\ordinaryIP}{\langle}{\rangle}
\DeclarePairedDelimiter{\ordinaryKom}{[}{]}
\DeclarePairedDelimiter{\ordinaryBracket}{(}{)}
\DeclarePairedDelimiter{\coolIP}{\llangle}{\rrangle}
\newcommand{\set}[3][]{\ordinarySet[#1]{\,#2 \;#1|\; #3\,}}
\newcommand{\poi}[3][]{\ordinarySet[#1]{\,#2\,,\,#3\,}}
\newcommand{\dupr}[3][]{\ordinaryIP[#1]{\,#2 \,,\, #3\,}}
\newcommand{\kom}[3][]{\ordinaryKom[#1]{\,#2\,,\,#3\,}}
\newcommand{\genHull}[2][]{\coolIP[#1]{#2}}
\newcommand{\falling}[3][]{\ordinaryBracket[#1]{#2}_{\downarrow, #3}}
\newcommand{\rising}[3][]{\ordinaryBracket[#1]{#2}_{\uparrow, #3}}
\newcommand{\abs}[2][]{#1|#2#1|}
\newcommand{\seminorm}[3][]{#1|#1|#3#1|#1|_{#2}}
\newtheorem{lemma}{Lemma}[section]
\newtheorem{proposition}[lemma]{Proposition}
\newtheorem{theorem}[lemma]{Theorem}
\newtheorem{corollary}[lemma]{Corollary}
\newtheorem{definition}[lemma]{Definition}
\newtheorem{maintheorem}{Main Theorem}
\def\thmhead@plain#1#2#3{%
	\thmname{#1}\thmnumber{\@ifnotempty{#1}{ }\@upn{#2}}%
	\thmnote{ {\the\thm@notefont#3}}}
\let\thmhead\thmhead@plain
\theoremstyle{nonumberplain}
\newtheorem{proof}{Proof}
\theoremstyle{empty}
\newtheorem{Empty}{}
\author{
  \textbf{Philipp Schmitt}%
  \thanks{\href{mailto:philippschmitt@math.ku.dk}{\texttt{philippschmitt@math.ku.dk}},
    supported by the Danish National Research Foundation through 
    the Centre of Symmetry and Deformation (DNRF92).
  }
  \\[0.5cm]
  Department of Mathematical Sciences\\
  University of Copenhagen\\
  Universitetsparken 5, 
  2100 København \O{}\\ 
  Denmark\\[1cm]
  \textbf{Matthias Schötz}%
  \thanks{\href{mailto:Matthias.Schotz@ulb.ac.be}{\texttt{Matthias.Schotz@ulb.ac.be}},
    Boursier de l'ULB, 
    supported by the Fonds de la Recherche Scientifique (FNRS) and the 
    Fonds Wetenschappelijk Onderzoek - Vlaaderen (FWO) under EOS Project n$^\circ$30950721.
  }
  \\[0.5cm]
  Département de Mathématiques\\
  Université libre de Bruxelles CP 218\\
  Boulevard du Triomphe, Bruxelles 1050\\
  Belgium\\[1cm]
}
\title{Wick Rotations in Deformation Quantization\texorpdfstring{\\[1cm]}{}}
\date{January 2020}
\begin{document}
	\maketitle
	\begin{abstract}
    We study formal and non-formal deformation quantizations of a family of manifolds 
    that can be obtained by phase space reduction from $\CC^{1+n}$ with the Wick star
    product in arbitrary signature. Two special cases of such manifolds  are the complex projective
    space $\CC\PP^n$ and the complex hyperbolic disc $\DD^n$. 
    We generalize several older results to this setting:
    The construction of formal star products and their explicit description by bidifferential
    operators, the existence of a convergent subalgebra of ``polynomial'' functions, and 
    its completion to an algebra of certain analytic functions that allow an easy characterization
    via their holomorphic extensions. Moreover, we find an isomorphism between the non-formal
    deformation quantizations for different signatures, linking e.g.\ the star products on
    $\CC\PP^n$ and $\DD^n$. More precisely, we describe an isomorphism between the (polynomial or
    analytic) function algebras that is compatible with Poisson brackets and
    the convergent star products. This isomorphism is essentially given by Wick rotation,
    i.e.\ holomorphic extension of analytic functions and restriction to a new domain.
    It is not compatible with the $^*$-involution of pointwise complex conjugation.
	\end{abstract}


    \newpage
	\tableofcontents

\begin{onehalfspace}




\section{Introduction}      \label{sec:introduction}

One way to study the quantization problem arising in physics, which asks how to associate a quantum 
mechanical system to a classical mechanical one, is \emph{formal deformation quantization} as introduced in 
\cite{bayen.flato.fronsdal.lichnerowicz.sternheimer:DeformationTheoryAndQuantization}. In this approach,
the classical observable algebra is assumed to be the algebra $\Smooth(M)$ of smooth functions on a Poisson 
manifold $M$ and one tries to find a so-called \emph{formal star product} 
$\star$ that deforms the classical product. More precisely,
$\star \colon \Smooth(M)\formal{\lambda} \times \Smooth(M)\formal{\lambda} \to 
\Smooth(M)\formal{\lambda}$ is called a formal star product if it is 
$\CC\formal\lambda$-bilinear, associative, has the constant $1$-function as a unit,
and if it can be expanded as $f \star g = \sum_{r=0}^\infty \lambda^r C_r(f,g)$
with $\CC\formal\lambda$-linear extensions of bidifferential operators
$C_r \colon \Smooth(M) \times \Smooth(M) \to \Smooth(M)$ that satisfy
that $C_0(f,g) = fg$ is the usual commutative pointwise product and that 
$C_1(f,g) - C_1(g,f) = \I \poi{f}{g}$ is (up to the factor $\I$) the Poisson bracket of $f, g \in \Smooth(M)$.
Here $\formal{\lambda}$ denotes formal power series in the parameter $\lambda$.
We say that $\star$ deforms \neu{in direction of} the Poisson bracket $\poi{\argument}{\argument}$.
Such a star product is called \neu{Hermitian} if 
pointwise complex conjugation is a $^*$-involution, i.e.\ if
$\cc{f \star g} = \cc{g} \star \cc{f}$ holds
for all $f,g\in \Smooth(M)$.
In a sense, formal deformation quantization transfers the quantization problem to algebra and therefore 
allows to use powerful algebraic tools in its study. For example, existence and classification results follow 
from Kontsevich's formality theorem in the most general case of Poisson manifolds 
\cite{kontsevich:DeformationQuantizationOfPoissonManifolds}, but were already proven before in the special 
case of symplectic manifolds by various authors \cite{bertelson.cahen.gutt:EquivalenceOfStarProducts, 
nest.tsygan:AlgebraicIndexTheorem, fedosov:ASimpleGeometricConstructionOfDeformationQuantization, 
dewilde.lecomte:ExistenceOfStarProductsAndOfFormalDeformationsOfThePoissonLieAlgebraOfArbitrarySymplecticManifolds}
 and with the help of different techniques, e.g.\ the so-called Fedosov construction.

Formal deformation quantizations can also be studied in an equivariant setting. Assume $G$ is a Lie group 
acting on $M$. Then a star product is called \emph{$G$-invariant} if all the bidifferential operators $C_r$ 
are $G$-invariant. For Hamiltonian $G$-actions there is a related notion of $G$-equivariance that
considers the quantization of a momentum map as well. Existence and classification results are also available 
in this setting \cite{bertelson.bieliavsky.gutt:ParametrizingEquivalenceClassesOfInvariantStarProducts,
reichert.waldmann:ClassificationOfEquivariantStarproductsOnSymplecticManifolds,
esposito.klein.schnitzer:AProofOfTsygansFormalityConjectureForHamiltonianActions}. 
Some explicit examples of star products can easily be obtained on $\CC^{1+n}$,
namely the exponential star products like Moyal /  Weyl–Groenewold or Wick star products.
There are also explicit methods to obtain star products on more general spaces, like $\CC\PP^n$ or $\DD^n$. 
\cite{bordemann.brischle.emmrich.waldmann:PhaseSpaceReductionForStarProducts.ExplicitConstruction,
	bordemann.brischle.emmrich.waldmann:SubalgebrasWithConvergingStarProducts,
	beiser.waldmann:FrechetAlgebraicDeformationOfThePoincareDisc,
	kraus.roth.schoetz.waldmann:OnlineConvergentStarProductOnPoincareDisc}
use a construction via phase space reduction from one of the aforementioned products on $\CC^{1+n}$. 
Alternatively, one can e.g.\ use Berezin dequantization \cite{cahen.gutt.rawnsley:QuantisationOfKaehlerManifolds3},
a Lie algebraic approach 
\cite{alekseev.lachowska:InvariantStarProductsOnCoadjointOrbitsAndTheShapovalovPairing}
or an explicit solution of the recursive equations coming from the Fedosov construction 
\cite{loeffler:FedosovDifferentialsAndCartanNumbers}.

The drawback of considering formal power series is that one cannot easily replace the formal 
parameter
$\lambda$ by Planck's constant $\hbar$, as required in actual physical applications. 
Therefore \emph{strict quantization} asks to find a field of well-behaved algebras, 
usually Fr\'echet $^*$-algebras or C$^*$-algebras, see 
\cite{bieliavsky.gayral:DeformationQuantizationForActionsOfKaehlerianLieGroups, 
rieffel:DeformationQuantizationForActionsOfRd, 
natsume.nest.peter:StrictQuantizationOfSymplecticManifolds},
that depend nicely on a parameter $\hbar$ ranging over some subset of $\CC$, 
and that reproduce the usual product and Poisson bracket in the zeroth and first order as above for $\hbar 
\to 0$. Usually, strict quantizations as in \cite{rieffel:DeformationQuantizationForActionsOfRd, 
bieliavsky.gayral:DeformationQuantizationForActionsOfKaehlerianLieGroups} are constructed by analytical 
methods, involving oscillatory integrals.
If a strict quantization depends smoothly on the parameter $\hbar$, its asymptotic expansion around 
$\hbar = 0$ yields a formal deformation quantization.  
Conversely, one can ask to construct strict quantizations that have a given formal deformation quantization 
as their limit.

Some results in this direction were obtained by Waldmann and collaborators, who try to find some 
distinguished subalgebra $\Polynomials(M)$ of $\Smooth(M)$, on which a star product converges trivially because the formal 
power series are finite.
Such a choice usually comes from some extra structure, for example if $M = \CoTangent Q$ is a cotangent space 
one can try to use functions that are polynomial in the momenta. One then tries to find some 
topology with respect to which a star product on $\Polynomials(M)$ is continuous, in order to complete 
$\Polynomials(M)$ to a more interesting algebra $\Analytic(M)$, typically consisting of analytic functions.
This approach has been worked out e.g.\ for star 
products of exponential type on possibly infinite-dimensional vector spaces 
\cite{schoetz.waldmann:ConvergentStarProductsForProjectiveLimitsOfHilbertSpaces, 
waldmann:ANuclearWeylAlgebra}, for the Gutt star 
product on the dual of a Lie algebra \cite{esposito.stapor.waldmann:ConvergenceOfTheGuttStarProduct},
for the 2-sphere \cite{esposito.schmitt.waldmann:OnlineComparisonContinuityWickStarProducts}, for the 
hyperbolic disc $\DD^n$ \cite{beiser.waldmann:FrechetAlgebraicDeformationOfThePoincareDisc,
kraus.roth.schoetz.waldmann:OnlineConvergentStarProductOnPoincareDisc}, and for semisimple 
coadjoint orbits of semisimple connected Lie groups \cite{schmitt:StrictQuantizationOfCodjointOrbits}. 
In the case of the hyperbolic disc the completed algebra $\Analytic$ has a nice geometric 
interpretation as functions that allow an extension to holomorphic functions on some fixed, larger space.

In this article we generalize the approach used in 
\cite{kraus.roth.schoetz.waldmann:OnlineConvergentStarProductOnPoincareDisc}
for the hyperbolic disc to obtain formal and non-formal star products on a larger class of certain 
(pseudo-)K\"ahler manifolds.
These manifolds depend on two parameters, dimension $n$ and signature $s$, and are obtained by using 
Marsden--Weinstein reduction 
for the canonical $\group{U}(1)$-action on $\CC^{1+n}$ endowed with a metric of signature $s$. 
Focussing on treating all these examples in a uniform way, we construct $\group{U}(s,1+n-s)$-invariant, Hermitian formal star products.
Using ideas relating to K\"ahler reduction, we derive an explicit formula
in Theorem~\ref{theorem:reducedWickProductFormula}:
\newpage
\begin{maintheorem}
	For any of the reduced (pseudo-)Kähler manifolds $\Mred$ described above, the formula
	\begin{equation}
	f \starred g
	=
	\sum_{r=0}^\infty \frac 1 {r!} \frac {\lambda^r} {\left(1-\lambda\right)\left(1 -2\lambda\right) \dots \left(1 -(r-1)\lambda\right)} 
	\dupr[\big]{(D_\red^\sym)^r f \otimes (D_\red^\sym)  ^r g }{ H_\red^r }
	\label{eq:intro:formula}
	\end{equation}
	defines a formal star product. Here $f,g\in \Smooth(\Mred)$, $D_\red^\sym$ is the symmetrized covariant derivative 
	associated to the Levi-Civita connection of $\Mred$, and $H_\red$ a certain bivector field on $\Mred$.
\end{maintheorem}
This formula was already known in the special case of $\CC\PP^n$ and $\DD^n$, 
\cite{loeffler:FedosovDifferentialsAndCartanNumbers}, where it was derived from the Fedosov construction.
Our result therefore allows to compare this approach with phase space reduction without appealing to any 
abstract classification results, and generalizes it to a larger class of manifolds.

It will become clear from the construction that, at least outside of the poles appearing in
\eqref{eq:intro:formula},
the star product $\starred$ converges trivially for a class of functions 
$\Polynomials(\Mred)$ that is obtained by reducing polynomials on $\CC^{1+n}$. 
All these functions can be (uniquely) extended to holomorphic functions on a larger complex manifold $\MredExt$ that 
can be obtained by an analogous reduction procedure from $\CC^{1+n} \times \CC^{1+n}$.
We define the algebra $\Analytic(\Mred)$ of all functions that can be extended to holomorphic 
functions on $\MredExt$, thus obtaining an algebra of certain analytic functions.
Using methods from complex analytic geometry, we prove that 
$\Polynomials(\Mred)$ is dense in $\Analytic(\Mred)$ with respect to the topology of 
locally uniform convergence of the extensions to $\MredExt$. 
Then we obtain for all complex $\hbar$ outside of the poles of
\eqref{eq:intro:formula} our Theorem~\ref{theorem:strictProduct}:

\begin{maintheorem} The strict product $\starred[,\hbar]$ on $\Polynomials(\Mred)$ obtained by replacing 
the formal parameter $\lambda$ with $\hbar$ in \eqref{eq:intro:formula}, is continuous 
with respect to the topology of locally uniform convergence of the holomorphic extensions to $\MredExt$. It therefore 
extends uniquely to a continuous product on $\Analytic(\Mred)$.
\end{maintheorem}
The geometries of the manifolds $\Mred$ can be quite different (e.g.\ sometimes compact, sometimes not).
However, both the classical and quantum algebras of analytic functions cannot see this 
difference as we show in Theorem~\ref{theorem:wickRotation} and Theorem~\ref{theorem:wickstar} using
essentially a generalization of the Wick rotation:

\begin{maintheorem}
	The algebras $\Analytic(\Mred)$ (for the same dimension $n$ but different signatures $s$) with the pointwise product are all isomorphic as unital Fréchet algebras.
\end{maintheorem}
\begin{maintheorem}
	The algebras $\Analytic(\Mred)$ (for the same dimension $n$ but different signatures $s$) with the product $\starred[,\hbar]$ and fixed $\hbar$
	are all isomorphic as unital Fréchet algebras.
\end{maintheorem}
Note that these last two results
can also be proven in a more Lie algebraic context for coadjoint orbits 
\cite{schmitt:StrictQuantizationOfCodjointOrbits}.
However, the algebras $\Analytic(\Mred)$ are in general not $^*$-isomorphic 
(for real $\hbar$ and the $^*$-involution of pointwise complex conjugation),
which demonstrates the 
importance of considering $^*$-algebras in strict deformation quantization. This
can be shown by examining positive linear functionals on these $^*$-algebras,
which encode information about their $^*$-representations on pre-Hilbert spaces.

The article is structured as follows:
After introducing some notation in Section~\ref{sec:notation},
we discuss the smooth and complex manifolds occurring
at various stages of the construction in Section~\ref{sec:geometry}. The classical and quantum phase space reduction
allow to construct Poisson brackets and formal star products on a reduced manifold $\Mred$
out of a constant Poisson bracket and the Wick star product on $\CC^{1+n}$. This is achieved
essentially by first restricting to the level set $\Levelset$ of a momentum map 
$\momentmap \in \Smooth(\CC^{1+n})$ and then dividing out the action of the group
$\group{U}(1)$ to obtain $\Mred \cong \Levelset / \group{U}(1)$. Depending on 
the choice of signature, $\Mred$ can e.g.\ be $\CC\PP^n$ or $\DD^n$. In order to
be able to construct the spaces of analytic functions on which the non-formal star
products can be defined, we introduce complex manifolds $\CC^{1+n}\times \CC^{1+n}$,
$\LevelsetExt$ and $\MredExt$ into which $\CC^{1+n}$, $\Levelset$ and $\Mred$ can
be embedded ``anti-diagonally''. The complex structure on $\CC^{1+n}$ finally gives rise
to a complex structure on $\Mred$, which in the special cases of $\CC\PP^n$ and $\DD^n$
coincides with the usual one. This also allows to obtain $\Mred$ by restricting
first to an open subset $\CCres$ of $\CC^{1+n}$ and then dividing out an action of
the complexification $\CCx = \set{z\in \CC}{z\neq 0}$ of $\group{U}(1)$,
which simplifies some later considerations.

Section~\ref{sec:algebra} deals with the algebras $\Smooth(\ldots)$, 
$\Analytic(\ldots)$ and $\Polynomials(\ldots)$ of smooth, certain analytic, and polynomial
functions on $\CC^{1+n}$, $\Levelset$ and $\Mred$. It is also discussed under which 
conditions and how additional structures given by bidifferential operators
on $\CC^{1+n}$ can be reduced to $\Mred$. This is then applied in Section~\ref{sec:physics}
to the Poisson bracket and Wick star product on $\CC^{1+n}$. We obtain the usual
Fubini--Study structures as well as explicit formulae for the reduced star products both
by means of bidifferential operators and by structure constants.

As the constructions for $\CC\PP^n$, $\DD^n$ and the other examples only differ
by the choice of certain signs, it is not surprising that they yield closely
related results: In Section~\ref{sec:wick} we construct isomorphisms between
various function spaces on the reduced manifolds,
which are compatible with both the Poisson brackets and the convergent star products, i.e.\ with
the classical and quantum structures. 

Finally, in Appendix~\ref{sec:derivatives} we discuss some details
concerning the symmetrized covariant derivatives used for the explicit description of
bidifferential operators in Section~\ref{sec:physics}.




\subsubsection*{Acknowledgements}
The authors would like to thank Simone Gutt, Ryszard Nest and Stefan Waldmann for some helpful discussions.




\section{Notation and Conventions} \label{sec:notation}

There are some conventions that will be used throughout the whole article: We fix two
natural numbers $n \in \NN$, $s\in\{1,\dots,1+n\}$. These will be the complex dimension $n$ of
the reduced manifold $\Mred$ and the choice of signature $s$. Nearly all objects will depend
on this signature, but in order to keep the notation clean this dependence will usually not be made
explicit. Only when it is necessary (especially when discussing the Wick rotation in Section~\ref{sec:wick})
the choice of $s$ will be indicated by a superscript in brackets.

For a smooth manifold $M$, we denote by $\Smooth(M)$ the unital $^*$-algebra of complex-valued
smooth functions on $M$ with the pointwise operations.
Similarly, $\Tangent M$ and $\CoTangent M$ are the complexified tangent and cotangent spaces.
If $M$ is even a complex manifold with complex structure $I$, then $\Tangent^{(1,0)} M$ and
$\Tangent^{(0,1)} M$ denote the linear subbundles of $+\I$ and $-\I$ eigenvectors of $I$, respectively,
and $\Tangent^{*,(1,0)} M$, $\Tangent^{*,(0,1)} M$ their duals.
The space of smooth sections of a complex vector bundle $E \to M$ over a smooth manifold $M$
is denoted by $\SmoothSections(E)$ and is a $\Smooth(M)$-module. Tensor products between such 
spaces of sections are always tensor products over the ring $\Smooth(M)$.
If $M$ is endowed with the action of a group $G$, then $\Smooth(M)^G \subseteq \Smooth(M)$
denotes the $G$-invariant smooth functions on $M$. This notation is also applied to subspaces
of $\Smooth(M)$. A multilinear map $\Phi \colon \Smooth(M) \times \dots \times \Smooth(M) \to \Smooth(M)$
is called $G$-invariant if $\Phi(f_\alpha\racts g, \dots, f_\omega\racts g) \racts g^{-1} = \Phi(f_\alpha,\dots,f_\omega)$
holds for all $f_\alpha, \dots,f_\omega \in \Smooth(M)$ and all $g\in G$.

The tensor algebra over a vector space $V$ is denoted by $\Ten[\bullet] V \coloneqq \bigoplus_{k=0}^\infty \Ten[k] V$ with
$\Ten[k] V$ the linear subspace of homogeneous tensors of degree $k \in \NN_0$.
The symmetric and antisymmetric
tensor algebra are identified with the linear subspaces $\Symten{\bullet}V$ and 
$\ASymten{\bullet} V$ of $\Ten[\bullet]V$ consisting of
symmetric and antisymmetric tensors, respectively, with symmetric and
antisymmetric tensor product $X\vee Y = \SymOp[\bullet](X\otimes Y)$ for all $X,Y\in \Symten{\bullet}V$
and 
$X\wedge Y = \AsymOp[\bullet](X\otimes Y)$ for all $X,Y\in \ASymten{\bullet}V$.
Here 
$\SymOp[\bullet], \AsymOp[\bullet] \colon \Ten[\bullet] V \to \Ten[\bullet] V$,
the operators of symmetrization and antisymmetrization, are defined as the
homogeneous projections onto $\Symten{\bullet}V$ and $\ASymten{\bullet}V$ fulfilling
\begin{align}
      \SymOp[k]\big( v_1 \otimes \dots \otimes v_k \big) 
      &= \frac{1}{k!} \sum_\sigma v_{\sigma(1)} \otimes \dots \otimes v_{\sigma(k)}
      \shortintertext{and}
      \AsymOp[k]\big( v_1 \otimes \dots \otimes v_k \big) 
      &= \frac{1}{k!} \sum_\sigma \sgn(\sigma) v_{\sigma(1)} \otimes \dots \otimes v_{\sigma(k)}
\end{align}
for $k\in \NN_0$ and $v_1,\dots,v_k \in V$, where the sum is over all 
permutations $\sigma$ of $\{1,\dots,k\}$.
So especially $v\vee w = \frac{1}{2}(v\otimes w + w\otimes v)$ and
$v\wedge w = \frac{1}{2}(v\otimes w - w\otimes v)$ for all $v,w\in V$.
Vector bundles and their sections are treated analogously.
    
By $\dupr{\argument}{\argument} \colon V^* \times V \to \CC$ we denote
the dual pairing between a complex vector space $V$ and its algebraic dual $V^*$,
$\dupr{\omega}{\alpha} \coloneqq \omega(\alpha)$ for all $\omega \in V^*$, $\alpha \in V$.
This pairing is extended to higher tensor powers by demanding that
\begin{align}
   \dupr[\big]{\omega_1\otimes \dots \otimes \omega_k}{\alpha_1\otimes \dots \otimes \alpha_k}
   =
   \dupr{\omega_1}{\alpha_1} \dots \dupr{\omega_k}{\alpha_k}
\end{align}
for all $k\in \NN_0$ and $\omega_1,\dots,\omega_k \in V^*$, $\alpha_1,\dots,\alpha_k \in V$.
Especially for symmetric tensor products this yields
\begin{align}
  \dupr[\big]{\omega_1\vee \dots \vee \omega_k}{\alpha_1\vee \dots \vee \alpha_k}
  =
  \frac{1}{k!}\sum_{\sigma} \dupr{\omega_1}{\alpha_{\sigma(1)}} \dots \dupr{\omega_k}{\alpha_{\sigma(k)}}
\end{align}
where again the sum is over all permutations $\sigma$ of $\{1,\dots,k\}$. If $\iota_\beta$
denotes the insertion derivation with a vector $\beta\in V$, i.e.\ the derivation of degree $-1$
of the symmetric tensor algebra over $V^*$ that fulfils $\iota_\beta \omega = \dupr{\omega}{\beta}$
for all $\omega \in V^*$, then by the above conventions,
\begin{align}
    \frac{1}{k} \dupr[\big]{\iota_\beta (\omega_1 \vee \dots \vee \omega_{k})}{\alpha_1 \vee \dots \vee \alpha_{k-1}}
  =
  \dupr[\big]{\omega_1 \vee \dots \vee \omega_{k}}{\beta \vee \alpha_1 \vee \dots \vee \alpha_{k-1}}
  \label{eq:insertionpairing}
\end{align}
holds for all $k\in \NN$, $\omega_1,\dots,\omega_k \in V^*$ and $\alpha_1,\dots,\alpha_{k-1} \in V$.
Like before, vector bundles and their sections are treated analogously.

We will also make use of multiindices $P,Q \in \NN_0^{1+n}$ and define as usual 
$P! \coloneqq \prod_{k=0}^n P_k!$ and
\begin{equation}
  \binom{P}{Q} \coloneqq \frac{P!}{(P-Q)!Q!}
\end{equation}
for $Q\le P$ (the order is the elementwise one). Moreover, the elementwise minimum is
\begin{equation}
  \min\{P,Q\} \coloneqq \big( \min\{P_0,Q_0\},\dots,\min\{P_n,Q_n\} \big)
  \punkt
\end{equation}




\section{Geometric Background}    \label{sec:geometry}

In this section we will in detail explain the following commutative diagram, that describes the reduction 
procedures to obtain $\Mred$ and $\MredExt$:
\begin{equation} \label{diagram:reduction}
	\begin{tikzcd}[column sep=huge, row sep=large, nodes={inner xsep=12pt, inner ysep=7pt}]
	\CC^{1+n}\times\CC^{1+n}
	&
	\LevelsetExt
	\arrow[swap]{l}{ {\hat{\iota}} }
	\arrow{r}{ {\hat{\pr}} }
	&
	\MredExt
	\\ 
	\CC^{1+n}
	\arrow[swap]{u}{ \Diag }
	&
	\arrow[swap]{l}{ \iota }
	\Levelset
	\arrow[swap]{u}{ \DiagZ }
	\arrow{r}{ \pr }
	\arrow{d}{  }
	&
	\Mred
	\arrow[swap]{u}{ \DiagM }
	\\
	&
	\CCres
	\arrow{lu}{}
	\arrow[swap]{ru}{\bigpr}
	&
	\end{tikzcd}
\end{equation}
Note the similarity to the diagram considered in
\cite{kraus.roth.schoetz.waldmann:OnlineConvergentStarProductOnPoincareDisc}.




\subsection*{Middle Row}

The middle row is a typical example of Marsden--Weinstein reduction, even though we will not 
yet discuss symplectic structures in this section.  
It consists of (at least) smooth manifolds endowed with an action of the real Lie group 
$\Stab$, which is defined below, and of $\Stab$-equivariant smooth maps.
 
On $\CC^{1+n}$, let $z^0,\dots,z^n$ be the standard coordinates,
i.e.\ $z^k(\rho) = \rho^k$ for all $k\in \{0,\dots,n\}$ and $\rho\in \CC^{1+n}$.
We define
\begin{equation}
\momentmap
\coloneqq \sum_{k=0}^{n} \nu_{k} z^k \cc{z}^k
= \sum_{k=0}^{s-1} z^k \cc z^k - \sum_{k=s}^{n} z^k \cc z^k
\komma
\end{equation}
where the coefficients $\nu_{k}$ are
$+1$ if $k \in \{0,\dots,s-1\}$ and 
$-1$ if $k \in \{s,\dots,n\}$. 
Note that we drop the dependence of $\momentmap$ and $\nu_k$ on $s$ from our notation as explained in
the convention at the beginning of Section~\ref{sec:notation}.
The Lie group $\GL$ of invertible complex $(1+n)\times(1+n)$\,-\,matrices acts 
from the left on $\CC^{1+n}$ as usual via
$A\acts \rho \coloneqq A\rho$ for all $A\in\GL$ and $\rho \in \CC^{1+n}$.
This left action $\argument\acts\argument$ on $\CC^{1+n}$ induces a right action 
$\argument\racts\argument$ on smooth functions and tensor fields by pullback.
Especially for the coordinate functions, this yields
$z^k \racts A = \sum_{\ell=0}^{n} {A^k}_\ell\, z^\ell$.

The stabilizer of $\momentmap$, i.e.\ the set of all $A\in\GL$ fulfilling
$\momentmap \racts A = \momentmap$, is
\begin{equation}
\Stab 
\coloneqq
\group{U}(s,1+n-s)
=
\set[\Big]{
	A\in\GL
}{
	\sum\nolimits_{k=0}^n \nu_{k} {A^k}_\ell \cc{{A^k}_m} = \delta_{\ell,m} \,\nu_{m} 
	\textup{ for all }\ell,m\in \{0,\dots,n\}
}
\end{equation}
with $\delta_{\ell,m}$ the usual Kronecker-$\delta$.
Note that $\Stab$ is a real Lie group and a subgroup of $\GL$.
Its Lie algebra is
\begin{equation}
\stab 
\coloneqq
\lie{u}(s,1+n-s)
=
\set[\Big]{
 	A\in\gl
}{
 	\nu_{\ell} \cc{{A^\ell}_m}\, + \nu_{m} {A^m}_\ell = 0
 	\textup{ for all }\ell,m\in \{0,\dots,n\}
}
\komma
\end{equation}
which is a real form of $\gl = \CC^{(1+n)\times(1+n)}$.
 
We define 
$\Levelset
\coloneqq {\momentmap}^{-1}(\{1\}) 
= \set[\big]{\rho\in \CC^{1+n}}{1+\sum_{k=s}^{n} \abs{\rho^k}^2 = \sum_{k=0}^{s-1} \abs{\rho^k}^2}$,
the $1$-level set of $\momentmap$, and $\iota \colon \Levelset \to \CC^{1+n}$ as
the canonical inclusion. Then the $\Stab$-action on $\CC^{1+n}$
restricts to $\Levelset$ and $\iota$ is $\Stab$-equivariant.
 
The second step is to divide out the orbits of the action of the $\group{U}(1)$-subgroup
$\set{\E^{\I\phi}\Unit_{1+n}}{\phi\in\RR}$ of $\Stab$, with $\Unit_{1+n}$ the identity matrix, which yields
\begin{equation}
\Mred \coloneqq \Levelset \big/\,\group{U}(1) \punkt
\end{equation}
As the 
$\group{U}(1)$-subgroup of $\Stab$ is central,
the $\Stab$-action remains well-defined on $\Mred$ and the
canonical projection $\pr \colon \Levelset \to \Mred$ is $\Stab$-equivariant.

We note that, by mapping the $\group{U}(1)$-equivalence class $[\rho] \in \Mred$
of some $\rho\in \Levelset$ to its $\CCx$-equivalence class $[\rho] \in \CC\PP^n$, 
the manifold $\Mred$ can be identified with the well-defined open complex submanifold
$\set{[\rho] \in \CC\PP^n}{ \momentmap(\rho) > 0}$ of $\CC\PP^n$.
Then $w^1,\dots,w^n \colon \set{[\rho] \in \Mred}{\rho^0 \neq 0} \to \CC$,
\begin{equation} \label{eq:coordinates:Mred}
w^k\big([\rho] \big) \coloneqq \frac{\rho^k}{\rho^0}
\end{equation}
with $k\in \{1,\dots,n\}$ define the usual (complex) projective coordinates on 
$\set{[\rho] \in \Mred}{\rho^0 \neq 0} \subseteq \Mred$ and it is easy to 
obtain an atlas by considering similar coordinates on 
$\set{[\rho] \in \Mred}{\rho^\ell \neq 0}$ for $1 \leq \ell \leq n$.
We will later see how the complex structure that $\Mred$ inherits
from $\CC\PP^n$ can also be obtained in a more natural way.

In the special case of the signature $s=1+n$, this construction yields $\Mred[{\signature[1+n]}] \cong \CC\PP^n$
with the usual action of $\group{U}(1+n)$ on it. For $s=1$, one obtains the disc
$\Mred[{\signature[1]}] \cong \DD^n = \set{\xi \in \CC^n}{ \sum_{k=1}^n \abs{\xi^k}^2 < 1}$
with the action of $\group{U}(1,n)$ by Möbius transformations. The holomorphic
isomorphism from $\Mred[{\signature[1]}]$ to the disc is simply given by the coordinates $w^1,\dots,w^n$,
which are global coordinates if $s = 1$.

Note that in general, these projective coordinates $w^1 , \dots, w^n$ describe a chart for $\Mred$
with dense domain of definition. Because of this, it is essentially
sufficient to use only these coordinates for the explicit description of some
tensors later on, but it is important to keep in mind that they describe $\Mred$
only up to a meagre subset.




\subsection*{Top Row}

The top row consists of complex manifolds carrying an action of a complex Lie 
group $\StabExt$,
and of $\StabExt$-equivariant holomorphic maps. These complex manifolds will later
be helpful for defining certain algebras of analytic functions on $\CC^{1+n}$ and $\Mred$.

On $\CC^{1+n}\times \CC^{1+n}$,
the standard complex coordinate functions are denoted by $x^0,\dots,x^n,y^0,\dots,y^n$,
and given by
$x^k(\xi,\eta) \coloneqq \xi^k$ as well as $y^k(\xi,\eta) \coloneqq \eta^k$ for all $k\in 
\{0,\dots,n\}$ and $\xi,\eta \in \CC^{1+n}$.
Define the holomorphic polynomial
\begin{equation}
\momentmapExt \coloneqq \sum_{k=0}^{n} \nu_{k} x^k y^k = \sum_{k=0}^{s-1} x^k y^k - 
\sum_{k=s}^{n} x^k y^k \punkt
\end{equation}
Note that the polynomial $\momentmap$ considered before is just the restriction 
of $\momentmapExt$ to the antidiagonal. More precisely, if $\Diag \colon \CC^{1+n} \to \CC^{1+n} \times \CC^{1+n}$,
\begin{equation}
 \rho \mapsto \Diag(\rho) \coloneqq (\rho, \cc \rho)
\end{equation}
denotes the embedding along the antidiagonal,
then $\momentmap = \momentmapExt \circ \Diag = \Diag^* (\momentmapExt)$. Similarly, $\Diag^*(x^k) = z^k$ and
$\Diag^*(y^k) = \cc{z}^k$ for all $k\in \{0,\dots,n\}$.

The complex Lie group $\GL\times \GL$
acts holomorphically from the left on $\CC^{1+n}\times\CC^{1+n}$ as usual via 
$(A,B) \acts (\xi,\eta) \coloneqq (A\xi,B\eta)$ for all $A,B\in \GL$ and $\xi,\eta\in \CC^{1+n}$,
which induces a right action $\argument \racts \argument$ by pullback on the spaces 
of holomorphic functions or holomorphic tensor fields. 
Especially for the coordinate functions, 
this yields
$x^k \racts (A,B) = \sum_{\ell=0}^{n} {A^k}_\ell\, x^\ell$ and 
$y^k \racts (A,B) = \sum_{\ell=0}^{n} {B^k}_\ell\, y^\ell$.

The stabilizer $\StabExt$ of $\momentmapExt$, i.e.\ the set of $(A,B) \in \GL\times \GL$
fulfilling $\momentmapExt \racts (A,B) = \momentmapExt$, is explicitly given by
\begin{equation}
\StabExt
=
\set[\bigg]{ (A,B)\in \GL\times\GL }{
	\sum_{k=0}^{n} \nu_{k} {A^k}_\ell {B^k}_m = \delta_{\ell,m}\, \nu_{m}
	\textup{ for all }\ell,m\in \{0,\dots,n\}
} \punkt
\end{equation}
Note that for all $A\in \GL$
there exists a unique $B\in \GL$ such that $(A,B)\in \StabExt$, namely
${B^k}_m = \nu_{k} \nu_{m} {(A^{-1})^m}_k$, so $\StabExt$ is a 
complex Lie group and isomorphic to $\GL$.

Similar to the definition of $\Levelset$ we define $\LevelsetExt$ as the $1$-level set of $\momentmapExt$
in $\CC^{1+n}\times \CC^{1+n}$, i.e.
\begin{equation}
\LevelsetExt 
\coloneqq 
\momentmapExt{}^{-1}(\{1\}) 
= 
\set[\bigg]{(\xi,\eta)\in \CC^{1+n}\times\CC^{1+n}}{1+\sum_{k=s}^{n} \xi^k\,\eta^k = 
\sum_{k=0}^{s-1} \xi^k\,\eta^k}\,\punkt
\end{equation}
Then $\LevelsetExt$ is a complex submanifold of $\CC^{1+n}\times\CC^{1+n}$.
The canonical inclusion of $\LevelsetExt$ into $\CC^{1+n}\times \CC^{1+n}$ is denoted by ${\hat{\iota}}$.
As $\momentmapExt$ is invariant under the action of $\StabExt$, this action can be restricted to
$\LevelsetExt$ and $\hat{\iota}$ then is clearly $\StabExt$-invariant.
Moreover the inclusion $\Diag$ restricts to an inclusion $\DiagZ \colon \Levelset \to \LevelsetExt$, which makes 
the upper left square in \eqref{diagram:reduction} commute.

The second step is to divide out the orbits of the Lie group $\CCx \coloneqq \CC\setminus\{0\}$,
more precisely of the subgroup $\set{(\alpha \Unit_{1+n}, \alpha^{-1}\Unit_{1+n})}{\alpha\in\CCx}$
of $\StabExt$. So define
\begin{equation}
\MredExt 
\coloneqq 
\LevelsetExt \big/ \,\CCx \komma
\end{equation}
then $\MredExt$ can be identified with
$\set{([\xi],[\eta])\in \CC\PP^n\times\CC\PP^n}{ \momentmapExt(\xi,\eta) \neq 0}$,
a well-defined open and dense complex submanifold of $\CC\PP^n\times\CC\PP^n$,
via $\MredExt \ni [(\xi,\eta)] \mapsto ([\xi],[\eta]) \in \CC\PP^n\times\CC\PP^n$.
As the $\CCx$-subgroup of $\StabExt$ is central,
the $\StabExt$-action remains well-defined on $\MredExt$.
The canonical projection from $\LevelsetExt$ onto the quotient $\MredExt$
will be denoted by $\hat{\pr}$ and is again $\StabExt$-equivariant by construction.
Finally, one can check that $\DiagM \colon \Mred \to \MredExt$,
\begin{equation}
[\rho] \mapsto \DiagM \big([\rho]\big) \coloneqq \big[ \DiagZ(\rho) \big] = [(\rho,\cc{\rho})]
\end{equation}
is well-defined and makes the upper right rectangle of \eqref{diagram:reduction} commute.

On $\MredExt$, we are going to use the usual projective
coordinates coming from $\CC\PP^n\times \CC\PP^n$, denoted by
$u^1,\dots,u^n \colon \set{[(\xi,\eta)] \in \MredExt }{\xi^0 \neq 0} \to \CC$
and
$v^1,\dots,v^n \colon \set{[(\xi,\eta)] \in \MredExt }{\eta^0 \neq 0} \to \CC$,
and given by
\begin{equation} \label{eq:coordinates:extended}
u^k\big([(\xi,\eta)] \big) \coloneqq \frac{\xi^k}{\xi^0}
\quad\quad\text{as well as}\quad\quad
v^k\big([(\xi,\eta)] \big) \coloneqq \frac{\eta^k}{\eta^0}
\end{equation}
for all $k\in \{1,\dots,n\}$.
Note that it is again easy to obtain an atlas by considering similarly defined coordinates on
$\set{[(\xi,\eta)] \in \MredExt }{\xi^i \neq 0}$ and
$\set{[(\xi,\eta)] \in \MredExt }{\eta^j \neq 0}$
and that the relations $(\DiagM)^* ( u^k) = w^k$ and $(\DiagM)^* ( v^k) = \cc{w}^k$ 
hold for all $k\in\{1,\dots,n\}$. As before, one should also keep in mind that
these coordinates form a chart with dense domain of definition.




\subsection*{Bottom Node}

It turns out that the complex structure on $\CC^{1+n}$ can be used to simplify the 
Marsden--Weinstein reduction in the middle row of \eqref{diagram:reduction}.
First, we define a complex structure on $\Mred$ that is 
compatible with the complex coordinates defined before. A
more general treatment of this procedure can be found in 
\cite{stienon.xu:reductionOfGeneralizedComplexStructures}.
Then we find a holomorphic projection map $\bigpr : \CCres \to \Mred$ from the open subset 
\begin{align}
\CCres \coloneqq \set[\big]{ z \in \CC^{1+n} }{ \momentmap(z) > 0 }
\end{align}
of $\CC^{1+n}$ to $\Mred$ making the bottom right triangle in \eqref{diagram:reduction} commute.
Since restriction to an open subset is easy for almost any geometric structure,
one can therefore avoid the restriction to a hypersurface 
that is needed in the Marsden--Weinstein reduction.

Denote the standard complex structure of $\CC^{1+n}$ by $I$. For $A \in \gl$, let $X_A$ be the 
vector field on $\CC^{1+n}$ obtained by differentiating the right action of $\GL$ on 
$\Smooth(\CC^{1+n})$ in the direction of $A$, i.e.\ $X_A(f) = \frac{\D}{\D t}\at[\big]{t=0} f\racts \exp(tA)$.
In particular,
\begin{align}
X_\I
\coloneqq
X_{\I\Unit_{1+n}}
=
\sum_{k=0}^n \bigg(
	\I z^k \frac{\partial}{\partial z^k} 
	- \I \cc{z}^k \frac{\partial}{\partial \cc{z}^k}
\bigg)
\end{align}
is the generator of the (diagonal) $\group{U}(1)$-action and
\begin{equation}
  X_\Unit 
  \coloneqq
  X_{\Unit_{1+n}}
  = 
  \sum_{k=0}^n \bigg(
    z^k \frac{\partial}{\partial z^k} + \cc{z}^k \frac{\partial}{\partial \cc{z}^k}
  \bigg)
  = -I X_\I
  \punkt
\end{equation}
Let $\genHull{X_\I}$ and $\genHull{X_\Unit}$ be the complex $1$-dimensional vector subbundles of $\Tangent \CCres$
spanned by $X_\I\at{\CCres}$ and $X_\Unit\at{\CCres}$, respectively.
Moreover, for $\rho \in \CCres$ define 
\begin{equation} \label{eq:def:Xi}
  \Xi_\rho 
  \coloneqq 
  \set[\big]{
    \alpha_\rho \in \Tangent_\rho \CCres
  }{
    \alpha_\rho(\momentmap) = 0
    \text{ and }
    \big(I \alpha_\rho\big)(\momentmap) = 0
  }
  \quad\text{and}\quad
  \Xi \coloneqq \bigcup_{\rho \in \CCres} \Xi_\rho
  \komma
\end{equation}
then one can check that $\Xi$ is a complex $2n$-dimensional vector subbundle of $\Tangent \CCres$,
and we get:
\begin{proposition} \label{proposition:decomposition:CC}
  The tangent bundle of $\CCres$ can be decomposed as the direct sum
  \begin{equation} \label{eq:decomposition:CC}
    \Tangent \CCres = \genHull{X_\Unit} \oplus \genHull{X_\I} \oplus \Xi 
    \punkt
  \end{equation}
  Moreover, for all $\rho \in \Levelset$, the map
  $\Tangent_\rho \pr \circ \operatorname{(\Tangent_\rho \iota)}^{-1} \colon \Xi_\rho \to \Tangent_{[\rho]} \Mred$
  is a linear isomorphism.
\end{proposition}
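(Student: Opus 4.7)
The plan is to reduce both claims to a single rank computation for the $\CC$-linear bundle map whose kernel is $\Xi$, and then finish each by dimension counting.

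First, I would compute the action of $X_\Unit$ and $X_\I$ on the moment map. Using $\momentmap = \sum_{k=0}^n \nu_k z^k \cc z^k$, direct differentiation yields $X_\Unit(\momentmap) = 2\momentmap$ and $X_\I(\momentmap) = 0$. Together with the identity $X_\Unit = -IX_\I$ (equivalently $IX_\Unit = X_\I$), this produces the four entries $X_\Unit(\momentmap)|_\rho = 2\momentmap(\rho)$, $X_\I(\momentmap)|_\rho = 0$, $(IX_\Unit)(\momentmap)|_\rho = 0$, and $(IX_\I)(\momentmap)|_\rho = -2\momentmap(\rho)$, valid for every $\rho \in \CCres$.

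Next, I would introduce the smooth $\CC$-linear bundle map $\Phi \colon \Tangent \CCres \to \CCres \times \CC^2$ defined by $\alpha \mapsto (\alpha(\momentmap), (I\alpha)(\momentmap))$, whose fiberwise kernel is $\Xi_\rho$ by definition \eqref{eq:def:Xi}. The table above shows that $\Phi_\rho(X_\Unit|_\rho) = (2\momentmap(\rho), 0)$ and $\Phi_\rho(X_\I|_\rho) = (0, -2\momentmap(\rho))$. Since $\momentmap > 0$ on $\CCres$, these two images form a basis of $\CC^2$. Several conclusions follow immediately: $\Phi$ has constant rank $2$, so $\Xi$ is a smooth complex subbundle of fiber dimension $2n$; the vectors $X_\Unit|_\rho$ and $X_\I|_\rho$ are $\CC$-linearly independent in $\Tangent_\rho \CCres$; and their span meets $\Xi_\rho$ only in $0$. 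The dimension count $1 + 1 + 2n = 2(1+n) = \dim_\CC \Tangent_\rho \CCres$ then yields the direct sum decomposition.

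For the second claim, I would fix $\rho \in \Levelset \subseteq \CCres$. Every $\alpha \in \Xi_\rho$ satisfies $\alpha(\momentmap) = 0$, hence lies in the image of $\Tangent_\rho \iota$, so the composition is well defined. The kernel of $\Tangent_\rho \pr$ is spanned by the infinitesimal generator of the $\group{U}(1)$-action, i.e.\ by $X_\I|_\rho$. The last entry of the table reads $(IX_\I)(\momentmap)|_\rho = -2\momentmap(\rho) = -2$ on $\Levelset$, so $X_\I|_\rho \notin \Xi_\rho$; hence the restriction to $\Xi_\rho$ (transported via $(\Tangent_\rho \iota)^{-1}$) is injective. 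Since $\dim_\CC \Xi_\rho = 2n = \dim_\CC \Tangent_{[\rho]} \Mred$, injectivity forces bijectivity.

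The only genuine subtlety I expect is notational bookkeeping: $\Tangent$ in this paper denotes the complexified tangent bundle (see Section~\ref{sec:notation}), so $I$ extends to a $\CC$-linear endomorphism of its fibers with eigenvalues $\pm \I$ rather than acting as scalar multiplication. In particular, the identity $X_\Unit = -IX_\I$ does \emph{not} collapse $\genHull{X_\Unit}$ and $\genHull{X_\I}$ into the same line. The explicit images under $\Phi$ make this transparent and prevent any ambiguity in the dimension counts.
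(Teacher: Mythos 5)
Your proposal is correct and follows essentially the same route as the paper: both rest on the computations $X_\Unit(\momentmap)=2\momentmap$, $X_\I(\momentmap)=0$ and $(IX_\I)(\momentmap)=-2\momentmap$ together with a dimension count, your bundle map $\Phi(\alpha)=(\alpha(\momentmap),(I\alpha)(\momentmap))$ being just a compact repackaging of the paper's two-step splitting off of $\genHull{X_\Unit}$ and $\genHull{X_\I}$. The only cosmetic difference is in the second claim, where you argue injectivity plus equality of dimensions while the paper argues surjectivity and identifies the kernel as $\genHull{X_\I}\at{\rho}$; both are valid.
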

\begin{proof}
  For all $\rho \in \CCres$, the linear subspace
  $S_\rho \coloneqq \set[\big]{
    \alpha_\rho \in \Tangent_\rho \CCres
  }{
    \alpha_\rho(\momentmap) = 0
  }$ of $\Tangent_\rho \CCres$ has complex codimension $1$, and $\genHull{X_\Unit}\at{\rho}$ is a complement
  of $S_\rho$ in $\Tangent_\rho \CCres$ because $X_\Unit(\momentmap) = 2 \momentmap$. So 
  $\Tangent_\rho \CCres = \genHull{X_\Unit}\at{\rho} \oplus S_\rho$. Moreover, $\group{U}(1)$-invariance of $\momentmap$
  implies that $X_\I(\momentmap) = 0$, so $\genHull{X_\I}\at{\rho} \subseteq S_\rho$,
  and $\Xi_\rho \subseteq S_\rho$ is clear. But as $(I X_\I)(\momentmap) = - X_\Unit(\momentmap) = -2 \momentmap$,
  the sum of $\genHull{X_\I}\at{\rho}$ and $\Xi_\rho$ is direct, and therefore $S_\rho = \genHull{X_\I}\at{\rho} \oplus \Xi_\rho$
  by counting dimensions.
  This proves the decomposition \eqref{eq:decomposition:CC}.
  
  If $\rho \in \Levelset$, then $S_\rho = \genHull{X_\I}\at{\rho}\oplus \Xi_\rho$
  coincides with the image of $\Tangent_\rho \Levelset$ under $\Tangent_\rho \iota$.
  Because of this, the
  map $\Tangent_\rho \pr \circ (\Tangent_\rho \iota)^{-1}$
  is well-defined as a map from $\genHull{X_\I}\at{\rho}\oplus \Xi_\rho$ to $\Tangent_{[\rho]} \Mred$
  and is clearly surjective. The kernel of this map is $\genHull{X_\I}\at{\rho}$, so its restriction
  to $\Xi_\rho$ is an isomorphism.
\end{proof}
Note that $X_\I$, $X_\Unit$ and $\Xi$ do not depend on any choices but arise naturally from the $\group{U}(1)$-action,
the map $\momentmap$ and the complex structure $I$ that $\CCres$ inherits from $\CC^{1+n}$. By
definition of $\Xi$, this complex structure restricts to $\Xi$. As it is also $\group{U}(1)$-invariant,
it gives rise to a well-defined (almost) complex structure $I_\red$ on $\Mred$:

\begin{definition}
  Define the vector bundle endomorphism $I_\red \colon \Tangent \Mred \to \Tangent \Mred$,
  that maps any $\beta_{[\rho]} \in \Tangent_{[\rho]} \Mred$ with $[\rho] \in \Mred$ to 
  $I_\red(\beta_{[\rho]}) \coloneqq \big(\Tangent_\rho \pr \circ \operatorname{(\Tangent_\rho \iota)}^{-1} \circ I\at{\rho} \circ 
	(\Tangent_\rho \pr \circ \operatorname{(\Tangent_\rho \iota)}^{-1})^{-1}\big)(\beta_{[\rho]})$.
\end{definition}
It is clear that $I_\red$ squares to $-\id_{\Tangent \Mred}$ and hence is an almost complex
structure. In order to see that it is also integrable, we check that $I_\red$
coincides with the complex structure that $\Mred$ inherits from $\CC\PP^n$. For a more general
discussion, see \cite{stienon.xu:reductionOfGeneralizedComplexStructures}:

\begin{definition}
  On $\CCres \setminus \set{\rho \in \CCres}{ z^0(\rho) = 0}$ we define the vector fields
  \begin{align}
  W_k \coloneqq z^0\bigg( \frac{\partial}{\partial z^k} - \frac{\nu_k\cc{z}^k}{\momentmap} \sum_{\ell=0}^n 
  z^\ell
  \frac{\partial}{\partial z^\ell}\bigg) \at[\bigg]{\CCres \setminus \set{\rho \in \CCres}{ z^0(\rho) = 0}}
  \end{align}
  for all $k\in \{1,\dots,n\}$.
\end{definition}
Note that, analogously to the projective coordinates $w^1,\dots,w^n$ on $\Mred$, 
the vector fields $W_1,\dots,W_n$ are only defined on a dense subset of $\CCres$.
However, this will be completely sufficient for our purposes.

As $I W_k = \I W_k$ and $\dupr{\D \momentmap}{W_k} = 0$ for all $k\in \{1,\dots,n\}$
on the domain of definition of $W_k$,
these vector fields $W_k$, as well as their complex conjugates $\cc{W}_k$ with $k\in \{1,\dots,n\}$,
are actually (local, densely defined) sections of $\Xi$. A short calculation shows that
\begin{equation}
  W_k\big(z^\ell / z^0\big) = \delta^{\ell}_k
\end{equation}
for all $k,\ell \in \{1,\dots,n\}$, so the sections $W_k$ are pointwise linearly independent
and, by counting dimensions, they form a (local, densely defined) frame of $\Xi$.
Moreover, this immediately shows: 

\begin{proposition} \label{proposition:wredused}
  If $\rho \in \Levelset$, $z^0(\rho) \neq 0$, then
  \begin{align}
  \big(\Tangent_\rho \pr \circ \operatorname{(\Tangent_\rho \iota)}^{-1}\big)(W_k\at{\rho}) 
  =
  \frac{\partial}{\partial w^k} \at[\bigg]{[\rho]}
  \quad\quad\text{and}\quad\quad
  \big(\Tangent_\rho \pr \circ \operatorname{(\Tangent_\rho \iota)}^{-1}\big)(\cc{W}_k\at{\rho})
  =  
  \frac{\partial}{\partial \cc{w}^k} \at[\bigg]{[\rho]}
  \end{align}
  for all $k\in \{1,\dots,n\}$.
\end{proposition}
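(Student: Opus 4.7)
The plan is to test both sides of the desired equation against the local coordinate functions $w^1,\dots,w^n$ (and their complex conjugates) on the dense open subset of $\Mred$ where they are defined. Since these coordinates form a chart, two tangent vectors at $[\rho]$ that agree on all $w^\ell$ and $\cc w^\ell$ must coincide.

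First I observe that the proposition is well-posed: by the discussion preceding the statement, $W_k\at\rho \in \Xi_\rho$ for all $\rho \in \CCres$ with $z^0(\rho) \neq 0$, and $\Xi_\rho \subseteq S_\rho = \im \Tangent_\rho \iota$ on $\Levelset$ by the proof of Proposition~\ref{proposition:decomposition:CC}. So there is indeed a unique $\beta \in \Tangent_\rho \Levelset$ with $(\Tangent_\rho \iota)(\beta) = W_k\at\rho$, and I need to identify $(\Tangent_\rho \pr)(\beta)$.

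The key observation is that on $\Levelset$ with $z^0(\rho) \neq 0$, the pullback coordinates satisfy $\pr^*(w^\ell) = \iota^*(z^\ell/z^0)$, since both map a point $\rho \in \Levelset$ to $\rho^\ell/\rho^0$. Consequently,
\begin{equation*}
  \big((\Tangent_\rho \pr)(\beta)\big)(w^\ell)
  = \beta\big(\iota^*(z^\ell/z^0)\big)
  = \big((\Tangent_\rho \iota)(\beta)\big)(z^\ell/z^0)
  = W_k\at\rho(z^\ell/z^0)
  = \delta^\ell_k
\end{equation*}
by the identity already recorded for $W_k$. For the conjugate coordinates $\cc w^\ell$, the explicit formula shows that $W_k$ contains only holomorphic derivatives $\partial/\partial z^\ell$, hence annihilates the antiholomorphic functions $\cc z^\ell/\cc z^0$, giving $\big((\Tangent_\rho \pr)(\beta)\big)(\cc w^\ell) = 0$. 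Thus $(\Tangent_\rho \pr)(\beta)$ agrees with $\partial/\partial w^k\at{[\rho]}$ on every coordinate of a chart at $[\rho]$, which proves the first equation.

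The second equation follows by the analogous argument applied to $\cc W_k$: it lies in $\Xi_\rho$ as noted in the preceding paragraph, it consists only of antiholomorphic derivatives so it annihilates $z^\ell/z^0$, and pairing it with $\cc w^\ell = \overline{z^\ell/z^0} \circ \iota \circ \text{(section of $\pr$)}$ yields $\delta^\ell_k$ by complex conjugation of the previous computation. I do not anticipate a real obstacle here; the only subtlety is bookkeeping the identification of $W_k\at\rho$ as $(\Tangent_\rho \iota)(\beta)$ for a tangent vector on $\Levelset$, which rests on the inclusion $\Xi_\rho \subseteq S_\rho = \im \Tangent_\rho \iota$ established in Proposition~\ref{proposition:decomposition:CC}.
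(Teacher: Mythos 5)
Your proof is correct and is essentially the argument the paper has in mind: the paper deduces the proposition "immediately" from $W_k(z^\ell/z^0)=\delta^\ell_k$, the fact that $W_k,\cc W_k$ are sections of $\Xi$, and the identity $\pr^*(w^\ell)=\iota^*(z^\ell/z^0)$, which is exactly what you spell out. Your additional care about well-posedness via $\Xi_\rho\subseteq\operatorname{Im}(\Tangent_\rho\iota)$ is just the content of Proposition~\ref{proposition:decomposition:CC} made explicit, so nothing genuinely different is happening.
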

%
As an immediate consequence we obtain:
\begin{corollary}
  The reduced complex structure fulfils
  $I_\red(\frac{\partial}{\partial w^k}) = \I \frac{\partial}{\partial w^k}$ and
  $I_\red(\frac{\partial}{\partial \cc{w}^k}) = -\I \frac{\partial}{\partial \cc{w}^k}$
  for all $k\in \{1,\dots,n\}$,
  so $I_\red$ is indeed the standard complex structure of $\Mred$ interpreted as an open subset 
  of $\CC\PP^n$. In particular, $I_\red$ is integrable and really a complex structure.
\end{corollary}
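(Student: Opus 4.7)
The plan is to exploit the fact that Proposition~\ref{proposition:wredused} already describes the action of the isomorphism $\Tangent_\rho \pr \circ (\Tangent_\rho \iota)^{-1} \colon \Xi_\rho \to \Tangent_{[\rho]} \Mred$ on the distinguished frame $W_1,\dots,W_n,\cc{W}_1,\dots,\cc{W}_n$ of $\Xi$ (on the dense open subset $\set{\rho \in \Levelset}{z^0(\rho) \neq 0}$), together with the earlier observation that $I W_k = \I W_k$ and hence $I \cc{W}_k = -\I \cc{W}_k$. Combined with the very definition of $I_\red$, the first claim then reduces to a one-line diagram chase.

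First I would fix $\rho \in \Levelset$ with $z^0(\rho) \neq 0$ and unravel the definition of $I_\red$ at $[\rho]$. Applying the inverse isomorphism to $\frac{\partial}{\partial w^k}\at{[\rho]}$ yields $W_k\at{\rho}$ by Proposition~\ref{proposition:wredused}, acting with $I\at{\rho}$ multiplies by $\I$, and pushing forward again produces $\I\,\frac{\partial}{\partial w^k}\at{[\rho]}$. The computation for $\frac{\partial}{\partial \cc{w}^k}$ is identical, using $\cc{W}_k$ and the eigenvalue $-\I$.

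Next I would compare $I_\red$ with the standard complex structure $I_{\CC\PP^n}$ inherited from the open embedding $\Mred \hookrightarrow \CC\PP^n$ described after equation~\eqref{eq:coordinates:Mred}. Since $w^1,\dots,w^n$ are precisely the standard holomorphic projective coordinates, $I_{\CC\PP^n}$ also acts on $\frac{\partial}{\partial w^k}$ by multiplication by $\I$. Both $I_\red$ and $I_{\CC\PP^n}$ are smooth vector bundle endomorphisms, so agreement on the dense chart domain forces agreement on its closure inside this chart, and the remaining points of $\Mred$ are covered by the analogous charts with $\rho^\ell \neq 0$ for $\ell \in \{1,\dots,n\}$; the vector fields and calculations carry over by symmetry of the construction in the index $\ell$. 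Integrability and the fact that $I_\red$ is really a complex structure are then immediate from the corresponding property of $\CC\PP^n$.

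There is no substantial obstacle here: the corollary is essentially a repackaging of Proposition~\ref{proposition:wredused} together with $I W_k = \I W_k$. The only point that requires a small remark is the passage from the distinguished chart to all of $\Mred$, which is handled by smoothness (continuity even suffices) of both complex structures and the symmetry of the atlas construction.
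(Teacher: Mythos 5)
Your argument is correct and is essentially the one the paper intends: the corollary is stated as an immediate consequence of Proposition~\ref{proposition:wredused} together with $I W_k = \I W_k$, $I\cc{W}_k = -\I\cc{W}_k$, unwound through the definition of $I_\red$, exactly as you do. Your additional remark on passing from the dense chart $\set{[\rho]\in\Mred}{\rho^0\neq 0}$ to all of $\Mred$ by continuity (or by the symmetric charts) just makes explicit a step the paper leaves implicit, and is fine.
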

It thus makes sense to talk about holomorphic maps from $\CC^{1+n}$ or $\CCres$ to $\Mred$:

\begin{lemma}
  If a holomorphic complex-valued map $\phi$ on a connected and open 
  subset $S \subseteq \CC^{1+n}$ with $S \cap \Levelset \neq \emptyset$
  vanishes on $S\cap \Levelset$, then it already vanishes on all of $S$.
\end{lemma}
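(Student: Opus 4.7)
The plan is to argue by a dimension count that exploits the mismatch between the real dimension of the zero locus of a nontrivial holomorphic function and the real dimension of the real hypersurface $\Levelset$.

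First I would verify that $\Levelset$ is a smooth real submanifold of $\CC^{1+n}$ of real dimension $2n+1$. Since $\D\momentmap = \sum_{k=0}^n \nu_k(\cc{z}^k \D z^k + z^k \D \cc z^k)$ is nonzero at every point $\rho\in \Levelset$ (because $\momentmap(\rho) = 1$ forces some $\rho^k \neq 0$), the value $1$ is a regular value of the smooth function $\momentmap \colon \CC^{1+n} \to \RR$, so $\Levelset$ is a codimension-$1$ real submanifold. Consequently the open subset $S \cap \Levelset$ of $\Levelset$ is nonempty and also of real dimension $2n+1$.

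Next, suppose for contradiction that $\phi$ does not vanish identically on $S$. Then, since $S$ is connected and $\phi$ is holomorphic, the zero set $N \coloneqq \{\rho \in S : \phi(\rho) = 0\}$ is a proper complex analytic subset of $S$. By the standard local theory of holomorphic functions (after restricting to a small polydisc around any point, Weierstrass preparation makes $\phi$ a pseudopolynomial in one coordinate, and the zero set of a nonzero holomorphic one-variable function is discrete), $N$ has complex dimension at most $n$, hence real dimension at most $2n$. But the hypothesis gives $S \cap \Levelset \subseteq N$, which together with the previous paragraph yields $2n+1 = \dim_\RR(S\cap \Levelset) \leq \dim_\RR N \leq 2n$, a contradiction. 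Hence $\phi \equiv 0$ on $S$.

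The main obstacle — really the only substantive point — is the real-dimension bound on the zero set $N$; everything else is either the definition of regular value or the identity theorem in one complex variable. If I wanted to avoid invoking the structure theorem for analytic sets, I could instead argue locally: pick $\rho_0 \in S \cap \Levelset$ and choose an index $k$ with $\rho_0^k \neq 0$, so that $\partial \momentmap / \partial \cc z^k = \nu_k \rho_0^k \neq 0$. Then for each fixed choice of the other coordinates near $\rho_0$, the slice of $\Levelset$ in the complex $z^k$-line is a real-analytic curve (a real hypersurface of $\CC$), in particular has a limit point. Since $\phi$ restricted to this complex line is a holomorphic function of one variable vanishing on a set with an accumulation point, it vanishes on the whole line; varying the other coordinates shows $\phi$ vanishes on a complex neighborhood of $\rho_0$, and the usual identity theorem on the connected open set $S$ finishes the proof.
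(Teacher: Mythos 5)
Your argument is correct, but it runs along a genuinely different line than the paper's. You only use that $\Levelset$ is a smooth real hypersurface (regular level set of $\momentmap$) and then appeal to the structure theory of analytic sets: the zero locus of a not-identically-vanishing holomorphic function on the connected set $S$ has complex dimension at most $n$, hence real (Hausdorff) dimension at most $2n$, and so cannot contain the $(2n+1)$-dimensional manifold $S\cap\Levelset$. That is a valid contradiction — just be explicit that the relevant point is a dimension bound of $2n$ and monotonicity of dimension under inclusion; ``measure zero'' alone would not do, since $\Levelset$ itself has Lebesgue measure zero. Your fallback slicing argument (Weierstrass-free) is also sound: for nearby fixed values of the other coordinates the slice of $\Levelset$ in the $z^k$-line is an arc of a circle through a neighbourhood of $\rho_0^k$, so the one-variable identity theorem gives vanishing on a polydisc, and connectedness of $S$ finishes. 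The paper instead argues pointwise on $\Levelset$ without any analytic-set machinery: at $\rho\in\Levelset$ the tangent space splits as $\genHull{X_\Unit}\at{\rho}\oplus(\Tangent_\rho\iota)(\Tangent_\rho\Levelset)$, the tangential derivatives of $\phi$ vanish because $\phi$ vanishes on $S\cap\Levelset$, and the transverse derivative $X_\Unit(\phi)$ vanishes too because $X_\Unit=-IX_\I$ with $X_\I$ tangent to $\Levelset$ and $\phi$ holomorphic; iterating shows all partial derivatives vanish on $S\cap\Levelset$, whence $\phi\equiv 0$ by the identity theorem. So the paper's proof is more self-contained and exploits the specific $\group{U}(1)$-geometry (it is the same mechanism used throughout Section~\ref{sec:geometry}), while yours is shorter to state, applies verbatim to any real hypersurface, but either imports a nontrivial theorem about analytic sets or, in the elementary variant, is essentially a coordinate version of the same ``hypersurface is maximally non-complex'' phenomenon.
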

\begin{proof}
  Indeed, as
  $\Tangent_\rho \CC^{1+n} 
  =
  \genHull{ X_\Unit} \at{\rho} \oplus (\Tangent_\rho \iota)(\Tangent_\rho \Levelset)$
  for all $\rho \in \Levelset$,
  as $\alpha_\rho(\phi) = 0$ for all $\alpha_\rho \in (\Tangent_\rho \iota)(\Tangent_\rho \Levelset)$
  by assumption and as also
  $X_\Unit\at{\rho}(\phi)
  = 
  X_\I\at{\rho}(-\I \phi) 
  =
  0$
  because $\phi$ is holomorphic and $X_\Unit\at{\rho} \in (\Tangent_\rho \iota)(\Tangent_\rho \Levelset)$,
  all first order partial derivatives of $\phi$ 
  vanish on $S\cap \Levelset$. This now extends to all arbitrarily high partial derivatives
  by using the same argument and thus the holomorphic $\phi$ vanishes on whole $S$.
\end{proof}
As a consequence, there is at most one holomorphic map $\bigpr \colon \CCres \to \Mred$
whose restriction to $\Levelset$ coincides with $\pr$. In the special case treated here it
is not hard to guess this map:

\begin{proposition}
  There exists a (necessarily unique) holomorphic map
  $\bigpr \colon \CCres \to \Mred$
  whose restriction to $\Levelset$ coincides with $\pr$.
  It is explicitly given by
  \begin{equation} \label{eq:defPr}
    \rho \mapsto \bigpr(\rho) = [\rho / \sqrt{\momentmap(\rho)}]\,.
  \end{equation}
  In coordinates, $w^k \circ \bigpr = z^k / z^0$.
\end{proposition}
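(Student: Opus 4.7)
The plan is to bypass the explicit formula $\rho \mapsto [\rho/\sqrt{\momentmap(\rho)}]$ — whose scaling factor $1/\sqrt{\momentmap(\rho)}$ is real-valued and thus not holomorphic — by exploiting the identification $\Mred \cong \set{[\rho]\in\CC\PP^n}{\momentmap(\rho)>0}$ established in the middle-row discussion, together with the preceding Corollary which shows that this identification is a biholomorphism with respect to $I_\red$ and the standard complex structure on $\CC\PP^n$.

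First I would define $\bigpr \colon \CCres \to \Mred$ as the composition of the restriction to $\CCres$ of the standard holomorphic projection $\CC^{1+n}\setminus\{0\} \to \CC\PP^n$ with the inverse of the biholomorphism above. This is well-defined because $\CCres \subseteq \CC^{1+n}\setminus\{0\}$, and because the condition $\momentmap(\rho) > 0$ defining $\CCres$ guarantees that $[\rho]\in\CC\PP^n$ lies in the open submanifold identified with $\Mred$; holomorphy then follows at once from this description.

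Next I would verify the remaining claims. That $\bigpr\at{\Levelset} = \pr$ holds essentially by construction, since the identification of $\Mred = \Levelset/\group{U}(1)$ with the open submanifold of $\CC\PP^n$ sends the $\group{U}(1)$-class of any $\rho\in \Levelset$ to its $\CCx$-class $[\rho]\in\CC\PP^n$, which is precisely the image of $\rho$ under the standard projection. The explicit formula follows from the quadratic homogeneity $\momentmap(t\rho) = t^2\momentmap(\rho)$ for $t\in\RR$, which ensures $\rho/\sqrt{\momentmap(\rho)} \in \Levelset$, together with the observation that scaling by a nonzero real number does not change the $\CCx$-class in $\CC\PP^n$; and the coordinate formula $w^k \circ \bigpr = z^k/z^0$ is immediate from \eqref{eq:coordinates:Mred} applied to $[\rho]$. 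There is no real obstacle in this argument: the only subtle point is the apparent non-holomorphy of $\sqrt{\momentmap}$ in the given formula, which is absorbed upon passage to projective classes; uniqueness has already been noted using the preceding lemma.
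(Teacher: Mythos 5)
Your proposal is correct, and it reaches the result by a somewhat different route than the paper. The paper's proof takes the explicit formula $\rho \mapsto [\rho/\sqrt{\momentmap(\rho)}]$ as the definition and simply verifies it in the projective coordinates, obtaining $w^k \circ \bigpr = z^k/z^0$; since, by the preceding Corollary, the $w^k$ are holomorphic coordinates for $I_\red$, this one computation gives holomorphy, and the restriction to $\Levelset$ is immediate because $\momentmap = 1$ there. You instead construct the map globally as the composition of the tautological holomorphic projection $\CC^{1+n}\setminus\{0\} \to \CC\PP^n$ (restricted to $\CCres$) with the inverse of the identification of $\Mred$ with the open subset $\set{[\rho]\in\CC\PP^n}{\momentmap(\rho)>0}$, which the same Corollary shows to be a biholomorphism for $I_\red$; holomorphy is then automatic, the agreement with $\pr$ on $\Levelset$ holds by construction, and the explicit formula and the coordinate expression are derived afterwards from the real homogeneity $\momentmap(t\rho) = t^2\momentmap(\rho)$ together with injectivity of the identification. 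Both arguments rest on exactly the same ingredient (the Corollary identifying $I_\red$ with the complex structure inherited from $\CC\PP^n$, plus the preceding uniqueness lemma, which you correctly cite rather than reprove), so neither is more general; what your version buys is a coordinate-free explanation of why the non-holomorphic factor $1/\sqrt{\momentmap}$ is harmless --- it is absorbed into the $\CCx$-equivalence class --- and it anticipates the remark made after the proposition that $\bigpr$ is precisely the quotient map identifying $\CCres/\CCx$ with $\Mred$, whereas the paper's coordinate check is shorter and self-contained on the dense chart $z^0\neq 0$ (with the obvious analogues on the remaining charts).
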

\begin{proof}
It is not hard to check the expression of \eqref{eq:defPr} in coordinates, which also
shows that $\bigpr$ is holomorphic. Its restriction to $\Levelset$ clearly coincides with $\pr$.
\end{proof}
We also note that the domain $\CCres$ of $\bigpr$, which was chosen rather arbitrarily, 
is naturally determined from the $\group{U}(1)$-action on $\CC^{1+n}$ and the complex structure $I$:
The action of the corresponding Lie algebra $\lie{u}(1)\cong \RR$ is given
by its fundamental vector field $X_\I$, and the complex structure $I$ allows to extend this
to an action of the complexified Lie algebra $\lie{u}(1) \otimes \CC \cong \CC$ via the
fundamental vector fields $X_\I$ and $X_\Unit$. This action even integrates to a unique holomorphic
action of the corresponding complex Lie group $\CCx$ on $\CC^{1+n}$, which is just given
by multiplication with scalars. The orbit of $\Levelset$ under the action of $\CCx$ is easily
seen to be $\CCres$, and $\bigpr\colon \CCres \to \Mred$ is the quotient map that identifies
$\CCres / \CCx$ with $\Mred$ as complex manifolds. From this point of view, the complex
structure on $\CC^{1+n}$ allows to replace the two steps of Marsden--Weinstein reduction
(restriction to the level set $\Levelset$ and taking $\group{U}(1)$-equivalence classes)
by restriction to the open complex submanifold $\CCres$ and taking equivalence classes with respect
to the action of the complexification $\CCx$ of $\group{U}(1)$.

For future use it will be helpful to be able to express the standard coordinate vectors
$\frac{\partial}{\partial z^k}$ with $k\in \{0,\dots,n\}$ in terms of the holomorphic Euler vector
field
\begin{equation}
  E \coloneqq \frac{1}{2} (X_\Unit - \I X_\I) \at[\big]{\CCres} = \sum_{k = 0}^n z^k \frac{\partial}{\partial 
  z^k}\at[\bigg]{\CCres} 
\end{equation}
and the $W_k$, $k \in \{1,\dots, n\}$. On their domain of definition, one gets
(using that always $\nu_0 = 1$)
\begin{equation}\label{eq:zAsLinearCombOfW}
  \frac{\partial}{\partial z^0}  = \frac{\cc{z}^0}{\momentmap} E - \sum_{\ell=1}^n 
  \frac{z^\ell}{(z^0)^2} W_\ell
  \quad\quad\text{and}\quad\quad
  \frac{\partial}{\partial z^k} =  \frac{\nu_k \cc{z}^k}{\momentmap} E + \frac{1}{z^0} W_k
\end{equation}
for all $k\in \{1,\dots, n\}$ and $(E, W_1, \dots, W_k)$ is a local, densely defined frame
for $\Tangent^{(1,0)} \CCres$. Together with its
complex conjugates $(\cc{E},\cc{W}_1,\dots,\cc{W}_n)$ we obtain a densely defined frame for 
the whole tangent space $\Tangent \CCres$.
The dual frames are denoted by $(E^*, W_1^*, \dots, W_n^*)$ and $(\cc{E}^*, \cc{W}_1^*, \dots, \cc{W}_n^*)$,
and (again only on the domain of definition of the vector fields $W_k$) we have
\begin{align}
E^* &= \frac{1}{\momentmap}\sum_{k=0}^n \nu_k \cc z^k \D z^k
&&\text{and}& 
W_k^* &= - \frac{z^k}{(z^0)^2} \D z^0 + \frac{1}{z^0} \D z^k = \bigpr^*(\D w^k) \komma 
\label{eq:EWAsLinearCombOfDz}
\\
\D z^0 &= z^0 E^* - \frac{(z^0)^2}{\momentmap}\sum_{k=1}^n \nu_k \cc z^k W_k^*
&&\text{and}&
\D z^k &= z^k E^* + z^0 \bigg(W_k^* - \frac{z^k}{\momentmap} \sum_{\ell=1}^n v_\ell \cc{z}^\ell W_\ell^*\bigg)
\punkt
\label{eq:DzAsLinearCombOfWStar}
\end{align}
Note that $E$ and $\cc{E}$ are obtained from the $\group{U}(1)$-action and complex 
structure of $\CCres$. Similarly,
also $E^*$ and $\cc{E}^*$ can be obtained naturally as the $(1,0)$ and $(0,1)$-parts of $\D \momentmap / \momentmap$.
Only the vector fields $W_1,\dots,W_n$ as well as their conjugates and duals depend on a choice of coordinates.




\section{Algebraic Point of View}   \label{sec:algebra}

The general reduction procedure from $\CC^{1+n}$ to $\Mred$ by first restricting
to the level set $\Levelset$ and then dividing out the action of $\group{U}(1)$
has a dual version that connects various function algebras on $\CC^{1+n}$ and 
$\Mred$:
First, one divides out the ideal of functions vanishing on $\Levelset$ and then
restricts to $\group{U}(1)$-invariant equivalence classes. However, as
every $\group{U}(1)$-invariant equivalence class of functions also contains at 
least one $\group{U}(1)$-invariant function, which can be obtained by averaging over the
compact group $\group{U}(1)$, a simplified procedure yields the
same results: First, one restricts to $\group{U}(1)$-invariant functions and 
then divides out the ideal of functions vanishing on $\Levelset$. We will use
this second approach throughout.

It is well-known that this way one can also construct algebraic structures on 
$\Mred$ out of such structures on $\CC^{1+n}$, especially Poisson brackets and
star products.
In the following we will consider three types of function algebras: 
All smooth functions, polynomial functions and certain analytic functions. 
While formal star products are defined on all smooth functions, their non-formal 
versions can only be defined on polynomial or some analytic functions.
All these function algebras on $\CC^{1+n}$ will also be endowed with the right-action of the
stabilizer group $\Stab$.




\subsection{Smooth Functions}   \label{subsec:smoothfunctions}

The reduction procedure for smooth functions is well-known. In order to fix notation,
it is helpful to shortly discuss some details again:
Recall that $\Smooth(\CC^{1+n})^{\group{U}(1)}$ is the unital subalgebra of 
$\Smooth(\CC^{1+n})$ whose elements are the $\group{U}(1)$-invariant
functions. It is easy to see that the following is well-defined:
\begin{definition} \label{definition:reductionmap}
  Let $S$ be an open and $\group{U}(1)$-invariant subset of $\CC^{1+n}$ such that $S \supseteq \Levelset$.
  The \neu{(classical) reduction map} is $\argument_\red \colon \Smooth(S)^{\group{U}(1)} \to \Smooth(\Mred)$, $f \mapsto f_\red$,
  where
  \begin{equation}\label{eq:reductionmap}
    f_\red([\rho]) \coloneqq f(\rho)
  \end{equation}
for all $\rho \in \Levelset$.
\end{definition}
We will especially be interested in the two cases $S=\CC^{1+n}$ and $S=\CCres$.
Note that $f_\red$ is the unique smooth function on $\Mred$ that fulfils $\pr^*(f_\red) = \iota^*(f)$.
From the algebraic point of view, smooth functions on
$\CC^{1+n}$ and $\Mred$ can be related as follows:

\begin{lemma} \label{lemma:extension}
  For every $g\in \Smooth(\Mred)$ there exists an $f\in \Smooth(\CC^{1+n})^{\group{U}(1)}$
  such that $f_\red = g$, and $f$ can even be chosen in such a way that the following locality condition is fulfilled: 
  Whenever $U$ is an open subset of $\Mred$ such that the restriction of $g$ to $U$ vanishes, 
  then there exists an open subset $V$ of $\CC^{1+n}$ such that $V \supseteq \pr^{-1}(U)$ 
  and such that the restriction of $f$ to $V$ vanishes.
\end{lemma}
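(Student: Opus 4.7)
The plan is to use the holomorphic projection $\bigpr\colon \CCres \to \Mred$ constructed in Section~\ref{sec:geometry}, which factors $\pr$ through an open subset of $\CC^{1+n}$, together with a radial cutoff in $\momentmap$ to extend by zero smoothly to all of $\CC^{1+n}$. The main point to verify is smoothness of the extension across $\partial \CCres = \momentmap^{-1}(\{0\})$, which the cutoff handles.

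First, I set $\tilde{f} \coloneqq \bigpr^*(g) \in \Smooth(\CCres)$. Since $\bigpr$ is $\CCx$-invariant, in particular $\group{U}(1)$-invariant, $\tilde{f}$ lies in $\Smooth(\CCres)^{\group{U}(1)}$, and by commutativity of the bottom triangle in \eqref{diagram:reduction} we have $\iota^*(\tilde{f}) = \pr^*(g)$, i.e.\ $\tilde{f}_\red = g$ for the reduction map of Definition~\ref{definition:reductionmap} applied to $S = \CCres$.

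Next, I choose $\chi \in \Smooth(\RR)$ with $\chi \equiv 1$ in an open neighborhood of $1$ and $\supp(\chi)$ compact in $(0,\infty)$. Then $\chi\circ\momentmap$ is a smooth $\group{U}(1)$-invariant function on $\CC^{1+n}$ whose support $\momentmap^{-1}(\supp(\chi))$ is closed in $\CC^{1+n}$ and entirely contained in $\CCres$. I define
\begin{equation*}
  f(\rho) \coloneqq \begin{cases} \chi(\momentmap(\rho))\,\tilde{f}(\rho) & \text{if } \rho \in \CCres, \\ 0 & \text{otherwise.} \end{cases}
\end{equation*}
Away from $\momentmap^{-1}(\supp(\chi))$ the function $f$ vanishes identically on an open neighborhood, so in particular near every point of $\CC^{1+n}\setminus \CCres$ we have $f\equiv 0$; on $\CCres$, $f$ agrees with the smooth function $(\chi\circ\momentmap)\cdot \tilde{f}$. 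This shows $f \in \Smooth(\CC^{1+n})$, and $\group{U}(1)$-invariance of $\momentmap$ and $\tilde{f}$ yields $f \in \Smooth(\CC^{1+n})^{\group{U}(1)}$. On $\Levelset$ one has $\momentmap = 1$, so $\chi(\momentmap)\at{\Levelset} = 1$ and consequently $\iota^*(f) = \iota^*(\tilde{f}) = \pr^*(g)$, i.e.\ $f_\red = g$.

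For the locality statement, suppose $g\at{U} = 0$ for some open $U\subseteq \Mred$. Put $V \coloneqq \bigpr^{-1}(U) \subseteq \CCres$, which is open in $\CC^{1+n}$ by continuity of $\bigpr$ and openness of $\CCres$. For $\rho \in V$ we have $\tilde{f}(\rho) = g(\bigpr(\rho)) = 0$, hence $f\at{V} = 0$. Finally, $\pr = \bigpr\at{\Levelset}$ implies $\pr^{-1}(U) = \bigpr^{-1}(U)\cap \Levelset \subseteq V$, as required. The only subtle step is the smoothness of $f$ at points of $\partial\CCres$, and this is precisely why $\supp(\chi)$ is chosen compact in $(0,\infty)$; everything else is straightforward.
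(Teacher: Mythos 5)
Your construction is correct and is essentially the paper's own proof: the paper defines $f(\rho) = g(\bigpr(\rho))\,\chi(\momentmap(\rho))$ on $\CCres$ and $f = 0$ outside, with $\chi$ a compactly supported smooth function on $]0,\infty[$ satisfying $\chi(1)=1$, exactly as you do (your extra requirement $\chi \equiv 1$ near $1$ is harmless). Your verification of smoothness across $\partial\CCres$, of $f_\red = g$, and of the locality condition via $V = \bigpr^{-1}(U)$ spells out the details the paper leaves implicit.
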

\begin{proof}
  This is well-known to be true in more generality, but in the present case it 
  is also easy to construct such an $f\in \Smooth(\CC^{1+n})^{\group{U}(1)}$
  for every $g\in \Smooth(\Mred)$: Indeed, one can define $f(\rho) \coloneqq 0$ for all $\rho \in \CC^{1+n} 
  \setminus \CCres$ and $f(\rho) \coloneqq g(\bigpr(\rho)) \chi(\momentmap(\rho))$
  for all $\rho \in \CCres$, where $\chi \colon {]0,\infty[} \to {[0,1]}$ is a smooth
  function with compact support that fulfils $\chi(1) = 1$.
\end{proof}
This lemma has the following consequence:
\begin{proposition}
    For every open and $\group{U}(1)$-invariant subset $S \subseteq \CC^{1+n}$ that contains $\Levelset$,
	the reduction map $\argument_\red \colon \Smooth(S)^{\group{U}(1)} \to \Smooth(\Mred)$
	descends to an isomorphism between the unital $^*$-algebras
	$\Smooth(S)^{\group{U}(1)} / \set{v \in \Smooth(S)^{\group{U}(1)} }{\iota^*(v) = 0}$
	and $\Smooth(\Mred)$. 
\end{proposition}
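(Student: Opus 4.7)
The plan is to apply the first isomorphism theorem for unital $^*$-algebras, so I need to check three things: that $\argument_\red$ is a unital $^*$-algebra homomorphism, that its kernel equals $\set{v \in \Smooth(S)^{\group{U}(1)} }{\iota^*(v) = 0}$, and that it is surjective onto $\Smooth(\Mred)$.

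First I would verify that $\argument_\red$ is well-defined as a map into $\Smooth(\Mred)$. The pointwise formula $f_\red([\rho]) \coloneqq f(\rho)$ for $\rho \in \Levelset$ is independent of the choice of representative $\rho$ precisely because $f$ is $\group{U}(1)$-invariant and two points of $\Levelset$ define the same class in $\Mred = \Levelset / \group{U}(1)$ iff they lie in the same $\group{U}(1)$-orbit. Smoothness of $f_\red$ follows from the fact that $\pr \colon \Levelset \to \Mred$ is a surjective submersion, so $\pr^*$ identifies $\Smooth(\Mred)$ with the $\group{U}(1)$-invariant smooth functions on $\Levelset$, and $\iota^*(f)$ is obviously such. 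That $\argument_\red$ preserves the unital $^*$-algebra structure is immediate from the pointwise definition of the operations on both sides.

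Next I would compute the kernel. By definition $f_\red = 0$ if and only if $f_\red([\rho]) = 0$ for all $[\rho] \in \Mred$, which by \eqref{eq:reductionmap} is equivalent to $f$ vanishing on $\Levelset$, i.e. $\iota^*(f) = 0$. Hence $\ker(\argument_\red) = \set{v \in \Smooth(S)^{\group{U}(1)} }{\iota^*(v) = 0}$, which is clearly a two-sided $^*$-ideal, so the quotient inherits a unital $^*$-algebra structure and $\argument_\red$ descends to an injective unital $^*$-homomorphism out of it.

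The only remaining step, and the main (already handled) point, is surjectivity: given $g \in \Smooth(\Mred)$, Lemma~\ref{lemma:extension} produces an $f \in \Smooth(\CC^{1+n})^{\group{U}(1)}$ with $f_\red = g$; restricting $f$ to $S$ (which contains $\Levelset$ and is $\group{U}(1)$-invariant and open) yields an element of $\Smooth(S)^{\group{U}(1)}$ with the same reduction, since the reduction map only depends on values on $\Levelset \subseteq S$. Combining the three steps, the induced map from the quotient to $\Smooth(\Mred)$ is a bijective unital $^*$-homomorphism and hence an isomorphism of unital $^*$-algebras. I do not expect any serious obstacle here; the key content has already been packaged into Lemma~\ref{lemma:extension}, and the rest is a direct verification.
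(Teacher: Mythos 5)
Your proposal is correct and matches the paper's intended argument: the paper states this proposition as an immediate consequence of Lemma~\ref{lemma:extension}, with exactly the reasoning you spell out (the reduction map is a unital $^*$-homomorphism with kernel the vanishing ideal of $\Levelset$, surjective by the extension lemma, so the first isomorphism theorem applies). The only content beyond routine verification is surjectivity, which both you and the paper obtain from Lemma~\ref{lemma:extension}, and your restriction of the extension to $S$ is handled correctly.
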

We can now also construct algebraic structures on $\Smooth(\Mred)$ out of such structures on 
$\Smooth(\CC^{1+n})$ or $\Smooth(\CCres)$:

\begin{proposition} \label{proposition:restriction}
  Let $S$ be an open and $\group{U}(1)$-invariant subset of $\CC^{1+n}$ such that 
  $S \supseteq \Levelset$, and let $C\colon \Smooth(S) \times \Smooth(S) \to \Smooth(S)$
  be a $\group{U}(1)$-invariant bilinear map, then the following is equivalent:
  \begin{itemize}
    \item There exists a bilinear map 
    $C_\red \colon \Smooth(\Mred) \times \Smooth(\Mred) \to \Smooth(\Mred)$
    such that
    \begin{align*}
     \big(C(f,g)\big)_\red = C_\red\big(f_\red,g_\red\big)
    \end{align*}
    holds for all $f,g\in \Smooth(S)^{\group{U}(1)}$.
    \item $C(f,v)\at{\Levelset} = 0 = C(v,f)\at{\Levelset}$ holds for all 
    $f,v \in  \Smooth(S)^{\group{U}(1)}$ with $\iota^*(v) = 0$.
  \end{itemize}
  If one, hence both of these two conditions are fulfilled, then the bilinear map 
  $C_\red$ from the first point is uniquely determined.
\end{proposition}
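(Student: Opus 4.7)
The proof strategy is the standard one for descending operations to quotients. We first establish uniqueness, then prove the two implications.

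For uniqueness, suppose $C_\red$ exists satisfying the compatibility condition. Given arbitrary $g_1, g_2 \in \Smooth(\Mred)$, Lemma~\ref{lemma:extension} provides preimages $f_1, f_2 \in \Smooth(S)^{\group{U}(1)}$ with $(f_i)_\red = g_i$ (restriction from $\Smooth(\CC^{1+n})^{\group{U}(1)}$ to $\Smooth(S)^{\group{U}(1)}$ is well-defined since $S \supseteq \Levelset$). The identity $C_\red(g_1, g_2) = \big(C(f_1, f_2)\big)_\red$ then determines $C_\red$ completely.

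The implication (i) $\Rightarrow$ (ii) is immediate: if $v \in \Smooth(S)^{\group{U}(1)}$ satisfies $\iota^*(v) = 0$, then $v_\red = 0$ by the definition of the reduction map, hence $\big(C(f,v)\big)_\red = C_\red(f_\red, 0) = 0$, which precisely means $C(f,v)\at{\Levelset} = 0$; the same argument in the first slot gives $C(v,f)\at{\Levelset} = 0$.

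For (ii) $\Rightarrow$ (i), the plan is to define $C_\red$ by choosing preimages. Given $g_1, g_2 \in \Smooth(\Mred)$, pick any $f_1, f_2 \in \Smooth(S)^{\group{U}(1)}$ with $(f_i)_\red = g_i$ (again by Lemma~\ref{lemma:extension}) and set
\begin{equation}
	C_\red(g_1, g_2) \coloneqq \big(C(f_1, f_2)\big)_\red \punkt
\end{equation}
Note that $C(f_1,f_2) \in \Smooth(S)^{\group{U}(1)}$ by the assumed $\group{U}(1)$-invariance of $C$ together with that of $f_1, f_2$, so the reduction map can indeed be applied. The main step is showing that this definition is independent of the chosen preimages. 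If $f_1, f_1'$ both project to $g_1$, then $v \coloneqq f_1 - f_1' \in \Smooth(S)^{\group{U}(1)}$ satisfies $\iota^*(v) = 0$, whence by (ii) and bilinearity of $C$ we get $C(f_1, f_2)\at{\Levelset} - C(f_1', f_2)\at{\Levelset} = C(v, f_2)\at{\Levelset} = 0$; the analogous computation handles the second argument. Hence $C_\red(g_1, g_2)$ is well-defined. Bilinearity of $C_\red$ then follows directly from bilinearity of $C$ and $\CC$-linearity of the reduction map, and the compatibility $\big(C(f,g)\big)_\red = C_\red(f_\red, g_\red)$ for $f, g \in \Smooth(S)^{\group{U}(1)}$ holds by construction.
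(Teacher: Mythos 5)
Your proof is correct and follows exactly the standard quotient argument that the paper itself invokes (its proof just says the equivalence and uniqueness are ``standard results'' given the existence of preimages from Lemma~\ref{lemma:extension}); you have simply written out those standard steps, including the correct observations that preimages restrict from $\CC^{1+n}$ to $S$ and that $C(f_1,f_2)$ is $\group{U}(1)$-invariant so the reduction map applies.
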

\begin{proof}
  Using the existence of preimages under $\argument_\red$ from Lemma~\ref{lemma:extension},
  the equivalence of the two points and the uniqueness of $C_\red$ are standard results.
\end{proof}

\begin{definition} \label{definition:reducible}
  Let $S$ be an open and $\group{U}(1)$-invariant subset of $\CC^{1+n}$ such that 
  $S \supseteq \Levelset$, and let $C\colon \Smooth(S) \times \Smooth(S) \to \Smooth(S)$
  be a $\group{U}(1)$-invariant bilinear map, then $C$ is called \neu{reducible}
  if one, hence both of the equivalent properties from the previous Proposition~\ref{proposition:restriction}
  are fulfilled. In this case, we also define the \neu{reduced} map $C_\red$ like
  in the first point there.
\end{definition}
One example is of course the multiplication: Let $C$
be the pointwise multiplication of smooth functions on $\CC^{1+n}$,
then $C_\red$ is the pointwise multiplication of smooth 
functions on $\Mred$. 
For more interesting examples, however,
the second point in Proposition~\ref{proposition:restriction} can still be
hard to check. Luckily, there are some simplifications for bidifferential operators.
Note that in the following it is no loss of generality to consider the special case of a
$\group{U}(1)$-invariant bidifferential operator 
$C\colon \Smooth(\CCres) \times \Smooth(\CCres) \to \Smooth(\CCres)$:
A bidifferential operator on a different domain of definition can always be restricted
and extended (in a not necessarily unique way) to a bidifferential operator
on $\CCres$ which coincides with the original one in a neighbourhood of $\Levelset$
and thus yields the same reduced map.

\begin{proposition} \label{proposition:reduction:sufficientCondition}
  Let $C\colon \Smooth(\CCres) \times \Smooth(\CCres) \to \Smooth(\CCres)$
  be a $\group{U}(1)$-invariant bidifferential operator. If
  $C\big((\momentmap-1)\at{\CCres}f,f'\big) = 0 = C\big(f,(\momentmap-1)\at{\CCres}f'\big)$
  holds for all $f,f' \in \Smooth(\CCres)^{\group{U}(1)}$, then
  $C$ is reducible and
  \begin{align}
    \big( C( \bigpr^*(g),  \bigpr^*(g') ) \big)_\red = C_\red(g,g')
    \label{eq:localreduction}
  \end{align}
  holds for all $g,g'\in\Smooth(\Mred)$.
\end{proposition}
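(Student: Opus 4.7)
The plan is to verify the hypothesis of Proposition~\ref{proposition:restriction} via a factorization lemma, and then to identify $\bigpr^*(g)$ as a natural $\group{U}(1)$-invariant preimage of $g$ under the reduction map.

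\textbf{Step 1 (factorization lemma).} The key claim I would establish is that every $v \in \Smooth(\CCres)^{\group{U}(1)}$ with $\iota^*(v) = 0$ admits a factorization $v = (\momentmap - 1)\at{\CCres} h$ with $h \in \Smooth(\CCres)^{\group{U}(1)}$. For this I would use that the flow of $\tfrac{1}{2} X_\Unit$ provides a $\group{U}(1)$-equivariant diffeomorphism
$\Phi \colon \Levelset \times {]0,\infty[} \to \CCres$, $(\rho, t) \mapsto \sqrt{t}\,\rho$,
with inverse $\sigma \mapsto \big(\sigma/\sqrt{\momentmap(\sigma)},\momentmap(\sigma)\big)$, under which $\momentmap$ pulls back to the projection onto the $t$-factor. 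Hadamard's lemma in the $t$-variable applied to $v \circ \Phi$, which vanishes at $t = 1$, then produces a smooth function on $\Levelset \times {]0,\infty[}$ that transports back to the desired $h$. Because $\group{U}(1)$ acts only on the $\Levelset$-factor under $\Phi$ and Hadamard's integral formula commutes with pullback by the $\group{U}(1)$-action, the resulting $h$ inherits $\group{U}(1)$-invariance. (The bidifferential structure of $C$ is not needed at this step; only bilinearity and $\group{U}(1)$-invariance.)

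\textbf{Step 2 (reducibility).} For any $f, v \in \Smooth(\CCres)^{\group{U}(1)}$ with $\iota^*(v) = 0$, Step 1 gives $h \in \Smooth(\CCres)^{\group{U}(1)}$ with $v = (\momentmap - 1)\at{\CCres} h$. The second hypothesis yields
$C(f,v) = C\big(f, (\momentmap - 1)\at{\CCres} h\big) = 0$
identically on $\CCres$, and symmetrically $C(v,f) = 0$. In particular both restrict to zero on $\Levelset$, so the second condition of Proposition~\ref{proposition:restriction} is satisfied and $C$ is reducible in the sense of Definition~\ref{definition:reducible}, with a unique $C_\red$.

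\textbf{Step 3 (the explicit formula).} For any $g \in \Smooth(\Mred)$, the pullback $\bigpr^*(g)$ is smooth on $\CCres$ and $\CCx$-invariant, hence in particular $\group{U}(1)$-invariant. Since $\bigpr$ restricts to $\pr$ on $\Levelset$ by \eqref{eq:defPr}, the computation $(\bigpr^*(g))_\red([\rho]) = \bigpr^*(g)(\rho) = g(\bigpr(\rho)) = g([\rho])$ for $\rho \in \Levelset$ shows that $\bigpr^*(g)$ is a $\group{U}(1)$-invariant preimage of $g$ under the reduction map, and similarly for $g'$. Applying the defining identity of $C_\red$ from Definition~\ref{definition:reducible} to this pair of preimages immediately gives \eqref{eq:localreduction}. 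The main obstacle is Step 1; once the global factorization through $\momentmap - 1$ with preserved $\group{U}(1)$-invariance is in hand, the remaining steps are straightforward bookkeeping with Proposition~\ref{proposition:restriction} and the explicit form of $\bigpr$.
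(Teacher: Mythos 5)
Your proof is correct, but it takes a genuinely different route from the paper's. Where you establish the global factorization $v = (\momentmap-1)\at{\CCres}\,h$ with $h \in \Smooth(\CCres)^{\group{U}(1)}$ via the $\group{U}(1)$-equivariant splitting $\CCres \cong \Levelset \times {]0,\infty[}$ and Hadamard's lemma in the $\momentmap$-direction, the paper avoids any such factorization of $v$: for each $\epsilon$ it splits $v = v\cdot(\chi\circ\momentmap\at{\CCres}) + (\momentmap-1)\at{\CCres}\tilde{v}$ with a bump function $\chi$ supported near $1$ (here $\tilde v$ is trivially smooth since $1-\chi\circ\momentmap$ vanishes near $\Levelset$), kills the second term by the hypothesis, and then uses the locality of the \emph{bidifferential} operator $C$ to conclude that $C(f,v)$ has support in an arbitrarily thin band around $\Levelset$, hence vanishes identically. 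The trade-offs: your argument never uses that $C$ is bidifferential (bilinearity and $\group{U}(1)$-invariance suffice, as you note), and it only invokes the hypothesis on the single function $h$ produced by the factorization — in particular it would go through verbatim if the hypothesis were weakened to $C((\momentmap-1)\at{\CCres}f,f')\at{\Levelset} = 0 = C(f,(\momentmap-1)\at{\CCres}f')\at{\Levelset}$, which is exactly what is verified in the later applications (Poisson bracket and the orders of the transformed star product), whereas the paper's support argument leans on the hypothesis holding identically on $\CCres$ and on locality of $C$. The price you pay is the extra geometric input (the tubular-neighbourhood-type diffeomorphism and Hadamard), which the paper's bump-function trick sidesteps. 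Your Step 3 coincides with the paper's: both just observe $(\bigpr^*(g))_\red = g$ because $\bigpr\at{\Levelset} = \pr$, and then apply the uniqueness/defining property of $C_\red$.
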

\begin{proof}
  In order to show that $C$ is reducible, let $f,v \in \Smooth(\CCres)^{\group{U}(1)}$
  with $\iota^*(v) = 0$ be given. For every $\epsilon \in {]0,1[}$ and
  using a bump function $\chi \in \Smooth({]0,\infty[})$ with support in $[1-\epsilon,1+\epsilon]$
  fulfilling
  $\chi(r) = 1$ for all $r \in [1-\epsilon/2,1+\epsilon/2]$,
  one can express $v$ as the sum $v = v\cdot (\chi\circ \momentmap\at{\CCres}) + (\momentmap-1)\at{\CCres} \tilde{v}$ of a function
  $v\cdot (\chi\circ \momentmap\at{\CCres}) \in \Smooth(\CC^{1+n})^{\group{U}(1)}$ with 
  support in $\set{\rho \in \CCres}{-\epsilon \le \momentmap(\rho)-1 \le \epsilon}$
  and the product of $(\momentmap-1)\at{\CCres}$ with a function $\tilde{v} \in \Smooth(\CCres)^{\group{U}(1)}$.
  Then $C(f,v) = C\big(f, v\cdot (\chi\circ \momentmap\at{\CCres}) \big)$ and 
  $C(v,f) = C\big(v\cdot (\chi\circ \momentmap\at{\CCres}), f \big)$
  have support in $\set{\rho \in \CCres}{-\epsilon \le \momentmap(\rho)-1 \le \epsilon}$.
  As $\epsilon \in {]0,1[}$ was arbitrary, even $C(v,f) = 0 = C(f,v)$ holds and $C$ is reducible.
  For Equation~\eqref{eq:localreduction}
  we just note that $(\bigpr^*(g))_\red = g$ for all $g\in \Smooth(\Mred)$.
\end{proof}




\subsection{Polynomial Functions}   \label{subsec:polynomialfunctions}

On polynomial functions it will be possible to construct non-formal star products 
in Section~\ref{sec:physics}. Here we only discuss the basic definitions and the 
reduction procedure:

\begin{definition}
  We write $\Polynomials(\CC^{1+n})$ for the unital $^*$-subalgebra of 
  $\Smooth(\CC^{1+n})$ that consists of all polynomial functions in $z^0,\dots,z^n,\cc{z}^0,\dots,\cc{z}^n$.
  We denote the image of $\Polynomials(\CC^{1+n})^{\group{U}(1)}$ under $\argument_\red$ by
  $\Polynomials(\Mred)$ and call its elements \neu{polynomials on $\Mred$}.
\end{definition}
One can check that $\Polynomials(\Mred)$ is a unital $^*$-subalgebra
of $\Smooth(\Mred)$ and so the reduction map restricts
to a surjective unital $^*$-homomorphism from $\Polynomials(\CC^{1+n})^{\group{U}(1)}$
to $\Polynomials(\Mred)$. 
Its kernel are all $\group{U}(1)$-invariant polynomial functions on $\CC^{1+n}$ which vanish on $\Levelset$. 
We see that the unital $^*$-algebra $\Polynomials(\Mred)$ is
isomorphic to the quotient 
$\Polynomials(\CC^{1+n})^{\group{U}(1)} / \set{
    v \in \Polynomials(\CC^{1+n})^{\group{U}(1)} 
}{
    \iota^*(v) = 0
}$ like in the smooth case.
A basis of $\Polynomials(\CC^{1+n})^{\group{U}(1)}$ yields a generating
subset of $\Polynomials(\Mred)$, a subset of which is a basis of 
$\Polynomials(\Mred)$. We essentially follow 
\cite{beiser.waldmann:FrechetAlgebraicDeformationOfThePoincareDisc,
kraus.roth.schoetz.waldmann:OnlineConvergentStarProductOnPoincareDisc}
and just check that the definitions and results there, which were made for 
the special case $s=1$, actually work for all signatures:

\begin{definition} \label{definition:basis:polynomials}
  For every pair of multiindices $P,Q \in \NN_0^{1+n}$ we define the monomial on $\CC^{1+n}$
  \begin{align}
    \Monom{P}{Q} 
    \coloneqq 
    z^P \cc{z}^Q 
    \coloneqq 
    (z^0)^{P_0} \dots (z^n)^{P_n} (\cc{z}^0)^{Q_0} \dots (\cc{z}^n)^{Q_n}
    \punkt
  \end{align}
\end{definition}
The monomials $\Monom{P}{Q}$ with $P,Q\in \NN_0^{1+n}$ are a basis of $\Polynomials(\CC^{1+n})$,
and those monomials with $\abs{P}=\abs{Q}$ are a basis of $\Polynomials(\CC^{1+n})^{\group{U}(1)}$.
The resulting \neu{reduced monomials} $\MonomRed{P}{Q} \in \Polynomials(\Mred)$ are, in the projective
coordinates defined in \eqref{eq:coordinates:Mred} (and restricted to the dense domain of definition of these 
coordinates),
\begin{align}
  \MonomRed{P}{Q} 
  = 
  \frac{w^{P'} \cc{w}^{Q'}}{(1 + \sum_{k=1}^n \nu_k w^k \cc{w}^k)^{\abs{P}}}
  \coloneqq
  \frac{(w^1)^{P_1} \dots (w^n)^{P_n}(\cc{w}^1)^{Q_1} \dots (\cc{w}^n)^{Q_n}}{(1 + \sum_{k=1}^n \nu_k w^k \cc{w}^k)^{\abs{P}}}
\end{align}
for all $P,Q\in \NN_0^{1+n}$ with $\abs{P}=\abs{Q}$ and with $P' \coloneqq (P_1,\dots,P_n) \in \NN_0^n$, 
analogously for $Q$. To check this, note that the pullback with $\bigpr$ of the right-hand side
coincides with $\Monom{P}{Q} / \momentmap[\abs{P}]$ on $\CCres$, hence with $\Monom{P}{Q}$ on $\Levelset$.
Even though the monomials $\Monom{P}{Q}$ on $\CC^{1+n}$ are linearly independent,
this does no longer hold for their counterparts $\MonomRed{P}{Q}$ on $\Mred$.
Because of this we introduce:

\begin{definition} \label{definition:fundamentalMonomials}
  For all multiindices $P,Q\in \NN_0^n$ we define the 
  \neu{fundamental monomial on $\Mred$}
  \begin{align} \label{eq:definition:fundamentalMonomials}
    \MonomRedFund{P}{Q} \coloneqq 
    \begin{cases}
      \MonomRed{(\abs{Q}-\abs{P},P_1,\dots,P_n)}{(0,Q_1,\dots,Q_n)}
      &\text{if }\abs{P}\le\abs{Q} \komma \\
      \MonomRed{(0,P_1,\dots,P_n)}{(\abs{P}-\abs{Q},Q_1,\dots,Q_n)}
      &\text{if }\abs{P}\ge\abs{Q}    \punkt
    \end{cases}
  \end{align}
\end{definition}
Note that the fundamental monomials on $\Mred$ -- unlike the monomials on $\CC^{1+n}$ --
are determined by $2n$ indices, not $2n+2$. Using projective coordinates on $\Mred$, they
can be expressed as
\begin{align}
  \MonomRedFund{P}{Q}
  =
  \frac{ 
    w^P \cc{w}^Q
  }{
    (1 + \sum_{k=1}^n \nu_k w^k \cc{w}^k)^{\max\{\abs{P},\abs{Q}\}}
  }
\end{align}
for all $P,Q\in \NN_0^n$. While the usual easy multiplication rules
for monomials still hold for the $\MonomRed{P}{Q}$, i.e.\ 
$\MonomRed{P}{Q}\MonomRed{R}{S} = \MonomRed{P+R}{Q+S}$ for all $P,Q,R,S\in \NN_0^{1+n}$
with $\abs{P} = \abs{Q}$ and $\abs{R}=\abs{S}$, this is no longer true for the fundamental
monomials on $\Mred$. Their product can be obtained by rewriting them
in terms of the reduced monomials, which can easily be multiplied, and 
by applying the following:

\begin{lemma} \label{lemma:redexpansioninredfund}
  For all $P,Q\in \NN_0^{1+n}$ with $\abs{P}=\abs{Q}$, the identity
  \begin{align}
    \MonomRed{P}{Q} 
    = 
    \sum_{\substack{T\in\NN_0^n \\ \abs{T}\le\min\{P_0,Q_0\}}}
    (-1)^{\abs{T}}
    \sgn(T)
    \binom{\min\{P_0,Q_0\}}{\abs{T}}
    \frac{\abs{T}!}{T!}
    \MonomRedFund{P'+T}{Q'+T}
    \label{eq:expansioninfundamentals}
  \end{align}
  holds, where $P'\coloneqq (P_1,\dots,P_n) \in \NN_0^n$,
  $Q'\coloneqq (Q_1,\dots,Q_n) \in \NN_0^n$ and
  $\sgn(T) \coloneqq \prod_{k=1}^n \nu_k^{T_k}$.
\end{lemma}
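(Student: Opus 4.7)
The plan is to verify the identity in the projective coordinates $w^1,\dots,w^n$, which form a chart on a dense open subset of $\Mred$. Since both sides are smooth functions on $\Mred$, agreement on a dense set implies agreement everywhere. To simplify, introduce the abbreviations $a_k \coloneqq \nu_k w^k \cc w^k$ and $u \coloneqq \sum_{k=1}^n a_k = \sum_{k=1}^n \nu_k w^k \cc w^k$, so that the denominator appearing in the monomials is $(1+u)$. With this notation $\sgn(T)\,(w\cc w)^T = a^T \coloneqq \prod_{k=1}^n a_k^{T_k}$.

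Because the identity is symmetric in $(P,Q)$ under complex conjugation, assume without loss of generality that $P_0 \le Q_0$, so $\min\{P_0,Q_0\} = P_0$. From $\abs{P} = \abs{Q}$ one obtains $\abs{P'} \ge \abs{Q'}$, and therefore $\max\{\abs{P'+T},\abs{Q'+T}\} = \abs{P'}+\abs{T}$ for every $T \in \NN_0^n$. Thus $\MonomRedFund{P'+T}{Q'+T} = w^{P'+T}\cc w^{Q'+T}/(1+u)^{\abs{P'}+\abs{T}}$, while $\MonomRed{P}{Q} = w^{P'}\cc w^{Q'}/(1+u)^{\abs{P}}$. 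Dividing both sides of the claim by the nowhere-vanishing factor $w^{P'}\cc w^{Q'}/(1+u)^{\abs{P}}$ and using $\abs{P} - \abs{P'} = P_0$, the identity reduces to
\begin{equation*}
1 \;=\; \sum_{\substack{T\in\NN_0^n \\ \abs{T}\le P_0}} (-1)^{\abs{T}} \binom{P_0}{\abs{T}} \frac{\abs{T}!}{T!} \, a^T \, (1+u)^{P_0-\abs{T}} \punkt
\end{equation*}

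The final step is to group the sum according to the value $j = \abs{T}$. The multinomial theorem gives $\sum_{T:\abs{T}=j} \tfrac{j!}{T!} a^T = (a_1 + \dots + a_n)^j = u^j$, so the sum collapses to
\begin{equation*}
\sum_{j=0}^{P_0} (-1)^j \binom{P_0}{j} u^j (1+u)^{P_0 - j} \;=\; \bigl((1+u) - u\bigr)^{P_0} \;=\; 1
\end{equation*}
by the binomial theorem. This completes the verification on the dense coordinate domain and hence, by continuity, on all of $\Mred$.

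There is no real obstacle here beyond the bookkeeping of the case split in the definition of $\MonomRedFund{\cdot}{\cdot}$. The only minor subtlety is verifying that the exponent $\max\{\abs{P'+T},\abs{Q'+T}\}$ is indeed constant in $T$ once a choice of $\min\{P_0,Q_0\}$ is fixed; this follows immediately from $\abs{P}=\abs{Q}$. The symmetric case $Q_0 \le P_0$ is handled by the same calculation with $P$ and $Q$ interchanged, and the two formulas agree when $P_0 = Q_0$ (equivalently $\abs{P'} = \abs{Q'}$) since both branches of \eqref{eq:definition:fundamentalMonomials} then coincide.
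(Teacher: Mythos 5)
Your proof is correct, but it takes a different route from the paper. The paper argues intrinsically in the algebra $\Polynomials(\Mred)$: it uses the relation $\MonomRed{E_0}{E_0} = 1 - \sum_{k=1}^n \nu_k \MonomRed{E_k}{E_k}$ (the reduced form of the constraint $\momentmap = 1$), expands $(\MonomRed{E_0}{E_0})^{\min\{P_0,Q_0\}}$ by the multinomial theorem, and multiplies by $\MonomRedFund{P'}{Q'}$ using the product rule for reduced monomials, with no coordinates and no density argument. You instead verify the identity in the projective chart, factor out $w^{P'}\cc{w}^{Q'}/(1+u)^{\abs{P}}$, and collapse the sum via the multinomial theorem and $((1+u)-u)^{P_0}=1$ — which is the same binomial mechanism in coordinate disguise, since $\MonomRed{E_0}{E_0}=1/(1+u)$ on the chart. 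Your version is elementary and self-contained but pays with the chart/density bookkeeping; the paper's is shorter once the multiplication rule $\MonomRed{P}{Q}\MonomRed{R}{S}=\MonomRed{P+R}{Q+S}$ is in hand and makes the role of the momentum-map constraint transparent. Two small points in your write-up: the factor $w^{P'}\cc{w}^{Q'}/(1+u)^{\abs{P}}$ is \emph{not} nowhere-vanishing (it vanishes where some $w^k=0$ with $P_k+Q_k>0$), but this is harmless because you only need the implication from the scalar identity back to the claimed one — multiply by the common factor instead of dividing, or shrink to the dense set where it is nonzero; and your case split together with the conjugation symmetry (real coefficients, $\cc{\MonomRed{P}{Q}}=\MonomRed{Q}{P}$, $\cc{\MonomRedFund{P}{Q}}=\MonomRedFund{Q}{P}$) is handled correctly, including the boundary case $P_0=Q_0$.
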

\begin{proof}
  For $k\in \{0,\dots,n\}$, let $E_k \coloneqq (0,\dots,0,1,0,\dots,0) \in \NN_0^{1+n}$ be the tuple with $1$ at position $k$.
  From $\MonomRed{E_0}{E_0} = 1 - \sum_{k=1}^n \nu_k \MonomRed{E_k}{E_k}$ and the multinomial theorem it follows that
  \begin{align*}
    \big(\MonomRed{E_0}{E_0}\big)^{\min\{P_0,Q_0\}}
    &= 
    \sum_{\substack{T\in\NN_0^n \\ \abs{T}\le \min\{P_0,Q_0\}}}
    (-1)^{\abs{T}} \sgn(T)
    \binom{\min\{P_0,Q_0\}}{\abs{T}}
    \frac{\abs{T}!}{T!}
    \MonomRedFund{T}{T}
    \punkt
  \end{align*}
  Combining this with 
  $\MonomRed{P}{Q} = ( \MonomRed{E_0}{E_0} )^{\min\{P_0,Q_0\}} \MonomRedFund{P'}{Q'}$
  yields the desired result.
\end{proof}
Analogous to
\cite{beiser.waldmann:FrechetAlgebraicDeformationOfThePoincareDisc,
kraus.roth.schoetz.waldmann:OnlineConvergentStarProductOnPoincareDisc},
one can show that these fundamental monomials $\MonomRedFund{P}{Q}$ with 
$P,Q \in \NN_0^{n}$ are a Hamel basis of $\Polynomials(\Mred)$. We will
come back to this problem later in Section~\ref{sec:wick}.




\subsection{Analytic Functions}   \label{subsec:analyticfunctions}

The polynomial algebras discussed in the previous Subsection~\ref{subsec:polynomialfunctions} 
can be completed to algebras of certain analytic functions. More precisely,
we are interested in the pullbacks with $\Diag \colon \CC^{1+n} \to \CC^{1+n}\times\CC^{1+n}$
and $\DiagM \colon \Mred \to \MredExt$ of holomorphic functions:    

\begin{definition}
  By $\Holomorphic(M)$ we denote the unital complex algebra of holomorphic
  functions on a complex manifold $M$. Moreover, we define the following
  subsets of $\Smooth(\CC^{1+n})$ and $\Smooth(\Mred)$, respectively:
  \begin{align}
    \Analytic(\CC^{1+n}) 
    &\coloneqq 
    \set[\big]{\Diag^*(\hat{f})}{\hat{f}\in\Holomorphic(\CC^{1+n}\times\CC^{1+n})}
  \shortintertext{and}
    \Analytic(\Mred) 
    &\coloneqq 
    \set[\big]{\DiagM^*(\hat{g})}{\hat{g}\in\Holomorphic(\MredExt)} 
    \punkt
  \end{align}
\end{definition}
It is not hard to check that $\Analytic(\CC^{1+n})$ and $\Analytic(\Mred)$ are unital $^*$-subalgebras of
$\Smooth(\CC^{1+n})$ and $\Smooth(\Mred)$, respectively. Especially for the $^*$-involution one finds:
Given $\hat{f}\in\Holomorphic(\CC^{1+n}\times\CC^{1+n})$ or $\hat{g}\in\Holomorphic(\MredExt)$,
then one can define $\hat{f}{}'\in\Holomorphic(\CC^{1+n}\times\CC^{1+n})$ or
$\hat{g}{}'\in\Holomorphic(\MredExt)$ as the functions 
$\CC^{1+n}\times\CC^{1+n} 
\ni 
(\xi,\eta) 
\mapsto 
\hat{f}{}'(\xi,\eta) 
\coloneqq 
\cc{\hat{f}(\cc{\eta},\cc{\xi})} 
\in 
\CC$
or
$\MredExt
\ni 
([\xi,\eta]) 
\mapsto 
\hat{g}{}'([\xi,\eta])
\coloneqq 
\cc{\hat{g}([\cc{\eta},\cc{\xi}])}
\in 
\CC$, so that 
$\Diag ^*(\hat{f}{}') = \cc{\Diag ^*(\hat{f})}$ and
$\DiagM^*(\hat{g}{}') = \cc{\DiagM^*(\hat{g})}$, respectively.
As algebras, $\Holomorphic(\CC^{1+n}\times\CC^{1+n})$ and $\Analytic(\CC^{1+n})$ as well as
$\Holomorphic(\MredExt)$ and $\Analytic(\Mred)$ are isomorphic:

\begin{proposition} \label{proposition:analyticholomorphiciso}
  The pullbacks $\Diag^* \colon \Holomorphic(\CC^{1+n}\times\CC^{1+n}) \to \Analytic(\CC^{1+n})$ and
  $\DiagM^* \colon \Holomorphic(\MredExt) \to \Analytic(\Mred)$ are isomorphisms
  of algebras.
\end{proposition}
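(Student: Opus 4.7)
Both pullbacks are surjective by the very definition of $\Analytic(\CC^{1+n})$ and $\Analytic(\Mred)$, and pullback automatically preserves pointwise multiplication and the unit, so each is a homomorphism of unital algebras. The entire content therefore lies in proving injectivity.

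For $\Diag^*$ the plan is to exploit the fact that the antidiagonal $\Diag(\CC^{1+n}) = \{(\rho,\cc{\rho}) : \rho \in \CC^{1+n}\}$ is a maximally totally real submanifold of $\CC^{1+n}\times\CC^{1+n}$. A quick tangent space computation shows that $T_\rho \Diag$ sends $v \in \Tangent_\rho \CC^{1+n}$ to $(v, \cc{v})$, and one checks $T_p\Diag(\CC^{1+n}) \cap I(T_p\Diag(\CC^{1+n})) = \{0\}$, so the antidiagonal has real dimension equal to the complex dimension of the ambient space. If one prefers to avoid invoking the general principle about totally real submanifolds, one can argue directly from power series: given $\hat{f} \in \Holomorphic(\CC^{1+n}\times\CC^{1+n})$ with globally convergent expansion $\hat{f} = \sum_{P,Q} a_{P,Q}\, x^P y^Q$, the condition $\hat{f}(\rho,\cc\rho) = 0$ for all $\rho$ reads $\sum_{P,Q} a_{P,Q}\, \rho^P \cc\rho^Q = 0$; writing $\rho^k = r_k \E^{\I \phi_k}$ and using orthogonality of the characters on the torus and linear independence of monomials in the $r_k$, every $a_{P,Q}$ must vanish, whence $\hat{f}=0$.

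For $\DiagM^*$ the same idea applies locally. Around any point $[(\rho,\cc\rho)] \in \DiagM(\Mred)$ with $\rho^0 \neq 0$ we have the chart with coordinates $u^1,\dots,u^n,v^1,\dots,v^n$ on $\MredExt$ noted in \eqref{eq:coordinates:extended}, and the paper already records $(\DiagM)^*(u^k) = w^k$ and $(\DiagM)^*(v^k) = \cc{w}^k$. In these coordinates the image $\DiagM(\Mred)$ is exactly the antidiagonal $\{v = \cc{u}\}$, again a maximal totally real submanifold. Hence any $\hat{g} \in \Holomorphic(\MredExt)$ with $\DiagM^*(\hat{g}) = 0$ vanishes on an open neighbourhood of this one point (either by the totally-real principle or, concretely, by expanding $\hat{g}$ in a polydisc in $u,v$ and applying the same torus argument as above to the restriction $\hat{g}(u,\cc{u}) = 0$).

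To upgrade this local vanishing to global vanishing, one invokes the identity theorem on the connected complex manifold $\MredExt$. Connectedness is immediate: $\MredExt \subseteq \CC\PP^n \times \CC\PP^n$ is the complement of the proper analytic subset $\{\momentmapExt = 0\}$ inside a connected complex manifold, and removing an analytic hypersurface does not disconnect. I do not expect any step here to be a genuine obstacle; the only point requiring a little care is the verification of the totally real / power-series argument, since the two cases must be treated on domains of different geometric type ($\CC^{1+n}\times\CC^{1+n}$ versus the quasi-projective $\MredExt$), but reducing the second case to a local chart via the $u^k,v^k$ coordinates handles this uniformly.
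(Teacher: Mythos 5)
Your proposal is correct, but it reaches injectivity by a different mechanism than the paper. The paper's proof is a direct derivative computation: using $\Diag^*(x^k)=z^k$, $\Diag^*(y^k)=\cc{z}^k$ (respectively $(\DiagM)^*(u^k)=w^k$, $(\DiagM)^*(v^k)=\cc{w}^k$) it shows that $\partial\hat{f}/\partial x^k$ and $\partial\hat{f}/\partial y^k$ (respectively $\partial\hat{g}/\partial u^k$, $\partial\hat{g}/\partial v^k$) vanish at every point of the antidiagonal, iterates to kill all higher derivatives, and concludes $\hat{f}=0$, $\hat{g}=0$; in effect it proves the totally-real principle on the spot, uniformly for both spaces, without charts or global power series, while leaving the final identity-theorem/connectedness step implicit. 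You instead outsource the local step to the maximal-totally-real principle (or, equivalently, to the global power-series expansion of an entire function on $\CC^{1+n}\times\CC^{1+n}$ combined with character orthogonality on the torus and the injectivity of $(P,Q)\mapsto(P-Q,P+Q)$), and then globalize explicitly via the identity theorem together with connectedness of $\MredExt$ as the complement of the analytic hypersurface $\{\momentmapExt=0\}$ in $\CC\PP^n\times\CC\PP^n$. What your route buys is that the globalization, which the paper glosses over, is spelled out — connectedness of $\MredExt$ really is needed for the reduced case — at the price of invoking either a standard but external principle or a Fourier-type computation, and of a chart-by-chart treatment of $\MredExt$. One small inaccuracy: in the $u,v$ chart the image $\DiagM(\Mred)$ is not the full antidiagonal $\{v=\cc{u}\}$ but only its open piece where $1+\sum_{k=1}^n\nu_k u^k\cc{u}^k>0$ (the remaining antidiagonal points of $\MredExt$ come from $\momentmap<0$); this is harmless, since vanishing on a nonempty open subset of a maximal totally real submanifold, or near a single point of the image, already forces local vanishing of the holomorphic extension.
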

\begin{proof}
  It is easy to check that $\Diag^*$ and $\DiagM^*$ are homomorphisms of algebras, 
  and they are surjective by definition of $\Analytic(\CC^{1+n})$ and $\Analytic(\Mred)$,
  so only injectivity remains: Given $\hat{f} \in \Holomorphic(\CC^{1+n}\times\CC^{1+n})$ with
  $\Diag^*(\hat{f}) = 0$ or $\hat{g} \in \Holomorphic(\MredExt)$ with $\DiagM^*(\hat{g}) = 0$,
  then, in the coordinates introduced in Section~\ref{sec:geometry},
  \begin{align*}
    \frac{\partial \hat{f}}{\partial x^k} \at[\bigg]{(\rho,\cc{\rho})}
    &=
    \big(\Tangent_\rho \Diag\big)\bigg( \frac{\partial}{\partial z^k} \at[\bigg]{\rho} \bigg)\big(\hat{f}\big)
    =
    \frac{\partial}{\partial z^k} \at[\bigg]{\rho} \Diag^*\big(\hat{f}\big)
    =
    0
    &&\text{and}&
    \frac{\partial \hat{f}}{\partial y^k} \at[\bigg]{(\rho,\cc{\rho})}
    &=
    \frac{\partial}{\partial \cc{z}^k} \at[\bigg]{\rho} \Diag^*\big(\hat{f}\big)
    =
    0
  \intertext{or}
    \frac{\partial \hat{g}}{\partial u^\ell} \at[\bigg]{[(\rho,\cc{\rho})]}
    &=
    \big(\Tangent_{[\rho]} \DiagM\big)\bigg( \frac{\partial}{\partial w^\ell} \at[\bigg]{[\rho]} \bigg)\big(\hat{g}\big)
    =
    \frac{\partial}{\partial w^\ell} \at[\bigg]{[\rho]} \DiagM^*\big(\hat{g}\big)
    =
    0
    &&\text{and}&
    \frac{\partial \hat{g}}{\partial v^\ell} \at[\bigg]{[(\rho,\cc{\rho})]}
    &=
    \frac{\partial}{\partial \cc{w}^\ell} \at[\bigg]{[\rho]} \DiagM^*\big(\hat{g}\big)
    =
    0
  \end{align*}
  hold for all $\rho \in \Levelset$ with $z^0(\rho) \neq 0$ and all $k\in\{0,\dots,n\}$,
  $\ell\in\{1,\dots,n\}$, respectively. By iteration one finds that also
  all higher derivatives of $\hat{f}$ or $\hat{g}$ vanish, so that $\hat{f} = 0$ or
  $\hat{g} = 0$.
\end{proof}
It is well-known that the holomorphic functions $\Holomorphic(M)$ on a complex manifold $M$ with the
pointwise operations become a Fréchet algebra with the topology of locally uniform convergence
(i.e.\ $\Holomorphic(M)$ is complete and the multiplication continuous with respect to this
metrizable locally convex topology). This locally convex topology can be described by all the
seminorms $\seminorm{K}{\argument} \colon \Holomorphic(M) \to {[0,\infty[}$,
\begin{align}
  \hat{f} \mapsto \seminorm{K}{\hat{f}} \coloneqq \max_{z\in K} \abs{\hat{f}(z)}
  \label{eq:maxcompactseminorms}
\end{align}
with $K$ a compact subset of $M$. From this we see immediately that $\Analytic(\CC^{1+n})$ and
$\Analytic(\Mred)$ with the topology coming from $\Holomorphic(\CC^{1+n}\times\CC^{1+n})$ and $\Holomorphic(\MredExt)$,
respectively, are Fréchet $^*$-algebras (Fréchet algebras endowed with a continuous $^*$-involution).
It is a consequence of the Cauchy integral formula on $\CC^{1+n}\times\CC^{1+n}$
that every $f\in \Analytic(\CC^{1+n})$ can be expressed in a unique way as an absolutely convergent series
\begin{align}
 f = \sum_{P,Q\in \NN_0^{1+n}} \expansionCoefficients{f}{P}{Q} \Monom{P}{Q}
 \label{eq:analyticexpansion}
\end{align}
with complex coefficients $\expansionCoefficients{f}{P}{Q}$ fulfilling
\begin{equation} \label{eq:normoben}
   \seminorm{r}{f} 
   \coloneqq 
   \sum_{P,Q \in \NN_0^{1+n}} \abs{\expansionCoefficients{f}{P}{Q}} r^{\abs{P}+\abs{Q}}
   <
   \infty
\end{equation}
for all $r\in {[1,\infty[}$, and that the topology of $\Analytic(\CC^{1+n})$ can 
equivalently be described by these seminorms
$\seminorm{r}{\argument} \colon \Analytic(\CC^{1+n}) \to {[0,\infty[}$.
See e.g.\ \cite[Proposition 3.5]{schmitt:StrictQuantizationOfCodjointOrbits} for details.
We will later in Proposition~\ref{proposition:wickapplication_basis} obtain an analogous result also for $\Analytic(\Mred)$.
Like for polynomials one also finds that the $\group{U}(1)$-invariant analytic
functions $f$ are precisely those which fulfil $\expansionCoefficients{f}{P}{Q} = 0$
for all $P,Q\in \NN_0^{1+n}$ with $\abs{P}\neq \abs{Q}$, e.g.\ by explicitly calculating
the coefficients with the help of the Cauchy integral formula.
Note that due to the completeness of $\Analytic(\CC^{1+n})$, averaging over
the $\group{U}(1)$-action on $\Analytic(\CC^{1+n})$ is possible and yields for
every $f\in \Analytic(\CC^{1+n})$ an $f_{\mathrm{av}} \in \Analytic(\CC^{1+n})^{\group{U}(1)}$.

We observe that the reduction map $\argument_\red$ can be defined
analogously as before also for holomorphic functions:
\begin{lemma} \label{lemma:analyticred}
  Let $\hat{f} \in \Holomorphic(\CC^{1+n}\times\CC^{1+n})$ be $\CCx$-invariant
  in the sense that $\hat{f}\racts (\alpha\Unit_{1+n},\alpha^{-1}\Unit_{1+n}) = \hat{f}$
  holds for all $\alpha \in \CCx$, then there exists a unique
  $\hat{f}_{\hat{\red}} \in \Holomorphic(\MredExt)$ for which
  \begin{align*}
    \hat{\iota}{}^*\big(\hat{f}\big) = \hat{\pr}{}^*\big(\hat{f}_{\hat{\red}}\big)
  \end{align*}
  holds.
\end{lemma}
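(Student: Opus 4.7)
Uniqueness is immediate: the projection $\hat{\pr}\colon \LevelsetExt \to \MredExt$ is surjective, so $\hat{\pr}^*$ is injective on functions, and any two candidates $\hat{f}_{\hat{\red}}$ must agree pointwise.

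For existence, the plan is first to define $\hat{f}_{\hat{\red}}$ set-theoretically and then to check holomorphicity via local sections. Since $\MredExt = \LevelsetExt/\CCx$ as a set, the $\CCx$-invariance assumption $\hat{f}\racts (\alpha\Unit_{1+n},\alpha^{-1}\Unit_{1+n}) = \hat{f}$ implies that $\hat{\iota}^*(\hat{f}) \in \Holomorphic(\LevelsetExt)$ is constant on $\CCx$-orbits. Hence there is a unique set-theoretic function $\hat{f}_{\hat{\red}}\colon \MredExt \to \CC$ with $\hat{\pr}^*(\hat{f}_{\hat{\red}}) = \hat{\iota}^*(\hat{f})$. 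The real content is to show that $\hat{f}_{\hat{\red}}$ is holomorphic.

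The key observation is that $\hat{\pr}$ admits explicit local holomorphic sections. On the chart of $\MredExt$ where $\xi^0 \neq 0 \neq \eta^0$, with coordinates $u^1,\dots,u^n,v^1,\dots,v^n$ from \eqref{eq:coordinates:extended}, one can send a point with coordinates $(u,v)$ to the representative
\begin{equation*}
  \sigma(u,v) \coloneqq \bigl( (1,u^1,\dots,u^n),\; (c, c v^1,\dots, c v^n) \bigr) \in \LevelsetExt \komma
\end{equation*}
where $c \coloneqq \bigl(1 + \sum_{k=1}^n \nu_k u^k v^k\bigr)^{-1}$. Indeed, on this chart the non-vanishing of $\momentmapExt$ that defines $\MredExt$ translates into $1 + \sum_k \nu_k u^k v^k \neq 0$, so $c$ is a well-defined holomorphic function of $(u,v)$, and one checks directly that $\sigma(u,v) \in \LevelsetExt$ and $\hat\pr \circ \sigma = \id$. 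Analogous sections exist on the other coordinate charts of $\MredExt$ corresponding to different choices of non-vanishing homogeneous coordinates. Restricted to any such chart one has $\hat{f}_{\hat{\red}} = \hat{f} \circ \sigma$, which exhibits $\hat{f}_{\hat{\red}}$ as a composition of holomorphic maps and therefore as a holomorphic function. Since holomorphicity is a local property, this gives $\hat{f}_{\hat{\red}} \in \Holomorphic(\MredExt)$ globally.

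The only mildly technical point is the construction of the local sections; once these are available, holomorphicity reduces to the trivial observation that a function whose pullback by a holomorphic submersion is holomorphic is itself holomorphic (as seen via any local section). No further analysis is required.
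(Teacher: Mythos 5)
Your proposal is correct and follows essentially the same route as the paper: the invariant function descends set-theoretically to $\MredExt = \LevelsetExt/\CCx$, and the descended function is holomorphic because $\hat{\pr}$ is a holomorphic quotient map. The only difference is that the paper simply asserts the holomorphicity as automatic, whereas you verify it by exhibiting explicit local holomorphic sections such as $\sigma(u,v)$ with $c = (1+\sum_{k=1}^n \nu_k u^k v^k)^{-1}$, which is a valid (and slightly more detailed) justification of the same step.
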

\begin{proof}
  As $\hat{\iota}{}^*(\hat{f})$ is $\CCx$-invariant, it descends to a well-defined
  function $\hat{f}_{\hat{\red}}$ on $\MredExt = \LevelsetExt / \CCx$,
  which is automatically holomorphic. Uniqueness of $\hat{f}_{\hat{\red}}$ is clear.
\end{proof}

\begin{proposition} \label{proposition:analyticred}
  The reduction map $\argument_\red$ restricts to a map from $\Analytic(\CC^{1+n})^{\group U(1)}$ to $\Analytic(\Mred)$.
  More precisely, given $f\in \Analytic(\CC^{1+n})^{\group U(1)}$ and $\hat{f} \in \Holomorphic(\CC^{1+n}\times\CC^{1+n})$
  such that $\Diag^*(\hat{f}) = f$, then $\hat{f}$ is $\CCx$-invariant in the sense of
  the previous Lemma~\ref{lemma:analyticred} and $f_\red = \DiagM^*(\hat{f}_{\hat{\red}}) \in \Analytic(\Mred)$.
\end{proposition}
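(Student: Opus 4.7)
The plan is to first establish the $\CCx$-invariance of $\hat{f}$ required by Lemma~\ref{lemma:analyticred}, after which the identity $f_\red = \DiagM^*(\hat{f}_{\hat{\red}})$ follows from that lemma together with the commutativity of diagram~\eqref{diagram:reduction}. Uniqueness of $\hat{f}$ satisfying $\Diag^*(\hat{f}) = f$ is automatic by Proposition~\ref{proposition:analyticholomorphiciso}. As a first step, I would note that for every $\phi\in\RR$ and $\rho\in\CC^{1+n}$,
\begin{equation*}
  \Diag(\E^{\I\phi}\rho)
  = (\E^{\I\phi}\rho,\E^{-\I\phi}\cc{\rho})
  = (\E^{\I\phi}\Unit_{1+n},\E^{-\I\phi}\Unit_{1+n}) \acts \Diag(\rho),
\end{equation*}
so that, using the $\group{U}(1)$-invariance of $f$, one gets $\Diag^*\bigl(\hat{f} \racts (\E^{\I\phi}\Unit_{1+n},\E^{-\I\phi}\Unit_{1+n})\bigr) = f\racts \E^{\I\phi}\Unit_{1+n} = f = \Diag^*(\hat{f})$. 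The injectivity of $\Diag^*$ from Proposition~\ref{proposition:analyticholomorphiciso} then forces $\hat{f}\racts (\E^{\I\phi}\Unit_{1+n},\E^{-\I\phi}\Unit_{1+n}) = \hat{f}$ for all $\phi\in\RR$.

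To promote this to the full $\CCx$-invariance, I would fix $(\xi,\eta)\in\CC^{1+n}\times\CC^{1+n}$ and consider the one-variable holomorphic map $h\colon \CCx \to \CC$, $\alpha \mapsto \hat{f}(\alpha\xi,\alpha^{-1}\eta) - \hat{f}(\xi,\eta)$. By the previous step $h$ vanishes on the unit circle, which has accumulation points in the connected complex manifold $\CCx$, so the identity theorem yields $h \equiv 0$. As $(\xi,\eta)$ was arbitrary, this is precisely $\hat{f}\racts (\alpha\Unit_{1+n},\alpha^{-1}\Unit_{1+n}) = \hat{f}$ for every $\alpha\in\CCx$.

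With the $\CCx$-invariance in hand, Lemma~\ref{lemma:analyticred} supplies $\hat{f}_{\hat{\red}} \in \Holomorphic(\MredExt)$ with $\hat{\iota}^*(\hat{f}) = \hat{\pr}^*(\hat{f}_{\hat{\red}})$, so that $\DiagM^*(\hat{f}_{\hat{\red}}) \in \Analytic(\Mred)$ by definition of $\Analytic(\Mred)$. Using the identities $\hat{\iota}\circ\DiagZ = \Diag\circ\iota$ and $\hat{\pr}\circ\DiagZ = \DiagM\circ\pr$ that are encoded in diagram~\eqref{diagram:reduction}, I would compute, for an arbitrary $\rho \in \Levelset$,
\begin{equation*}
  \pr^*\bigl(\DiagM^*(\hat{f}_{\hat{\red}})\bigr)(\rho)
  = \hat{f}_{\hat{\red}}\bigl(\hat{\pr}(\DiagZ(\rho))\bigr)
  = \hat{f}\bigl(\Diag(\iota(\rho))\bigr)
  = f(\iota(\rho))
  = \pr^*(f_\red)(\rho),
\end{equation*}
and the surjectivity of $\pr$ onto $\Mred$ then gives $f_\red = \DiagM^*(\hat{f}_{\hat{\red}})$, in particular $f_\red \in \Analytic(\Mred)$. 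The only step that is not a direct diagram chase or invocation of an earlier result is the upgrade from $\group{U}(1)$- to $\CCx$-invariance of $\hat{f}$; I expect this to be the main (though rather mild) obstacle, and it is dispatched by the standard device of exploiting holomorphy in the complex parameter $\alpha$ via the one-variable identity theorem.
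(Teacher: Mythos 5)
Your proposal is correct and follows essentially the same route as the paper's proof: pull the $\group{U}(1)$-action back through $\Diag^*$, use the injectivity of $\Diag^*$ from Proposition~\ref{proposition:analyticholomorphiciso} to get invariance of $\hat{f}$ under the circle subgroup, upgrade to full $\CCx$-invariance by holomorphy of the $\CCx$-action, and conclude via Lemma~\ref{lemma:analyticred} and the commutativity of diagram~\eqref{diagram:reduction}. The only difference is cosmetic: the paper asserts the passage from $\group{U}(1)$- to $\CCx$-invariance in one sentence, whereas you make it explicit with the one-variable identity theorem applied to $\alpha \mapsto \hat{f}(\alpha\xi,\alpha^{-1}\eta)$, which is exactly the argument the paper leaves implicit.
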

\begin{proof}
  Given such $f$ and $\hat{f}$, then
  \begin{align*}
    \Diag^*\big(\hat{f} \racts (\E^{\I\phi}\Unit_{1+n},\E^{-\I\phi}\Unit_{1+n})\big)
    = 
    \Diag^*(\hat{f}) \racts \E^{\I\phi}\Unit_{1+n}
    =
    f\racts \E^{\I\phi}\Unit_{1+n}
    =
    f
    =
    \Diag^*(\hat{f})
  \end{align*}
  holds for all $\phi \in \RR$, so $\hat{f}$ is $\group{U}(1)$-invariant because $\Diag^*$
  is an isomorphism between $\Holomorphic(\CC^{1+n}\times\CC^{1+n})$ and $\Analytic(\CC^{1+n})$.
  But since the action of the complex Lie group $\CCx$ on 
  $\CC^{1+n} \times \CC^{1+n}$ is holomorphic, $\hat f$ is 
  even $\CCx$-invariant. Using the commutativity of
  the diagram in Section~\ref{sec:geometry}, one can now check that
  \begin{align*}
    \pr^*\big(\DiagM^*(\hat{f}_{\hat{\red}})\big)
    =
    \DiagZ^*\big(\hat{\pr}{}^*(\hat{f}_{\hat{\red}})\big)
    =
    \DiagZ^*\big(\hat{\iota}{}^*(\hat{f})\big)
    =
    \iota^*\big(\Diag^*(\hat{f})\big)
    =
    \iota^*(f)
  \end{align*}
  holds, hence $f_\red = \DiagM^*(\hat{f}_{\hat{\red}}) \in \Analytic(\Mred)$.
\end{proof}
Using some deep results from complex analysis, the analytic functions on $\Mred$ and on 
$\CC^{1+n}$ can be related in the same way as smooth or polynomial functions:

\begin{lemma} \label{lemma:analyticLift}
  For every $g\in \Analytic(\Mred)$ there exists an $f\in\Analytic(\CC^{1+n})^{\group{U}(1)}$
  such that $f_\red = g$.
\end{lemma}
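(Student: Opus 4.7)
The strategy is to lift via holomorphic extensions on the upper row of the diagram~\eqref{diagram:reduction}, using the fact that $\CC^{1+n}\times\CC^{1+n}$ is a Stein manifold. Given $g\in\Analytic(\Mred)$, Proposition~\ref{proposition:analyticholomorphiciso} provides a unique $\hat{g}\in\Holomorphic(\MredExt)$ with $\DiagM^*(\hat{g})=g$. The pullback $\hat{\pr}{}^*(\hat{g})\in\Holomorphic(\LevelsetExt)$ is then a holomorphic function on $\LevelsetExt$, and the task reduces to finding $\hat{f}\in\Holomorphic(\CC^{1+n}\times\CC^{1+n})$ with $\hat{\iota}{}^*(\hat{f})=\hat{\pr}{}^*(\hat{g})$; once this is achieved, $f_0\coloneqq\Diag^*(\hat{f})$ is a natural candidate for a preimage of $g$, up to an averaging step to enforce $\group{U}(1)$-invariance.

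The key analytic input is that $\LevelsetExt=\momentmapExt{}^{-1}(\{1\})$ is a closed complex submanifold of the Stein manifold $\CC^{1+n}\times\CC^{1+n}$: indeed, $\momentmapExt$ is a holomorphic polynomial whose differential vanishes only at the origin, which is not contained in $\LevelsetExt$, so $\LevelsetExt$ is a smooth closed complex hypersurface. Cartan's Theorem~B (in its standard consequence for closed complex submanifolds of Stein manifolds) therefore guarantees that every holomorphic function on $\LevelsetExt$ admits a holomorphic extension to $\CC^{1+n}\times\CC^{1+n}$, which furnishes the desired $\hat{f}$. This is the only nontrivial ingredient, and also the place where the phrase ``deep results from complex analysis'' in the statement is used.

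It remains to deduce the claim from $\hat{f}$. Using the commutativity of~\eqref{diagram:reduction} one computes
\begin{equation*}
   \pr^*\bigl((f_0)_{\mathrm{red}}\bigr)
   =
   \iota^*(f_0)
   =
   \iota^*\bigl(\Diag^*(\hat{f})\bigr)
   =
   \DiagZ^*\bigl(\hat{\iota}{}^*(\hat{f})\bigr)
   =
   \DiagZ^*\bigl(\hat{\pr}{}^*(\hat{g})\bigr)
   =
   \pr^*\bigl(\DiagM^*(\hat{g})\bigr)
   =
   \pr^*(g)
   \komma
\end{equation*}
so $(f_0)_{\mathrm{red}}=g$, but $f_0\in\Analytic(\CC^{1+n})$ need not be $\group{U}(1)$-invariant. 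This is fixed by passing to the average $f\coloneqq(f_0)_{\mathrm{av}}\in\Analytic(\CC^{1+n})^{\group{U}(1)}$, which is well-defined as recalled before the lemma. Since $\iota^*(f_0)=\pr^*(g)$ is already $\group{U}(1)$-invariant, averaging commutes with $\iota^*$ and yields $\iota^*(f)=\iota^*(f_0)=\pr^*(g)$, hence $f_{\mathrm{red}}=g$, completing the argument.
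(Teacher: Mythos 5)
Your proposal is correct and follows essentially the same route as the paper's proof: pull $\hat{g}$ back to $\LevelsetExt$, extend holomorphically to the Stein manifold $\CC^{1+n}\times\CC^{1+n}$ (the paper cites H\"ormander's Thm.~7.4.8, which is the same standard consequence of Cartan's Theorem~B you invoke), chase the diagram, and average over $\group{U}(1)$. Your additional remarks --- that $\LevelsetExt$ is a smooth closed hypersurface because $\D\momentmapExt$ vanishes only at the origin, and that averaging preserves $\iota^*(f_0)=\pr^*(g)$ by $\group{U}(1)$-equivariance of $\iota$ --- are correct elaborations of steps the paper leaves implicit.
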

\begin{proof}
  Given $g\in \Analytic(\Mred)$ and corresponding $\hat{g} \in \Holomorphic(\MredExt)$ such that
  $\DiagM^*(\hat{g}) = g$, then $\hat{\pr}^*(\hat{g})$ is a holomorphic function on $\LevelsetExt$.
  Now note that $\CC^{1+n}\times\CC^{1+n}$ is a Stein manifold by 
  \cite[Sec.~5.1]{hoermander:ComplexAnalysisInSeveralVariables} and that $\LevelsetExt$ 
  is -- in the language of \cite[Def.~6.5.1]{hoermander:ComplexAnalysisInSeveralVariables} -- an 
  analytic submanifold 
  thereof because it
  is the set of zeros of a holomorphic function on $\CC^{1+n}\times\CC^{1+n}$. So 
  \cite[Thm.~7.4.8]{hoermander:ComplexAnalysisInSeveralVariables} applies
  and shows that there exists an extension $\hat{f} \in \Holomorphic(\CC^{1+n}\times\CC^{1+n})$
  of $\hat{\pr}^*(\hat{g})$, i.e.\ $\hat{\iota}^*(\hat{f}) = \hat{\pr}^*(\hat{g})$.
  Therefore $f \coloneqq \Diag^*(\hat{f})$ fulfils $\iota^*(f) = \pr^*(g)$
  due to the commutativity of the diagram in Section~\ref{sec:geometry}.
  By averaging over the $\group{U}(1)$-action on $\Analytic(\CC^{1+n})$ we can even arrange that
  $f$ is $\group{U}(1)$-invariant.
\end{proof}
For an alternative proof one can also generalize the more constructive results obtained in
\cite[Sec.~3.2]{kraus.roth.schoetz.waldmann:OnlineConvergentStarProductOnPoincareDisc} for the case
of signature $s=1$, or use these results and the Wick rotation as discussed later in Section~\ref{sec:wick}.

Clearly, $\set{f \in \Analytic(\CC^{1+n})^{\group{U}(1)}}{ \iota^*(f) = 0 }$ is the kernel of $\cdot_\red$ 
restricted to $\Analytic(\CC^{1+n})$ and 
therefore a closed $^*$-ideal of $\Analytic(\CC^{1+n})^{\group{U}(1)}$. Similarly to the case of smooth 
or polynomial functions we get:

\begin{proposition} \label{proposition:topologies:quotientTopology}
	The reduction map $\argument_\red$ descends to a homeomorphic $^*$-isomorphism between the Fréchet $^*$-algebras  
	$\Analytic(\CC^{1+n})^{\group{U}(1)} / \set{f \in \Analytic(\CC^{1+n})}{ \iota^*(f) = 0 }$
	and $\Analytic(\Mred)$.
\end{proposition}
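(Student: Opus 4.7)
The statement combines an algebraic claim (that $\argument_\red$ descends to a $^*$-algebra isomorphism on the quotient) and a topological claim (that this isomorphism is a homeomorphism). My plan is to settle the algebraic part using the preceding lemmas, then prove continuity of $\argument_\red$ by hand, and finally invoke the open mapping theorem.

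For the algebraic part, Proposition~\ref{proposition:analyticred} shows that $\argument_\red$ restricts to a well-defined unital $^*$-homomorphism from $\Analytic(\CC^{1+n})^{\group{U}(1)}$ into $\Analytic(\Mred)$, Lemma~\ref{lemma:analyticLift} provides its surjectivity, and its kernel is by definition $\set{f\in\Analytic(\CC^{1+n})^{\group{U}(1)}}{\iota^*(f)=0}$. Thus factoring through the quotient produces a bijective unital $^*$-homomorphism onto $\Analytic(\Mred)$, and I only need to upgrade this to a homeomorphism.

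The key step is continuity of $\argument_\red$ itself. Writing $\hat{f}\coloneqq(\Diag^*)^{-1}(f)$ for $f\in\Analytic(\CC^{1+n})^{\group{U}(1)}$, the proof of Proposition~\ref{proposition:analyticred} shows that $\hat f$ is $\CCx$-invariant and that $f_\red=\DiagM^*(\hat{f}_{\hat\red})$. Since the topologies on $\Analytic(\CC^{1+n})$ and $\Analytic(\Mred)$ are those transported by the algebra isomorphisms $\Diag^*$ and $\DiagM^*$ from the Fréchet spaces $\Holomorphic(\CC^{1+n}\times\CC^{1+n})$ and $\Holomorphic(\MredExt)$, it suffices to prove continuity of the middle step $\hat f\mapsto\hat f_{\hat\red}$ on the closed subspace of $\CCx$-invariant holomorphic functions. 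Given a compact $K\subseteq\MredExt$, I would construct a compact $L\subseteq\LevelsetExt$ with $\hat\pr(L)=K$; such $L$ exists because $\hat\pr\colon\LevelsetExt\to\MredExt$ is a holomorphic principal $\CCx$-bundle and therefore admits local holomorphic sections, so $K$ can be covered by finitely many compacts each lying in the image of a section. Then for every $p\in K$ one finds $q\in L$ with $\hat\pr(q)=p$, yielding
\begin{equation*}
\seminorm{K}{\hat f_{\hat\red}}
=\max_{p\in K}\bigl|\hat f_{\hat\red}(p)\bigr|
\le \max_{q\in L}\bigl|\hat\iota{}^*(\hat f)(q)\bigr|
\le \seminorm{\hat\iota(L)}{\hat f}
\komma
\end{equation*}
and $\hat\iota(L)\subseteq\CC^{1+n}\times\CC^{1+n}$ is compact. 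This proves continuity.

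With continuity in hand, the kernel is closed, so the quotient $\Analytic(\CC^{1+n})^{\group{U}(1)}/\ker(\argument_\red)$ inherits the structure of a Fréchet $^*$-algebra (noting that $\Analytic(\CC^{1+n})^{\group U(1)}$ is itself a closed subspace of the Fréchet $^*$-algebra $\Analytic(\CC^{1+n})$ and hence Fréchet). The induced map to $\Analytic(\Mred)$ is a continuous bijective unital $^*$-homomorphism between Fréchet spaces, and the open mapping theorem then promotes it to a homeomorphism. The main obstacle in this plan is the construction of the compact set $L$ with $\hat\pr(L)=K$: surjectivity of $\hat\pr$ alone is insufficient, and one really needs the bundle structure (equivalently, the properness of $\hat\pr$ modulo the $\CCx$-action) to obtain a compact preimage on the nose.
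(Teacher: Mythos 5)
Your proposal is correct and follows essentially the same route as the paper: the algebraic part from Proposition~\ref{proposition:analyticred} and Lemma~\ref{lemma:analyticLift}, continuity of $\argument_{\hat{\red}}$ via a sup-seminorm estimate over a compact subset of $\LevelsetExt$ mapping onto a given compact $K \subseteq \MredExt$, and then the open mapping theorem. The only (inessential) difference is how that compact lift is produced: you patch together images of local holomorphic sections of $\hat{\pr}$, while the paper simply takes $B \cap \hat{\pr}^{-1}(K)$ for a sufficiently large closed ball $B$, which sidesteps the bundle-structure discussion you flag as the main obstacle.
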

\begin{proof}
	Using Lemma~\ref{lemma:analyticred} it is clear that that $\argument_\red$ induces a $^*$-isomorphism.
	As $\seminorm{K}{\hat{f}_{\hat{\red}}} = \seminorm{B \cap \pr^*(K)}{\hat{f}}$
	holds for every $\hat{f} \in \Holomorphic(\CC^{1+n}\times \CC^{1+n})^{\CCx}$
	with $B \subseteq \CC^{1+n}\times\CC^{1+n}$ a sufficiently large
	closed ball, the map 
	$\argument_{\hat{\red}} \colon \Holomorphic(\CC^{1+n}\times \CC^{1+n})^{\CCx} \to \Holomorphic(\MredExt)$
	from Lemma~\ref{lemma:analyticred} is continuous with respect to the 
	topologies of locally uniform convergence, thus
	$\argument_{\red} \colon \Analytic(\CC^{1+n})^{\group{U}(1)} \to \Analytic(\Mred)$
	is continuous as well. It follows from the open mapping theorem 
	that it is a homeomorphism.
\end{proof}
As the $\group{U}(1)$-invariant polynomials $\Polynomials(\CC^{1+n})^{\group{U}(1)}$
are dense in $\Analytic(\CC^{1+n})^{\group{U}(1)}$, this immediately yields:

\begin{corollary} \label{corollary:polynomialsdense}
  The polynomials $\Polynomials(\Mred)$ are dense in $\Analytic(\Mred)$.
\end{corollary}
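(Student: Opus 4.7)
The plan is to deduce the density of $\Polynomials(\Mred)$ in $\Analytic(\Mred)$ by transporting a corresponding density statement on $\CC^{1+n}$ along the reduction map, using the continuous surjection established in Proposition~\ref{proposition:topologies:quotientTopology}.

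First, I would verify that $\Polynomials(\CC^{1+n})^{\group{U}(1)}$ is dense in $\Analytic(\CC^{1+n})^{\group{U}(1)}$. Indeed, every $f \in \Analytic(\CC^{1+n})$ admits the absolutely convergent expansion \eqref{eq:analyticexpansion} whose finite partial sums are elements of $\Polynomials(\CC^{1+n})$, and these converge to $f$ in every seminorm $\seminorm{r}{\argument}$ simply because each tail $\sum_{\abs{P}+\abs{Q} > N} \abs{\expansionCoefficients{f}{P}{Q}} r^{\abs{P}+\abs{Q}}$ tends to zero by \eqref{eq:normoben} together with dominated convergence on the index set. If $f$ is $\group{U}(1)$-invariant, then as noted in the excerpt all coefficients with $\abs{P} \neq \abs{Q}$ vanish, so each partial sum is already a $\group{U}(1)$-invariant polynomial, proving the density claim.

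Next, I would invoke Proposition~\ref{proposition:topologies:quotientTopology}: the reduction map $\argument_\red \colon \Analytic(\CC^{1+n})^{\group{U}(1)} \to \Analytic(\Mred)$ is continuous and surjective (being a homeomorphism after quotienting). Continuous surjections between topological spaces send dense subsets to dense subsets, so the image of $\Polynomials(\CC^{1+n})^{\group{U}(1)}$ under $\argument_\red$ is dense in $\Analytic(\Mred)$. By definition of $\Polynomials(\Mred)$ in Subsection~\ref{subsec:polynomialfunctions}, this image is exactly $\Polynomials(\Mred)$, which completes the proof.

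There is no real obstacle here: all the analytic work has already been done in Proposition~\ref{proposition:analyticred} and Proposition~\ref{proposition:topologies:quotientTopology}, and the density on $\CC^{1+n}$ is just a standard fact about the topology defined by the seminorms $\seminorm{r}{\argument}$. The only point that requires a moment of attention is that the partial sums of the expansion remain $\group{U}(1)$-invariant, but this is immediate from the vanishing of off-diagonal Fourier coefficients.
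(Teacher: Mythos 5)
Your argument is correct and follows essentially the same route as the paper: the paper likewise deduces the corollary immediately from the density of $\Polynomials(\CC^{1+n})^{\group{U}(1)}$ in $\Analytic(\CC^{1+n})^{\group{U}(1)}$ together with the continuity and surjectivity of the reduction map established in Proposition~\ref{proposition:topologies:quotientTopology}. You merely spell out the details (partial sums of the expansion \eqref{eq:analyticexpansion}, invariance of the partial sums, and the fact that continuous surjections map dense sets onto dense sets) that the paper leaves implicit.
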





\section{Poisson Brackets and Star Products}   \label{sec:physics}

In this section we introduce a Poisson bracket and star product on $\CC^{1+n}$ and discuss their 
reduction to $\Mred$. First we consider formal star products, which make 
sense for formal power series of smooth functions. We present a method for reducing the (pseudo-)Wick 
product on $\CC^{1+n}$ to $\Mred$ in Subsection~\ref{subsec:physics:smooth} and derive more explicit formulas in 
Subsection~\ref{subsec:physics:explicit}. The other two sections deal with strict star products. 
In order to make the formal power series convergent, we restrict ourselves to polynomials in 
Subsection~\ref{subsec:physics:polynomial} and extend these results to analytic functions in 
Subsection~\ref{subsec:physics:analytic}.




\subsection{The Smooth Case}   \label{subsec:physics:smooth}

We will now introduce the Wick star product on $\CC^{1+n}$.
The antisymmetrization of its first order gives rise to a Poisson structure on $\CC^{1+n}$.
Let $\nabla$ be the Euclidean covariant derivative of $\CC^{1+n}$, $D$ its exterior covariant derivative
and $D^\sym$ the corresponding symmetrized covariant derivative, 
see Appendix~\ref{sec:derivatives} for details. We define
\begin{equation}\label{eq:def:H}
H 
\coloneqq 
\sum_{k=0}^n \nu_k \frac{\partial}{\partial\cc z^k} \otimes \frac{\partial}{\partial z^k}
\in \SmoothSections\big( \Tangent^{(0,1)} \CC^{1+n} \otimes \Tangent^{(1,0)} \CC^{1+n} \big)
\punkt
\end{equation}
It is easy to see that $H$ is $\group{U}(1)$-invariant, so that
$H_\red \in \SmoothSections( \Tangent^{(0,1)} \Mred \otimes \Tangent^{(1,0)} \Mred )$
can be defined as
\begin{equation}\label{eq:def:Hred}
H_\red \at{[\rho]}
\coloneqq 
\big(\Tangent_\rho \bigpr\big)^{\otimes 2} (H \at{\rho})
\end{equation}
for all $[\rho] \in \Mred$ with representative $\rho \in \Levelset$.
An explicit formula for $H_\red$ in projective coordinates will be given later in
Lemma~\ref{lemma:H:writtenWithW}.
Using $H$ and symmetrized covariant derivatives, we can now define the
well-known Wick star product:

\begin{definition} \label{definition:wick}
	The product 
	$\star \colon \Smooth(\CC^{1+n} )\formal{\lambda} \times 
	\Smooth(\CC^{1+n})\formal{\lambda} \to \Smooth(\CC^{1+n} )\formal{\lambda}$,
	\begin{equation} \label{eq:starProduct:Wick}
	(f,g) 
	\mapsto 
	f \star g 
	\coloneqq 
	\sum_{r=0}^\infty \frac {\lambda^r} {r!} 
	\dupr[\big]{(D^\sym)^r(f) \otimes (D^\sym)^r(g)}{H^r}
	\end{equation}
	is the \emph{(pseudo-)Wick star product} on $\CC^{1+n}$. Here $H^r$ denotes the $r$-th
	power of $H$ as an element of degree $(1,1)$ in the algebra
	$\SymSec^\bullet(\CC^{1+n}) \otimes \SymSec^\bullet(\CC^{1+n})$ with
	$\SymSec^\bullet(\CC^{1+n}) \coloneqq \bigoplus_{k=0}^\infty \SmoothSections( \Symten{k} \Tangent 
	\CC^{1+n} )$
	the algebra of symmetric multivector fields.
\end{definition}
Note that one can check that $\star$ is actually a $\Stab$-invariant Hermitian 
formal star product constructed out of the bidifferential operators
\begin{align}
 C_r(f,g) = \frac {1} {r!} \dupr[\big]{(D^\sym)^r(f) \otimes (D^\sym)^r(g)}{H^r} \punkt
\end{align}
It deforms in direction of the standard Poisson bracket with signature $s$
\begin{equation} \label{eq:definition:poissonBracket}
\frac{1}{\I}\big(C_1(f,g) - C_1(g,f)\big)
=
\frac 1 \I \sum_{k=0}^n \nu_k \left(
\frac{\partial f}{\partial\cc z^k} \frac{\partial g}{\partial z^k}
- \frac{\partial g}{\partial\cc z^k} \frac{\partial f}{\partial z^k}
\right)
\mathbin{\text{\reflectbox{$\coloneqq$}}}
\poi{f}{g}
\end{equation}
on $\CC^{1+n}$ with Poisson tensor
\begin{equation}\label{eq:poissontensor:cn}
\pi 
=
-2\I \sum_{k=0}^n \nu_k \frac{\partial}{\partial \cc z^k} \wedge \frac{\partial}{\partial z^k}
=
2\IM(H) \komma
\end{equation}
where, as usual, 
$\poi f g 
=
\dupr{\D f \otimes \D g}{\pi} 
=
\dupr{D^{\sym} f \otimes D^{\sym} g}{\pi}$.
Note that \eqref{eq:poissontensor:cn} implies that $\pi$ is a real tensor.

\begin{lemma}
	The Poisson bracket \eqref{eq:definition:poissonBracket} fulfils the condition for reducibility of 
	Proposition~\ref{proposition:reduction:sufficientCondition}.
\end{lemma}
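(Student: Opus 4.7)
The plan is to verify the condition of Proposition~\ref{proposition:reduction:sufficientCondition} by applying the Leibniz rule together with the momentum-map property of $\momentmap$. For any $f, f' \in \Smooth(\CCres)^{\group{U}(1)}$, expanding
\[ \{(\momentmap-1)f, f'\} = (\momentmap-1)\{f, f'\} + f\{\momentmap, f'\} \]
via Leibniz reduces the task to showing $\{\momentmap, f'\} = 0$ for all $\group{U}(1)$-invariant $f'$, since the first summand already lies in the ideal generated by $(\momentmap-1)\at{\CCres}$.

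The key step is a direct calculation using \eqref{eq:definition:poissonBracket} together with $\partial \momentmap/\partial z^k = \nu_k \cc{z}^k$, $\partial \momentmap/\partial \cc{z}^k = \nu_k z^k$, and $\nu_k^2 = 1$:
\[ \{\momentmap, f'\} = \frac{1}{\I}\sum_{k=0}^n \bigg(z^k \frac{\partial f'}{\partial z^k} - \cc{z}^k \frac{\partial f'}{\partial \cc{z}^k}\bigg) = -X_\I(f'). \]
Since $\group{U}(1)$-invariance of $f'$ is precisely the condition $X_\I(f') = 0$, the term $\{\momentmap, f'\}$ vanishes identically. Consequently $\{(\momentmap-1)f, f'\} = (\momentmap-1)\{f, f'\}$, and by antisymmetry of the bracket, the analogous identity $\{f, (\momentmap-1)f'\} = (\momentmap-1)\{f, f'\}$ follows by swapping arguments and applying the same reasoning.

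I do not anticipate any real obstacle. The proof is essentially a two-line computation combining Leibniz with the defining momentum-map property $\{\momentmap, \argument\} = -X_\I(\argument)$. Conceptually, this simply reflects that $\momentmap$ generates the $\group{U}(1)$-action by Hamiltonian flow, so its Poisson bracket annihilates invariant functions, and hence the ideal generated by $\momentmap - 1$ is preserved under bracketing with invariant functions --- which is precisely the reducibility condition in the present setting.
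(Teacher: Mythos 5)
Your argument is essentially the paper's own proof: both rest on the Leibniz rule together with the momentum-map identity $\poi{\momentmap}{f'} = -X_\I(f')$, which vanishes on $\group{U}(1)$-invariant $f'$, giving $\poi{(\momentmap-1)f}{f'} = (\momentmap-1)\poi{f}{f'}$ (hence vanishing on $\Levelset$) and the second slot by antisymmetry, exactly as in the paper. The only point the paper records that you leave implicit is that the bracket is a $\group{U}(1)$-invariant bidifferential operator (since $H$ is $\group{U}(1)$-invariant) and so restricts to $\CCres$, which is part of the hypotheses of Proposition~\ref{proposition:reduction:sufficientCondition} but is immediate.
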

\begin{proof}
	First, $\poi{\argument}{\argument}$ is bidifferential, hence can be restricted
	to $\CCres$. As $H$ is $\group{U}(1)$-invariant, the Poisson bracket is $\group{U}(1)$-invariant, too.
	One also finds that $\poi{f}{\momentmap} = X_\I(f)$ for all $f\in \Smooth(\CC^{1+n})$,
	with $X_\I$ the generator of the $\group U(1)$-action as before. 
	So if $f, g \in \Smooth(\CC^{1+n})^{\group U(1)}$
	are $\group U(1)$-invariant, then
	$\poi{f(\momentmap-1)}{g} = \poi{f}{g} (\momentmap-1) - f X_\I(g) = \poi{f}{g} (\momentmap-1)$
	vanishes on $\Levelset$, and similarly 
	$\poi{f}{g(\momentmap-1)} \at \Levelset = 0$.
\end{proof}
Thus we can construct a reduced Poisson bracket on $\Mred$ by application
of Definition~\ref{definition:reducible} and get:

\begin{proposition} \label{proposition:poired}
  The reduced Poisson bracket 
  $\poi\argument\argument_\red : \Smooth(\Mred) \times \Smooth(\Mred) \to \Smooth(\Mred)$
  is given for all $f ,g \in \Smooth(\Mred)$ and $\rho \in \Levelset$ by
  \begin{align}
    \poi{f}{g}_{\mathrm{red}}([\rho]) 
    = 
    \poi[\big]{\bigpr^*(f)}{\bigpr^*(g)}(\rho) 
    = 
    \dupr[\big]{\D f \otimes \D g \at{[\rho]}}{(\Tangent_\rho \bigpr)^{\otimes 2}\pi\at{\rho}}
    \label{eq:poired1}
  \end{align}
  and the corresponding Poisson tensor $\pi_\red$ on $\Mred$ is simply
  \begin{align}
    \pi_\red = 2 \IM(H_\red)
    \punkt
    \label{eq:poired2}
  \end{align}
\end{proposition}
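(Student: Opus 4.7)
The plan is to deduce both equalities in the first display by chaining Proposition~\ref{proposition:reduction:sufficientCondition} with the tensor form of the Poisson bracket, and then to read off the Poisson tensor. More precisely, the first equality is an immediate application of equation~\eqref{eq:localreduction} to the bidifferential operator $C = \poi{\argument}{\argument}$, whose reducibility has just been established. Indeed, \eqref{eq:localreduction} gives $\poi{f}{g}_\red = \big(\poi{\bigpr^*(f)}{\bigpr^*(g)}\big)_\red$, and evaluating at $[\rho]$ with $\rho \in \Levelset$ via \eqref{eq:reductionmap} produces the pointwise identity $\poi{f}{g}_\red([\rho]) = \poi{\bigpr^*(f)}{\bigpr^*(g)}(\rho)$.

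For the second equality I would express the Poisson bracket on $\CCres$ through its Poisson tensor, $\poi{F}{G} = \dupr{\D F \otimes \D G}{\pi}$, substitute $F = \bigpr^*(f)$ and $G = \bigpr^*(g)$, and apply the chain rule $\D(\bigpr^*f)\at \rho = (\Tangent_\rho\bigpr)^*(\D f\at{[\rho]})$. The adjointness between pullback of forms and pushforward of multivectors then transfers $(\Tangent_\rho\bigpr)^{\otimes 2}$ from the covariant factor to the contravariant one, yielding precisely
\begin{equation*}
\poi{\bigpr^*(f)}{\bigpr^*(g)}(\rho) = \dupr[\big]{\D f \otimes \D g\at{[\rho]}}{(\Tangent_\rho\bigpr)^{\otimes 2}\pi\at \rho} \punkt
\end{equation*}

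For the identity $\pi_\red = 2\IM(H_\red)$, the two equalities just established identify $\pi_\red\at{[\rho]} = (\Tangent_\rho\bigpr)^{\otimes 2}\pi\at \rho$ for any $\rho \in \Levelset$. Substituting $\pi = 2\IM(H)$ on $\CCres$ and using that $\Tangent_\rho\bigpr$, extended $\CC$-linearly to the complexified tangent space, commutes with complex conjugation (being the $\CC$-linear extension of an $\RR$-linear map between real tangent spaces), we conclude
\begin{equation*}
(\Tangent_\rho\bigpr)^{\otimes 2}\pi\at \rho = 2\IM\big((\Tangent_\rho\bigpr)^{\otimes 2}H\at \rho\big) = 2\IM\big(H_\red\at{[\rho]}\big) \komma
\end{equation*}
where the last step is the definition~\eqref{eq:def:Hred} of $H_\red$.

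None of the steps is a genuine obstacle; the only item requiring some care is the adjointness calculation in the middle paragraph and the implicit check that $(\Tangent_\rho\bigpr)^{\otimes 2}\pi\at \rho$ depends only on $[\rho]$ and not on the representative $\rho \in \Levelset$. The latter follows from the $\group{U}(1)$-invariance of $H$ (hence of $\pi$) together with the equivariance $\bigpr \circ (\E^{\I\phi}\Unit_{1+n} \acts \argument) = \bigpr$, which forces the right-hand side to descend to a well-defined tensor on $\Mred$.
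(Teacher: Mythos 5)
Your proposal is correct and follows essentially the same route as the paper, which simply declares \eqref{eq:poired1} clear (it is exactly your application of Proposition~\ref{proposition:reduction:sufficientCondition} and the chain rule for $\bigpr$) and derives \eqref{eq:poired2} from \eqref{eq:def:Hred} and \eqref{eq:poissontensor:cn}, as you do. Your extra remarks on adjointness, compatibility of $\Tangent_\rho\bigpr$ with conjugation, and independence of the representative $\rho$ just make explicit the details the paper leaves implicit.
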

\begin{proof}
  Equation \eqref{eq:poired1} is clear and \eqref{eq:poired2} then follows from
  \eqref{eq:def:Hred} and \eqref{eq:poissontensor:cn}.
\end{proof}
This is just the Poisson-algebraic analog of the Marsden--Weinstein reduction scheme.
However, the situation is a bit more difficult if one tries to reduce the bidifferential operators 
$C_r$ defining the Wick star product.
One immediately sees that Proposition~\ref{proposition:restriction} cannot be applied directly:
For example, $C_1 (\momentmap, \momentmap-1) = \momentmap \neq 0$.
Following \cite{bordemann.brischle.emmrich.waldmann:PhaseSpaceReductionForStarProducts.ExplicitConstruction},
this problem can be overcome by restricting to $\CCres$ and performing an equivalence transformation
$S = \id + \sum_{k=1}^\infty \lambda^k S_k$,
with differential operators $S_k \colon \Smooth(\CCres) \to 
\Smooth(\CCres)$
that vanish on constant functions,
from $\star$ to a suitable new star product $\startilde$, i.e.\ $f\startilde f' \coloneqq 
S(S^{-1}(f)\star S^{-1}(f'))$,
in such a way that $\startilde$ is reducible to a star product $\starred$ on $\Mred$ by application of 
Proposition~\ref{proposition:reduction:sufficientCondition}.
If this can be achieved, then $\pr^*(g \starred g') = ( \bigpr^*(g) \startilde \bigpr^*(g') )\at{\Levelset}$
for all $g,g'\in \Smooth(\Mred)$. For this we require the following:
\begin{enumerate}
	\item $S$ should commute with $\cc{\argument}$, since then $\startilde$ is again a Hermitian star 
	product. 
	\label{item:prop:Hermitian}
	\item $S$ should be $\Stab$-invariant, 
	since then $\startilde$ is again $\Stab$-invariant. \label{item:prop:U1equi}
	\item Moreover, $\startilde$ should fulfil $\momentmap \startilde f = \momentmap f$ for all 
	$f\in \Smooth(\CC^{1+n})^{\group{U}(1)}$,
	hence also 
	$f \startilde \momentmap = \cc{\momentmap \startilde \cc{f}} = \cc{\momentmap \cc{f}} = f \momentmap$
	for all $f\in \Smooth(\CC^{1+n})^{\group{U}(1)}$.
	As a consequence, Proposition~\ref{proposition:reduction:sufficientCondition} can be applied to 
	the bidifferential operator defining the $r$-th order of $\startilde$ for any $r$,
	so that $\starred$ as described above is indeed well-defined.  \label{item:prop:ptw}
	\item Finally, it would be helpful if $S$ (hence also $S^{-1}$) acts as the
	identity on $\CCx$-invariant functions, because this has the consequence that
	the formula for $\starred$ simplifies to
	\begin{align}
	\pr^*(g \starred g') = \big( \bigpr^*(g) \startilde \bigpr^*(g') \big)\at[\big]{\Levelset} = 
	\big(S \big(\bigpr^*(g) \star \bigpr^*(g') \big) \big)\at[\big]{\Levelset}
	\end{align}
	for all $g,g'\in \Smooth(\Mred)$.  \label{item:prop:partialh}
\end{enumerate}
Let us define the rescaled vector field
\begin{align}
  \frac{\partial}{\partial\momentmap}
  \coloneqq
  \frac 1 {2\momentmap} X_\Unit
  \in
  \SmoothSections(\Tangent \CCres)
\end{align}
on $\CCres$, which satisfies $\frac{\partial}{\partial\momentmap} \momentmap = 1$. Then 
Properties \refitem{item:prop:Hermitian}, \refitem{item:prop:U1equi} and 
\refitem{item:prop:partialh}
are fulfilled if all the differential operators $S_k$ with $k\in \NN$
are of the form 
$S_k = \sum_{\ell = 1}^\infty (S_{k,\ell} \circ \momentmap) 
\big(\frac{\partial}{\partial \momentmap}\big)^\ell$
with smooth functions $S_{k,\ell} \colon {]0,\infty[} \to \RR$, such that for every fixed $k\in \NN$
there are only finitely many $\ell \in \NN$ with $S_{k,\ell} \neq 0$.

We are interested in the inverse equivalence transformation $T = S^{-1}$, which then also 
contains only derivatives $\frac{\partial}{\partial \momentmap}$
and coefficient functions dependent on $\momentmap$,
i.e.\ $T = \id+\sum_{k,\ell = 1}^\infty \lambda^k (T_{k,\ell} \circ \momentmap) 
\big(\frac{\partial}{\partial \momentmap}\big)^\ell$.
Therefore recall the usual definition of the \emph{falling} and \emph{rising factorial} as
\begin{equation}
  \falling \xi r \coloneqq \prod_{k=0}^{r-1} (\xi-k)
  \quad\quad\text{and}\quad\quad
  \rising{\xi}{r} \coloneqq \prod_{k=0}^{r-1} (\xi+k)
  \komma
\end{equation}
respectively, for all elements $\xi$ of a ring with unit and all $r\in \NN_0$.
Here the empty product is of course $\falling{\xi}{0} = 1 = \rising{\xi}{0}$. 
For formal Laurent series in $\lambda$ over the smooth functions $\Smooth(M)$ of a manifold
$M$ and a pointwise invertible $f\in \Smooth(M)$ we see that
\begin{equation}
  \frac{1}{\falling{f/\lambda}{r}}
  =
  \frac{\lambda^r}{ \prod_{k=0}^{r-1} (f-k\lambda)}
  \quad\quad\text{and}\quad\quad
  \frac{1}{\rising{f/\lambda}{r}}
  =
  \frac{\lambda^r}{ \prod_{k=0}^{r-1} (f+k\lambda)}
\end{equation}
are actually formal power series because
$f \pm k \lambda \in \Smooth(M)\formal{\lambda}$ are invertible.

\begin{proposition} \label{proposition:Trafo1}
	Let $T$ be a $\group{U}(1)$-invariant equivalence transformation on $\CCres$ 
	from a new star product ${}\startilde{}$ to ${}\star{}$, then the following is equivalent:
	\begin{itemize}
		\item $T(\momentmap)=\momentmap$ and $\momentmap\startilde f = \momentmap f$ for all
		$f\in \Smooth(\CCres)^{\group{U}(1)}$.
		\item $\kom{T}{\momentmap}(f) = \lambda \momentmap \frac{\partial}{\partial \momentmap} 
		T(f)$ for all $f\in \Smooth(\CCres)^{\group{U}(1)}$, where $\kom{\argument}{\argument}$
		denotes the commutator.
	\end{itemize}
	If $T$ fulfils one, hence both of these conditions, then
	\begin{align}
	  T\big( \lambda^r \falling{\momentmap/\lambda}{r} \big) = {\momentmap}^r
	  \quad\quad\text{and}\quad\quad
	  T\bigg(\frac{\momentmap}{\lambda^{r+1} \rising{\momentmap/\lambda}{r+1}} \bigg) 
	  = 
	  {\momentmap}^{-r}
	\end{align}
  for all $r \in \NN_0$.
\end{proposition}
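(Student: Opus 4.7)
The plan is to first derive the equivalence of the two conditions from the explicit form of the product $\momentmap \star g$, and then to prove the two closed identities by inductions on $r$ using the second condition as an operator identity.

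For the equivalence, the key observation is that $\momentmap$ has bidegree $(1,1)$ in $(z,\cc{z})$, so all of its antiholomorphic derivatives of order $\ge 2$ vanish. Only the terms $r = 0$ and $r = 1$ of the Wick series \eqref{eq:starProduct:Wick} therefore contribute, and a short computation gives $\momentmap \star g = \momentmap g + \lambda E(g)$ for every $g \in \Smooth(\CCres)\formal{\lambda}$, with $E$ the holomorphic Euler field. On $\group{U}(1)$-invariant functions one has $X_\I g = 0$ and $X_\Unit g = 2\momentmap\, \frac{\partial g}{\partial \momentmap}$, so that this simplifies to $\momentmap \star g = \momentmap g + \lambda \momentmap\, \frac{\partial g}{\partial \momentmap}$. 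Assuming the first condition, I apply $T$ to $\momentmap \startilde f = \momentmap f$ for $\group{U}(1)$-invariant $f$ and use the homomorphism property $T(\momentmap \startilde f) = T(\momentmap) \star T(f) = \momentmap \star T(f)$ together with the invariance of $T(f)$; the right-hand side unfolds by the formula above into exactly the second condition. Conversely, substituting $f = 1$ in the second condition and invoking $T(1) = 1$ (forced by $T$ being an equivalence preserving the unit) gives $T(\momentmap) = \momentmap$, and then running the same calculation backwards produces $T(\momentmap \startilde f) = T(\momentmap f)$, whence the first condition by the invertibility of $T$.

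For the falling factorial identity I induct on $r$, the base case $r = 0$ being $T(1) = 1$. At the step, the algebraic recursion $\lambda^{r+1}\falling{\momentmap/\lambda}{r+1} = (\momentmap - r\lambda)\,\lambda^r \falling{\momentmap/\lambda}{r}$ combined with the second condition and the inductive hypothesis $T(\lambda^r \falling{\momentmap/\lambda}{r}) = \momentmap^r$ yields
\begin{equation*}
  T\bigl(\lambda^{r+1}\falling{\momentmap/\lambda}{r+1}\bigr) = \momentmap^{r+1} + \lambda \momentmap\, \frac{\partial \momentmap^r}{\partial \momentmap} - r\lambda \momentmap^r = \momentmap^{r+1},
\end{equation*}
where the two extra terms cancel because $\lambda \momentmap \cdot r \momentmap^{r-1} = r \lambda \momentmap^r$.

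For the rising factorial identity, introduce $G_r \coloneqq \momentmap/(\lambda^{r+1}\rising{\momentmap/\lambda}{r+1}) = \prod_{k=1}^{r}(\momentmap + k\lambda)^{-1}$, so that the recursion becomes $(\momentmap + (r+1)\lambda)\,G_{r+1} = G_r$. Applying $T$ with the inductive hypothesis $T(G_r) = \momentmap^{-r}$ and the second condition turns this into the equation
\begin{equation*}
  (\momentmap + (r+1)\lambda)\, U + \lambda \momentmap\, \frac{\partial U}{\partial \momentmap} = \momentmap^{-r}
\end{equation*}
for $U \coloneqq T(G_{r+1}) \in \Smooth(\CCres)\formal{\lambda}$. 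Direct substitution shows that $U = \momentmap^{-(r+1)}$ solves it, and the solution is unique: expanding $U = \sum_{j \ge 0} \lambda^j U_j$, the $\lambda^0$-coefficient forces $U_0 = \momentmap^{-r-1}$ (using invertibility of $\momentmap$ on $\CCres$), and the recursion $U_j = -\bigl((r+1)U_{j-1}/\momentmap + \partial U_{j-1}/\partial \momentmap\bigr)$ then yields $U_1 = 0$ and inductively $U_j = 0$ for all $j \ge 1$. The main subtlety of the proof is this last uniqueness argument, as $G_r$ is genuinely an infinite formal series in $\lambda$; but the invertibility of $\momentmap$ on $\CCres$ renders the order-by-order recursion unobstructed.
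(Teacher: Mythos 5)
Your proof is correct and follows essentially the same route as the paper's: the equivalence via the identity $\momentmap \star g = \momentmap g + \lambda E(g)$ (which you make explicit, while the paper uses it implicitly), the same induction for the falling factorials, and for the rising factorials the same reduction to inverting the operator $\momentmap + (r+1)\lambda + \lambda\momentmap\frac{\partial}{\partial\momentmap}$, whose invertibility on $\Smooth(\CCres)\formal{\lambda}$ you verify order by order in $\lambda$ where the paper simply asserts it from the invertibility of $\momentmap$ on $\CCres$.
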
 
 
\begin{proof}
 	Assume $T(\momentmap)=\momentmap$ and $\momentmap\startilde f = \momentmap f$
 	for some $f\in \Smooth(\CCres)^{\group{U}(1)}$, then
 	\begin{align*}
 	T(\momentmap f)
 	=
 	T(\momentmap \startilde f)
 	=
 	T(\momentmap) \star T(f)
 	=
 	\momentmap \star T(f)
 	=
 	\big( \momentmap + \lambda E \big) T(f)
  =
 	\bigg(\momentmap + \lambda \momentmap\frac{\partial}{\partial \momentmap} \bigg) T(f)
 	\end{align*}
 	and so $\kom{T}{\momentmap}(f) = \lambda \momentmap \frac{\partial}{\partial \momentmap} 
 	T(f)$. 
 	Conversely,	if $\kom{T}{\momentmap}(f) = \lambda \momentmap \frac{\partial}{\partial \momentmap} 
 	T(f)$
 	for all $f\in \Smooth(\CCres)^{\group{U}(1)}$, then especially for $f=1$ one gets
 	$T(\momentmap)-\momentmap T(1) = \lambda \momentmap \frac{\partial}{\partial \momentmap} T(1)$,
 	i.e.\ $T(\momentmap)-\momentmap = 0$ because $T(1) = 1$ for the equivalence transformation $T$.
 	Then one also checks that 
 	\begin{align*}
 	  \momentmap\startilde f 
 	  = 
 	  T^{-1}\bigg(\bigg(\momentmap + \lambda \momentmap\frac{\partial}{\partial \momentmap} \bigg) T(f)\bigg)
 	  = 
 	  T^{-1}\big( \momentmap T(f) + \kom{T}{\momentmap}(f)\big)
 	  =
 	  \momentmap f
      \punkt
 	\end{align*}
 	Moreover, by induction one finds that indeed 
 	$T( \lambda^r \falling{\momentmap/\lambda}{r} ) = {\momentmap}^r$ for all $r\in 
 	\NN_0$:
 	For $r=0$ this is just $T(1) = 1$, and if it holds for one $r\in \NN_0$, then
 	\begin{align*}
 	T\big( \lambda^{r+1} \falling{\momentmap/\lambda}{r+1} \big) 
 	&= 
 	\lambda T\big( \lambda^{r} 
 	\falling{\momentmap/\lambda}{r}(\momentmap/\lambda-r) \big) 
 	\\
 	&=
 	\kom{T}{\momentmap}\big( \lambda^{r} \falling{\momentmap/\lambda}{r} \big) 
 	+
 	\momentmap T\big( \lambda^{r} \falling{\momentmap/\lambda}{r} \big) 
 	-
 	\lambda r T\big( \lambda^{r} \falling{\momentmap/\lambda}{r} \big)
 	\\
 	&=
 	\bigg(\lambda \momentmap \frac{\partial}{\partial \momentmap} + \momentmap - \lambda 
 	r\bigg) T\big( 
 	\lambda^{r} 
 	\falling{\momentmap/\lambda}{r} \big)
 	\\
 	&=
 	\bigg(\lambda \momentmap \frac{\partial}{\partial \momentmap} + \momentmap - \lambda 
 	r\bigg) 
 	{\momentmap}^{r}
 	\\
 	&=
 	{\momentmap}^{r+1} \punkt
 	\end{align*}
 	In order to check the formula for ${\momentmap}^{-r}$, we note first that
 	\begin{align*}
 	(\momentmap+r \lambda) T\big( \momentmap (\lambda^{r+1} 
 	\rising{\momentmap/\lambda}{r+1})^{-1} \big)
 	&=
 	[\momentmap+r \lambda, T] \big( \momentmap (\lambda^{r+1} 
 	\rising{\momentmap/\lambda}{r+1})^{-1} \big)
 	+ T\big( \momentmap (\lambda^{r} \rising{\momentmap/\lambda}{r})^{-1} \big)
 	\\
 	&=
 	- \lambda \momentmap \frac \partial {\partial \momentmap} T \big( \momentmap 
 	(\lambda^{r+1} 
 	\rising{\momentmap/\lambda}{r+1})^{-1} \big)
 	+ T\big( \momentmap (\lambda^{r} \rising{\momentmap/\lambda}{r})^{-1} \big) \komma
 	\end{align*}
 	so
 	\begin{equation*}
 	\bigg(\momentmap+r \lambda+\lambda \momentmap \frac \partial {\partial 
 	\momentmap}\bigg) T
 	\big(
 		\momentmap (\lambda^{r+1} \rising{\momentmap/\lambda}{r+1})^{-1}
 	\big)
 	=
 	T\big( \momentmap (\lambda^{r} \rising{\momentmap/\lambda}{r})^{-1} \big) \punkt
 	\end{equation*}
 	Since $\momentmap$ is an invertible function on $\CCres$ it follows that 
 	$\momentmap+r \lambda+\lambda \momentmap \frac \partial {\partial \momentmap}$
 	is invertible on $\Smooth(\CCres)\formal{\lambda}$. Since
 	\begin{equation*}
 	\bigg(\momentmap+r \lambda+\lambda \momentmap \frac \partial {\partial 
 	\momentmap}\bigg) 
 	{\momentmap}^{-r}
 	=
 	{\momentmap}^{-r+1} + r \lambda {\momentmap}^{-r} -r \lambda {\momentmap}^{-r}
 	= {\momentmap}^{-r+1}
 	\end{equation*}
 	for all $r\in \NN$, we obtain
 	$\big(\momentmap+r\lambda+\lambda\momentmap\frac\partial{\partial \momentmap}\big)^{-1} 
 	({\momentmap}^{-r+1}) 
 	= {\momentmap}^{-r}$.
 	The statement now follows by induction because the base case $r=0$ reduces to $T(1) = 1$ and is 
 	therefore fulfilled.
\end{proof}
 
\begin{proposition} \label{proposition:Trafo2}
 	There exists a unique equivalence transformation $T$ on $\CCres$ of the form
 	\begin{equation}
 	T = \id + \sum_{k=1}^\infty \sum_{\ell = 1}^{2k} \lambda^k (T_{k,\ell} \circ \momentmap) \bigg( 
 	\frac{\partial}{\partial \momentmap}\bigg)^\ell
 	\label{eq:eqtrafoTexplizit}
 	\end{equation}
 	with $T_{k,\ell} \in \Smooth( {]0,\infty[})$ that has the properties
 	from the previous Proposition~\ref{proposition:Trafo1}.
 	Its inverse $S = T^{-1}$ thus has all the properties \refitem{item:prop:Hermitian} to \refitem{item:prop:partialh}
 	discussed above and additionally fulfils $S(\momentmap) = \momentmap$.
\end{proposition}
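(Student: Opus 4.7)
By the equivalence established in Proposition~\ref{proposition:Trafo1}, the requested properties of $T$ collapse to the single operator identity $[T,\momentmap](f) = \lambda\momentmap\frac{\partial}{\partial\momentmap}T(f)$ for every $f\in\Smooth(\CCres)^{\group{U}(1)}$. My plan is to feed the ansatz \eqref{eq:eqtrafoTexplizit} into this identity to set up a triangular recursion on the coefficient functions $T_{k,\ell}$, from which uniqueness, existence, and the degree bound $\ell\le 2k$ all fall out by induction on $k$.

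Writing $T = \id + \sum_{k\ge 1}\lambda^k T_k$ with $T_k = \sum_{\ell\ge 1}(T_{k,\ell}\circ\momentmap)\bigl(\frac{\partial}{\partial\momentmap}\bigr)^\ell$ and using the Leibniz-type identity $\bigl(\frac{\partial}{\partial\momentmap}\bigr)^\ell(\momentmap f) = \momentmap\bigl(\frac{\partial}{\partial\momentmap}\bigr)^\ell f + \ell\bigl(\frac{\partial}{\partial\momentmap}\bigr)^{\ell-1}f$, a direct computation gives
\begin{equation*}
[T,\momentmap] = \sum_{k\ge 1}\lambda^k\sum_{\ell\ge 1}\ell\,(T_{k,\ell}\circ\momentmap)\Bigl(\tfrac{\partial}{\partial\momentmap}\Bigr)^{\ell-1}
\quad\text{and}\quad
\lambda\momentmap\tfrac{\partial}{\partial\momentmap}T_k = \sum_\ell \momentmap\bigl[(T_{k,\ell}'\circ\momentmap)\bigl(\tfrac{\partial}{\partial\momentmap}\bigr)^\ell + (T_{k,\ell}\circ\momentmap)\bigl(\tfrac{\partial}{\partial\momentmap}\bigr)^{\ell+1}\bigr].
\end{equation*}
Since the identity is required on all $\group{U}(1)$-invariant $f$, in particular on functions of the form $g\circ\momentmap$ for arbitrary $g\in\Smooth(]0,\infty[)$, one may read off the equality of these differential operators coefficient by coefficient. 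Matching powers of $\lambda$ and of $\frac{\partial}{\partial\momentmap}$, and using the conventions $T_{0,0}\equiv 1$, $T_{0,\ell}\equiv 0$ for $\ell\ge 1$, $T_{m,0}\equiv 0$ and $T_{m,-1}\equiv 0$, produces the recursion
\begin{equation*}
(j+1)\,T_{m+1,j+1}(t) \;=\; t\bigl(T_{m,j}'(t) + T_{m,j-1}(t)\bigr) \qquad\text{for all } m\ge 0,\ j\ge 0.
\end{equation*}
This formula uniquely determines each $T_{k,\ell}$ as a polynomial in $t$ (a fortiori as an element of $\Smooth(]0,\infty[)$), which gives uniqueness at once.

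For existence it remains to show that the ansatz form is self-consistent, i.e.\ that $T_{k,\ell}=0$ for $\ell>2k$. This is a short induction: $j=0$ in the recursion yields $T_{m+1,1}\equiv 0$, and assuming $T_{k,\ell}\equiv 0$ for $\ell>2k$, any term $T_{k+1,j+1}$ with $j+1>2k+2$ requires $T_{k,j}$ and $T_{k,j-1}$ with $j-1>2k$, both of which vanish; hence $T_{k+1,\ell}\equiv 0$ for $\ell>2(k+1)$. Thus the inner sum in \eqref{eq:eqtrafoTexplizit} is finite for every $k$, and $T$ is a well-defined equivalence transformation of precisely the stated form.

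The remaining assertions are then bookkeeping on the ansatz. The reality of the coefficients $T_{k,\ell}$ and of $\momentmap$, $\frac{\partial}{\partial\momentmap}$ ensures that $T$ (and hence $S=T^{-1}$) commutes with complex conjugation, giving~\refitem{item:prop:Hermitian}; the $\Stab$-invariance of $\momentmap$ and of $\frac{\partial}{\partial\momentmap}=\frac{1}{2\momentmap}X_\Unit$ gives~\refitem{item:prop:U1equi}; property~\refitem{item:prop:ptw} is exactly what Proposition~\ref{proposition:Trafo1} was set up to encode; and~\refitem{item:prop:partialh} holds because $\CCx$-invariant functions are annihilated by $X_\Unit$ and therefore by every $T_k$ with $k\ge 1$, so $T$ (and hence $S$) acts as the identity on them. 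Finally, setting $f=1$ in the commutator identity gives $T(\momentmap)=\momentmap$, and since formal inversion of an operator of the form \eqref{eq:eqtrafoTexplizit} again has that form, the same is true of $S$. The only genuinely nontrivial point in this plan is the degree bound $\ell\le 2k$; the rest is algebraically forced by the recursion.
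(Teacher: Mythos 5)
Your proposal is correct and takes essentially the same route as the paper: plugging the ansatz \eqref{eq:eqtrafoTexplizit} into the commutator identity from Proposition~\ref{proposition:Trafo1} and matching powers of $\lambda$ and of $\frac{\partial}{\partial\momentmap}$ gives exactly the paper's recursion $(\ell+1)\,T_{k+1,\ell+1}(t) = t\,\big(T_{k,\ell}'(t)+T_{k,\ell-1}(t)\big)$ with the same initial conditions, from which uniqueness and existence follow. Your extra bookkeeping (the degree bound $\ell\le 2k$, the justification of coefficient matching on functions of $\momentmap$, and the verification of properties \refitem{item:prop:Hermitian} to \refitem{item:prop:partialh} for $S$) only spells out what the paper delegates to the surrounding discussion.
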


\begin{proof}
 	By collecting terms in $\lambda^k$ and $\big(\frac{\partial}{\partial \momentmap}\big)^\ell$,
 	the identity 
 	$\kom{T}{\momentmap} 
 	=
 	\lambda \momentmap \frac{\partial}{\partial \momentmap} T$
 	with $T$ like in \eqref{eq:eqtrafoTexplizit} is equivalent to
 	\begin{align*}
 	T_{k+1,\ell+1} \circ \momentmap = \frac{\momentmap}{\ell+1} \big( (T_{k,\ell}' + T_{k,\ell-1}) \circ \momentmap \big)
 	\end{align*}
 	for all $k\in \NN_0$, $\ell \in \NN_0$ with initial conditions
 	$T_{0,0} = 1$ and $T_{0,\ell} = 0 = T_{k,0}$ for all $k\in \NN$, $\ell \in \NN$,
 	where $T_{k,\ell}' \in \Smooth( {]0,\infty[})$ is the derivative of $T_{k,\ell}$
 	and where $T_{k,-1} \coloneqq 0$ for all $k\in \NN_0$.
\end{proof}
So the equivalence transformation $S$ exists and is uniquely determined if we add to the
four requirements \refitem{item:prop:Hermitian} to \refitem{item:prop:partialh}
above the fifth requirement that $S(\momentmap) = \momentmap$, which is just
a convenience.
We can now construct the reduced star product on $\Mred$:

\begin{definition} \label{definition:reducedStar}
  The \neu{transformed star product} $\startilde$ on $\CCres$ is the one obtained
  from $\star$ by application of the equivalence transformation $S = T^{-1}$
  with $T$ like in Proposition~\ref{proposition:Trafo2}. Explicitly,
  \begin{align}
    f \startilde g = S\big( T(f) \star T(g) \big)
  \end{align}
  for all $f,g\in \Smooth(\CCres)\formal{\lambda}$.
  Moreover, the \neu{reduced star product} $\starred$ on $\Mred$ is defined as
  \begin{align}
    f \starred g \coloneqq \sum_{r=0}^\infty \lambda^r \tilde C_{r,\red}(f,g)
  \end{align}
  for all $f,g\in \Smooth(\Mred)$ and extended to formal power series in $\lambda$,
  where $\tilde C_{r,\red}$ on $\Mred$ are the reductions
  like in Definition~\ref{definition:reducible} of the bidifferential operators
  $\tilde C_r$ on $\CCres$ that describe the transformed star product $\startilde$ on $\CCres$.
\end{definition}
Using the defining properties of the reduced bilinear maps $\tilde C_{r,\red}$
it is easy to check that $\starred$ is again associative and it is clear
that the constant $1$-function is the neutral element. It also follows from
the construction that $\tilde C_{r,\red}$ are bidifferential operators on $\Mred$,
but we will also shows this by giving an explicit formula in the next subsection.   
As $T$ and thus also $S$ commute with the
pointwise complex conjugation and the action of $\Stab$, both
$\startilde$ and $\starred$ are Hermitian and $\Stab$-invariant.
Note also that $\startilde$ still deforms in direction of the original Poisson bracket
$\poi{\argument}{\argument}$ (or rather, its restriction to $\CCres$),
so that it is easy to check that $\starred$ deforms in direction of the reduced
Poisson bracket $\poi{\argument}{\argument}_\red$ on $\Mred$.




\subsection{Explicit Formulae}   \label{subsec:physics:explicit}

We want to find an explicit expression for the reduced Poisson bracket
$\poi{\argument}{\argument}_\red$ and star product $\starred$ in terms of bidifferential
operators on $\Mred$.

\begin{lemma} \label{lemma:H:writtenWithW}
	The restriction to $\CCres$ of the tensor $H$ can be expressed as
	\begin{align} \label{eq:H:writtenWithW}
	H \at{\CCres} 
	&=
	\frac{1}{\momentmap} \cc{E} \otimes E 
	+
	H_\Xi
	\end{align}
	with some $H_\Xi \in \SmoothSections(\Xi \otimes \Xi)$.
	Explicitly,
	\begin{align} \label{eq:H:XiPart}
	H_\Xi 
	&=
	\frac{1}{\cc{z}^0z^0} \bigg(
      \sum_{k,\ell=1}^n \frac{\cc{z}^k z^\ell}{\cc{z}^0z^0 } \cc{W}_k \otimes W_\ell
      + 
      \sum_{k=1}^n \nu_k \cc{W}_k \otimes W_k 
	\bigg)
	\end{align}
	on the domain of definition of the vector fields $W_1, \dots, W_n$ and consequently
	\begin{equation} \label{eq:H:reduced}
	H_\red 
	= 
	\bigg(1 + \sum_{k=1}^n \nu_k \cc{w}^k w^k\bigg)
	\bigg(
      \sum_{k,\ell=1}^n \cc{w}^k w^\ell \frac{\partial}{\partial \cc{w}^k} \otimes \frac{\partial}{\partial w^\ell}
      + 
      \sum_{k=1}^n \nu_k \frac{\partial}{\partial \cc{w}^k} \otimes \frac{\partial}{\partial w^k}
	\bigg)
	\end{equation}
	in projective coordinates on $\Mred$.
\end{lemma}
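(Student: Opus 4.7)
The plan is to substitute the formulas \eqref{eq:zAsLinearCombOfW}, which express the standard coordinate vectors $\frac{\partial}{\partial z^k}$ and (by complex conjugation) $\frac{\partial}{\partial \cc{z}^k}$ in terms of the frame $(E,W_1,\dots,W_n)$ and its conjugate on $\CCres \setminus \{z^0 = 0\}$, directly into the definition \eqref{eq:def:H} of $H$. After expanding, one collects the resulting terms according to their type in the decomposition $\Tangent\CCres = \genHull{X_\Unit} \oplus \genHull{X_\I} \oplus \Xi$ from Proposition~\ref{proposition:decomposition:CC}: a piece proportional to $\cc{E}\otimes E$, "cross" pieces proportional to $\cc{E}\otimes W_\ell$ and $\cc{W}_k \otimes E$, and a piece in $\Xi\otimes\Xi$ spanned by the $\cc{W}_k \otimes W_\ell$. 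Formula \eqref{eq:H:XiPart} then identifies the third piece as $H_\Xi$, and \eqref{eq:H:writtenWithW} claims that the first piece is $\momentmap^{-1}\cc{E}\otimes E$ while the cross pieces vanish.

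Carrying out the substitution, the $k=0$ summand of $H$ contributes $\frac{\cc{z}^0 z^0}{\momentmap^2}\cc{E}\otimes E$ together with cross terms $-\sum_\ell \frac{z^\ell}{\momentmap z^0}\cc{E}\otimes W_\ell$ and $-\sum_k \frac{\cc{z}^k}{\momentmap\cc{z}^0}\cc{W}_k \otimes E$ and a pure $\cc{W}\otimes W$ piece; the $k\ge 1$ summands contribute $\sum_{k\ge 1}\nu_k\frac{z^k\cc{z}^k}{\momentmap^2}\cc{E}\otimes E$, together with cross terms of opposite sign (because the relative signs $\nu_k$ appear squared when combined with the factor $\nu_k$ in front), plus $\sum_{k\ge 1}\frac{\nu_k}{\cc{z}^0 z^0}\cc{W}_k\otimes W_k$. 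The coefficient of $\cc{E}\otimes E$ then telescopes to $\momentmap/\momentmap^2 = 1/\momentmap$ thanks to $\cc{z}^0 z^0 + \sum_{k\ge 1}\nu_k z^k \cc{z}^k = \momentmap$, and the cross terms cancel in pairs. The main obstacle is precisely this bookkeeping: one must be careful with the factors $\nu_k$ and the asymmetry between the $k=0$ and $k\ge 1$ cases in \eqref{eq:zAsLinearCombOfW}. The fact that cross terms cancel is not only a pleasant simplification but is also necessary for $H_\Xi$ to lie in $\Xi\otimes\Xi$, a sanity check on the decomposition.

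For the reduction to $\Mred$, apply $(\Tangent_\rho \bigpr)^{\otimes 2}$ at some $\rho \in \Levelset$ with $z^0(\rho)\neq 0$ to both sides of \eqref{eq:H:writtenWithW}. Since $\bigpr$ is constant along $\CCx$-orbits and $E, \cc{E}$ are the generators of this holomorphic action, $\Tangent_\rho\bigpr(E\at\rho) = 0 = \Tangent_\rho\bigpr(\cc{E}\at\rho)$, so the $\cc{E}\otimes E$ term is killed. For the $\Xi$-part, since $\Xi_\rho$ is contained in the image of $\Tangent_\rho\iota$ and $\bigpr\at\Levelset = \pr$, Proposition~\ref{proposition:wredused} yields $\Tangent_\rho\bigpr(W_k\at\rho) = \frac{\partial}{\partial w^k}\at{[\rho]}$ and analogously for $\cc{W}_k$. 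Finally, one rewrites the coefficients at points of $\Levelset$ by using $\momentmap = 1$ there: the relation $\cc{z}^0 z^0 + \sum_{k=1}^n \nu_k \cc{z}^k z^k = 1$ gives $1/(\cc{z}^0 z^0) = 1 + \sum_{k=1}^n \nu_k \cc{w}^k w^k$, and $\cc{z}^k z^\ell/(\cc{z}^0 z^0) = \cc{w}^k w^\ell$ by definition of $w^k$. Substituting these into \eqref{eq:H:XiPart} produces precisely \eqref{eq:H:reduced}, and since both sides are tensor fields defined on all of $\Mred$, the formula on the dense domain where the $w^k$ are defined determines them everywhere.
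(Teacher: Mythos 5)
Your proposal is correct and follows essentially the same route as the paper: substitute \eqref{eq:zAsLinearCombOfW} (and its conjugates) into \eqref{eq:def:H}, observe the cross terms cancel and the $\cc{E}\otimes E$ coefficient sums to $1/\momentmap$, and then reduce by applying $(\Tangent_\rho\bigpr)^{\otimes 2}$, using $\Tangent_\rho\bigpr(E\at\rho)=0$, Proposition~\ref{proposition:wredused} for $W_k$, and the identity $(\cc{z}^0z^0)^{-1}\at{\Levelset}=\bigpr^*(1+\sum_{k=1}^n\nu_k\cc{w}^kw^k)\at{\Levelset}$. The sign bookkeeping you describe (cancellation via $\nu_k^2=1$ and the telescoping $\cc{z}^0z^0+\sum_{k\ge1}\nu_k z^k\cc{z}^k=\momentmap$) is exactly the "easy computation" the paper leaves implicit.
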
 
\begin{proof} The first part is an easy computation using \eqref{eq:zAsLinearCombOfW}.
	The formula for $H_\red$ then follows since 
	$(\cc{z}^0z^0)^{-1} \at{\Levelset} = ( \momentmap / \abs{z^0}^2 ) \at{\Levelset} = \bigpr^*(1+\sum_{k=1}^n \nu_k \cc{w}^k w^k ) \at{\Levelset}$
	and $(\Tangent_\rho \bigpr) (W_k \at{\rho}) = (\Tangent_\rho \pr) (W_k\at{\rho}) = \frac{\partial}{\partial w^k} \at{[\rho]}$
	by Proposition~\ref{proposition:wredused} and because $(\Tangent_\rho \bigpr) (E \at{\rho}) = 0$.
\end{proof}
As an immediate consequence we get from \eqref{eq:poired2}:

\begin{proposition}
  The reduced Poisson tensor $\pi_\red$ that determines $\poi{\argument}{\argument}_\red$ is
	\begin{equation} \label{eq:pi:reduced}
	\pi_\red 
	= 
	- 2 \I 
	\bigg(1 + \sum_{k=1}^n \nu_k \cc{w}^kw^k \bigg)
	\bigg(
      \sum_{k,\ell=1}^n \cc{w}^k w^\ell \frac{\partial}{\partial \cc{w}^k} \wedge \frac{\partial}{\partial w^\ell}
      + 
      \sum_{k=1}^n \nu_k \frac{\partial}{\partial \cc{w}^k} \wedge \frac{\partial}{\partial w^k}
	\bigg)
	\end{equation}  
	in projective coordinates.
\end{proposition}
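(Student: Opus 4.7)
The plan is to derive the formula as a direct consequence of the two immediately preceding results. From Proposition~\ref{proposition:poired} we have the abstract identity $\pi_\red = 2\IM(H_\red)$, and Lemma~\ref{lemma:H:writtenWithW} provides the explicit expression
\begin{equation*}
H_\red = \bigg(1 + \sum_{k=1}^n \nu_k \cc{w}^k w^k\bigg)\bigg(\sum_{k,\ell=1}^n \cc{w}^k w^\ell \frac{\partial}{\partial \cc{w}^k} \otimes \frac{\partial}{\partial w^\ell} + \sum_{k=1}^n \nu_k \frac{\partial}{\partial \cc{w}^k} \otimes \frac{\partial}{\partial w^k}\bigg)
\end{equation*}
in projective coordinates. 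So the proof is essentially a bookkeeping exercise: rewrite $2\IM(H_\red) = -\I(H_\red - \cc{H_\red})$ in the coordinate basis.

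First, I would compute $\cc{H_\red}$. Conjugation sends $\frac{\partial}{\partial \cc w^k} \otimes \frac{\partial}{\partial w^\ell}$ to $\frac{\partial}{\partial w^k} \otimes \frac{\partial}{\partial \cc w^\ell}$ and $\cc{w}^k w^\ell$ to $w^k \cc{w}^\ell$, while the prefactor $1 + \sum_k \nu_k \cc{w}^k w^k$ is already real (since $\nu_k \in \RR$ and $\cc w^k w^k \geq 0$) and hence unchanged. Relabelling summation indices $(k,\ell) \leftrightarrow (\ell,k)$ in the mixed sum puts $\cc{H_\red}$ into the form
\begin{equation*}
\cc{H_\red} = \bigg(1 + \sum_{k=1}^n \nu_k \cc{w}^k w^k\bigg)\bigg(\sum_{k,\ell=1}^n \cc{w}^k w^\ell \frac{\partial}{\partial w^\ell} \otimes \frac{\partial}{\partial \cc{w}^k} + \sum_{k=1}^n \nu_k \frac{\partial}{\partial w^k} \otimes \frac{\partial}{\partial \cc{w}^k}\bigg),
\end{equation*}
which has exactly the same scalar coefficients as $H_\red$ but with the two tensor factors swapped.

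Second, subtracting, the common prefactor pulls out and each term becomes of the form $X \otimes Y - Y \otimes X = 2\, X \wedge Y$ (using the wedge convention $X \wedge Y = \frac12(X\otimes Y - Y\otimes X)$ fixed in Section~\ref{sec:notation}). Multiplying by $-\I$ then yields precisely
\begin{equation*}
\pi_\red = -2\I \bigg(1 + \sum_{k=1}^n \nu_k \cc{w}^k w^k\bigg)\bigg(\sum_{k,\ell=1}^n \cc{w}^k w^\ell \frac{\partial}{\partial \cc{w}^k} \wedge \frac{\partial}{\partial w^\ell} + \sum_{k=1}^n \nu_k \frac{\partial}{\partial \cc{w}^k} \wedge \frac{\partial}{\partial w^k}\bigg),
\end{equation*}
as claimed. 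There is no genuine obstacle; the only thing to be careful about is the sign convention for the wedge product and the reality of the prefactor, both of which were already fixed in earlier sections.
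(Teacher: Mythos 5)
Your proposal is correct and follows exactly the paper's route: the paper states this proposition as an immediate consequence of $\pi_\red = 2\IM(H_\red)$ (Proposition~\ref{proposition:poired}) and the coordinate formula for $H_\red$ in Lemma~\ref{lemma:H:writtenWithW}, and your computation of $2\IM(H_\red) = -\I(H_\red - \cc{H_\red})$ with the convention $X\wedge Y = \tfrac12(X\otimes Y - Y\otimes X)$ simply spells out that bookkeeping correctly.
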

For the signature $s=1+n$, this is the usual Poisson tensor associated to the symplectic Fubini--Study form 
on $\Mred[{\signature[1+n]}] \cong \CC\PP^n$. If $s=1$, then one obtains (up to a sign) the Poisson tensor 
associated to the symplectic Fubini--Study form on the hyperbolic disc $\Mred[{\signature[1]}] \cong \DD^n$.

Similarly to the Wick star product from Definition~\ref{definition:wick},
the bidifferential operators defining the reduced star product
should be expressed using symmetrized covariant derivatives. In order to
define reduced symmetrized covariant derivatives we need the following:

\begin{definition}
  We write $\bigproj_\Xi \colon \SmoothSections(\Tangent \CCres) \to \SmoothSections(\Tangent \CCres)$
  for the projection on the subbundle $\Xi$ of $\Tangent \CCres$ associated to the decomposition
  $\Tangent \CCres = \genHull{X_\Unit} \oplus \genHull{X_\I} \oplus \Xi$
  from Proposition~\ref{proposition:decomposition:CC}. Moreover, its dual will be denoted by
  $\bigproj_\Xi^* \colon \SmoothSections(\CoTangent \CCres) \to \SmoothSections(\CoTangent \CCres)$.
\end{definition}
Note that $\bigproj_\Xi$ commutes with the complex structure $I$ of $\CCres$.
Like in Propositions~\ref{proposition:covDer:reduction} and \ref{proposition:covsymDer:reduction}
we can construct a reduced exterior covariant derivative and a reduced symmetrized covariant derivative
on $\Mred$ out of $D$ and $D^\sym$ on $\CCres$, because $D$ and $D^\sym$ are $\CCx$-invariant 
(even invariant under arbitrary linear automorphisms of $\CC^{1+n}$):

\begin{definition}
  By
  $D_\red \colon (\ASymSec\otimes \SymSec)^{\bullet,\bullet}(\Mred) \to (\ASymSec\otimes \SymSec)^{\bullet+1,\bullet}(\Mred)$
  we denote the
  \neu{reduced exterior covariant derivative} on $\Mred$, which is the one that fulfils
  \begin{align}
    \bigpr^*\big(D_\red \Omega\big) = (\bigproj^*_\Xi)^{\otimes (k+1+\ell)} D \bigpr^*(\Omega)
  \end{align}
  for all $\Omega \in (\ASymSec \otimes \SymSec)^{k,\ell}(\Mred)$, $k,\ell\in \NN_0$, and
  analogously, the \neu{reduced symmetrized covariant derivative}
  $D^\sym_\red \colon \SymSec^\bullet(\Mred) \to \SymSec^{\bullet+1}(\Mred)$ on $\Mred$ is determined
  by
\begin{align}
  \bigpr^*\big(D^\sym_\red \omega\big) = (\bigproj^*_\Xi)^{\otimes (k+1)} D^\sym \bigpr^*(\omega)
  \label{eq:definition:Dsymred}
\end{align}
for all $\omega \in \SymSec^k(\Mred)$, $k\in \NN_0$.
\end{definition}
We will give a more explicit characterization of the corresponding covariant derivative
on $\Mred$ later in Proposition~\ref{proposition:Dredchar}. Note that $D$ is compatible
with the complex structure on $\CC^{1+n}$ in the sense of Definition~\ref{definition:compatibleWithComplexStructure},
and thus splits into $(1,0)$ and $(0,1)$-components $D = D_\hol + D_{\cc{\hol}}$
like in Definition~\ref{definition:holantiholofD}, analogously $D^\sym = D^\sym_\hol + D^\sym_{\cc{\hol}}$.
This carries over to the reduced derivatives:
\begin{proposition} \label{proposition:Dsymredhol}
  The reduced exterior covariant derivative $D_\red$ is compatible with the
  complex structure and
  \begin{equation}
    \bigpr^*(D^\sym_{\red,\hol} \omega) = (\bigproj_\Xi^*)^{\otimes(k+1)} D^\sym_\hol \bigpr^*(\omega)
    \label{eq:definition:Dsymredhol}
  \end{equation}
  holds for all $\omega \in \SymSec^k(\Mred)$.
\end{proposition}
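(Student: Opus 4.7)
The plan is to show that every building block in the definition of $D_\red^\sym$ respects the bidegree coming from the complex structure, which then forces the compatibility and gives \eqref{eq:definition:Dsymredhol} by uniqueness of the type decomposition.

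First I would verify that the projection $\bigproj_\Xi$ commutes with $I$ on $\CCres$. The vector fields $X_\Unit$ and $X_\I$ satisfy $IX_\I = -X_\Unit$ and $IX_\Unit = X_\I$, so the rank-two subbundle $\genHull{X_\Unit}\oplus\genHull{X_\I}$ is $I$-stable. For $\Xi$, observe that if $\alpha_\rho\in\Xi_\rho$, then by \eqref{eq:def:Xi} we have $\alpha_\rho(\momentmap)=0$ and $(I\alpha_\rho)(\momentmap)=0$; applying $I$ again and using $I^2=-\id$ yields $(I\alpha_\rho)(\momentmap)=0$ and $(I(I\alpha_\rho))(\momentmap)=-\alpha_\rho(\momentmap)=0$, so $I\alpha_\rho\in\Xi_\rho$ as well. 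Hence $\bigproj_\Xi$ is $I$-linear, and dually $\bigproj_\Xi^*$ preserves the splitting $\CoTangent\CCres=\Tangent^{*,(1,0)}\CCres\oplus\Tangent^{*,(0,1)}\CCres$. By the multiplicativity of the symmetric tensor power, each $(\bigproj_\Xi^*)^{\otimes j}$ then preserves the bidegree decomposition of $\SmoothSections(\Symten{j}\CoTangent\CCres)$.

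Second, since $\bigpr\colon\CCres\to\Mred$ is holomorphic (as established in the previous section), the pullback $\bigpr^*$ sends sections of bidegree $(p,q)$ on $\Mred$ to sections of bidegree $(p,q)$ on $\CCres$. Third, the Euclidean symmetrized covariant derivative $D^\sym$ on $\CCres$ is compatible with the complex structure in the sense of Definition~\ref{definition:compatibleWithComplexStructure}; this is part of the general discussion in Appendix~\ref{sec:derivatives} and is the reason we may split $D^\sym=D^\sym_\hol+D^\sym_{\cc\hol}$ in the first place. Combining these three facts: for $\omega\in\SymSec^k(\Mred)$ of pure bidegree $(p,q)$ with $p+q=k$, we have
\begin{equation*}
  (\bigproj_\Xi^*)^{\otimes(k+1)}D^\sym\bigpr^*(\omega)
  =
  (\bigproj_\Xi^*)^{\otimes(k+1)}D^\sym_\hol\bigpr^*(\omega)
  +
  (\bigproj_\Xi^*)^{\otimes(k+1)}D^\sym_{\cc\hol}\bigpr^*(\omega),
\end{equation*}
where the two summands have bidegree $(p+1,q)$ and $(p,q+1)$ respectively.

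Finally, the left-hand side equals $\bigpr^*(D^\sym_\red\omega)$ by \eqref{eq:definition:Dsymred}, and since $\bigpr^*$ is injective on $\SymSec^\bullet(\Mred)$ and preserves bidegree, the unique decomposition of $D^\sym_\red\omega$ into $(p+1,q)$ and $(p,q+1)$ parts must be given by the two summands on the right, read off via $\bigpr^*$. In particular, the $(p+1,q)$-part $D^\sym_{\red,\hol}\omega$ is characterized by \eqref{eq:definition:Dsymredhol}, and $D_\red^\sym$ is therefore compatible with the complex structure on $\Mred$. An analogous (in fact simpler) argument for $D_\red$ in place of $D^\sym_\red$ establishes the first assertion of the proposition. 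The only place where one has to be slightly careful is the interaction between $\bigproj_\Xi^*$ and the symmetric power structure, but this is purely functorial since $\bigproj_\Xi^*$ is $\Smooth(\CCres)$-linear and $I$-linear.
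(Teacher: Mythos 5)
Your proposal is correct and follows essentially the same route as the paper: both rest on the facts that $\bigproj_\Xi^*$ commutes with the complex structure (hence with the bidegree projections), that $\bigpr^*$ preserves types because $\bigpr$ is holomorphic, and that $D^\sym$ on $\CCres$ is compatible with the complex structure, from which the formula follows from the defining relation \eqref{eq:definition:Dsymred}. Your extra details (the explicit check that $\Xi$ and $\genHull{X_\Unit}\oplus\genHull{X_\I}$ are $I$-stable, and the injectivity of $\bigpr^*$) only spell out what the paper cites from Proposition~\ref{proposition:decomposition:CC} and uses implicitly; just note that the compatibility of $D_\red$ should logically be established before invoking $D^\sym_{\red,\hol}$.
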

\begin{proof}
  As a consequence of Proposition~\ref{proposition:decomposition:CC} the projection
  $\bigproj^*_\Xi$ commutes with the complex structure $I$ on $\CCres$ and therefore 
  $(\bigproj^*_\Xi)^{\otimes(p+q)}$ commutes with the projection onto 
  symmetric tensors of degree $(p, q)$. The projection onto such tensors also 
  commutes with $\bigpr^*$ since $\bigpr$ is holomorphic. Therefore $D_\red$ is
  compatible with the complex structure and 
  \eqref{eq:definition:Dsymredhol} follows immediately from \eqref{eq:definition:Dsymred}.
\end{proof}
We can now formulate the main theorem of this section:

\begin{theorem}\label{theorem:reducedWickProductFormula}
 	The reduced Wick star product is
 	\begin{equation}
 	f \starred g
 	=
 	\sum_{r=0}^\infty \frac 1 {r!} \frac {1} {\falling{1/\lambda} {r}} 
 	\dupr[\big]{(D_\red^\sym)^r f \otimes (D_\red^\sym)^r g }{ H_\red^r }
 	\end{equation}
 	for all $f,g\in \Smooth(\Mred)$,
 	where $H_\red\at{[\rho]} = (\Tangent_\rho \bigpr)^{\otimes 2} H \at \rho$ was computed
 	in Lemma~\ref{lemma:H:writtenWithW}. Moreover, if even $g\in \Polynomials(\Mred)$,
 	then the series in $r$ in the product $f \starred g$ has only finitely many non-zero terms.
\end{theorem}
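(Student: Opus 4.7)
My plan is to start from
$\pr^*(f \starred g) = \big(S(\bigpr^*(f) \star \bigpr^*(g))\big)\at{\Levelset}$,
which follows immediately from Definition~\ref{definition:reducedStar} combined with Property~\refitem{item:prop:partialh} (so that $T$ acts as the identity on the $\CCx$-invariant functions $\bigpr^*(f)$ and $\bigpr^*(g)$), and then to rewrite the right-hand side entirely in reduced data.

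The first step is to expand the Wick star product using the decomposition
$H\at{\CCres} = \momentmap^{-1}\cc E \otimes E + H_\Xi$
from Lemma~\ref{lemma:H:writtenWithW} together with the binomial theorem in the symmetric algebra:
\begin{equation*}
  H^r = \sum_{k=0}^{r}\binom{r}{k}\momentmap^{-k}(\cc E\otimes E)^{k} \vee H_\Xi^{r-k}.
\end{equation*}
Using the pairing identity~\eqref{eq:insertionpairing}, each summand in the $\lambda^r$-order of $\bigpr^*(f)\star\bigpr^*(g)$ reduces to iterated insertions $\iota_{\cc E}^k(D^\sym)^r\bigpr^*(f)$ on the one side and $\iota_E^k(D^\sym)^r\bigpr^*(g)$ on the other, paired against the $H_\Xi^{r-k}$ factors.

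The core technical step is the evaluation of these iterated insertions for $\CCx$-invariant functions. The starting point is $\cc E(\bigpr^*(f)) = E(\bigpr^*(g)) = 0$, and the induction step uses the explicit covariant derivative identities
\begin{equation*}
  \nabla_E E = E,\ \nabla_{\cc E}\cc E = \cc E,\ \nabla_E \cc E = 0 = \nabla_{\cc E}E,\ \nabla_E W_j = W_j,\ \nabla_{\cc E}\cc W_j = \cc W_j,\ \nabla_{\cc E}W_j = 0 = \nabla_E\cc W_j,
\end{equation*}
all easily checked from the flatness of the Euclidean connection on $\CC^{1+n}$ and the explicit formulae for $E, \cc E, W_j, \cc W_j$. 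Combined with the defining property $(\bigproj_\Xi^*)^{\otimes m}(D^\sym)^m\bigpr^*(f) = \bigpr^*\big((D_\red^\sym)^m f\big)$ and the pullback relation $(\Tangent_\rho \bigpr)^{\otimes 2}(H_\Xi\at\rho) = \momentmap(\rho)^{-1}H_\red\at{\bigpr(\rho)}$, this simplification yields an expression of the form
\begin{equation*}
  \bigpr^*(f)\star\bigpr^*(g) = \sum_{r=0}^{\infty} \frac{\lambda^r}{r!}\,G_r(\momentmap)\,\bigpr^*\dupr{(D_\red^\sym)^r f \otimes (D_\red^\sym)^r g}{H_\red^r}
\end{equation*}
with explicit $\momentmap$-dependent prefactors $G_r$.

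To finish, Proposition~\ref{proposition:Trafo1} provides $S(\momentmap^m) = \prod_{k=0}^{m-1}(\momentmap - k\lambda)$, whose restriction to $\Levelset$ equals $\prod_{k=0}^{m-1}(1-k\lambda) = \lambda^m \falling{1/\lambda}{m}$. Since $S$ is built from $\momentmap$-derivatives and functions of $\momentmap$, it commutes with multiplication by the $\CCx$-invariant factor $\bigpr^*\dupr{\ldots}{\ldots}$, so applying $S$ and restricting to $\Levelset$ reduces to evaluating $S(G_r(\momentmap))\at{\momentmap=1}$; a combinatorial identity coming from the re-summation then produces exactly $1/\falling{1/\lambda}{r}$. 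The finiteness claim for $g \in \Polynomials(\Mred)$ follows because $\bigpr^*(g) \in \Polynomials(\CC^{1+n})$ has bounded polynomial degree, forcing $(D^\sym)^r\bigpr^*(g) = 0$ and hence $(D_\red^\sym)^r g = 0$ for sufficiently large $r$ by the defining relation. The hard part is precisely the combinatorial bookkeeping of this last step, where signs, binomial coefficients and powers of $\momentmap$ must combine across all $k \in \{0,\dots,r\}$ so that the many contributions collapse into the single prefactor $G_r(\momentmap)$ whose image under $S\at{\momentmap=1}$ is $1/\falling{1/\lambda}{r}$, rather than any other function consistent with the leading orders.
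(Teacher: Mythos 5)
Your overall strategy is the same as the paper's (decompose $H\at{\CCres} = \momentmap^{-1}\cc E\otimes E + H_\Xi$, evaluate the vertical insertions on $\CCx$-invariant functions, identify the projected iterated derivatives with reduced ones, and resum), but as written the proposal has genuine gaps at exactly the points where the work lies. First, the identity you call the ``defining property'', $(\bigproj_\Xi^*)^{\otimes m}(D^\sym)^m\bigpr^*(f) = \bigpr^*\big((D_\red^\sym)^m f\big)$, is not the definition: the reduced derivative is defined with projections interleaved after \emph{every} application of $D^\sym$, cf.\ \eqref{eq:definition:Dsymred}. Dropping the intermediate projections is a nontrivial statement; it is proved in the paper only for the pure holomorphic and antiholomorphic parts (Lemma~\ref{lemma:DofEStar}, resting on $D^\sym_\hol E^* = -(E^*)^2$), and for the full $D^\sym$ it fails at the tensor level: already $D^\sym_{\antihol}E^* = \momentmap^{-1}\sum_k\nu_k\D\cc z^k\vee\D z^k - \cc E^*\vee E^*$ has a component surviving $(\bigproj^*_\Xi)^{\otimes 2}$, so correction terms appear from order three on. One must first use the type structure of $H$ and $H_\red$ (Proposition~\ref{proposition:holahol}) to replace $(D^\sym)^r$ by $(D^\sym_{\antihol})^r$ and $(D^\sym_\hol)^r$ inside the pairing, and only then remove the intermediate projections. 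Neither of these two ingredients appears in your argument. Relatedly, your finiteness argument is flawed in detail: $\bigpr^*(\MonomRed{P}{Q}) = \momentmap^{-\abs P}\Monom{P}{Q}$ is not a polynomial on $\CC^{1+n}$, $(D^\sym)^r\bigpr^*(g)$ does not vanish for large $r$, and neither does $(D_\red^\sym)^r g$ in general; what vanishes (for $r>\abs P$) is the holomorphic part $(\bigproj^*_\Xi)^{\otimes r}(D^\sym_\hol)^r$ applied to $\Monom{P}{Q}$ after the $\momentmap^{-\abs P}$ factor has been pulled out using $\bigproj^*_\Xi D^\sym_\hol\momentmap = 0$, which suffices because the pairing with $H_\red^r$ only sees that part.

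Second, the quantitative core is deferred rather than proved. The evaluation of the vertical insertions is not a routine consequence of the connection identities you list; the paper derives $\iota_E(D^\sym_\hol)^r g = -r(r-1)(D^\sym_\hol)^{r-1}g$ from the commutator relation $\kom{\iota_E}{D^\sym_\hol}\sigma = -2k\sigma$ on $\CCx$-invariant $\sigma$ of symmetric degree $k$ (Lemma~\ref{lemma:kommutatoriotaEDhol}), and this produces the specific coefficients $\tfrac{r!(r-k)!}{k!}\binom{r-1}{k-1}^2$ in Lemma~\ref{lemma:insertEInSymCovDer}. The subsequent collapse of the double sum into the single prefactor $1/\falling{1/\lambda}{r}$ requires the identity $\sum_{s\ge 0}\tfrac{1/\lambda}{\rising{1/\lambda}{k+s+1}}\binom{k+s-1}{k-1}^2 s! = 1/\falling{1/\lambda}{k}$ (Lemma~\ref{lemma:relationBetweenRisingAndFallingFactorials}), proved by a separate induction. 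You acknowledge that this ``combinatorial bookkeeping'' is the hard part and do not supply it, so the proposal is a plausible plan following the paper's route (with the cosmetic difference that you apply $S$ after, rather than before, the expansion in $H_\Xi$, which is legitimate since $S$ acts trivially on $\CCx$-invariant factors), but it is not yet a proof.
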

Note that for complex projective spaces and hyperbolic discs this formula
coincides (up to rescaling the formal parameter) with the formula derived in 
\cite[Thm.~3.2.4]{loeffler:FedosovDifferentialsAndCartanNumbers}
for a Fedosov star product with form $\Omega = 0$.
For the proof of Theorem~\ref{theorem:reducedWickProductFormula} we have to collect some intermediate results:

\begin{lemma} \label{lemma:transformedWickRewritten}
 	On $\CCx$-invariant functions $f, g \in \Smooth(\CCres)^{\CCx}$ the transformed Wick star product can be 
 	expressed as
 	\begin{equation} \label{eq:starProduct:transformedWick}
 	f \startilde g
 	=
 	S (f \star g)
 	=
 	\sum_{r=0}^\infty \frac{1}{ r!} \frac {\momentmap/\lambda} 
 	{\rising{\momentmap/\lambda}{r+1}} 
 	\dupr[\big]{(D^\sym_{\antihol})^r f \otimes (D^\sym_\hol)^r g}{ H^r {\momentmap}^r } \punkt
 	\end{equation}
\end{lemma}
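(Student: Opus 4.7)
The plan is to apply $S$ to the explicit expression of $f\star g$ and to exploit that $S$ commutes with multiplication by any $\CCx$-invariant function. First I would observe that for $\CCx$-invariant $f,g \in \Smooth(\CCres)^{\CCx}$ one has $T(f)=f$ and $T(g)=g$: every summand of $T-\id$ contains at least one factor $\frac{\partial}{\partial\momentmap} = \frac{1}{2\momentmap}X_\Unit$, and $X_\Unit$ annihilates $\CCx$-invariants. Consequently $f \startilde g = S(T(f) \star T(g)) = S(f\star g)$. Since $H \in \SmoothSections(\Tangent^{(0,1)}\CC^{1+n} \otimes \Tangent^{(1,0)}\CC^{1+n})$, the pairing with $H^r$ sees only the $(0,r)$-part of $(D^\sym)^r f$ and the $(r,0)$-part of $(D^\sym)^r g$, so that
\begin{equation*}
  f \star g = \sum_{r=0}^\infty \frac{\lambda^r}{r!}\, C_r(f,g), \qquad C_r(f,g) \coloneqq \dupr[\big]{(D^\sym_\antihol)^r f \otimes (D^\sym_\hol)^r g}{H^r}.
\end{equation*}

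The key claim is that $\momentmap^r\, C_r(f,g)$ is itself $\CCx$-invariant. To see this I would expand $C_r(f,g)$ in the standard coordinates on $\CC^{1+n}$: using that partial derivatives commute, it becomes a linear combination (with constant coefficients) of terms $\nu_{k_1}\cdots\nu_{k_r}\bigl(\frac{\partial^r f}{\partial\cc z^{k_1}\cdots\partial\cc z^{k_r}}\bigr)\bigl(\frac{\partial^r g}{\partial z^{k_1}\cdots\partial z^{k_r}}\bigr)$ with $k_1,\dots,k_r \in \{0,\dots,n\}$. Under the rescaling $z \mapsto \alpha z$ with $\alpha \in \CCx$, differentiating the identity $f(\alpha z,\cc\alpha \cc z) = f(z,\cc z)$ shows that each $r$-th iterated $\cc z$-derivative of $f$ picks up a factor $\cc\alpha^{-r}$, and similarly each $r$-th iterated $z$-derivative of $g$ picks up $\alpha^{-r}$; hence $C_r(f,g)$ rescales by $\abs{\alpha}{}^{-2r}$, which is precisely cancelled by the $\abs{\alpha}{}^{2r}$-scaling of $\momentmap^r$.

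Finally, I would use that $S$ commutes with multiplication by every $\CCx$-invariant function $h$. Because $\frac{\partial}{\partial\momentmap} h = 0$ for such $h$, the Leibniz rule gives $\bigl(\frac{\partial}{\partial\momentmap}\bigr)^\ell (hF) = h\,\bigl(\frac{\partial}{\partial\momentmap}\bigr)^\ell F$ for every $\ell$, so each summand $(T_{k,\ell}\circ\momentmap)\bigl(\frac{\partial}{\partial\momentmap}\bigr)^\ell$ of $T$ (and hence of $S=T^{-1}$) commutes with multiplication by $h$. Factoring $C_r(f,g) = \bigl(\momentmap^r C_r(f,g)\bigr)\cdot\momentmap^{-r}$ and invoking $S(\momentmap^{-r}) = \momentmap\bigl/\bigl(\lambda^{r+1}\rising{\momentmap/\lambda}{r+1}\bigr)$ from Proposition~\ref{proposition:Trafo1} yields
\begin{equation*}
  S\bigl(C_r(f,g)\bigr) = \momentmap^r C_r(f,g) \cdot \frac{\momentmap}{\lambda^{r+1}\rising{\momentmap/\lambda}{r+1}}.
\end{equation*}
Summing $\lambda^r/r!$ times this expression over $r\ge 0$ and absorbing $\momentmap^r$ back into the pairing reproduces exactly the formula in the statement. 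The most delicate point is the $\CCx$-scaling argument establishing invariance of $\momentmap^r C_r(f,g)$; once this is in place, the remainder reduces to the identity $S(\momentmap^{-r})$ from Proposition~\ref{proposition:Trafo1} together with simple algebraic bookkeeping.
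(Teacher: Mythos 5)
Your proposal is correct and follows essentially the same route as the paper's proof: the first equality comes from $T$ (and $S$) acting as the identity on $\CCx$-invariant functions, then one inserts $\momentmap^r\momentmap^{-r}$, observes that $\dupr{(D^\sym_{\antihol})^r f\otimes(D^\sym_\hol)^r g}{H^r\momentmap^r}$ is $\CCx$-invariant so that $S$ only acts on the factor $\momentmap^{-r}$, and applies $S(\momentmap^{-r})$ from Proposition~\ref{proposition:Trafo1}. You merely spell out two points the paper leaves implicit (the scaling argument for $\CCx$-invariance of $\momentmap^r C_r(f,g)$ and the fact that $S$ commutes with multiplication by $\CCx$-invariant functions), which is fine.
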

\begin{proof}
The first equality in \eqref{eq:starProduct:transformedWick} follows from requirement 
\refitem{item:prop:partialh} for the equivalence transformation.
For the second one we use that we can express $f\star g$ as
\begin{align*}
  f \star g 
  =
  \sum_{r=0}^\infty \frac{\lambda^r}{ {\momentmap}^r r!} 
  \dupr[\big]{(D^\sym)^r f \otimes (D^\sym)^r g}{ H^r {\momentmap}^r } \punkt
\end{align*}
Note that $\dupr{(D^\sym)^r f \otimes (D^\sym)^r g}{ H^r {\momentmap}^r }$ is $\CCx$-invariant, 
so that all its derivatives $\frac{\partial}{\partial \momentmap}$ vanish.
Now 
$\dupr{(D^\sym)^r f \otimes (D^\sym)^r g}{ H^r {\momentmap}^r } = 
\dupr{(D^\sym_{\antihol})^r f \otimes (D^\sym_\hol)^r g}{ H^r {\momentmap}^r }$
because of Proposition~\ref{proposition:holahol} and since the first tensor factor of $H$ lies in 
$\Tangent^{(0,1)}\CC^{1+n}$ and the second one in
$\Tangent^{(1,0)}\CC^{1+n}$.
Then it only remains to apply the formula for
$S({\momentmap}^{-r})$ from Proposition~\ref{proposition:Trafo1}. 
\end{proof}
If we restrict \eqref{eq:starProduct:transformedWick} to $\Levelset$, we can substitute
$\momentmap$ by $1$. In order to express 
$\dupr{(D^\sym_{\antihol})^r f \otimes (D^\sym_\hol)^r g}{ H^r }\at{\Levelset}$
with $\CCx$-invariant functions $f$ and $g$ by differential operators on $\Mred$,
we use formula \eqref{eq:H:writtenWithW} for $H$ and explicitly calculate the
contribution of the vertical directions $E$ and $\cc{E}$:

\begin{lemma} \label{lemma:kommutatoriotaEDhol}
	For $\CCx$-invariant $\sigma \in \SmoothSections(\Symten k \Tangent^{*,(1,0)} \CCres)^{\CCx}$,
	$k\in \NN_0$, we get $\kom{\iota_E}{ D_\hol^\sym }(\sigma) = -2 k \sigma$.
\end{lemma}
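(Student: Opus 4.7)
The plan is to work in the standard holomorphic coordinates $z^0,\dots,z^n$ on $\CCres$, compute $\kom{\iota_E}{D_\hol^\sym}(\sigma)$ explicitly in these coordinates, and only apply the $\CCx$-invariance of $\sigma$ at the very end. Because the Euclidean connection on $\CC^{1+n}$ is flat, its Christoffel symbols vanish in the $z^k$-coordinates, so the computation reduces to elementary bookkeeping with the partial derivatives $\partial_{z^\ell}$ and the symmetric products of $\D z^\ell$.

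First I would write $\sigma = \sigma_{i_1\cdots i_k}\,\D z^{i_1}\vee\cdots\vee \D z^{i_k}$ with components symmetric in $(i_1,\dots,i_k)$ and, using the description of $D^\sym$ from Appendix~\ref{sec:derivatives}, obtain
\[
D_\hol^\sym \sigma
=
\partial_{z^\ell}(\sigma_{i_1\cdots i_k})\,\D z^\ell \vee \D z^{i_1}\vee\cdots\vee \D z^{i_k}\,.
\]
Applying the degree $-1$ derivation $\iota_E$ and using $\dupr{\D z^m}{E}=z^m$, the result splits into two pieces: inserting $E$ into the new factor $\D z^\ell$ produces $z^\ell \partial_{z^\ell}\sigma_I\,\D z^I = E(\sigma_I)\D z^I$, while inserting $E$ into one of the $k$ old factors $\D z^{i_j}$ yields, by symmetry of $\sigma$, $k$ identical copies of a single ``symmetry term''. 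Computing $D_\hol^\sym \iota_E \sigma$ analogously, starting from $\iota_E \sigma = k\,z^{i_1}\sigma_{i_1 i_2\cdots i_k}\,\D z^{i_2}\vee\cdots\vee \D z^{i_k}$, produces a contribution $k\sigma$ whenever $\partial_{z^\ell}$ hits the explicit $z^{i_1}$, together with an identical symmetry term whenever $\partial_{z^\ell}$ hits $\sigma_{i_1\cdots i_k}$. The symmetry contributions cancel in the commutator, so that
\[
\kom{\iota_E}{D_\hol^\sym}(\sigma) = E(\sigma) - k\sigma\,,
\]
where $E(\sigma)$ denotes the componentwise action of $E$ on the coefficients $\sigma_I$.

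The final step exploits the $\CCx$-invariance of $\sigma$. Under multiplication $z\mapsto \alpha z$ by $\alpha\in\CCx$, each factor $\D z^{i_j}$ pulls back to $\alpha\,\D z^{i_j}$, so invariance of $\sigma$ forces the coefficients to transform as $\sigma_I(\alpha z,\cc\alpha \cc z) = \alpha^{-k}\sigma_I(z,\cc z)$. Differentiating this at $\alpha = 1$ in the real and imaginary directions yields $X_\Unit \sigma_I = -k \sigma_I$ and $X_\I \sigma_I = -\I k \sigma_I$, hence $E\sigma_I = \frac{1}{2}(X_\Unit - \I X_\I)\sigma_I = -k\sigma_I$, i.e.\ $E(\sigma) = -k\sigma$. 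Substituting this into the formula above gives the desired identity $\kom{\iota_E}{D_\hol^\sym}(\sigma) = -2k\sigma$.

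The main obstacle is purely combinatorial: one has to verify carefully that the two ``symmetry contributions'', arising from inserting $E$ into the $k$ original factors $\D z^{i_j}$ in $\iota_E D_\hol^\sym \sigma$ and from $\partial_{z^\ell}$ differentiating $\sigma_{i_1\cdots i_k}$ in $D_\hol^\sym \iota_E \sigma$, really match term-by-term so that they cancel in the commutator. This uses the symmetry of $\sigma$ in all $k$ indices in an essential way; once this cancellation is secured, the rest of the argument is straightforward.
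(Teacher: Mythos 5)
Your proof is correct and follows essentially the same route as the paper: a computation in the flat coordinates $z^0,\dots,z^n$ together with the observation that $\CCx$-invariance forces the coefficients of a $(k,0)$-tensor to satisfy $E(\sigma_I)=-k\sigma_I$ (the paper obtains this from $\mathscr{L}_E\sigma=0$ and $\mathscr{L}_E\,\D z^\ell=\D z^\ell$ rather than by differentiating the group action, but it is the same homogeneity statement). The only difference is organizational: the paper notes that $\kom{\iota_E}{D_\hol^\sym}$ is itself a derivation and evaluates it on the generators $\sigma_P$ and $\D z^\ell$, which makes the cancellation of your two ``symmetry terms'' automatic, whereas you verify that cancellation explicitly — both versions of the bookkeeping check out.
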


\begin{proof}
    Let $k\in \NN_0$ be given. For a multiindex $P \in \NN_0^{1+n}$ we write 
    $(\D z)^P \coloneqq (\D z^0)^{P_0} \vee \dots \vee (\D z^n)^{P_n}$, then a
    general element $\sigma \in \SmoothSections(\Symten k \Tangent^{*,(1,0)} \CCres)^{\CCx}$ can be expanded as
    $\sigma = \sum_{P\in \NN_0^{1+n}, \abs{P}=k} \sigma_P (\D z)^P$ with coefficient
    functions $\sigma_P \in \Smooth(\CCres)$.
    Note that $\CCx$-invariance of $\sigma$ implies that its Lie derivative with $E$
    vanishes, $\mathscr L_E \sigma = 0$. As
    $\mathscr L_E \D z^\ell = \D z^\ell$ for all $\ell\in \{0,\dots,n\}$,
    hence $\mathscr L_E (\D z)^P = k (\D z)^P$, it follows that
    $\mathscr L_E (\sigma_P) = -k \sigma_P$ and thus 
    $\kom{\iota_E}{ D_\hol^\sym } (\sigma_P) = \iota_E \D (\sigma_P) = \mathscr L_E (\sigma_P) = -k \sigma_P$.
    Moreover, $\kom{\iota_E}{ D_\hol^\sym } \D z^\ell = - D_\hol^\sym \iota_E \D z^\ell = - \D z^\ell$ for all $\ell\in \{0,\dots,n\}$.
    As both $\iota_E$ and $D_\hol^\sym$ are derivations, their commutator $\kom{\iota_E}{ D_\hol^\sym }$
    is also a derivation and we obtain $\kom{\iota_E}{ D_\hol^\sym }\sigma = -2k \sigma$.
\end{proof}

\begin{lemma} \label{lemma:insertEInSymCovDer}
	For $f,g \in \Smooth(\CCres)^{\CCx}$ and $r\in \NN$ we have
	\begin{align}
	\iota_{\cc E} (D_{\antihol}^\sym)^r f = - r (r-1) (D_{\antihol}^\sym)^{r-1} f
	\quad\text{and}\quad 
	\iota_E       (D_{  \hol  }^\sym)^r g = - r (r-1) (D_{  \hol  }^\sym)^{r-1} g
	\label{eq:insertEInSymCovDer:1}
	\end{align}
	as well as
	\begin{align}
	  \dupr[\big]{(D^\sym_{\antihol})^r f \otimes (D^\sym_\hol)^r g}{ H^r }\at[\big]{\Levelset}
	  =
	  \sum_{k=1}^r\frac{r!(r-k)!}{k!} \binom{r-1}{k-1}^2
	  \dupr[\big]{(D^\sym_{\antihol})^k f \otimes (D^\sym_\hol)^k g}{ (H_\Xi)^k }\at[\big]{\Levelset}
	  \komma
	\end{align}
	where $H_\Xi$ is the component of $H$ in $\Xi\otimes \Xi$, as defined in \eqref{eq:H:XiPart}.
\end{lemma}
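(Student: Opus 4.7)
The plan is to prove the two insertion identities in \eqref{eq:insertEInSymCovDer:1} by induction on $r$, iterate them to a closed form, and then feed everything into a binomial expansion of $H^r$. I focus on $\iota_E (D_\hol^\sym)^r g$; the other identity is strictly analogous. For the base case $r=1$, $\CCx$-invariance of $g$ gives $X_\Unit(g) = X_\I(g) = 0$, so $E(g) = \frac{1}{2}(X_\Unit - \I X_\I)(g) = 0$. For the inductive step I split
\[ \iota_E (D_\hol^\sym)^{r+1} g = \kom{\iota_E}{D_\hol^\sym}\bigl((D_\hol^\sym)^r g\bigr) + D_\hol^\sym \iota_E (D_\hol^\sym)^r g \]
and observe that $(D_\hol^\sym)^r g$ is a $\CCx$-invariant symmetric $(1,0)$-tensor of degree $r$, since the flat connection $\nabla$ is $\GL$-invariant. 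Then Lemma~\ref{lemma:kommutatoriotaEDhol} identifies the first summand with $-2r(D_\hol^\sym)^r g$, and the inductive hypothesis identifies the second with $-r(r-1)(D_\hol^\sym)^r g$, giving the desired $-(r+1)r(D_\hol^\sym)^r g$.

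Iterating \eqref{eq:insertEInSymCovDer:1} amounts to solving the two-step recursion $c_{m+1} = -(r-m)(r-m-1) c_m$ with $c_0 = 1$, which yields for $1 \leq k \leq r$
\[ \iota_{\cc E}^{\,r-k}(D_{\antihol}^\sym)^r f = (-1)^{r-k}\frac{r!(r-1)!}{k!(k-1)!}(D_{\antihol}^\sym)^k f\,, \]
with the $k = 0$ term vanishing because $c_r = 0$; the same formula holds for $\iota_E^{r-k}(D_\hol^\sym)^r g$. For the second identity I will use the decomposition $H\at{\CCres} = \momentmap^{-1}\cc{E} \otimes E + H_\Xi$ from Lemma~\ref{lemma:H:writtenWithW} and expand via the binomial theorem in the bigraded commutative algebra $\SymSec^\bullet \otimes \SymSec^\bullet$:
\[ H^r = \sum_{k=0}^r \binom{r}{k}\, \momentmap^{-(r-k)}\bigl(\cc{E}^{\vee(r-k)} \otimes E^{\vee(r-k)}\bigr)\cdot H_\Xi^k\,. \]
Restricting to $\Levelset$ kills the factor $\momentmap^{-(r-k)}$. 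Iterating \eqref{eq:insertionpairing} gives $\dupr{\omega}{\beta^{\vee m}\vee \alpha} = \frac{(r-m)!}{r!}\dupr{\iota_\beta^{\,m}\omega}{\alpha}$ for $\omega$ of symmetric degree $r$, and applying this in each tensor factor with $\beta = \cc E$ or $E$ and $m = r-k$, together with the closed form above, produces a total coefficient
\[ \binom{r}{k}\bigl(\tfrac{k!}{r!}\bigr)^2\bigl(\tfrac{r!(r-1)!}{k!(k-1)!}\bigr)^2 = \frac{r!}{k!(r-k)!}\Bigl(\tfrac{(r-1)!}{(k-1)!}\Bigr)^2 = \frac{r!(r-k)!}{k!}\binom{r-1}{k-1}^2 \]
in front of $\dupr{(D_{\antihol}^\sym)^k f \otimes (D_\hol^\sym)^k g}{H_\Xi^k}\at{\Levelset}$, matching the statement.

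The main effort is combinatorial bookkeeping of the factorials through the iteration and the binomial expansion. The one conceptual subtlety is the $\CCx$-invariance of $(D_\hol^\sym)^r g$ and $(D_{\antihol}^\sym)^r f$ that is needed to apply Lemma~\ref{lemma:kommutatoriotaEDhol} in the inductive step; this is guaranteed by the $\GL$-invariance of $\nabla$ and the fact that holomorphic/antiholomorphic projection commutes with the $\CCx$-action.
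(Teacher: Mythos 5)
Your proposal is correct and follows essentially the same route as the paper: both parts hinge on Lemma~\ref{lemma:kommutatoriotaEDhol} together with the $\CCx$-invariance of $(D^\sym_\hol)^k g$, the decomposition $H = \momentmap^{-1}\cc{E}\otimes E + H_\Xi$, the binomial expansion in $\SymSec^\bullet\otimes\SymSec^\bullet$, and the insertion--pairing convention \eqref{eq:insertionpairing}, yielding the same coefficient $\binom{r}{k}(k!/r!)^2\bigl(r!(r-1)!/(k!(k-1)!)\bigr)^2$. The only cosmetic difference is that you prove \eqref{eq:insertEInSymCovDer:1} by induction on $r$ with a single commutator, whereas the paper writes $\iota_E(D^\sym_\hol)^r g$ directly as the telescoping sum $\sum_{k=0}^{r-1}(D^\sym_\hol)^{r-k-1}\kom{\iota_E}{D^\sym_\hol}(D^\sym_\hol)^k g$; both are valid.
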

\begin{proof}
	For \eqref{eq:insertEInSymCovDer:1} it suffices to prove the second statement 
	since the first one then follows by taking complex conjugates.
	Note that $(D^\sym_\hol)^k g$ is $\CCx$-invariant, so the previous 
	Lemma~\ref{lemma:kommutatoriotaEDhol} yields
	\begin{equation*}
	\iota_E (D_\hol^\sym)^r g
	= \sum_{k=0}^{r-1} (D_\hol^\sym)^{r-k-1} \kom[\big]{\iota_E}{D_\hol^\sym} (D_\hol^\sym)^k g
	= \sum_{k=0}^{r-1} (-2 k) (D_{  \hol  }^\sym)^{r-1} g
	= - r (r-1) (D_{  \hol  }^\sym)^{r-1} g
	\end{equation*}
	for all $r\in \NN_0$.
	With this and $H \at{\Levelset} = \cc{E} \otimes E \at{\Levelset} + H_\Xi \at{\Levelset}$
	from Lemma~\ref{lemma:H:writtenWithW} we can now calculate
	\begin{align*}
	  \dupr[\big]{(&D^\sym_{\antihol})^rf \otimes (D^\sym_\hol)^r g}{ H^r }\at[\big]{\Levelset}
	  =\\  
	  &=
	  \sum_{k=0}^r \binom{r}{k} 
	  \dupr[\big]{
        (D^\sym_{\antihol})^r f \otimes (D^\sym_\hol)^r g
      }{ 
        (\cc{E} \otimes E)^{r-k} (H_\Xi)^k
      }\at[\big]{\Levelset}
      \\
      &\mathrel{\smash{\overset{\mathclap{\strut{(1)}}}=}}
      \sum_{k=0}^r \binom{r}{k} \bigg( \frac{k!}{r!} \bigg)^2
      \dupr[\big]{
        (\iota_{\cc{E}})^{r-k}(D^\sym_{\antihol})^r f \otimes (\iota_{E})^{r-k}(D^\sym_\hol)^r g
      }{ 
        (H_\Xi)^k
      }\at[\big]{\Levelset}
      \\
      &\mathrel{\smash{\overset{\mathclap{\strut{(2)}}}=}}
      \sum_{k=1}^r \binom{r}{k} \bigg( \frac{(r-1)!}{(k-1)!}\bigg)^2
      \dupr[\big]{
        (D^\sym_{\antihol})^k f \otimes (D^\sym_\hol)^k g
      }{ 
        (H_\Xi)^k
      }\at[\big]{\Levelset}
      \\
      &=
      \sum_{k=1}^r \frac{r!(r-k)!}{k!} \binom{r-1}{k-1}^2
      \dupr[\big]{
        (D^\sym_{\antihol})^k f \otimes (D^\sym_\hol)^k g
      }{ 
        (H_\Xi)^k
      }\at[\big]{\Levelset}
      \punkt
	\end{align*}
	The factors appearing in step (1) are due to our conventions for the symmetric product,
	the dual pairing and the insertion derivation, see Equation \eqref{eq:insertionpairing}.
	In (2) we used 
	\begin{align*}
	(\iota_E)^{r-k} (D^\sym_\hol)^r g 
	= 
	(-1)^{r-k} \frac{r!}{k!} \frac{(r-1)!}{(k-1)!} g
	\end{align*}
	and its complex conjugate,
	which can be obtained by applying \eqref{eq:insertEInSymCovDer:1} several times.
	In the special case $k=0$, \eqref{eq:insertEInSymCovDer:1} yields $(\iota_E)^{r} (D^\sym_\hol)^r g = 0$.
\end{proof}
The combinatorial factors in \eqref{eq:starProduct:transformedWick} can be simplified using:

\begin{lemma} \label{lemma:relationBetweenRisingAndFallingFactorials}
  For all $k \in \NN$ the identity
  \begin{equation}
    \sum_{s=0}^\infty 
    \frac{1/\lambda}{\rising{1/\lambda}{k+s+1}} \binom{k+s-1}{k-1}^2 s!
    =
    \frac 1 {\falling{1/\lambda}{k}}     
    \label{eq:fallingrisingid}
  \end{equation}
  holds for a formal parameter $\lambda$.
\end{lemma}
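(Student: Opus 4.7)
The plan is to recognise the left-hand side of \eqref{eq:fallingrisingid} as a Gauss-summable hypergeometric series. Setting $z = 1/\lambda$, both sides become rational functions of $z$; it therefore suffices to establish the identity as a meromorphic relation for real $z$ in some right half-plane, since the $\lambda$-adic identity then follows by expanding each rational function around $z = \infty$.

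First I would rewrite the combinatorial factors. Using $\binom{k+s-1}{k-1} = \rising{k}{s}/s!$ together with the factorisation $\rising{z}{k+s+1} = \rising{z}{k+1}\cdot\rising{z+k+1}{s}$, the sum on the left-hand side collapses to
\begin{equation*}
\frac{z}{\rising{z}{k+1}} \sum_{s=0}^\infty \frac{\rising{k}{s}\,\rising{k}{s}}{\rising{z+k+1}{s}}\,\frac{1}{s!} = \frac{z}{\rising{z}{k+1}} \cdot {}_2F_1\bigl(k,\, k;\, z+k+1;\, 1\bigr).
\end{equation*}

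Next I would invoke the classical Gauss summation formula
\begin{equation*}
{}_2F_1(a,b;c;1) = \frac{\Gamma(c)\Gamma(c-a-b)}{\Gamma(c-a)\Gamma(c-b)},
\end{equation*}
valid when $\RE(c-a-b) > 0$. Applied with $a = b = k$ and $c = z + k + 1$ (so that $c-a-b = z-k+1$, positive for $\RE z > k-1$), this evaluates the hypergeometric function to $\Gamma(z+k+1)\Gamma(z-k+1)/\Gamma(z+1)^2$. Since $z/\rising{z}{k+1} = 1/((z+1)\cdots(z+k)) = \Gamma(z+1)/\Gamma(z+k+1)$, two Gamma factors cancel and the left-hand side reduces to $\Gamma(z-k+1)/\Gamma(z+1)$, which is precisely $1/\falling{z}{k}$.

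I expect the only real obstacle to be organisational: namely, correctly matching the shape of the summand to the standard ${}_2F_1$ notation, and justifying the transition from the analytic identity (valid on a half-plane) to the claimed identity of formal power series in $\lambda$. The latter is automatic because both sides are rational functions of $z$, and agreement as meromorphic functions forces agreement of their Laurent expansions around $z = \infty$, which coincide with the $\lambda$-adic expansions appearing in the statement.
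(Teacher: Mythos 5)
Your reduction of the sum to a hypergeometric series and the application of Gauss's theorem are correct: with $z=1/\lambda$ one has $\binom{k+s-1}{k-1}=\rising{k}{s}/s!$ and $\rising{z}{k+s+1}=\rising{z}{k+1}\rising{z+k+1}{s}$, the series becomes $\frac{z}{\rising{z}{k+1}}\,{}_2F_1(k,k;z+k+1;1)$, and Gauss summation gives $\Gamma(z-k+1)/\Gamma(z+1)=1/\falling{z}{k}$ for $\RE z>k-1$. This is a genuinely different route from the paper, whose proof never leaves $\CC\formal{\lambda}$: it rewrites \eqref{eq:fallingrisingid} in an equivalent product form and proves it by induction on $k$, with a telescoping sum for $k=1$ and an algebraic rearrangement of the summands for the induction step.

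The gap is the transfer from the analytic identity to the identity of formal power series. The left-hand side is \emph{not} a rational function of $z$; it is an infinite series of rational functions, convergent only on the half-plane $\RE z>k-1$ (in the variable $\lambda=1/z$ this region touches $\lambda=0$ only at its boundary, so it contains no neighbourhood of $z=\infty$). Consequently the claimed termwise Laurent expansion at $z=\infty$ is not automatic: the general term behaves like $s^{\,k-2-\RE z}$, so there is no locally uniform convergence on any punctured neighbourhood of $\infty$, and a Weierstrass-type argument does not apply. What you actually need is that the $\lambda$-adically defined series is an \emph{asymptotic} expansion of the analytic sum as $z\to+\infty$; this follows from a tail estimate such as, for $z\ge z_0>k-1$ and $c_s\coloneqq\binom{k+s-1}{k-1}^2 s!$,
\begin{equation*}
\sum_{s>N}\frac{c_s\,z}{\rising{z}{k+s+1}}
\;\le\;
\frac{z}{\rising{z}{k+N+2}}\sum_{s>N}\frac{c_s}{\rising{z_0+k+N+2}{s-N-1}}
\;=\;O\big(z^{-(k+N+1)}\big)\komma
\end{equation*}
where the last series converges because its terms decay like $s^{\,k-2-z_0}$. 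Since the right-hand side of \eqref{eq:fallingrisingid} is rational in $\lambda$ and regular at $\lambda=0$, its Taylor series is its asymptotic expansion, and uniqueness of asymptotic power-series expansions then yields the formal identity. With such an estimate supplied your proof is complete; as written, the final step rests on a false assertion (rationality of the left-hand side) and leaves the analytic-to-formal bridge unproved.
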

\begin{proof}
  By multiplication with $\lambda^{-k}$ we see that \eqref{eq:fallingrisingid} is
  equivalent to the identity
  \begin{equation}
    \sum_{s=0}^\infty 
    \frac{\lambda^s s!}{\prod_{\ell = 1}^{k+s}(1+\lambda \ell)} \binom{k+s-1}{k-1}^2
    =
    \frac 1 {\prod_{\ell=1}^{k-1}(1-\lambda \ell)}
    \label{eq:fallingrisingidEquiv}
    \tag{\text{$*$}}
  \end{equation}
  of formal power series, which can be proven by induction over $k$: First note that
  \begin{equation*}
    \sum_{s=0}^\infty 
    \frac{\lambda^s s!}{\prod_{\ell = 1}^{1+s}(1+\lambda \ell)}
    =
    \sum_{s=0}^\infty 
    \frac{\lambda^s s!}{\prod_{\ell = 1}^{s}(1+\lambda \ell)} \bigg(1 - \frac{\lambda(s+1)}{1+\lambda(s+1)} \bigg)
    =
    \sum_{s=0}^\infty 
    \frac{\lambda^s s!}{\prod_{\ell = 1}^{s}(1+\lambda \ell)}
    -
    \sum_{s=0}^\infty 
    \frac{\lambda^{s+1} (s+1)!}{\prod_{\ell = 1}^{s+1}(1+\lambda \ell)}
    \komma
  \end{equation*}
  which is a telescope sum that gives the result $1$. So 
  \eqref{eq:fallingrisingidEquiv} holds for $k=1$. Now assume that 
  \eqref{eq:fallingrisingidEquiv} holds for some $k\in \NN$, then we get for $k+1$:
  \begin{align*}
    &\quad\quad\sum_{s=0}^\infty
    \frac{\lambda^s s!}{\prod_{\ell = 1}^{k+s+1}(1+\lambda \ell)} \binom{k+s}{k}^2
    =
    \\
    &=
    \frac{1}{1-\lambda k} \sum_{s=0}^\infty \bigg(
      \frac{\lambda^s s!}{\prod_{\ell = 1}^{k+s+1}(1+\lambda \ell)} 
      -
      \frac{\lambda^{s+1} s! k }{\prod_{\ell = 1}^{k+s+1}(1+\lambda \ell)}
    \bigg) \binom{k+s}{k}^2
    \\
    &=
    \frac{1}{1-\lambda k} \sum_{s=0}^\infty \bigg(
      \frac{\lambda^s s!}{\prod_{\ell = 1}^{k+s}(1+\lambda \ell)} \bigg( 1 - \frac{\lambda (k+s+1)}{1+\lambda(k+s+1)} \bigg)
      -
      \frac{\lambda^{s+1} s! k}{\prod_{\ell = 1}^{k+s+1}(1+\lambda \ell)}
    \bigg) \binom{k+s}{k}^2
    \\
    &=
    \frac{1}{1-\lambda k} \sum_{s=0}^\infty \bigg(
      \frac{\lambda^s s!}{\prod_{\ell = 1}^{k+s}(1+\lambda \ell)}
      -
      \frac{\lambda^{s+1} (s+1)!}{\prod_{\ell = 1}^{k+s+1}(1+\lambda \ell)} \frac{k+s+1}{s+1}
      -
      \frac{\lambda^{s+1} (s+1)!}{\prod_{\ell = 1}^{k+s+1}(1+\lambda \ell)} \frac{k}{s+1}
    \bigg) \binom{k+s}{k}^2
    \\
    &\mathrel{\smash{\overset{\mathclap{\strut{(0)}}}=}}
    \frac{1}{1-\lambda k} \Bigg(
      \frac{1}{\prod_{\ell = 1}^{k}(1+\lambda \ell)}
      +
      \sum_{s=0}^\infty
      \frac{\lambda^{s+1} (s+1)!}{\prod_{\ell = 1}^{k+s+1}(1+\lambda \ell)} \bigg(
        \frac{(k+s+1)^2}{(s+1)^2}
        -
        \frac{k+s+1}{s+1}
        -
        \frac{k}{s+1}
      \bigg) \binom{k+s}{k}^2
    \Bigg)
    \\
    &=
    \frac{1}{1-\lambda k} \Bigg(
      \frac{1}{\prod_{\ell = 1}^{k}(1+\lambda \ell)}
      +
      \sum_{s=0}^\infty
      \frac{\lambda^{s+1} (s+1)!}{\prod_{\ell = 1}^{k+s+1}(1+\lambda \ell)} \binom{k+s}{k-1}^2
    \Bigg)
    \\
    &=
    \frac{1}{1-\lambda k} \Bigg(
      \frac{1}{\prod_{\ell = 1}^{k}(1+\lambda \ell)}
      +
      \sum_{s=1}^\infty
      \frac{\lambda^s s!}{\prod_{\ell = 1}^{k+s}(1+\lambda \ell)} \binom{k+s-1}{k-1}^2
    \Bigg)
    \\
    &=
    \frac{1}{1-\lambda k}
    \sum_{s=0}^\infty
    \frac{\lambda^s s!}{\prod_{\ell = 1}^{k+s}(1+\lambda \ell)} \binom{k+s-1}{k-1}^2
    \\
    &=
    \frac 1 {(1-\lambda k)\prod_{\ell=1}^{k-1}(1-\lambda \ell)}
    \\
    &=
    \frac 1 {\prod_{\ell=1}^{k}(1-\lambda \ell)}
    \punkt
  \end{align*}
  At (0) we have used that
  \begin{align*}
    \sum_{s=0}^\infty \frac{\lambda^s s!}{\prod_{\ell = 1}^{k+s}(1+\lambda \ell)} \binom{k+s}{k}^2
    &=
    \frac{1}{\prod_{\ell = 1}^{k}(1+\lambda \ell)}
    +
    \sum_{s=1}^\infty \frac{\lambda^s s!}{\prod_{\ell = 1}^{k+s}(1+\lambda \ell)} \frac{(k+s)^2}{s^2}\binom{k+s-1}{k}^2
    \\
    &=
    \frac{1}{\prod_{\ell = 1}^{k}(1+\lambda \ell)}
    +
    \sum_{s=0}^\infty \frac{\lambda^{s+1} (s+1)!}{\prod_{\ell = 1}^{k+s+1}(1+\lambda \ell)} \frac{(k+s+1)^2}{(s+1)^2} \binom{k+s}{k}^2    
    \punkt
  \end{align*}
\end{proof}
The last, crucial step is the following observation:

\begin{lemma} \label{lemma:DofEStar}
	We have
	\begin{equation} \label{eq:dummy}
	D^\sym_{\antihol} \cc E^* = -(\cc E^*)^2
	\quad\quad\text{as well as}\quad\quad
	D^\sym_\hol E^* = -(E^*)^2 
	\end{equation}
	and consequently
	\begin{align}
	(\bigproj^*_\Xi)^{\otimes (k+1)} D^\sym_{\antihol} (\bigproj^*_\Xi)^{\otimes k} \cc{\omega}
	&=
	(\bigproj^*_\Xi)^{\otimes (k+1)} D^\sym_{\antihol} \cc{\omega}
	\shortintertext{as well as}
	(\bigproj^*_\Xi)^{\otimes (k+1)} D^\sym_{  \hol  } (\bigproj^*_\Xi)^{\otimes k}     \omega
	&= 
	(\bigproj^*_\Xi)^{\otimes (k+1)} D^\sym_{  \hol  }     \omega
	\label{eq:DofEStar:2}
	\end{align}
	for all $\omega \in \SmoothSections(\Symten{k} \Tangent^{*,(1,0)} \CCres)$
	with $k\in \NN_0$.
\end{lemma}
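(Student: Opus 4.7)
The plan is to first establish \eqref{eq:dummy} by a direct computation in the standard coordinates, and then to deduce \eqref{eq:DofEStar:2} formally from it using a Leibniz argument combined with a type-decomposition of $(1,0)$-forms. For \eqref{eq:dummy}, I would start from the explicit formula $E^* = \momentmap^{-1}\sum_{k} \nu_{k} \cc z^{k} \D z^{k}$ of \eqref{eq:EWAsLinearCombOfDz}. Since the Euclidean connection is flat in the coordinates $z^{0},\dots,z^{n}$, we have $D^{\sym} \D z^{k} = 0 = D^{\sym} \D \cc z^{k}$, so only the derivatives of the coefficient functions contribute. Using the identity $\D\momentmap = \momentmap(E^{*} + \cc E^{*})$ and recognising $\sum_{k} \nu_{k} \cc z^{k} \D z^{k} = \momentmap E^{*}$, the Leibniz rule then gives
\begin{equation*}
  D^{\sym} E^{*}
  =
  \frac{1}{\momentmap}\sum_{k=0}^{n} \nu_{k}\, \D \cc z^{k} \vee \D z^{k}
  - (E^{*} + \cc E^{*}) \vee E^{*}
  \punkt
\end{equation*}
Projecting onto the bi-degree $(2,0)$ component yields $D^{\sym}_{\hol} E^{*} = -(E^{*})^{2}$, and the corresponding identity for $\cc E^{*}$ follows by complex conjugation.

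For the second statement, I would use the pointwise decomposition $\Tangent^{*,(1,0)}\CCres = \genHull{E^{*}} \oplus \Xi^{*,(1,0)}$ where $\Xi^{*,(1,0)} \coloneqq \bigproj_{\Xi}^{*}(\Tangent^{*,(1,0)}\CCres)$. This splitting is a smooth splitting of vector bundles because $\bigproj_{\Xi}$ is smooth and commutes with the complex structure, and $E^{*}$ spans $\ker \bigproj_{\Xi}^{*} \cap \Tangent^{*,(1,0)}\CCres$: indeed $\bigproj_{\Xi}(E) = 0$ translates to $E^{*} \in \ker \bigproj_{\Xi}^{*}$, and this kernel is one-dimensional in $\Tangent^{*,(1,0)}\CCres$ by rank counting (equivalently, one verifies directly that $E^{*}(W_{k}) = 0$ for all $k\in\{1,\dots,n\}$). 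Taking symmetric powers, $(\bigproj_{\Xi}^{*})^{\otimes k}$ is exactly the projection onto the summand $\Symten{k}\Xi^{*,(1,0)}$ in the decomposition $\Symten{k}\Tangent^{*,(1,0)}\CCres = \bigoplus_{j=0}^{k} (E^{*})^{j} \vee \Symten{k-j} \Xi^{*,(1,0)}$. Hence every $\omega \in \SmoothSections(\Symten{k}\Tangent^{*,(1,0)}\CCres)$ admits a smooth decomposition $\omega = (\bigproj_{\Xi}^{*})^{\otimes k}\omega + \nu$ where $\nu$ is a finite sum of sections of the form $(E^{*})^{j} \vee \sigma$ with $j \geq 1$ and $\sigma \in \SmoothSections(\Symten{k-j}\Xi^{*,(1,0)})$.

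For any such summand the Leibniz rule for $D^{\sym}_{\hol}$ together with the identity $D^{\sym}_{\hol} E^{*} = -(E^{*})^{2}$ established above gives
\begin{equation*}
  D^{\sym}_{\hol}\big((E^{*})^{j} \vee \sigma\big)
  =
  -j\, (E^{*})^{j+1} \vee \sigma
  + (E^{*})^{j} \vee D^{\sym}_{\hol} \sigma
  \komma
\end{equation*}
so each summand of $D^{\sym}_{\hol}\nu$ still contains $E^{*}$ as a symmetric factor. Since $\bigproj_{\Xi}^{*}(E^{*}) = 0$, the operator $(\bigproj_{\Xi}^{*})^{\otimes(k+1)}$ annihilates any symmetric $(k+1)$-tensor having $E^{*}$ as one of its factors, so that $(\bigproj_{\Xi}^{*})^{\otimes(k+1)} D^{\sym}_{\hol}\nu = 0$, which is precisely the holomorphic part of \eqref{eq:DofEStar:2}; the antiholomorphic version follows by complex conjugation. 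I do not expect any serious obstacle: the only genuine input is the short coordinate computation of $D^{\sym}_{\hol} E^{*}$, and the rest is a formal consequence of the structural observation --- encoded in that identity --- that $D^{\sym}_{\hol}$ preserves the property of having $E^{*}$ as a symmetric factor.
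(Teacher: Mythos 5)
Your proposal is correct and follows essentially the same route as the paper: the key identity $D^\sym_\hol E^* = -(E^*)^2$ is obtained by the same Leibniz computation from $E^* = \momentmap^{-1}\sum_k \nu_k \cc z^k \D z^k$ (you compute all of $D^\sym E^*$ and project to bidegree $(2,0)$, the paper only tracks the holomorphic part directly), and the second claim is deduced in both cases from the facts that $\omega - (\bigproj^*_\Xi)^{\otimes k}\omega$ is built from terms with an $E^*$ factor, that $D^\sym_\hol$ preserves this property by the identity just proved, and that $(\bigproj^*_\Xi)^{\otimes(k+1)}$ annihilates such terms. The only organizational difference is that the paper reduces to $k=1$ and invokes the derivation/projection properties for general $k$, whereas you spell out the general-$k$ decomposition explicitly, which is if anything slightly more complete.
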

\begin{proof}
	Again, it suffices to prove the second equalities since the first ones then follow by taking complex 
	conjugates. Using \eqref{eq:EWAsLinearCombOfDz} and \eqref{eq:DzAsLinearCombOfWStar}, an easy 
	computation shows
	\begin{align*}
	D^\sym_\hol E^* 
	= D^\sym_\hol \bigg(\frac 1 {\momentmap} \sum_{k=0}^n \nu_k \cc z^k \D z^k \bigg)
	=
	\big( D^\sym_\hol \momentmap{}^{-1} \big) \sum_{k=0}^n \nu_k \cc{z}^k \D z^k
	=
	- \momentmap{}^{-2} \bigg( \sum_{k=0}^n \nu_k \cc{z}^k \D z^k \bigg)^2
	= 
	-(E^*)^2 \punkt
	\end{align*}
	For \eqref{eq:DofEStar:2} it is sufficient to consider the case $k=1$,
	the general case then follows from the algebraic properties of 
	$\bigproj^*_\Xi$ and $D^\sym_\hol$ (i.e.\ being a projection and a derivation).
	If $k=1$, then there is an $f \in \Smooth(\CCres)$ such that
	$\bigproj^*_\Xi \omega - \omega = f E^*$, and thus
	\begin{align*}
	D^\sym_{\hol} \bigproj^*_\Xi \omega - D^\sym_{\hol} \omega
	=
	D^\sym_{\hol} ( \bigproj^*_\Xi \omega - \omega )
	=
	D^\sym_{\hol} f E^*
	=
	\D f \vee E^*
	-
	f (E^*)^2
	\end{align*}
    is in the kernel of $(\bigproj^*_\Xi)^{\otimes 2}$.
\end{proof}

\begin{Empty}[Proof of Theorem~\ref{theorem:reducedWickProductFormula}:]
    The reduced star product
    on $\Mred$ fulfils
    \begin{align*}
    \pr^*(f \starred g) 
    = 
    \big(S \big(\bigpr^*(f) \star \bigpr^*(g) \big) \big)\at[\big]{\Levelset}
    \end{align*}
    for all $f,g\in\Smooth(\Mred)$.
    Application of first Lemma~\ref{lemma:transformedWickRewritten} and then
    Lemma~\ref{lemma:insertEInSymCovDer} now yields
    \begin{align*}
      \pr^*&(f \starred g) =
      \\
      &= 
      \sum_{r=0}^\infty \frac{1}{ r!} \frac {1/\lambda} 
      {\rising{1/\lambda}{r+1}}
      \dupr[\big]{(D^\sym_{\antihol})^r \bigpr^*(f) \otimes (D^\sym_\hol)^r \bigpr^*(g)}{ H^r }
      \at[\big]{\Levelset}
      \\
      &=
      fg\at[\big]{\Levelset}
      +
      \sum_{r=1}^\infty
      \sum_{k=1}^r
      \frac {1/\lambda} {\rising{1/\lambda}{r+1}}
      \frac{(r-k)!}{k!} \binom{r-1}{k-1}^2
      \dupr[\big]{(D^\sym_{\antihol})^k \bigpr^*(f) \otimes (D^\sym_\hol)^k \bigpr^*(g)}{ (H_\Xi)^k }\at[\big]{\Levelset}
    \end{align*}
    for all $f,g\in\Smooth(\Mred)$. By collecting the $k$-th derivatives and using
    Lemma~\ref{lemma:relationBetweenRisingAndFallingFactorials} this leads to
    \begin{align*}
      \pr^*&(f \starred g) =
      \\
      &=
      fg\at[\big]{\Levelset}
      +
      \sum_{k=1}^\infty
      \sum_{r=k}^\infty
      \frac {1/\lambda} {\rising{1/\lambda}{r+1}}
      \frac{(r-k)!}{k!} \binom{r-1}{k-1}^2
      \dupr[\big]{(D^\sym_{\antihol})^k \bigpr^*(f) \otimes (D^\sym_\hol)^k \bigpr^*(g)}{ (H_\Xi)^k }\at[\big]{\Levelset}
      \\
      &=
      fg\at[\big]{\Levelset}
      +
      \sum_{k=1}^\infty
      \sum_{s=0}^\infty
      \frac {1/\lambda} {\rising{1/\lambda}{k+s+1}}
      \frac{s!}{k!} \binom{k+s-1}{k-1}^2
      \dupr[\big]{(D^\sym_{\antihol})^k \bigpr^*(f) \otimes (D^\sym_\hol)^k \bigpr^*(g)}{ (H_\Xi)^k }\at[\big]{\Levelset}
      \\
      &=
      fg\at[\big]{\Levelset}
      +
      \sum_{k=1}^\infty
      \frac{1}{k!}
      \frac 1 {\falling{1/\lambda}{k}} 
      \dupr[\big]{(D^\sym_{\antihol})^k \bigpr^*(f) \otimes (D^\sym_\hol)^k \bigpr^*(g)}{ (H_\Xi)^k }\at[\big]{\Levelset}
      \\
      &=
      \sum_{k=0}^\infty
      \frac{1}{k!}
      \frac 1 {\falling{1/\lambda}{k}} 
      \dupr[\big]{(D^\sym_{\antihol})^k \bigpr^*(f) \otimes (D^\sym_\hol)^k \bigpr^*(g)}{ (H_\Xi)^k }\at[\big]{\Levelset}
      \punkt
    \end{align*}
    As $(H_\Xi)\at{\rho} \in \Xi_\rho \otimes \Xi_\rho$ for all $\rho \in \CCres$, we may insert 
    projections $\bigproj^*_{\Xi}$
    and get
    \begin{align*}
      \dupr[\big]{
        (D^\sym_{\antihol})^k \bigpr^*(f) \otimes &(D^\sym_\hol)^k \bigpr^*(g)
      }{ 
        (H_\Xi)^k 
      }
      =
      \\
      &=
      \dupr[\big]{
        (\bigproj^*_{\Xi})^{\otimes k} (D^\sym_{\antihol})^k \bigpr^*(f)
        \otimes
        (\bigproj^*_{\Xi})^{\otimes k} (D^\sym_\hol)^k \bigpr^*(g)
      }{
        (H_\Xi)^k 
      }
      \punkt
    \end{align*}
    Using Lemma~\ref{lemma:DofEStar} and Proposition~\ref{proposition:Dsymredhol} we obtain
    \begin{align*}
     (\bigproj^*_{\Xi})^{\otimes k} (D^\sym_\hol)^k \bigpr^*(g)
     =
     (\bigproj^*_{\Xi})^{\otimes k} D^\sym_\hol (\bigproj^*_{\Xi})^{\otimes (k-1)} D^\sym_\hol \dots \bigproj^*_{\Xi} 
     D^\sym_\hol \bigpr^*(g)
     =
     \bigpr^* \big( (D^\sym_{\red,\hol})^k g \big)
    \end{align*}
    and analogously for $f$, so that
    \begin{align*}
      \pr^*(f \starred g) 
      &=
      \sum_{k=0}^\infty
      \frac{1}{k!}
      \frac 1 {\falling{1/\lambda}{k}} 
      \dupr[\big]{(D^\sym_{\antihol})^k \bigpr^*(f) \otimes (D^\sym_\hol)^k \bigpr^*(g)}{ (H_\Xi)^k }\at[\big]{\Levelset}
      \\
      &=
      \sum_{k=0}^\infty
      \frac{1}{k!}
      \frac 1 {\falling{1/\lambda}{k}} 
      \dupr[\big]{\bigpr^*\big((D^\sym_{\red,\antihol})^k f\big) \otimes \bigpr^*\big((D^\sym_{\red,\hol})^k g\big)}{ (H_\Xi)^k }\at[\big]{\Levelset}
      \\
      &=
      \sum_{k=0}^\infty
      \frac{1}{k!}
      \frac 1 {\falling{1/\lambda}{k}} 
      \pr^*\big(\dupr[\big]{(D^\sym_{\red,\antihol})^k f \otimes (D^\sym_{\red,\hol})^k g}{ H_\red^k }\big)
      \\
      &=
      \sum_{k=0}^\infty
      \frac{1}{k!}
      \frac 1 {\falling{1/\lambda}{k}} 
      \pr^*\big(\dupr[\big]{(D^\sym_{\red})^k f \otimes (D^\sym_{\red})^k g}{ H_\red^k }\big)
      \punkt
    \end{align*}
    In the last step we used Proposition~\ref{proposition:holahol} and that the first tensor factor of $H_\red$ lies in 
    $\Tangent^{(0,1)} \Mred$ 
    whereas the second lies in $\Tangent^{(1,0)} \Mred$.
    
    It remains to check that only finitely many derivatives of $g$ contribute to $f \starred g$
    if $g \in \Polynomials(\Mred)$:
    Indeed, if $g = \MonomRed{P}{Q}$ with $P,Q \in \NN_0^{1+n}$ and $\abs{P} = \abs{Q}$, then
    $\bigpr^*(\MonomRed{P}{Q}) = {\momentmap}^{-\abs{P}} \Monom{P}{Q}$, and as
    $D^\sym_\hol \momentmap = \momentmap E^*$ is in the kernel of $\bigproj^*_{\Xi}$,
    we get
    \begin{align*}
      \bigpr^* \big( (D^\sym_{\red,\hol})^k \MonomRed{P}{Q} \big)
      &=
      (\bigproj^*_{\Xi})^{\otimes k} D^\sym_\hol (\bigproj^*_{\Xi})^{\otimes (k-1)} D^\sym_\hol \dots \bigproj^*_{\Xi} 
      D^\sym_\hol {\momentmap}^{-\abs{P}} \Monom{P}{Q} 
      \\
      &=
      {\momentmap}^{-\abs{P}} 
      (\bigproj^*_{\Xi})^{\otimes k} D^\sym_\hol (\bigproj^*_{\Xi})^{\otimes (k-1)} D^\sym_\hol \dots \bigproj^*_{\Xi} 
      D^\sym_\hol \Monom{P}{Q} 
      \\
      &=
      {\momentmap}^{-\abs{P}}  (\bigproj^*_{\Xi})^{\otimes k} (D^\sym_\hol)^k \Monom{P}{Q}
      \komma
    \end{align*}
    which vanishes for $k > \abs{P}$.
\end{Empty}
Finally, we can also characterize the reduced covariant derivative as follows:

\begin{proposition} \label{proposition:Dredchar}
  The reduced exterior covariant derivative $D_\red$ on $\Mred$ is the one
  for the Levi-Civita connection associated to the (not necessarily definite)
  reduced metric $g_\red \in \SymSec^2(\Mred)$, which is defined by
  \begin{align}
    \bigpr^*(g_\red) = (\bigproj_\Xi^*)^{\otimes 2} \bigg(\sum_{k=0}^n \frac{\nu_k \D\cc{z}^k \vee \D z^k}{ \momentmap } \at[\bigg]{\CCres}\bigg)
    \punkt
  \end{align}
\end{proposition}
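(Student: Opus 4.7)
The plan is to verify that $D_\red$ comes from a linear connection $\nabla_\red$ on $\Mred$ and then to check that $\nabla_\red$ is torsion-free and metric-compatible with $g_\red$. Uniqueness of the Levi-Civita connection then identifies $\nabla_\red$ with the Levi-Civita connection of $g_\red$, so that $D_\red$ is indeed its exterior covariant derivative. First one checks that $g_\red$ is well-defined: the right-hand side of the formula takes values in $\Xi^*\vee\Xi^*$, is $\CCx$-invariant (since $\momentmap$ and $\sum_k\nu_k\D\cc z^k\vee\D z^k$ both scale by $\abs{\alpha}^2$ under multiplication by $\alpha\in\CCx$) and horizontal along $\bigpr$, so by Proposition~\ref{proposition:decomposition:CC} it descends to a unique symmetric $2$-form $g_\red$ on $\Mred$. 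Non-degeneracy follows from an explicit computation in projective coordinates along the lines of Lemma~\ref{lemma:H:writtenWithW}. That $D_\red$ comes from a linear connection $\nabla_\red$ is immediate from the Leibniz rule for $D$ on $\CCres$ and the $\Smooth(\CCres)$-linearity of $\bigproj_\Xi^*$.

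Torsion-freeness of $\nabla_\red$ reduces to the symmetry of $\nabla_\red(\D g)$ for every $g\in\Smooth(\Mred)$. Pulling back, $\bigpr^*(\nabla_\red\D g) = (\bigproj_\Xi^*)^{\otimes 2}\nabla\D\bigpr^*(g)$, and the right-hand side is symmetric because the flat Euclidean $\nabla$ is torsion-free (so $\nabla\D\bigpr^*(g)$ is already symmetric) and $(\bigproj_\Xi^*)^{\otimes 2}$ commutes with transposition of the two cotangent factors. Injectivity of the pullback then yields the claim.

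The main step is metric compatibility, $\nabla_\red g_\red = 0$, equivalently $(\bigproj_\Xi^*)^{\otimes 3}\nabla\bigpr^*(g_\red) = 0$. Writing $g_{\mathrm{E}}\coloneqq\sum_k\nu_k\D\cc z^k\vee\D z^k$ for brevity, one decomposes $g_{\mathrm{E}}/\momentmap = \bigpr^*(g_\red) + R$ and verifies, using the K\"ahler reduction identities $\sum_k\nu_k\cc z^k\D z^k = \momentmap E^*$ and $\sum_k\nu_k\abs{z^k}^2 = \momentmap$, that the mixed horizontal-vertical contributions cancel and $R = E^*\vee\cc E^*$. Then $(\bigproj_\Xi^*)^{\otimes 3}\nabla(g_{\mathrm{E}}/\momentmap) = -\momentmap^{-2}\bigproj_\Xi^*(\D\momentmap)\otimes(\bigproj_\Xi^*)^{\otimes 2}(g_{\mathrm{E}}) = 0$, because $\bigproj_\Xi^*(\D\momentmap) = 0$ by definition of $\Xi$, and $(\bigproj_\Xi^*)^{\otimes 3}\nabla R = 0$ because every term in $\nabla R = \nabla E^*\vee\cc E^* + E^*\vee\nabla\cc E^*$ retains an uncontracted $E^*$ or $\cc E^*$ factor in a value slot, which $\bigproj_\Xi^*$ annihilates. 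The main obstacle is confirming the cancellation $R = E^*\vee\cc E^*$; this is essentially the K\"ahler moment map identity, and once it is recorded the remainder is bookkeeping with the projection $\bigproj_\Xi^*$.
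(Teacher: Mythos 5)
Your proof is correct and takes essentially the same route as the paper's: well-definedness of $g_\red$ via $\CCx$-invariance, torsion-freeness, and metric compatibility via the splitting $(\bigproj_\Xi^*)^{\otimes 2}\big(\sum_{k}\nu_k\D\cc{z}^k\vee\D z^k/\momentmap\big)=\sum_{k}\nu_k\D\cc{z}^k\vee\D z^k/\momentmap-\cc{E}^*\vee E^*$, after which applying $(\bigproj_\Xi^*)^{\otimes 3}D$ kills both pieces exactly as you argue (using $\bigproj_\Xi^*\D\momentmap=0$ and the leftover $E^*$, $\cc{E}^*$ factors). The only harmless deviations are that you establish torsion-freeness directly through symmetry of the Hessian, whereas the paper just cites Proposition~\ref{proposition:covDer:reduction}, and that you explicitly flag non-degeneracy of $g_\red$, which the paper leaves to the coordinate formula stated after the proposition.
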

\begin{proof}
  As $\sum_{k=0}^n \nu_k \D\cc{z}^k \vee \D z^k / \momentmap$ is $\CCx$-invariant,
  $g_\red$ is indeed well-defined. As $D$ is torsion-free, $D_\red$ is torsion free
  as well (see Proposition~\ref{proposition:covDer:reduction}).
  Now we calculate
  \begin{align*}
    \bigpr^*\big( D_\red ( g_\red ) \big)
    =
    (\bigproj^*_\Xi)^{\otimes 3} D \bigpr^*( g_\red )
    =
    (\bigproj^*_\Xi)^{\otimes 3} D (\bigproj^*_\Xi)^{\otimes 2} 
    \bigg( \sum_{k=0}^n \frac{\nu_k \D\cc{z}^k \vee \D z^k}{ \momentmap } \at[\bigg]{\CCres} \bigg)
    \punkt
  \end{align*}
  Using \eqref{eq:DzAsLinearCombOfWStar} one can check that
  $\sum_{k=0}^n \nu_k (\bigproj_\Xi^* \D\cc{z}^k) \vee (\bigproj_\Xi^*\D z^k)
  =
  \sum_{k=0}^n \nu_k \D\cc{z}^k \vee \D z^k - \momentmap \cc{E}^* \vee E^*$
  so that
  \begin{align*}
    \bigpr^*\big( D_\red ( g_\red ) \big)
    =
    (\bigproj^*_\Xi)^{\otimes 3} D
    \bigg( \sum_{k=0}^n \frac{\nu_k \D\cc{z}^k \vee \D z^k}{ \momentmap } \at[\bigg]{\CCres} \bigg)
    -
    (\bigproj^*_\Xi)^{\otimes 3} D \big( \cc{E}^* \vee E^* \big)
    =
    0
  \end{align*}
  because $\bigproj^*_\Xi \D (\momentmap[-1]) = - \momentmap[-2]\bigproj^*_\Xi \D \momentmap = 0$
  and because $(\bigproj^*_\Xi)^{\otimes 3} D ( \cc{E}^* \vee E^* ) = 0$, so $D_\red ( g_\red ) = 0$.
\end{proof}
Note that $g_\red$ can equivalently be obtained from the standard (pseudo-)metric
$g \coloneqq \sum_{k=0}^n \nu_k \D \cc{z}^k \vee \D z^k$ on $\CC^{1+n}$ in signature $s$
by first restricting $(\bigproj_\Xi^*)^{\otimes 2} g$ to $\Levelset$ and then projecting down on 
$\Mred$.
In local coordinates on the domain of definition of the forms $W_k^*$ one finds that
\begin{equation}
  (\bigproj^*_\Xi)^{\otimes 2}
  \bigg(
    \sum_{k=0}^n \frac{\nu_k \D\cc{z}^k \vee \D z^k}{ \momentmap } \at[\bigg]{\CCres} 
  \bigg)
  = 
  \frac{\abs{z^0}^2}{\momentmap} \sum_{k=1}^n \nu_k \cc{W}_k^* \vee W_k^*
  -
  \frac{\abs{z^0}^2}{{\momentmap}^2} \sum_{k,\ell=1}^n \nu_k\nu_\ell \cc{z}^\ell z^k \cc{W}_k^* \vee W_\ell^*
\end{equation}
and hence that
\begin{equation}
  g_\red
  =
  \frac{\sum_{k=1}^n \nu_k \D \cc{w}^k \vee \D w^k}{1 + \sum_{k=1}^n \nu_k \cc{w}^k w^k} 
  -
  \frac{ \sum_{k,\ell=1}^n \nu_k\nu_\ell \cc{w}^\ell w^k \D \cc{w}^k \vee \D w^\ell }{(1 + \sum_{k=1}^n \nu_k \cc{w}^k w^k)^2}
  \punkt
\end{equation}
In particular, in signature $s=1+n$ one obtains for $g_\red$ the usual Fubini--Study metric on 
$\Mred[{\signature[1+n]}] \cong \CC\PP^n$, and for $s=1$ the negative of the usual Fubini--Study metric 
on $\Mred[{\signature[1]}] \cong \DD^n$.




\subsection{The Polynomial Case}   \label{subsec:physics:polynomial}

In this section we will replace the formal parameter $\lambda$ by a complex number 
$\hbar$. In order to make sense of the convergence of the formal power series describing the
star product, we restrict ourselves to polynomial functions.

From the definition of the Poisson bracket in \eqref{eq:definition:poissonBracket} it is clear that it 
restricts to a well-defined map
$\poi\argument\argument \colon \Polynomials(\CC^{1+n}) \times \Polynomials(\CC^{1+n}) \to \Polynomials(\CC^{1+n})$ 
that is given by the same formula, and similarly for the Wick star product:
\begin{lemma} \label{lemma:starproductstructure}
	In the basis $\Monom{P}{Q}$ defined in Definition~\ref{definition:basis:polynomials}, the 
	Poisson bracket is
	\begin{equation} \label{eq:poissonBracket:onPolynomials}
	\poi[\big]{\Monom{P}{Q}}{\Monom{R}{S}} = \frac{1}{\I} \sum_{k=0}^n \nu_k (Q_k R_k - P_k S_k) \Monom{P+R-E_k}{Q+S-E_k}
	\end{equation}
	with $E_k = (0,\dots,0,1,0,\dots,0) \in \NN_0^{1+n}$ having the $1$ at position $k \in \{0,\dots,n\}$,
	and the	product $\star$ from Definition~\ref{definition:wick} is
	\begin{equation}
	\Monom{P}{Q} \star \Monom{R}{S} = \sum_{T=0}^{\min\{Q,R\}} \sgn(T) \lambda^{\abs T} T! 
	\binom Q T \binom R T \Monom{P+R-T}{Q+S-T}
	\end{equation}
	with $\sgn(T) = \prod_{k=0}^n \nu_k^{T_k} = \prod_{k=1}^n \nu_k^{T_k}$ like in Lemma~\ref{lemma:redexpansioninredfund}.
\end{lemma}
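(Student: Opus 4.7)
Both formulas are direct computations from the definitions; the only subtlety lies in unpacking the symmetric-tensor expression for $\star$ from Definition~\ref{definition:wick} into a coordinate form amenable to substitution of monomials. My plan is first to reduce both sides to explicit partial derivatives, and then to evaluate on $\Monom{P}{Q}$ and $\Monom{R}{S}$ using the chain-rule formulas $\partial^T \Monom{P}{Q} = \frac{P!}{(P-T)!}\Monom{P-T}{Q}$ (for $T\le P$) and its complex conjugate.

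For the Poisson bracket formula, I would just substitute $f=\Monom{P}{Q}$ and $g=\Monom{R}{S}$ into \eqref{eq:definition:poissonBracket}. The $k$-th summand yields
\begin{equation*}
   \nu_k\big(
     Q_k\Monom{P}{Q-E_k}\cdot R_k\Monom{R-E_k}{S}
     -S_k\Monom{R}{S-E_k}\cdot P_k\Monom{P-E_k}{Q}
   \big)
   = \nu_k\big(Q_kR_k - P_kS_k\big)\Monom{P+R-E_k}{Q+S-E_k}\komma
\end{equation*}
and summing over $k$ gives the claim.

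For the star product, the key reformulation is the coordinate formula
\begin{equation*}
   f\star g
   = \sum_{T\in\NN_0^{1+n}} \frac{\lambda^{\abs T}}{T!}\,\sgn(T)\,(\cc\partial{}^T f)(\partial^T g)\komma
\end{equation*}
which I would derive as follows. Since $\CC^{1+n}$ is flat with its Euclidean covariant derivative $\nabla$, iterated application of $D^\sym$ acts on functions by partial differentiation followed by symmetrization; concretely, the pure $(0,1)$-covector part of $(D^\sym)^r f$ equals $\sum_{\abs T=r}\binom{r}{T}(\cc\partial{}^T f)(\D\cc z)^T$, and analogously for $(1,0)$. Because $H$ has bi-type $(0,1)\otimes(1,0)$, only these pure parts of the two factors contribute in the pairing with $H^r$. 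Writing $H^r = \sum_{\abs T=r}\binom{r}{T}\nu^T (\cc\partial)^T\otimes\partial^T$ in $\SymSec^\bullet\otimes\SymSec^\bullet$ and using the symmetric pairing $\dupr{(\D\cc z)^S}{(\cc\partial)^T}=\delta_{S,T}\,T!/r!$ (which follows directly from the conventions around \eqref{eq:insertionpairing}), the combinatorial factors telescope to give the displayed coordinate formula.

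Substituting $f=\Monom{P}{Q}$, $g=\Monom{R}{S}$ and noting that $\cc\partial{}^T\Monom{P}{Q}\cdot\partial^T\Monom{R}{S} = \frac{Q!\,R!}{(Q-T)!(R-T)!}\,\Monom{P+R-T}{Q+S-T}$ whenever $T\le\min\{Q,R\}$ (and is zero otherwise), I can rewrite the coefficient as
\begin{equation*}
   \frac{1}{T!}\cdot\frac{Q!\,R!}{(Q-T)!(R-T)!}
   \;=\;
   T!\binom{Q}{T}\binom{R}{T}\komma
\end{equation*}
which yields the claimed formula for $\Monom{P}{Q}\star\Monom{R}{S}$. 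I expect the main bookkeeping hurdle to be tracking the multinomial factors that arise from the symmetric-tensor conventions—specifically, checking that the $\binom{r}{T}$'s from $(D^\sym)^r$, from $H^r$, and the $T!/r!$ from the symmetric pairing combine exactly to yield the clean coefficient $\frac{1}{T!}\nu^T$; everything else is routine.
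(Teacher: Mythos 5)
Your proposal is correct, and it follows exactly the route the paper has in mind: the paper states this lemma without proof, treating it as a routine computation from \eqref{eq:definition:poissonBracket} and Definition~\ref{definition:wick}, which is precisely what you carry out. Your bookkeeping of the symmetric-tensor conventions is accurate — the multinomial factors $\binom{r}{T}$ from $(D^\sym)^r$ and $H^r$ together with the pairing value $T!/r!$ and the prefactor $\frac{\lambda^r}{r!}$ do collapse to $\frac{\lambda^{\abs{T}}}{T!}\sgn(T)\,(\cc{\partial}{}^T f)(\partial^T g)$, from which the stated coefficients $T!\binom{Q}{T}\binom{R}{T}$ follow as you show.
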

So by setting $\lambda$ to $\hbar \in \CC$, this yields a well-defined map
$\star_{\hbar} \colon \Polynomials(\CC^{1+n}) \times \Polynomials(\CC^{1+n}) \to \Polynomials(\CC^{1+n})$.
Next we consider the equivalence transformation $S$ from Proposition~\ref{proposition:Trafo2}:

\begin{lemma} \label{lemma:Sonpolys}
	For $P, Q \in \NN_0^{1+n}$ with $\abs P = \abs Q$, the equivalence transformation $S$ is given by
	\begin{equation} \label{eq:equivalenceTrafo:strict}
	S(\Monom{P}{Q})	= \bigg(\frac \lambda \momentmap \bigg)^{\abs P} \falling[\bigg]{\frac 
	\momentmap \lambda}{\abs P} 
	\Monom{P}{Q} \punkt
	\end{equation}
\end{lemma}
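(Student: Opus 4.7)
The plan is to reduce the claim to Proposition~\ref{proposition:Trafo1}, which already computes $S$ on pure powers of $\momentmap$. Because $\abs{P} = \abs{Q}$, the ratio $h_{P,Q} \coloneqq \Monom{P}{Q}/\momentmap^{\abs{P}}$ is $\CCx$-invariant on $\CCres$: under $\rho \mapsto \alpha \rho$ both $\Monom{P}{Q}$ and $\momentmap^{\abs{P}}$ scale by $\abs{\alpha}^{2\abs{P}}$. Thus I would write $\Monom{P}{Q} = \momentmap^{\abs{P}} \, h_{P,Q}$, separating the non-$\CCx$-invariant part as a pure power of $\momentmap$ on which $S$ is already known.

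To transport $S$ across this factorization I first observe that $S = T^{-1}$ has the same structural form as $T$ in Proposition~\ref{proposition:Trafo2}, i.e.\ a formal power series in $\lambda$ whose coefficients are sums of operators of the form $(g\circ\momentmap)(\partial/\partial\momentmap)^\ell$ with $g\in\Smooth({]0,\infty[})$. This is routine: such operators form a subalgebra of differential operators on $\CCres$ under composition, and $T = \id + \lambda(\dots)$, so its formal inverse given by the Neumann series lies in the same subalgebra. Now $\partial/\partial\momentmap = \frac{1}{2\momentmap}X_\Unit$ is proportional to the real Euler vector field, hence annihilates every $\CCx$-invariant function. By the Leibniz rule it follows that, for any $\CCx$-invariant $h \in \Smooth(\CCres)$ and any $f\in\Smooth(\CCres)\formal{\lambda}$,
\begin{equation*}
  \bigg(\frac{\partial}{\partial\momentmap}\bigg)^{\!\ell}\!(fh) \;=\; h\,\bigg(\frac{\partial}{\partial\momentmap}\bigg)^{\!\ell}\! f
  \quad\text{and hence}\quad
  S(fh) \;=\; h\, S(f) \punkt
\end{equation*}

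Applying this identity with $f = \momentmap^{\abs{P}}$ and $h = h_{P,Q}$ reduces the claim to
\begin{equation*}
  S(\Monom{P}{Q}) \;=\; h_{P,Q}\, S\big(\momentmap^{\abs{P}}\big) \punkt
\end{equation*}
Proposition~\ref{proposition:Trafo1} gives $S(\momentmap^{\abs{P}}) = \lambda^{\abs{P}} \falling{\momentmap/\lambda}{\abs{P}}$ (since $S = T^{-1}$), and substituting $h_{P,Q} = \Monom{P}{Q}/\momentmap^{\abs{P}}$ then yields precisely the asserted expression $(\lambda/\momentmap)^{\abs{P}} \falling{\momentmap/\lambda}{\abs{P}} \Monom{P}{Q}$. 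One may check as a sanity test that $(\lambda/\momentmap)^{\abs{P}}\falling{\momentmap/\lambda}{\abs{P}} = \prod_{k=0}^{\abs{P}-1}\bigl(1 - k\lambda/\momentmap\bigr)$ is a genuine polynomial in $\lambda$ with coefficients in $\Smooth(\CCres)$, so the right-hand side lives in $\Smooth(\CCres)\formal{\lambda}$ as required.

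The only nontrivial input is the structural statement that $S$ sits inside the algebra of operators polynomial in $\partial/\partial\momentmap$ with coefficients depending only on $\momentmap$; everything else then follows from the Leibniz rule and the already established action of $T$ on $\lambda^r\falling{\momentmap/\lambda}{r}$. I do not anticipate any real obstacle.
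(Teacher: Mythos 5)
Your proposal is correct and follows essentially the same route as the paper: factor $\Monom{P}{Q} = \momentmap^{\abs{P}}\,(\momentmap^{-\abs{P}}\Monom{P}{Q})$ with the second factor $\CCx$-invariant, use that $S$ is built from $(g\circ\momentmap)(\partial/\partial\momentmap)^\ell$-type operators (so it is a module map over $\CCx$-invariant functions), and then apply Proposition~\ref{proposition:Trafo1} to get $S(\momentmap^{\abs{P}}) = \lambda^{\abs{P}}\falling{\momentmap/\lambda}{\abs{P}}$. Your extra justification of the module property of $S = T^{-1}$ via the Neumann series and the Leibniz rule simply makes explicit what the paper takes for granted from the discussion preceding Proposition~\ref{proposition:Trafo2}.
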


\begin{proof}
	As ${\momentmap}^{-\abs P} \Monom{P}{Q}$ is $\CCx$-invariant, we get using Proposition~\ref{proposition:Trafo1}:
	\begin{equation*} 
	S(\Monom{P}{Q}) = S({\momentmap}^{\abs P} {\momentmap}^{-\abs P} \Monom{P}{Q}) = S({\momentmap}^{\abs P}) 
	{\momentmap}^{-\abs P} \Monom{P}{Q} = \falling{\momentmap/\lambda}{\abs P} 
	(\lambda/\momentmap)^{\abs P} 
	\Monom{P}{Q} \punkt
	\end{equation*}
	Note that this is indeed a well-defined formal power series in $\lambda$ 
	as the term $\momentmap/\lambda$ that occurs in $\falling{\momentmap/\lambda}{\abs P}$
	if $\abs{P} \ge 1$ is cancelled.
\end{proof}
Replacing $\lambda$ by $\hbar \in \CC$ yields a rational expression in $\hbar$
and we have to be aware of some poles:

\begin{definition}
  We define the open subset $\StarDomain$ of $\CC$ as
  \begin{align}
    \StarDomain \coloneqq \CC \setminus \big( \set{1/k}{k\in \NN} \cup \{0\} \big)\punkt
  \end{align}
\end{definition}
We have already seen in Theorem~\ref{theorem:reducedWickProductFormula} that
the reduced Wick star product $f \starred g$ of polynomials $f,g \in \Polynomials(\Mred)$
is rational in $\hbar$ with poles in $\set{1/k}{k\in \NN}$. More precisely, we get:

\begin{proposition} \label{proposition:starproductstructurered}
	For $P, Q, R, S \in \NN_0^{1+n}$ with $\abs P = \abs Q$ and $\abs R = \abs S$, the
	reduced star product from Definition~\ref{definition:reducedStar} is given by
	\begin{equation} \label{eq:starProduct:strict:reduced}
	\MonomRed{P}{Q} \starred \MonomRed{R}{S} = \sum_{T=0}^{\min\{Q, R\}} \sgn(T) \frac{ 
		\falling{1/\lambda}{\abs{Q+R-T}}}{\falling{1 / \lambda}{\abs Q} 
		\falling{1/\lambda}{\abs R}} T! \binom Q T 
	\binom R T  
	\MonomRed{P+R-T}{Q+S-T} \punkt
	\end{equation}
	Replacing $\lambda$ by 
	$\hbar \in \StarDomain$
	yields a strict (associative) product
	$\starred[,\hbar] \colon 
	\Polynomials(\Mred) \times \Polynomials(\Mred) \to \Polynomials(\Mred)$
	and $\Polynomials(\Mred)$ with this product and pointwise complex conjugation
	becomes a unital $^*$-algebra if $\hbar \in \StarDomain \cap \RR$.
\end{proposition}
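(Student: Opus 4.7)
My plan is to derive the closed-form expression \eqref{eq:starProduct:strict:reduced} first, and then read off the strict-product statements from it. The starting point is the defining relation $\pr^*(f \starred g) = (S(\bigpr^*(f) \star \bigpr^*(g)))\at{\Levelset}$ combined with the observation that $S$ is an equivalence transformation between $\star$ and $\startilde$, so that
\[
S(\Monom{P}{Q} \star \Monom{R}{S}) = S(\Monom{P}{Q}) \startilde S(\Monom{R}{S}).
\]
Both factors on the right are $\group{U}(1)$-invariant, and Lemma~\ref{lemma:Sonpolys} evaluated at $\momentmap = 1$ on $\Levelset$ yields $(S(\Monom{P}{Q}))\at{\Levelset} = \lambda^{\abs{P}}\falling{1/\lambda}{\abs{P}}\pr^*(\MonomRed{P}{Q})$ (analogously for $(R,S)$). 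Using the reducibility of $\startilde$ together with bilinearity of $\starred$ and injectivity of $\pr^*$, this gives
\[
\pr^*(\MonomRed{P}{Q} \starred \MonomRed{R}{S}) = \frac{(S(\Monom{P}{Q} \star \Monom{R}{S}))\at{\Levelset}}{\lambda^{\abs{P}+\abs{R}}\falling{1/\lambda}{\abs{P}}\falling{1/\lambda}{\abs{R}}}.
\]

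To finish the formula I would expand $\Monom{P}{Q}\star\Monom{R}{S}$ via Lemma~\ref{lemma:starproductstructure}, apply $S$ term-by-term via Lemma~\ref{lemma:Sonpolys} (using that $\abs{P+R-T} = \abs{Q+S-T} = \abs{P}+\abs{R}-\abs{T}$ since $\abs{P}=\abs{Q}$ and $\abs{R}=\abs{S}$), and restrict to $\Levelset$. The powers $\lambda^{\abs{T}}$ coming from Lemma~\ref{lemma:starproductstructure} and $\lambda^{\abs{P}+\abs{R}-\abs{T}}$ coming from Lemma~\ref{lemma:Sonpolys} combine to $\lambda^{\abs{P}+\abs{R}}$, which cancels the matching factor in the denominator above and leaves precisely the rational coefficient $\sgn(T)\,T!\binom{Q}{T}\binom{R}{T}\falling{1/\lambda}{\abs{Q+R-T}}/(\falling{1/\lambda}{\abs{Q}}\falling{1/\lambda}{\abs{R}})$ from \eqref{eq:starProduct:strict:reduced}, upon identifying $\abs{P}=\abs{Q}$.

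For the strict statements I observe that the right-hand side of \eqref{eq:starProduct:strict:reduced} is a finite sum whose only denominators are products of factors $\falling{1/\lambda}{k}$, and these are nonzero at $\lambda = \hbar$ precisely when $\hbar \in \StarDomain$. Thus the formula defines $\starred[,\hbar]$ on the reduced monomials, with the constant function $1$ as the unit, and extends bilinearly to a well-defined product on $\Polynomials(\Mred)$. Associativity of the formal $\starred$ is an identity in $\Polynomials(\Mred)\formal{\lambda}$; on any triple of monomials both sides are finite sums of rational functions of $\lambda$, and a rational identity that holds as a Taylor series at $\lambda = 0$ holds wherever both sides are defined, so associativity descends to $\starred[,\hbar]$ for every $\hbar \in \StarDomain$.

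For the Hermitian property I note that for real $\hbar$ all numerical coefficients in \eqref{eq:starProduct:strict:reduced} are real, while $\cc{\MonomRed{A}{B}} = \MonomRed{B}{A}$. Conjugating the formula and relabelling via $(P,Q,R,S) \mapsto (S,R,Q,P)$ (under which the coefficient is invariant since $\binom{Q}{T}\binom{R}{T}=\binom{R}{T}\binom{Q}{T}$ and $\abs{Q+R-T}=\abs{R+Q-T}$) shows that the result coincides with $\MonomRed{S}{R}\starred[,\hbar]\MonomRed{Q}{P}$, so $\cc{f\starred[,\hbar] g} = \cc{g}\starred[,\hbar]\cc{f}$ on generators and hence on all of $\Polynomials(\Mred)$. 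I expect the main obstacle to lie in the very first step: one must resist replacing the polynomial $\group{U}(1)$-invariant lift $\Monom{P}{Q}$ by the $\CCx$-invariant but non-polynomial lift $\momentmap^{-\abs{P}}\Monom{P}{Q}$ in a naive way, and instead exploit the equivalence property of $S$ so that the entire computation can be carried out with polynomial lifts.
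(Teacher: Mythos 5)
Your derivation is correct and follows essentially the same route as the paper's proof: expand $\Monom{P}{Q}\star\Monom{R}{S}$ via Lemma~\ref{lemma:starproductstructure}, apply the equivalence transformation $S$ termwise via Lemma~\ref{lemma:Sonpolys} using $S(\Monom{P}{Q}\star\Monom{R}{S})=S(\Monom{P}{Q})\startilde S(\Monom{R}{S})$, restrict to $\Levelset$, use reducibility of $\startilde$, and cancel the invertible formal power series $\lambda^{\abs{P}}\falling{1/\lambda}{\abs{P}}\,\lambda^{\abs{R}}\falling{1/\lambda}{\abs{R}}$, which is exactly the paper's computation, while your rational-function argument for associativity and the explicit relabelling for the $^*$-property just spell out what the paper subsumes under ``properties of the Hermitian formal star product''. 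The only step worth making explicit is that, since the $\MonomRed{P}{Q}$ are not linearly independent, well-definedness of the bilinear extension of \eqref{eq:starProduct:strict:reduced} at fixed $\hbar$ requires the same evaluation-of-rational-functions argument you already use for associativity.
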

\begin{proof}
    First we note that  Lemmas~\ref{lemma:starproductstructure} and
    \ref{lemma:Sonpolys} show that
	\begin{align*}
        \bigg(\frac \lambda \momentmap \bigg)^{\abs Q} &\falling[\bigg]{\frac \momentmap \lambda}{\abs Q} \Monom{P}{Q}
        \startilde 
        \bigg(\frac \lambda \momentmap \bigg)^{\abs R} \falling[\bigg]{\frac \momentmap \lambda}{\abs R} \Monom{R}{S}
        =
        \\
        &=
        \sum_{T=0}^{\min\{Q, R\}} \sgn(T) \lambda^{\abs{T}} T!
        \binom Q T 
        \binom R T  
        \bigg(\frac \lambda \momentmap \bigg)^{\abs {Q+R-T}} \falling[\bigg]{\frac \momentmap \lambda}{\abs {Q+R-T}} 
        \Monom{P+R-T}{Q+S-T}
	\end{align*}
    holds for the transformed star product $\startilde$
    and all $P,Q,R,S\in \NN_0^{1+n}$ with $\abs{P} = \abs{Q}$ and $\abs{R} = \abs{S}$.
    As
    \begin{align*}
      \pr^* \big( \lambda^{\abs Q} \falling[\big]{1 / \lambda}{\abs Q} \MonomRed{P}{Q} \big)
      =
      \iota^*\Big( \big(\lambda / \momentmap \big)^{\abs Q} \falling[\big]{\momentmap / \lambda}{\abs Q} \Monom{P}{Q} \Big)
    \end{align*}
    we find that
	\begin{align*}
        \big( \lambda^{\abs Q} \falling[\big]{1 / \lambda}{\abs Q} &\MonomRed{P}{Q} \big)
        \starred
        \big( \lambda^{\abs R} \falling[\big]{1 / \lambda}{\abs R} \MonomRed{R}{S} \big)
        =
        \\
        &=
        \sum_{T=0}^{\min\{Q, R\}} \sgn(T) \lambda^{\abs{Q+R}} T!
        \binom Q T 
        \binom R T  
        \falling[\big]{1 / \lambda}{\abs {Q+R-T}}
        \MonomRed{P+R-T}{Q+S-T}
        \komma
	\end{align*}
    which yields \eqref{eq:starProduct:strict:reduced} by $\CC\formal{\lambda}$-linearity of $\starred$
    and because $\lambda^{\abs{Q}} \falling{1/\lambda}{\abs{Q}} = (1-\lambda) \dots (1-(\abs{Q}-1)\lambda)$
    is an invertible formal power series.
    Note that the right-hand side of \eqref{eq:starProduct:strict:reduced}
    is indeed a well-defined formal power series in $\lambda$ because the factor
    $1/\lambda$ that occurs in $\falling{1 / \lambda}{\abs {Q+R-T}}$
    for $\abs {Q+R-T} \ge 1$ is cancelled.
    
    We can now substitute $\lambda$ by $\hbar \in \CC$:
	If $\hbar \in \StarDomain$, the falling factorials in the nominator are non-zero, thus 
	\eqref{eq:starProduct:strict:reduced} defines a well-defined product on the whole algebra 
	$\Polynomials(\Mred)$.
	Associativity and compatibility with pointwise complex conjugation follow from the
	properties of the Hermitian formal star product $\starred$, and the unit is the
	constant $1$-function.
\end{proof}
Equation \eqref{eq:starProduct:strict:reduced} immediately yields:

\begin{corollary} \label{corollary:rationalDependenceOnHbar}
	For two fixed polynomials $f, g \in \Polynomials(\Mred)$, the map
	$\hbar \mapsto f \starred[,\hbar] g$ is rational
	and $\lim_{\hbar\to 0} f \starred[,\hbar] g = f g$ holds pointwise.
\end{corollary}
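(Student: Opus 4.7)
The plan is to reduce to the basis of reduced monomials and then read everything off directly from the formula \eqref{eq:starProduct:strict:reduced}. Since $f$ and $g$ are polynomials, each is a finite linear combination of reduced monomials $\MonomRed{P}{Q}$, so by bilinearity of $\starred[,\hbar]$ it suffices to establish both claims on products of the form $\MonomRed{P}{Q} \starred[,\hbar] \MonomRed{R}{S}$ with $\abs{P}=\abs{Q}$ and $\abs{R}=\abs{S}$.

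For rationality, I would simply observe that in the formula \eqref{eq:starProduct:strict:reduced} each factor $\falling{1/\hbar}{k}$ is a rational function of $\hbar$ (a product of finitely many terms $1/\hbar - j$), and the sum over $T$ is finite because it is indexed by $0\le T\le \min\{Q,R\}$. So the entire expression is a finite sum of rational functions, hence rational. For $\hbar\in\StarDomain$, the denominators $\falling{1/\hbar}{\abs Q}\falling{1/\hbar}{\abs R}$ are nonzero, confirming well-definedness away from the poles in $\{1/k \mid k\in\NN\}\cup\{0\}$.

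For the pointwise limit, the key step is to rewrite $\falling{1/\hbar}{k} = \hbar^{-k}\prod_{j=0}^{k-1}(1-j\hbar)$, so that
\begin{equation*}
  \frac{\falling{1/\hbar}{\abs{Q+R-T}}}{\falling{1/\hbar}{\abs Q}\falling{1/\hbar}{\abs R}}
  =
  \hbar^{\abs T}\,
  \frac{\prod_{j=0}^{\abs{Q+R-T}-1}(1-j\hbar)}{\prod_{j=0}^{\abs Q-1}(1-j\hbar)\prod_{j=0}^{\abs R-1}(1-j\hbar)} \punkt
\end{equation*}
As $\hbar\to 0$, the polynomial quotient tends to $1$, while the prefactor $\hbar^{\abs T}$ vanishes whenever $\abs T\ge 1$. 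Thus only the term $T=0$ survives, and since $\sgn(0)=1$, $0!=1$, and $\binom Q 0 = \binom R 0 = 1$, one obtains $\lim_{\hbar\to 0}\MonomRed{P}{Q}\starred[,\hbar]\MonomRed{R}{S} = \MonomRed{P+R}{Q+S}$. By the elementary multiplication rule $\MonomRed{P}{Q}\MonomRed{R}{S}=\MonomRed{P+R}{Q+S}$ noted after Definition~\ref{definition:fundamentalMonomials}, this equals the pointwise product, and the corollary follows by linearity.

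There is no real obstacle here; the entire statement is essentially a direct inspection of \eqref{eq:starProduct:strict:reduced}. The only minor point to be careful about is the bookkeeping of the powers of $\hbar$ that determine which $T$ contribute in the limit, which is precisely why the factored form of the falling factorial makes the argument transparent.
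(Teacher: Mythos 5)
Your proposal is correct and is essentially the paper's argument: the paper derives the corollary as an immediate consequence of the explicit formula \eqref{eq:starProduct:strict:reduced}, and your write-up simply spells out that inspection (finite sum of rational coefficients, and the factorization $\falling{1/\hbar}{k}=\hbar^{-k}\prod_{j=0}^{k-1}(1-j\hbar)$ showing that only the $T=0$ term survives as $\hbar\to 0$). Nothing is missing.
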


\begin{proposition} \label{proposition:poissonBracketIsFirstOrderCommutator}
	For two polynomials $f, g \in \Polynomials(\Mred)$, we have
	\begin{equation} \label{eq:poissonBracketIsFirstOrderCommutator}
	\lim_{\hbar \to 0} \frac 1 {\I\hbar} \big(f \starred[,\hbar] g - g 
	\starred[,\hbar] f\big) 
	= \poi{f}{g}_\red
	\end{equation}
	pointwise and with the reduced Poisson bracket $\poi{\argument}{\argument}_\red$ on $\Mred$.
\end{proposition}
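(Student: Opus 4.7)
The plan is to verify the identity on a basis of $\Polynomials(\Mred)$ and then extend by bilinearity. Both $\starred[,\hbar]$ and the reduced Poisson bracket $\poi{\argument}{\argument}_\red$ are bilinear in $f$ and $g$, so it is enough to check \eqref{eq:poissonBracketIsFirstOrderCommutator} when $f = \MonomRed{P}{Q}$ and $g = \MonomRed{R}{S}$ range over the generating family of reduced monomials.

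For such basis elements, Proposition~\ref{proposition:starproductstructurered} gives the explicit formula \eqref{eq:starProduct:strict:reduced}. The denominators appearing there are of the form $\falling{1/\lambda}{k} = \lambda^{-k}(1-\lambda)(1-2\lambda)\cdots(1-(k-1)\lambda)$, so after multiplying numerator and denominator by suitable powers of $\lambda$, the coefficient in front of each reduced monomial on the right-hand side is a rational function of $\hbar$ whose denominator is nonzero at $\hbar=0$. In particular, $\MonomRed{P}{Q}\starred[,\hbar]\MonomRed{R}{S}$ is a rational function of $\hbar$ that admits a convergent Taylor expansion around $\hbar=0$. By construction, this Taylor expansion agrees term by term with the formal power series $\MonomRed{P}{Q}\starred\MonomRed{R}{S}$ obtained by interpreting \eqref{eq:starProduct:strict:reduced} as a formal series in $\lambda$.

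Next, I invoke the fact noted at the end of Subsection~\ref{subsec:physics:smooth} that the formal star product $\starred$ deforms in direction of the reduced Poisson bracket. This means that its first-order bidifferential operator $\tilde C_{1,\red}$ satisfies $\bigl(\tilde C_{1,\red}(f,g) - \tilde C_{1,\red}(g,f)\bigr)/\I = \poi{f}{g}_\red$. Combining this with the previous paragraph, the Taylor expansion of $\hbar \mapsto \MonomRed{P}{Q}\starred[,\hbar]\MonomRed{R}{S} - \MonomRed{R}{S}\starred[,\hbar]\MonomRed{P}{Q}$ around $\hbar=0$ starts with $\I \hbar \poi{\MonomRed{P}{Q}}{\MonomRed{R}{S}}_\red$, and hence dividing by $\I\hbar$ and taking $\hbar\to 0$ yields the desired Poisson bracket.

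The only point requiring care is the verification that the rational function coming from \eqref{eq:starProduct:strict:reduced} and the formal power series coming from $\starred$ genuinely coincide as analytic functions of $\hbar$ near $\hbar=0$. This is where the matching of the two constructions needs to be made precise; however, since the derivation of \eqref{eq:starProduct:strict:reduced} in Proposition~\ref{proposition:starproductstructurered} consisted of exactly substituting $\lambda$ by $\hbar$ after canceling the spurious negative powers of $\lambda$, this identification is essentially tautological. The main (very mild) obstacle is therefore the bookkeeping for the Taylor expansion around $\hbar = 0$ of the prefactor $\falling{1/\lambda}{\abs{Q+R-T}}/\bigl(\falling{1/\lambda}{\abs Q}\falling{1/\lambda}{\abs R}\bigr)$ in \eqref{eq:starProduct:strict:reduced}, which is trivial once rewritten in the polynomial form above. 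Alternatively, one can bypass the formal machinery and compute the first-order term of the antisymmetrization of \eqref{eq:starProduct:strict:reduced} directly, and then compare with \eqref{eq:poissonBracket:onPolynomials} after reduction.
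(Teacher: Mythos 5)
Your proposal is correct, but its primary route differs from the paper's. The paper proves the statement by a direct computation on the structure constants: in \eqref{eq:starProduct:strict:reduced} the $T=0$ terms cancel in the commutator, all terms with $\abs{T}\ge 2$ carry a factor $\falling{1/\hbar}{\abs{Q+R-T}}/(\falling{1/\hbar}{\abs{Q}}\falling{1/\hbar}{\abs{R}}) = O(\hbar^{\abs{T}})$, and the $\abs{T}=1$ terms contribute exactly $\frac{1}{\I}\sum_{k}\nu_k(Q_kR_k-S_kP_k)\MonomRed{P+R-E_k}{Q+S-E_k}$ in the limit, which is identified with $\poi{\MonomRed{P}{Q}}{\MonomRed{R}{S}}_\red$ via Lemma~\ref{lemma:starproductstructure} and Definition~\ref{definition:reducible}; this is precisely the ``alternative'' you sketch in your last sentence. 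Your main argument instead establishes only regularity at $\hbar=0$ from \eqref{eq:starProduct:strict:reduced} (correctly: the coefficient of each reduced monomial is $\hbar^{\abs{T}}$ times a rational function regular and nonzero at $0$, so the Taylor series at $0$ coincides with the formal expansion in $\lambda$), and then delegates the identification of the first-order antisymmetric part to the statement that $\starred$ deforms in direction of $\poi{\argument}{\argument}_\red$, made at the end of Subsection~\ref{subsec:physics:smooth}. That is legitimate, and it buys a shorter proof; the trade-off is that the deformation-direction claim is only asserted there (``easy to check''), so the paper's computation is the more self-contained one -- indeed it effectively verifies that claim on the monomial basis, which is arguably the real content of the proposition. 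Both routes use the same reduction to reduced monomials plus bilinearity, and both correctly note that the zeroth-order (pointwise) terms cancel in the commutator.
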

\begin{proof}
	All terms with $\abs T \geq 2$ in Equation \eqref{eq:starProduct:strict:reduced}
	are at least of order $\hbar^2$ and the $T = 0$ term cancels out when
	taking the commutator. The first order in $\hbar$ of the terms with 
	$\abs T = 1$ produces
	\begin{align*}
	\lim_{\hbar \to 0} \frac 1 {\I \hbar}
	\big(\MonomRed{P}{Q} \starred[,\hbar] \MonomRed{R}{S} - \MonomRed{R}{S} \starred[,\hbar] \MonomRed{P}{Q}\big)
	=
	\frac{1}{\I}\sum_{k=0}^n \nu_k 
    (Q_k R_k - S_kP_k)
	\MonomRed{P+R-E_k}{Q+S-E_k}
	\end{align*}
    with $E_k = (0,\dots,0,1,0,\dots,0) \in \NN_0^{1+n}$ having the $1$ at position $k\in \{0,\dots,n\}$,
    which coincides by Definition~\ref{definition:reducible}
    and Lemma~\ref{lemma:starproductstructure} with $\poi{\MonomRed{P}{Q}}{\MonomRed{R}{S}}_\red$.
\end{proof}




\subsection{The Analytic Case}   \label{subsec:physics:analytic}

The aim of this section is to obtain a strict star product on the algebra $\Analytic(\Mred)$. We achieve this 
by proving the continuity of the star product $\starred[,\hbar]$ on $\Polynomials(\Mred)$ 
with respect to the locally convex topology that $\Polynomials(\Mred)$ inherits
from $\Analytic(\Mred)$, i.e.\ the topology of locally uniform convergence
of the holomorphic extensions to $\MredExt$. This then implies that $\starred[,\hbar]$ extends 
uniquely to a continuous star product on $\Analytic(\Mred)$.

Recall from Proposition~\ref{proposition:topologies:quotientTopology} that the topology on $\Analytic(\Mred)$
is just the quotient topology of the topology on $\Analytic(\CC^{1+n})^{\group U(1)}$
defined by locally uniform convergence of the holomorphic extensions to $\CC^{1+n}\times\CC^{1+n}$.
For 
$\hbar \in \StarDomain$ define a product 
$*_\hbar$ on $\Polynomials(\CC^{1+n})^{\group{U}(1)}$ by bilinearly extending
\begin{equation} 
\Monom{P}{Q} *_\hbar \Monom{R}{S} 
\coloneqq 
\sum_{T=0}^{\min\{Q, R\}} \sgn(T) 
\frac{ 
    \falling{1/\hbar}{\abs Q+R-T}
}{
    \falling{1 / \hbar}{\abs Q} \falling{1/\hbar}{\abs R}
}
T! \binom Q T \binom R T \Monom{P+R-T}{Q+S-T}
\end{equation}
for all $P,Q,R,S\in \NN_0^{1+n}$ with $\abs{P} = \abs{Q}$ and $\abs{R} = \abs{S}$.
Note that this product might be badly behaved, for example it does not need to be associative. However, from 
Proposition~\ref{proposition:starproductstructurered} it follows immediately that
dividing out the vanishing ideal of $\momentmap - 1$ is possible and reproduces the product 
$\starred[,\hbar]$.
Consequently, continuity of $*_\hbar$ with respect to the seminorms 
$\seminorm{r}{\argument}$ with $r \in {[1,\infty[}$ defined in Equation 
\eqref{eq:normoben} implies continuity of $\starred[,\hbar]$.

\begin{proposition} \label{proposition:continuityEstimates}
	The product $*_\hbar$ is continuous with respect to the locally convex topology 
	defined by the seminorms $\seminorm{r}{\argument}$ with $r \in {[1,\infty[}$ 
	as in Equation \eqref{eq:normoben}. More precisely, for every $r \in {[1,\infty[}$
	and every compact subset $K$ of $\StarDomain$ there exists $r' \in {[1,\infty[}$ such that
	\begin{align}
	 \seminorm[\big]{r}{f *_\hbar g}
	 \le
	 \seminorm{r'}{f} \seminorm{r'}{g}
	\end{align}
    holds for all $\hbar \in K$ and all $f,g\in \Polynomials(\CC^{1+n})^{\group{U}(1)}$.
\end{proposition}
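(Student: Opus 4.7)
The plan is to reduce the estimate, by bilinearity and the triangle inequality, to a combinatorial bound on a sum over multiindices $T$; to control that sum via the compactness of $K\subset\StarDomain$; and finally to recognize $*_\hbar$, after a rescaling of the basis, as the ordinary strict Wick star product acting on a weighted sequence space.

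First I would expand $f=\sum_{P,Q}f_{P,Q}\Monom{P}{Q}$ and $g=\sum_{R,S}g_{R,S}\Monom{R}{S}$ (only terms with $\abs P=\abs Q$ and $\abs R=\abs S$ appear by $\group{U}(1)$-invariance), apply the definition of $*_\hbar$, and use the triangle inequality for $\seminorm{r}{\argument}$. Using
\[
\abs{P+R-T}+\abs{Q+S-T}=\abs P+\abs Q+\abs R+\abs S-2\abs T=2(\abs Q+\abs R),
\]
one can factor out $r^{2(\abs Q+\abs R)}$ and reduce the problem to establishing a uniform bound
\[
\Sigma(Q,R,\hbar,r)\coloneqq\sum_{T\le\min\{Q,R\}}\abs[\bigg]{\frac{\falling{1/\hbar}{\abs Q+\abs R-\abs T}}{\falling{1/\hbar}{\abs Q}\falling{1/\hbar}{\abs R}}}T!\binom Q T\binom R T\,r^{-2\abs T}\le L^{\abs Q+\abs R}
\]
for some $L=L(K,r)\ge 1$, uniformly in $\hbar\in K$ and $Q,R\in\NN_0^{1+n}$; then $r'=r\sqrt L$ would suffice.

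Second, the identity $\falling{1/\hbar}{p+q-t}=\falling{1/\hbar}{p}\falling{1/\hbar-p}{q-t}$ combined with $1/\hbar-k=(1-k\hbar)/\hbar$ rewrites the ratio of falling factorials as $\abs\hbar^{\abs T}$ times a ratio of products of factors $(1-k\hbar)$. Since $K$ is a compact subset of $\StarDomain=\CC\setminus(\{1/k:k\in\NN\}\cup\{0\})$, there are constants $M_K,c_K>0$ with $\abs\hbar\le M_K$, $\abs{1-k\hbar}\le 1+kM_K$, and $\abs{1-k\hbar}\ge c_K$ for all $\hbar\in K$ and $k\in\NN_0$; the uniform lower bound combines the avoidance property with the observation that $\abs{1-k\hbar}\ge k\abs\hbar/2$ for $k$ sufficiently large compared to $1/\abs\hbar$.

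The crucial simplification is the rescaling $\tilde b_{P,Q}\coloneqq\prod_{k=0}^{\abs Q-1}(1-k\hbar)\,\Monom{P}{Q}$: a direct computation, using $\abs\hbar^{N}\falling{1/\hbar}{N}=\prod_{k=0}^{N-1}(1-k\hbar)$, shows that in this rescaled basis
\[
\tilde b_{P,Q}*_\hbar\tilde b_{R,S}=\sum_T\sgn(T)\hbar^{\abs T}T!\binom Q T\binom R T\,\tilde b_{P+R-T,Q+S-T},
\]
i.e.\ $*_\hbar$ becomes exactly the strict Wick star product. Continuity of $*_\hbar$ in the seminorms $\seminorm{r}{\argument}$ thus translates into continuity of the Wick product in weighted seminorms whose weights $\abs{\prod_{k=0}^{\abs Q-1}(1-k\hbar)}$ exhibit factorial growth in $\abs Q$ (uniformly for $\hbar\in K$). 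The main obstacle is this weighted estimate: neither the ratio of falling factorials nor the combinatorial factor $T!\binom Q T\binom R T$ is individually bounded by $C^{\abs Q+\abs R}$, so one must exploit the product structure
\[
\sum_T T!\binom Q T\binom R T y^{\abs T}=\prod_{k=0}^n\sum_{t_k=0}^{\min\{Q_k,R_k\}}t_k!\binom{Q_k}{t_k}\binom{R_k}{t_k}y^{t_k}
\]
to factor the sum componentwise into one-dimensional hypergeometric-type sums, and combine them with the $(1-k\hbar)$-bounds of the previous step and the $r^{-2\abs T}$-damping, in the spirit of the continuity arguments of \cite{schoetz.waldmann:ConvergentStarProductsForProjectiveLimitsOfHilbertSpaces,waldmann:ANuclearWeylAlgebra,kraus.roth.schoetz.waldmann:OnlineConvergentStarProductOnPoincareDisc}; the key freedom is to choose $r'$ as large compared to $r$ as the constants $M_K,c_K$ demand.
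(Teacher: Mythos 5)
Your reduction to the uniform bound $\Sigma(Q,R,\hbar,r)\le L^{\abs{Q}+\abs{R}}$ (with $r'=r\sqrt{L}$) is the right one, your compactness estimates for the factors $1-k\hbar$ are fine, and the rescaling identity is correct -- it simply inverts, on the level set, the equivalence transformation of Lemma~\ref{lemma:Sonpolys}. But the argument stops exactly where the content of the proposition lies: the bound on $\Sigma$ is never established, and the mechanisms you offer cannot deliver it. The componentwise factorization $\sum_T T!\binom{Q}{T}\binom{R}{T}y^{\abs{T}}=\prod_k\sum_{t_k}t_k!\binom{Q_k}{t_k}\binom{R_k}{t_k}y^{t_k}$ only applies when the $T$-dependence enters through a fixed geometric factor $y^{\abs{T}}$, and for any constant $y>0$ the one-dimensional factors already grow like $\min\{Q_k,R_k\}!$, i.e.\ faster than any $L^{Q_k+R_k}$; likewise the damping $r^{-2\abs{T}}$ and the freedom to enlarge $r'$ are only exponential in $\abs{T}$ resp.\ $\abs{Q}+\abs{R}$ and cannot absorb the factorial growth of $T!\binom{Q}{T}\binom{R}{T}$. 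Moreover the falling-factorial ratio does not factor componentwise (it depends on $\abs{T}$ only), so the product structure is not where the compensation can come from: it must come from the factorial \emph{decay} in $\abs{T}$ of that ratio, which your outline mentions but never uses quantitatively.

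Concretely, two estimates are missing, and together they close the gap (this is how the paper argues). First, your bounds $\abs{1-k\hbar}\le 1+kM_K$ and $\abs{1-k\hbar}\ge c_K$, together with $\abs{1-k\hbar}\ge k\abs{\hbar}/2$ for large $k$, must be assembled into the uniform two-sided bound $c^m m!\le\abs[\big]{\falling{1/\hbar}{m}}\le C^m m!$ for all $\hbar\in K$ and $m\in\NN_0$; note that the lower bound genuinely needs the factorial growth of the product, not just $\abs{1-k\hbar}\ge c_K$, which by itself only gives an exponential lower bound. Second, one needs the purely combinatorial cancellation
\begin{equation*}
(\abs{Q}+\abs{R}-\abs{T})!\;T!\,\binom{Q}{T}\binom{R}{T}
\le
4^{\abs{Q}+\abs{R}}\,\abs{Q}!\,\abs{R}!
\komma
\end{equation*}
obtained e.g.\ from $T!\le\abs{T}!$, $\binom{Q}{T}\binom{R}{T}\le 2^{\abs{Q}+\abs{R}}$ and $(\abs{Q}+\abs{R}-\abs{T})!\,\abs{T}!\le(\abs{Q}+\abs{R})!\le 2^{\abs{Q}+\abs{R}}\abs{Q}!\,\abs{R}!$. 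Inserting both into $\Sigma$ gives $\Sigma\le L(K,r)^{\abs{Q}+\abs{R}}$ uniformly for $\hbar\in K$, after which your $r'=r\sqrt{L}$ step finishes the proof. Without this cancellation between the factorially decaying ratio and the factorially growing $T!\binom{Q}{T}\binom{R}{T}$ there is a genuine gap; the rescaling to the Wick product does not remove it, since it merely restates the claim as continuity of $\star_\hbar$ with respect to factorially weighted seminorms, uniformly in $\hbar\in K$, which is not what the cited continuity results provide and is the same estimate in disguise.
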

\begin{proof}
	It is well-known that for any compact set $K' \subseteq \CC \setminus \NN_0$ there are 
	constants $c, C > 0$ such that
	\begin{equation*}
	c^n n! \leq \abs{\falling{z}{n}} \leq C^n n!
	\end{equation*}
	holds for all $z \in K$ and all $n \in \NN_0$.
	For a compact set $K \subseteq \StarDomain$ also
	$K' \coloneqq \set{z \in \CC \setminus \{ 0 \}}{z^{-1} \in K}$ is compact
	and a subset of $\CC \setminus \NN_0$. Therefore it follows for any 
	$r \in {[1,\infty[}$ and $P, Q, R, S \in \NN_0^{1+n}$ with $\abs P = \abs Q$ and $\abs R = \abs S$, and 
	assuming without loss of generality that $C \geq 1$,
	that 
	\begin{align*}
	\seminorm[\big]{r}{\Monom{P}{Q} *_\hbar \Monom{R}{S}} 
	& = 
	\seminorm[\bigg]{r}{
		\sum_{T=0}^{\min\{Q, R\}} \sgn(T) 
			\frac{\falling{1/\hbar}{\abs{Q+R-T}}}
				 {\falling{1 / \hbar}{\abs Q} \falling{1/\hbar}{\abs R}}
			T! \binom Q T \binom R T \Monom{P+R-T}{Q+S-T}
	} 
	\\
	& \leq 
	\sum_{T=0}^{\min\{Q, R\}} 
		\abs[\bigg]{
			\frac{\falling{1/\hbar}{\abs{Q+R-T}}}
				 {\falling{1 / \hbar}{\abs Q} \falling{1/\hbar}{\abs R}}
        }
		T! \binom Q T \binom R T
		r^{\abs{P+Q+R+S-2T}} 
	\\
	& \leq 
	\sum_{T=0}^{\min\{Q, R\}}
        \frac{C^{\abs{Q+R-T}}}{c^{\abs{Q+R}}}
        \frac{\abs{Q+R-T}! T!}{\abs Q! \abs R!}
		2^{\abs{Q+R}}
		r^{\abs{P+Q+R+S-2T}}
	\\
	& \leq 
	\sum_{T=0}^{\min\{Q, R\}} (c^{-1} C)^{\abs {Q + R}} 4^{\abs {Q+R}}
	r^{\abs {P + Q + R + S}}
	\\
	& \leq 
	(8 c^{-1} C r)^{\abs {P + Q + R + S}}
	\punkt
	\end{align*}
	So given $\group{U}(1)$-invariant polynomials 
	$f = \sum_{P,Q} \expansionCoefficients{f}{P}{Q} \Monom{P}{Q}$ and
	$g = \sum_{R,S} \expansionCoefficients{g}{R}{S} \Monom{R}{S}$ on $\CC^{1+n}$
	with complex coefficients $\expansionCoefficients{f}{P}{Q}$ and 
	$\expansionCoefficients{g}{R}{S}$, then
	\begin{align*}
	 \seminorm[\big]{r}{f *_\hbar g}
	 \le
	 \sum_{\substack{P,Q \in \NN_0^{1+n} \\ \abs{P}=\abs{Q}}}
	 \abs[\big]{\expansionCoefficients{f}{P}{Q}}
	 (8 c^{-1} C r)^{\abs {P + Q}}
	 \sum_{\substack{R,S \in \NN_0^{1+n} \\ \abs{R}=\abs{S}}}
	 \abs[\big]{\expansionCoefficients{g}{R}{S}}
	 (8 c^{-1} C r)^{\abs {R + S}}
	 =
	 \seminorm[\big]{8 c^{-1} C r}{f} \seminorm[\big]{8 c^{-1} C r}{g}
	 \punkt\,\,
	\end{align*}
\end{proof}
We would like to remark that, similar as in 
\cite{esposito.schmitt.waldmann:OnlineComparisonContinuityWickStarProducts},
one can also use the description of the star product using bidifferential operators to prove its continuity.

\begin{theorem} \label{theorem:strictProduct}
  For every $\hbar \in \StarDomain$, the product
  $\starred[,\hbar]$ on $\Polynomials(\Mred)$ extends to a continuous
  associative product on $\Analytic(\Mred)$. Moreover $\Analytic(\Mred)$ becomes a unital
  Fréchet-$^*$-algebra with this product and pointwise complex conjugation as $^*$-involution
  in the case that $\hbar \in \StarDomain \cap \RR$.
  Finally, for any two fixed elements $f, g \in \Analytic(\Mred)$ and $[\rho] \in \Mred$,
  the map $\StarDomain \to \CC$, $\hbar \mapsto (f \starred[,\hbar] g)([\rho])$
  is holomorphic.
\end{theorem}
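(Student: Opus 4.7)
The plan is to assemble the three claims from Proposition~\ref{proposition:continuityEstimates} via density of polynomials and the quotient description of $\Analytic(\Mred)$ from Proposition~\ref{proposition:topologies:quotientTopology}. First I would fix $\hbar \in \StarDomain$ and invoke Proposition~\ref{proposition:continuityEstimates} together with density of $\Polynomials(\CC^{1+n})^{\group{U}(1)}$ in the Fréchet space $\Analytic(\CC^{1+n})^{\group{U}(1)}$ to extend $*_\hbar$ uniquely to a jointly continuous bilinear product $\widetilde{*}_\hbar$ on $\Analytic(\CC^{1+n})^{\group{U}(1)}$. In order to descend through Proposition~\ref{proposition:topologies:quotientTopology}, I need the closed ideal $\mathcal{I} \coloneqq \set{f \in \Analytic(\CC^{1+n})^{\group{U}(1)}}{\iota^*(f) = 0}$ to be two-sided for $\widetilde{*}_\hbar$. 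By Proposition~\ref{proposition:starproductstructurered} and the construction of $\starred[,\hbar]$, this already holds on $\mathcal{I} \cap \Polynomials(\CC^{1+n})^{\group{U}(1)}$, so by joint continuity it is enough to prove that polynomial elements are dense in $\mathcal{I}$. That density I would obtain as follows: any element of $\mathcal{I}$ is $\Diag^*(\hat f)$ with $\hat f \in \Holomorphic(\CC^{1+n}\times\CC^{1+n})^{\CCx}$ vanishing on $\LevelsetExt$ by Proposition~\ref{proposition:analyticred}; since $\LevelsetExt$ is the smooth zero set of $\momentmapExt - 1$ and $\CC^{1+n}\times\CC^{1+n}$ is Stein, holomorphic division yields $\hat f = (\momentmapExt - 1)\hat h$ with $\hat h$ inheriting $\CCx$-invariance from the uniqueness of the quotient; truncating the Taylor expansion of $\hat h$ at the origin produces $\CCx$-invariant polynomial approximants $\hat h_m \to \hat h$ (only Taylor coefficients $c_{P,Q}$ with $|P|=|Q|$ survive the $\CCx$-invariance), so $(\momentmapExt - 1)\hat h_m \to \hat f$, and the pullbacks by $\Diag$ lie in $\mathcal{I} \cap \Polynomials(\CC^{1+n})^{\group{U}(1)}$. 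Once the ideal property is in place, $\widetilde{*}_\hbar$ descends to a jointly continuous product on $\Analytic(\Mred)$ that coincides with $\starred[,\hbar]$ on the dense polynomial subalgebra. Associativity and, for $\hbar \in \StarDomain \cap \RR$, compatibility with pointwise complex conjugation already hold on $\Polynomials(\Mred)$ by Proposition~\ref{proposition:starproductstructurered}, and propagate to $\Analytic(\Mred)$ by density and continuity, yielding the desired unital Fréchet $^*$-algebra.

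For the holomorphic dependence on $\hbar$, the essential extra ingredient is that the estimate in Proposition~\ref{proposition:continuityEstimates} is uniform for $\hbar$ in any compact subset $K \subseteq \StarDomain$. Given $f, g \in \Analytic(\Mred)$, I would lift them to $\tilde f, \tilde g \in \Analytic(\CC^{1+n})^{\group{U}(1)}$ via Lemma~\ref{lemma:analyticLift} and choose polynomial approximants $\tilde f_m \to \tilde f$ and $\tilde g_m \to \tilde g$. The telescoping identity $\tilde f_m *_\hbar \tilde g_m - \tilde f \,\widetilde{*}_\hbar\, \tilde g = (\tilde f_m - \tilde f) *_\hbar \tilde g_m + \tilde f \,\widetilde{*}_\hbar\, (\tilde g_m - \tilde g)$ combined with the uniform bound yields convergence in $\Analytic(\CC^{1+n})^{\group{U}(1)}$ uniformly for $\hbar \in K$. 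Post-composing with the continuous reduction map and with the continuous point evaluation at $[\rho] \in \Mred$ gives uniform convergence on $K$ of the scalar functions $\hbar \mapsto (f_m \starred[,\hbar] g_m)([\rho])$ to $\hbar \mapsto (f \starred[,\hbar] g)([\rho])$. Each approximant is rational in $\hbar$ by Proposition~\ref{proposition:starproductstructurered} with poles confined to $\CC \setminus \StarDomain$ by the very definition of $\StarDomain$, hence holomorphic on $\StarDomain$; since a locally uniform limit of holomorphic functions is holomorphic, the third claim follows.

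The most delicate piece is the density of $\mathcal{I} \cap \Polynomials(\CC^{1+n})^{\group{U}(1)}$ in $\mathcal{I}$, which is what turns the polynomial-level ideal statement of Proposition~\ref{proposition:starproductstructurered} into one for the full Fréchet algebra; this is where the Stein property of $\CC^{1+n}\times\CC^{1+n}$ and the regularity of $\momentmapExt - 1$ are really used. Everything else is a routine combination of joint continuity with density of polynomials and with the uniformity on compact subsets of $\StarDomain$ that is built into Proposition~\ref{proposition:continuityEstimates}.
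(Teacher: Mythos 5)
Your argument is correct, and its skeleton is the same as the paper's: the locally-uniform-in-$\hbar$ estimate of Proposition~\ref{proposition:continuityEstimates}, density of polynomials, the quotient description of the topology from Proposition~\ref{proposition:topologies:quotientTopology}, and holomorphy via locally uniform limits of the rational functions supplied by Corollary~\ref{corollary:rationalDependenceOnHbar}. Where you differ is in how the extension step is organized. The paper stays downstairs: it observes that $*_\hbar$ descends to $\starred[,\hbar]$ on $\Polynomials(\Mred)$ (a purely polynomial fact from Proposition~\ref{proposition:starproductstructurered}), concludes continuity of $\starred[,\hbar]$ on $\Polynomials(\Mred)$ for the inherited topology, and then extends by density of $\Polynomials(\Mred)$ in $\Analytic(\Mred)$ (Corollary~\ref{corollary:polynomialsdense}); it treats the passage from the upstairs estimate to the downstairs continuity as immediate. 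You instead extend $*_\hbar$ to all of $\Analytic(\CC^{1+n})^{\group{U}(1)}$ and quotient afterwards, which forces you to verify that the closed vanishing ideal $\mathcal{I}$ is absorbed by the extended product; your key additional lemma — that every invariant analytic function vanishing on $\Levelset$ is $(\momentmap-1)$ times an invariant analytic function (holomorphic division along the smooth hypersurface $\LevelsetExt$, with uniqueness of the quotient giving $\CCx$-invariance, plus Taylor truncation) — establishes density of $\mathcal{I}\cap\Polynomials(\CC^{1+n})^{\group{U}(1)}$ in $\mathcal{I}$ and makes the descent completely explicit. This buys a fully self-contained justification of a step the paper compresses into one sentence, at the cost of the extra division lemma (note that Stein theory is not really needed there: the local quotients by $\momentmapExt-1$ are unique, hence glue); the paper's route is shorter because well-definedness of the product is only ever invoked at the polynomial level. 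One small bookkeeping point in your ideal description: identifying $\mathcal{I}$ with $\Diag^*$ of invariant holomorphic functions vanishing on all of $\LevelsetExt$ uses Proposition~\ref{proposition:analyticred} together with injectivity of $\DiagM^*$ from Proposition~\ref{proposition:analyticholomorphiciso}, which is exactly how one upgrades vanishing on $\DiagZ(\Levelset)$ to vanishing on $\LevelsetExt$. The holomorphy argument via the telescoping estimate, uniform on compact subsets of $\StarDomain$, matches the paper's.
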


\begin{proof}
    By the previous Proposition~\ref{proposition:continuityEstimates} and the discussion above, 
    the associative product $\starred[,\hbar]$ is continuous on $\Polynomials(\Mred)$
    with respect to the topology inherited from $\Analytic(\Mred)$, and thus extends
    to an associative and continuous product on $\Analytic(\Mred)$ because $\Polynomials(\Mred)$
    is dense in $\Analytic(\Mred)$ by Corollary~\ref{corollary:polynomialsdense}. The constant
    $1$-function remains the unit like on polynomials.
    Compatibility with the $^*$-involution is clear as well if $\hbar$ is additionally real.
    
    Now recall that for polynomials $p, q \in \Polynomials(\Mred)$, the map
	$\hbar \mapsto (p \starred[,\hbar] q)([\rho])$
	is rational by Corollary~\ref{corollary:rationalDependenceOnHbar}. Since the estimates in
	Proposition~\ref{proposition:continuityEstimates} are locally uniform in $\hbar$, it follows that
	$\hbar \mapsto (f \starred[,\hbar] g)([\rho])$ is
	a locally uniform limit of rational functions and therefore holomorphic.
\end{proof}
Note that $0 \notin \StarDomain$, so one would like to understand whether in the limit
$\hbar\to 0$, the product $\starred[,\hbar]$ yields the
pointwise one, and whether its commutator yields the Poisson bracket
also on $\Analytic(\Mred)$.
Despite the results from Corollary~\ref{corollary:rationalDependenceOnHbar} and
Proposition~\ref{proposition:poissonBracketIsFirstOrderCommutator} in the polynomial
case, this is not so obvious because $0$ is an accumulation point of the
poles of $\starred[,\hbar]$. We will come back to this question later in
Proposition~\ref{proposition:poissonBracketIsFirstOrderCommutator:Analytic}.




\section{Wick Rotation}   \label{sec:wick}

The dependence on the choice of signature $s$ will now always be
made explicit by a superscript ``$^{\signature}$''.

We have already seen that the construction of the formal and 
non-formal star products on $\Mred[\signature]$ works completely independent of
$s \in \{1,\dots,1+n\}$. We will see now that the non-formal star product algebras are even
all isomorphic as unital complex algebras. This will be proven
by construction of a Wick transformation: A holomorphic isomorphism between
the complex manifolds $\MredExt[\signature]$ for different values of $s$ which gives 
rise to isomorphisms of the algebras $\Polynomials(\Mred[\signature])$ and
$\Analytic(\Mred[\signature])$ (with the pointwise product) and which are also 
compatible with the Poisson brackets and the non-formal star products, i.e.\ describe
isomorphisms of Poisson algebras and associative algebras, respectively.
However, we will also see that these isomorphisms are not compatible with the
$^*$-involution which is given by pointwise complex conjugation, hence are
not $^*$-isomorphisms. This demonstrates how important it is to consider $^*$-algebras
and not just algebras in non-formal deformation quantization: After all, one would
surely want to be able to distinguish the quantization of the complex projective space
$\CC\PP^n$ from the one of the hyperbolic disc $\DD^n$.




\subsection{Geometric Wick Rotation}   \label{subsec:wickgeo}

We start first with discussing the complex manifolds $\CC^{1+n}\times\CC^{1+n}$ and then
proceed to $\MredExt[\signature]$: 

The Lie group $\GL\times\GL$ acts on $\CC^{1+n}\times\CC^{1+n}$
from the left via $\argument \acts \argument$, which induces a right action $\argument \racts \argument$
on $\Holomorphic(\CC^{1+n}\times\CC^{1+n})$. It is easy to check that
$\Diag( A\acts \rho ) = (A,\cc{A}) \acts \Diag(\rho)$ for all $\rho \in \CC^{1+n}$
and all $A\in \GL$, where $\cc{A}$ denotes the elementwise complex conjugate of $A$.
For all $s\in \{1,\dots,1+n\}$, let $W^{\signature} \in \GL$ be
${(W^{\signature})^k}_\ell \coloneqq 1$ if $k=\ell\in\{0,\dots,s-1\}$ and
${(W^{\signature})^k}_\ell \coloneqq \I$ if $k=\ell\in\{s,\dots,n\}$, and otherwise
${(W^{\signature})^k}_\ell \coloneqq 0$. Then the action of $(W^{\signature},W^{\signature})$
on $\CC^{1+n}\times\CC^{1+n}$ does not come from an action on $\CC^{1+n}$, except in the trivial
case that $s=1+n$. However, the identity
\begin{equation}
\momentmapExt[\signature] \racts (W^{\signature},W^{\signature}) 
=
\momentmapExt[{\signature[1+n]}]
\label{eq:momentmapExtWicked}
\end{equation}
holds and thus the holomorphic automorphism of $\CC^{1+n}\times\CC^{1+n}$
that is given by the action of $(W^{\signature},W^{\signature})$ restricts to a holomorphic 
isomorphism from $\LevelsetExt[{\signature[1+n]}]$ to $\LevelsetExt[\signature]$.
It is then immediate that this
restriction even descends to a holomorphic isomorphism from
$\MredExt[{\signature[1+n]}]$ to $\MredExt[\signature]$, because
$(W^{\signature},W^{\signature})$ commutes with all elements of the
$\CCx$-subgroup of $\GL\times\GL$.

\begin{definition}
  For every $s \in \{1,\dots,1+n\}$ we define the map
  $\geomWickRotRed[\signature] \colon \MredExt[{\signature[1+n]}] \to \MredExt[\signature]$,
  \begin{align}
    [(\xi,\eta)] \mapsto \geomWickRotRed[\signature]\big( [(\xi,\eta)] \big) \coloneqq \big[(W^{\signature},W^{\signature}) \acts (\xi,\eta) \big]
    \punkt
  \end{align}
\end{definition}
The above discussion shows that $\geomWickRotRed[\signature]$ is well-defined and even more:

\begin{proposition} \label{proposition:geometricWickRotation}
  The maps $\geomWickRotRed[\signature] \colon \MredExt[{\signature[1+n]}] \to \MredExt[\signature]$
  are holomorphic isomorphisms of complex manifolds  for all $s\in \{1,\dots,1+n\}$.
\end{proposition}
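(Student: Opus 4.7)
The plan is to verify in sequence: (a) the linear map $(W^{\signature}, W^{\signature})$ on $\CC^{1+n} \times \CC^{1+n}$ restricts to a holomorphic isomorphism $\LevelsetExt[{\signature[1+n]}] \to \LevelsetExt[\signature]$; and (b) this restriction descends to a well-defined biholomorphism $\geomWickRotRed[\signature] \colon \MredExt[{\signature[1+n]}] \to \MredExt[\signature]$ of the $\CCx$-quotients. The key identity \eqref{eq:momentmapExtWicked} is itself a direct computation: for $k \in \{0,\dots,s-1\}$ the diagonal entries of $W^{\signature}$ are $1$, contributing $\nu_k = +1$ on both sides; for $k \in \{s,\dots,n\}$ they are $\I$, so that $\I\cdot\I = -1$ combines with $\nu_k = -1$ in $\momentmapExt[\signature]$ to produce $+1$, matching the corresponding coefficient in $\momentmapExt[{\signature[1+n]}]$.

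For (a), the identity \eqref{eq:momentmapExtWicked} immediately yields that $(W^{\signature}, W^{\signature})$ maps $\LevelsetExt[{\signature[1+n]}]$ into $\LevelsetExt[\signature]$. Since $W^{\signature}$ is invertible with inverse of the same diagonal shape (entries $1$ on the first $s$ places and $-\I$ on the remaining ones), the same computation with exponents flipped gives $\momentmapExt[{\signature[1+n]}] \racts ((W^{\signature})^{-1}, (W^{\signature})^{-1}) = \momentmapExt[\signature]$, producing a holomorphic map in the opposite direction. Both are restrictions to complex submanifolds of linear (hence holomorphic) automorphisms of $\CC^{1+n}\times\CC^{1+n}$, and by construction they are mutually inverse on these submanifolds.

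For (b), the relevant $\CCx$-subgroup consists of pairs $(\alpha \Unit_{1+n}, \alpha^{-1}\Unit_{1+n})$ with $\alpha \in \CCx$, which commute with $(W^{\signature}, W^{\signature})$ because diagonal matrices commute with scalar multiples of the identity. Hence $(W^{\signature}, W^{\signature})$ sends $\CCx$-orbits in $\LevelsetExt[{\signature[1+n]}]$ to $\CCx$-orbits in $\LevelsetExt[\signature]$ and induces a well-defined set-theoretic bijection $\geomWickRotRed[\signature]$. Holomorphicity then follows from the universal property of the holomorphic quotient $\hat{\pr}$: the composition of the restriction from (a) with the projection to $\MredExt[\signature]$ is a holomorphic map that is constant on the fibres of the projection to $\MredExt[{\signature[1+n]}]$, and therefore factors holomorphically through the latter; applying the same argument to $(W^{\signature})^{-1}$ supplies a holomorphic inverse. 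No genuine obstacle is expected, since the mathematical content is concentrated in \eqref{eq:momentmapExtWicked}, with everything else being standard descent through a free holomorphic group action.
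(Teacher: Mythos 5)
Your proposal is correct and follows essentially the same route as the paper: the paper's argument is precisely the computation behind \eqref{eq:momentmapExtWicked}, the resulting restriction to a biholomorphism $\LevelsetExt[{\signature[1+n]}] \to \LevelsetExt[\signature]$, and descent to the $\CCx$-quotients because $(W^{\signature},W^{\signature})$ commutes with the central $\CCx$-subgroup. You merely spell out the inverse matrix and the universal property of the holomorphic quotient explicitly, which the paper leaves implicit.
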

Moreover, Equation~\eqref{eq:momentmapExtWicked} also shows that the inner automorphism 
of the Lie group $\GL \times \GL$ that is given by conjugation with $(W^{\signature},W^{\signature})$,
i.e.
\begin{equation}
(A,B) \mapsto \big(
	W^{\signature} A (W^{\signature})^{-1}, W^{\signature} B (W^{\signature})^{-1}
\big) \label{eq:GLautomorphism}
\komma
\end{equation}
restricts to an isomorphism from $\StabExt[{\signature[1+n]}]$ to $\StabExt[\signature]$.
Note that we have already seen in Section~\ref{sec:geometry} that $\StabExt[\signature]$ is 
isomorphic to $\GL$ for all $s\in \{1,\dots,1+n\}$.

As a final remark, we note that the isomorphisms of $\MredExt[\signature]$ with different
signature $s$ clearly do not descend to isomorphisms of $\Mred[\signature]$. For example, 
$\Mred[{\signature[1+n]}] \cong \CC\PP^n$ is compact while $\Mred[{\signature[1]}] \cong \DD^n$
is not.




\subsection{Algebraic Wick Rotation}   \label{subsec:wickana}

The isomorphism of the complex manifolds $\MredExt[\signature]$ for different signatures
from Proposition~\ref{proposition:geometricWickRotation} immediately shows that the corresponding unital 
associative algebras $\Holomorphic(\MredExt[\signature])$ are also isomorphic. By
Proposition~\ref{proposition:analyticholomorphiciso}, the algebras $\Analytic(\MredExt[\signature])$
for different signatures are isomorphic as unital associative algebras as well
(but not necessarily as $^*$-algebras).

\begin{definition}
  For every $s \in \{1,\dots,1+n\}$ we define the maps
  $\WickRot[\signature] \colon \Analytic(\CC^{1+n}) \to \Analytic(\CC^{1+n})$,
  \begin{align}
    f \mapsto \WickRot[\signature](f) \coloneqq \Diag^*\big( \hat{f} \racts (W^{\signature},W^{\signature}) \big)
  \end{align}
  as well as
  $\WickRotRed[\signature] \colon \Analytic(\Mred[\signature]) \to \Analytic(\Mred[{\signature[1+n]}])$,
  \begin{align}
    g &\mapsto \WickRotRed[\signature](g) \coloneqq \big(\DiagM^{\signature[1+n]}\big)^*( \hat{g} \circ \geomWickRotRed[\signature] )
    \komma
  \end{align}
  where $\hat{f} \in \Holomorphic(\CC^{1+n}\times \CC^{1+n})$ and $\hat{g} \in \Holomorphic(\MredExt[\signature])$
  are such that $\Diag^*(\hat{f}) = f$ and $(\DiagM^{\signature})^*(\hat{g}) = g$. We will refer to 
  $\WickRot[\signature]$ and $\WickRotRed[\signature]$ as the \neu{Wick rotation} and the
  \neu{reduced Wick rotation}, respectively.
\end{definition}
Proposition~\ref{proposition:analyticholomorphiciso} and Proposition~\ref{proposition:geometricWickRotation}
together with the observation that $(W^{\signature},W^{\signature})$ commutes with the whole $\CCx$-subgroup
of $\GL\times\GL$ immediately shows:

\begin{theorem} \label{theorem:wickRotation}
  The Wick rotation $\WickRot[\signature]$ is a well-defined homeomorphic automorphism of the 
  unital associative Fréchet algebra $\Analytic(\CC^{1+n})$ that restricts to an automorphism
  of $\Analytic(\CC^{1+n})^{\group{U}(1)}$, and the reduced Wick rotation
  $\WickRotRed[\signature] \colon \Analytic(\Mred[\signature]) \to \Analytic(\Mred[{\signature[1+n]}])$
  is a well-defined homeomorphic isomorphism of unital associative Fréchet algebras.
\end{theorem}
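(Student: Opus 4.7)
The plan is to deduce everything from three ingredients: the algebra homeomorphisms $\Diag^* \colon \Holomorphic(\CC^{1+n}\times\CC^{1+n}) \to \Analytic(\CC^{1+n})$ and $(\DiagM^{\signature})^* \colon \Holomorphic(\MredExt[\signature]) \to \Analytic(\Mred[\signature])$ provided by Proposition~\ref{proposition:analyticholomorphiciso} and Proposition~\ref{proposition:topologies:quotientTopology}; the elementary observation that pullback along any biholomorphism $\Phi$ is an algebra automorphism of the holomorphic function algebra which is continuous for the topology of locally uniform convergence, since $\seminorm{K}{\Phi^*\hat{f}} = \seminorm{\Phi(K)}{\hat{f}}$ for every compact set $K$; and the geometric fact from Proposition~\ref{proposition:geometricWickRotation} that $\geomWickRotRed[\signature]$ is a biholomorphism $\MredExt[{\signature[1+n]}] \to \MredExt[\signature]$.

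First I would establish well-definedness by noting that $\Diag^*$ and $(\DiagM^{\signature})^*$ are bijections, so the preimages $\hat{f}$ and $\hat{g}$ appearing in the definitions are uniquely determined and neither $\WickRot[\signature]$ nor $\WickRotRed[\signature]$ depends on any choice. Then I would identify the Wick rotations as conjugates of natural pullback maps,
\begin{equation*}
  \WickRot[\signature]
  =
  \Diag^* \circ R_{(W^{\signature},W^{\signature})} \circ (\Diag^*)^{-1}
\end{equation*}
and
\begin{equation*}
  \WickRotRed[\signature]
  =
  (\DiagM^{\signature[1+n]})^* \circ (\geomWickRotRed[\signature])^* \circ \big((\DiagM^{\signature})^*\big)^{-1},
\end{equation*}
where $R_{(W^{\signature},W^{\signature})}$ denotes the right action of $(W^{\signature},W^{\signature})$ on $\Holomorphic(\CC^{1+n}\times\CC^{1+n})$. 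Each factor is an algebra (iso)morphism and a homeomorphism for the relevant Fréchet topologies, so both Wick rotations inherit these properties; bijectivity is witnessed by the explicit inverses, obtained by replacing $W^{\signature}$ with $(W^{\signature})^{-1}$ and $\geomWickRotRed[\signature]$ with its inverse biholomorphism.

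For the restriction to $\group{U}(1)$-invariants I would use that $W^{\signature}$ is diagonal and hence commutes with every scalar matrix $\E^{\I\phi}\Unit_{1+n}$. Consequently $R_{(W^{\signature},W^{\signature})}$ commutes with the right action of $(\E^{\I\phi}\Unit_{1+n}, \E^{-\I\phi}\Unit_{1+n})$ on $\Holomorphic(\CC^{1+n}\times\CC^{1+n})$, which under $\Diag^*$ corresponds to the $\group{U}(1)$-action on $\Analytic(\CC^{1+n})$; therefore $\WickRot[\signature]$ preserves the fixed-point subalgebra.

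No single step is substantial; the only points requiring care are keeping track of the directions of the various pullbacks (in particular that precomposition with $\geomWickRotRed[\signature] \colon \MredExt[{\signature[1+n]}] \to \MredExt[\signature]$ sends $\Holomorphic(\MredExt[\signature])$ to $\Holomorphic(\MredExt[{\signature[1+n]}])$, which is the correct direction for $\WickRotRed[\signature]$) and verifying that the Fréchet topology on $\Analytic(\Mred[\signature])$ really does turn $(\DiagM^{\signature})^*$ into a homeomorphism, so that the above conjugation yields a bona fide topological algebra isomorphism and not merely an algebraic one.
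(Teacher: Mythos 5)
Your proposal is correct and follows essentially the same route as the paper, which deduces the theorem directly from Proposition~\ref{proposition:analyticholomorphiciso}, Proposition~\ref{proposition:geometricWickRotation}, and the observation that $(W^{\signature},W^{\signature})$ commutes with the $\CCx$-subgroup of $\GL\times\GL$ (your diagonal-matrix argument for $\group{U}(1)$-invariance is the same observation); you merely spell out the conjugation formulas $\WickRot[\signature] = \Diag^*\circ R_{(W^{\signature},W^{\signature})}\circ(\Diag^*)^{-1}$ and $\WickRotRed[\signature] = (\DiagM^{\signature[1+n]})^*\circ(\geomWickRotRed[\signature])^*\circ((\DiagM^{\signature})^*)^{-1}$ that the paper leaves implicit. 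The continuity point is immediate since the Fréchet topologies on $\Analytic(\CC^{1+n})$ and $\Analytic(\Mred[\signature])$ are by definition transported from the holomorphic function algebras, so the pullback isomorphisms are homeomorphisms by construction.
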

The Wick rotations are also compatible with the reduction procedure:

\begin{proposition} \label{proposition:wickRotationLift}
  Given $f \in \Analytic(\CC^{1+n})^{\group{U}(1)}$, then
  \begin{align*}
    \big(\WickRot[\signature](f)\big)_\red = \WickRotRed[\signature](f_\red)
    \punkt
  \end{align*}
\end{proposition}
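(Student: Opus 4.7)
The plan is to trace both sides back to holomorphic functions on $\MredExt[{\signature[1+n]}]$ and use the injectivity statement in Proposition~\ref{proposition:analyticholomorphiciso} together with the uniqueness in Lemma~\ref{lemma:analyticred} to reduce the identity to a computation on level sets.

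First I would pick a holomorphic lift $\hat{f} \in \Holomorphic(\CC^{1+n} \times \CC^{1+n})$ with $\Diag^*(\hat{f}) = f$, which by Proposition~\ref{proposition:analyticred} is automatically $\CCx$-invariant in the sense of Lemma~\ref{lemma:analyticred}, and denote $\hat{h} \coloneqq \hat{f} \racts (W^{\signature}, W^{\signature})$, so that $\Diag^*(\hat{h}) = \WickRot[\signature](f)$. The key point is that the diagonal $\CCx$-subgroup $\{(\alpha \Unit_{1+n}, \alpha^{-1} \Unit_{1+n})\}$ is central in $\GL \times \GL$ and in particular commutes with $(W^{\signature}, W^{\signature})$, so $\hat{h}$ is again $\CCx$-invariant. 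Hence Proposition~\ref{proposition:analyticred} applies to $\WickRot[\signature](f)$ (with respect to the signature $1+n$ reduction) and produces $\bigl(\WickRot[\signature](f)\bigr)_\red = (\DiagM^{\signature[1+n]})^* (\hat{h}_{\hat{\red}^{\signature[1+n]}})$, while for the right-hand side $\WickRotRed[\signature](f_\red) = (\DiagM^{\signature[1+n]})^*(\hat{f}_{\hat{\red}^{\signature}} \circ \geomWickRotRed[\signature])$ by unwinding the definitions.

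Since $(\DiagM^{\signature[1+n]})^*$ and the passage from holomorphic to analytic functions are injective, it suffices to establish the identity of holomorphic functions
\[
 \hat{h}_{\hat{\red}^{\signature[1+n]}} = \hat{f}_{\hat{\red}^{\signature}} \circ \geomWickRotRed[\signature]
\]
on $\MredExt[{\signature[1+n]}]$. By the uniqueness part of Lemma~\ref{lemma:analyticred}, it is enough to verify that both sides pull back under $\hat{\pr}^{\signature[1+n]}$ to $\hat{\iota}^{\signature[1+n],*}(\hat{h})$. Using equation~\eqref{eq:momentmapExtWicked}, the action of $(W^{\signature}, W^{\signature})$ restricts to a holomorphic map $\Psi \colon \LevelsetExt[{\signature[1+n]}] \to \LevelsetExt[\signature]$, so that $\hat{\iota}^{\signature[1+n],*}(\hat{h}) = \hat{\iota}^{\signature, *}(\hat{f}) \circ \Psi = \hat{\pr}^{\signature, *}(\hat{f}_{\hat{\red}^{\signature}}) \circ \Psi$. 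By definition, passing to $\CCx$-equivalence classes turns $\hat{\pr}^{\signature} \circ \Psi$ into $\geomWickRotRed[\signature] \circ \hat{\pr}^{\signature[1+n]}$, which yields the desired equality of pullbacks.

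The proof is therefore essentially a diagram chase; there is no real obstacle, only a bookkeeping step: one must carefully keep track of which signature is used at each arrow (the reduction on the left-hand side uses the structures associated to signature $1+n$, the one on the right-hand side uses those of signature $s$), and verify that $(W^{\signature}, W^{\signature})$ commutes with the $\CCx$-action so that the quotient $\geomWickRotRed[\signature]$ is indeed the map induced by $\Psi$. Once this is set up cleanly, the identity is forced by the uniqueness of holomorphic extensions from $\LevelsetExt$ to $\CC^{1+n} \times \CC^{1+n}$ encoded in the proof of Proposition~\ref{proposition:analyticholomorphiciso}.
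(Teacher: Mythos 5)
Your proof is correct and takes essentially the same route as the paper: pick the $\CCx$-invariant lift $\hat f$ provided by Proposition~\ref{proposition:analyticred} and chase the commutative diagram, using that the $(W^{\signature},W^{\signature})$-action preserves the level sets and commutes with the $\CCx$-subgroup so that it descends to $\geomWickRotRed[\signature]$. The only cosmetic difference is that you conclude upstairs on $\MredExt[{\signature[1+n]}]$ via the uniqueness in Lemma~\ref{lemma:analyticred} and then apply $(\DiagM^{\signature[1+n]})^*$, whereas the paper pulls the identical chain of pullback identities down to $\Levelset[{\signature[1+n]}]$ and invokes the characterization $\pr^*(g_\red)=\iota^*(g)$ of the classical reduction; both amount to the same computation.
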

\begin{proof}
  By Proposition~\ref{proposition:analyticred} there exists an $\hat{f} \in \Holomorphic(\CC^{1+n}\times 
  \CC^{1+n})^{\CCx}$
  like in Lemma~\ref{lemma:analyticred} such that $\Diag^*(\hat{f}) = f$
  and $(\DiagM^{\signature})^*(\hat{f}_{\hat{\red}}) = f_\red$.
  Using the commutativity of the diagram from Section~\ref{sec:geometry} and the properties 
  of the action of $(W^{\signature},W^{\signature})$ one finds:
  \begin{align*}
    (\iota^{\signature[1+n]})^*\big(\WickRot[\signature](f)\big)
    &=
    (\iota^{\signature[1+n]})^*\big(\Diag^* \big(\hat{f}\racts (W^{\signature},W^{\signature})\big) \big)
    \\
    &=
    (\DiagZ^{\signature[1+n]})^*\big((\hat{\iota}{}^{\signature[1+n]})^* \big(\hat{f}\racts (W^{\signature},W^{\signature})\big) \big)
    \\
    &=
    (\DiagZ^{\signature[1+n]})^*\big((\hat{\iota}{}^{\signature})^* (\hat{f}) \racts (W^{\signature},W^{\signature})\big)    
    \\
    &=
    (\DiagZ^{\signature[1+n]})^*\big((\hat{\pr}{}^{\signature})^* (\hat{f}_{\hat{\red}}) \racts (W^{\signature},W^{\signature})\big)    
    \\
    &=
    (\DiagZ^{\signature[1+n]})^*\big((\hat{\pr}{}^{\signature[1+n]})^* \big(\hat{f}_{\hat{\red}} \circ \geomWickRotRed[\signature]\big) \big)
    \\
    &=
    (\pr^{\signature[1+n]})^* \big((\DiagM^{\signature[1+n]})^*\big(\hat{f}_{\hat{\red}} \circ \geomWickRotRed[\signature]\big) \big)
    \\
    &=
    (\pr^{\signature[1+n]})^* \big(\WickRotRed[\signature](f_\red) \big) \punkt
  \end{align*}
\end{proof}
In the following we will see that the Wick rotations are not only isomorphisms of unital associative algebras,
but also compatible with Poisson brackets and star products:

\begin{lemma} \label{lemma:wickrotPol}
  Given $s\in \{1,\dots,1+n\}$, then the identity
  \begin{align}
    \WickRot[\signature]( \Monom{P}{Q} ) &= \I^{\sum_{k=s}^{n} (P_k + Q_k)} \Monom{P}{Q} \label{eq:wickrotPol1}
  \intertext{holds for all $P,Q\in \NN_0^{1+n}$,}
    \WickRotRed[\signature]\big( \MonomRed[\signature]{P}{Q} \big) &= \I^{\sum_{k=s}^{n} (P_k + Q_k)} \MonomRed[{\signature[1+n]}]{P}{Q} \label{eq:wickrotPol2}
  \intertext{holds for all $P,Q\in \NN_0^{1+n}$ with $\abs{P}=\abs{Q}$, and}
    \WickRotRed[\signature]\big( \MonomRedFund[\signature]{P}{Q} \big) &= \I^{\sum_{k=s}^{n} (P_k + Q_k)} \MonomRedFund[{\signature[1+n]}]{P}{Q} \label{eq:wickrotPol3}
  \end{align}
  holds for all $P,Q\in \NN_0^n$, where $\MonomRedFund{P}{Q}$ are the fundamental monomials
  from Definition \ref{definition:fundamentalMonomials}.
  Moreover, $\WickRot[\signature]$ restricts to an automorphism of the unital subalgebra
  $\Polynomials(\CC^{1+n})$ of $\Analytic(\CC^{1+n})$, and $\WickRotRed[\signature]$ restricts to an isomorphism from
  the unital subalgebra $\Polynomials(\Mred[\signature])$ of $\Analytic(\Mred[\signature])$ to the unital subalgebra
  $\Polynomials(\Mred[{\signature[1+n]}])$ of $\Analytic(\Mred[{\signature[1+n]}])$.
\end{lemma}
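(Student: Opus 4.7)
The plan is to verify \eqref{eq:wickrotPol1} by direct computation from the definition, and then obtain \eqref{eq:wickrotPol2} and \eqref{eq:wickrotPol3} as consequences, using Proposition~\ref{proposition:wickRotationLift} and Definition~\ref{definition:fundamentalMonomials} respectively. The final statement about restriction to polynomials is then automatic because each basic building block is sent (up to a scalar) to its counterpart in the other signature.

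First I would prove \eqref{eq:wickrotPol1}. The natural holomorphic extension of $\Monom{P}{Q} = z^P \cc z^Q$ to $\CC^{1+n}\times \CC^{1+n}$ is $\hat f \coloneqq x^P y^Q$, since $\Diag^*(x^k) = z^k$ and $\Diag^*(y^k) = \cc z^k$. The matrix $W^{\signature}$ is diagonal with ${(W^\signature)^k}_k = 1$ for $k < s$ and ${(W^\signature)^k}_k = \I$ for $k \geq s$, so
\begin{equation*}
x^k \racts W^{\signature}
= \sum_{\ell=0}^n {(W^\signature)^k}_\ell\, x^\ell
= \begin{cases} x^k & \text{if } k < s, \\ \I\, x^k & \text{if } k \geq s, \end{cases}
\end{equation*}
and analogously for $y^k$. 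Therefore $\hat f \racts (W^\signature, W^\signature) = \I^{\sum_{k=s}^n (P_k + Q_k)}\, x^P y^Q$, and pulling back along $\Diag$ yields \eqref{eq:wickrotPol1}.

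Next I would deduce \eqref{eq:wickrotPol2}. As $\Monom{P}{Q}$ is $\group{U}(1)$-invariant precisely when $\abs{P} = \abs{Q}$, and $\WickRot[\signature]$ rescales it by a constant, it remains $\group U(1)$-invariant. By Proposition~\ref{proposition:wickRotationLift} applied to $f = \Monom{P}{Q}$, I get
\begin{equation*}
\WickRotRed[\signature]\bigl( f_\red^{\signature} \bigr)
=
\bigl(\WickRot[\signature](f)\bigr)_\red^{\signature[1+n]}
=
\I^{\sum_{k=s}^n (P_k + Q_k)}\, \bigl(\Monom{P}{Q}\bigr)_\red^{\signature[1+n]}
=
\I^{\sum_{k=s}^n (P_k + Q_k)}\, \MonomRed[{\signature[1+n]}]{P}{Q},
\end{equation*}
where I write $_\red^{\signature}$ to stress the signature used in the reduction. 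Since $f_\red^{\signature} = \MonomRed[\signature]{P}{Q}$, this proves \eqref{eq:wickrotPol2}.

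Now \eqref{eq:wickrotPol3} is immediate from \eqref{eq:wickrotPol2} and the two defining cases of Definition~\ref{definition:fundamentalMonomials}: the only difference between $\MonomRedFund[\signature]{P}{Q}$ and a reduced monomial $\MonomRed[\signature]{\widetilde P}{\widetilde Q}$ with suitable $\widetilde P, \widetilde Q \in \NN_0^{1+n}$ is a shift in the $0$-th coordinate, which contributes nothing to the exponent sum $\sum_{k=s}^n(P_k + Q_k)$ because $s \geq 1$. Finally, for the restriction statement: the monomials $\Monom{P}{Q}$ with $P,Q\in \NN_0^{1+n}$ form a basis of $\Polynomials(\CC^{1+n})$, so \eqref{eq:wickrotPol1} shows that $\WickRot[\signature]$ restricts to a linear automorphism of $\Polynomials(\CC^{1+n})$, which is already a unital algebra homomorphism by Theorem~\ref{theorem:wickRotation}; the same argument using \eqref{eq:wickrotPol2} and the fact that $\MonomRed[\signature]{P}{Q}$ (with $\abs P = \abs Q$) span $\Polynomials(\Mred[\signature])$ gives the assertion for $\WickRotRed[\signature]$. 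There is no real obstacle here beyond bookkeeping: the only subtle point is remembering that in Proposition~\ref{proposition:wickRotationLift} the two occurrences of $\argument_\red$ refer to the two different signatures, which is precisely why a scalar from $\Monom{P}{Q} \mapsto \WickRot[\signature](\Monom{P}{Q})$ suffices to relate the reduced monomials in signature $s$ to those in signature $1+n$.
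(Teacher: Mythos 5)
Your proposal is correct and follows essentially the same route as the paper: verify \eqref{eq:wickrotPol1} directly from $\Monom{P}{Q}=\Diag^*(x^Py^Q)$ and the diagonal action of $(W^{\signature},W^{\signature})$, deduce \eqref{eq:wickrotPol2} via Proposition~\ref{proposition:wickRotationLift}, obtain \eqref{eq:wickrotPol3} as the special case of prepended zeroth indices, and get the restriction statements from the (reduced) monomials spanning the polynomial algebras. You merely spell out the details the paper leaves as ``easy to check'' and ``clear'', including the correct bookkeeping of which signature each reduction map refers to.
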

\begin{proof}
  Using $\Monom{P}{Q} = \Diag^*( x^P y^Q )$ with $x^0,\dots,x^n,y^0,\dots,y^n \colon \CC^{1+n}\times\CC^{1+n} \to \CC$
  the standard coordinates, it is easy to check that Equation~\eqref{eq:wickrotPol1} holds. 
  Equation~\eqref{eq:wickrotPol2} then follows by applying the previous Proposition~\ref{proposition:wickRotationLift},
  which gives Equation~\eqref{eq:wickrotPol3} as a special case. The rest is clear.
\end{proof}

\begin{theorem} \label{theorem:wickstar}
  The Wick rotations remain isomorphisms of unital associative algebras also for the deformed products.
  More precisely, given $s\in \{1,\dots,1+n\}$, then the identities
  \begin{align}
    \WickRot[\signature]\big( f \star^{\signature}_\hbar g \big)
    &=
    \WickRot[\signature](f) \star^{\signature[1+n]}_\hbar \WickRot[\signature](g)
  \shortintertext{and}
    \WickRot[\signature]\poi[\big]{f}{g}^{\signature}
    &=
    \poi[\big]{\WickRot[\signature](f)}{\WickRot[\signature](g)}^{\signature[1+n]}
  \intertext{hold for all $f,g \in \Analytic(\CC^{1+n})$ and all $\hbar \in \CC$. Similarly, the identities}
    \WickRotRed[\signature]\big( f \star^{\signature}_{\red,\hbar} g \big)
    &=
    \WickRotRed[\signature](f) \star^{\signature[1+n]}_{\red,\hbar} \WickRotRed[\signature](g)
  \shortintertext{and}
    \WickRotRed[\signature]\poi[\big]{f}{g}^{\signature}_\red
    &=
    \poi[\big]{\WickRotRed[\signature](f)}{\WickRotRed[\signature](g)}^{\signature[1+n]}_\red
  \end{align}
  hold for all $f,g \in \Analytic(\Mred[\signature])$ and all $\hbar \in \StarDomain$.
\end{theorem}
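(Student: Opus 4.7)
The plan is to establish each of the four identities first on polynomials by a direct combinatorial computation, and then to extend to $\Analytic$ by continuity. I will describe the argument for the reduced case; the case of $\CC^{1+n}$ is analogous (with $\Polynomials(\CC^{1+n})^{\group{U}(1)}$ dense in $\Analytic(\CC^{1+n})^{\group{U}(1)}$ and the Wick star product being continuous on the latter by the standard estimates for exponential star products, see e.g.\ \cite{schmitt:StrictQuantizationOfCodjointOrbits}, or by the same computation as in Proposition~\ref{proposition:continuityEstimates}).

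Using Theorem~\ref{theorem:strictProduct} (continuity of $\starred[,\hbar]^{\signature}$ and $\starred[,\hbar]^{\signature[1+n]}$), Theorem~\ref{theorem:wickRotation} (continuity of $\WickRotRed[\signature]$) and Corollary~\ref{corollary:polynomialsdense} (density of $\Polynomials(\Mred[\signature])$ in $\Analytic(\Mred[\signature])$), it suffices to verify
$\WickRotRed[\signature](f \starred[,\hbar]^{\signature} g) = \WickRotRed[\signature](f) \starred[,\hbar]^{\signature[1+n]} \WickRotRed[\signature](g)$
on reduced monomials $f = \MonomRed[\signature]{P}{Q}$ and $g = \MonomRed[\signature]{R}{S}$ with $\abs{P} = \abs{Q}$ and $\abs{R} = \abs{S}$. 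By Proposition~\ref{proposition:starproductstructurered},
\begin{equation*}
\MonomRed[\signature]{P}{Q} \starred[,\hbar]^{\signature} \MonomRed[\signature]{R}{S}
=
\sum_{T \leq \min\{Q,R\}} \sgn(T)\, c_T(\hbar)\, \MonomRed[\signature]{P+R-T}{Q+S-T}
\end{equation*}
with coefficients $c_T(\hbar)$ that do not depend on $s$, and the sign $\sgn(T) = \prod_{k=0}^n \nu_k^{T_k} = (-1)^{\sum_{k=s}^n T_k}$ is the only place in which the signature enters. Applying $\WickRotRed[\signature]$ and using \eqref{eq:wickrotPol2} from Lemma~\ref{lemma:wickrotPol}, each summand picks up an extra factor $\I^{\sum_{k=s}^n (P_k+Q_k+R_k+S_k-2T_k)}$. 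On the other hand, the right-hand side equals $\I^{\sum_{k=s}^n(P_k+Q_k+R_k+S_k)} \sum_T c_T(\hbar) \MonomRed[{\signature[1+n]}]{P+R-T}{Q+S-T}$ since $\sgn(T)\equiv 1$ for signature $1+n$. The two sides thus agree because $\I^{-2\sum_{k=s}^n T_k} = (-1)^{\sum_{k=s}^n T_k} = \sgn(T)$.

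For the Poisson bracket identity, one possibility is to read it off from the star product identity by applying $(f \starred[,\hbar] g - g \starred[,\hbar] f)/(\I\hbar)$ and letting $\hbar\to 0$: on polynomials this is Proposition~\ref{proposition:poissonBracketIsFirstOrderCommutator}, and since $\poi{\argument}{\argument}^{\signature}_\red$ is bidifferential (hence continuous on $\Analytic(\Mred[\signature])$) and $\WickRotRed[\signature]$ is continuous, the identity extends to $\Analytic(\Mred[\signature])$. Alternatively, the direct formula \eqref{eq:poissonBracket:onPolynomials} for $\poi{\argument}{\argument}$ on monomials on $\CC^{1+n}$ reduces to the analogous formula on reduced monomials (via the reduction map, using Proposition~\ref{proposition:wickRotationLift}), in which the sole $s$-dependence is a factor $\nu_k$; the same parity computation as above then gives the required compatibility.

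The proof is essentially bookkeeping, so the main obstacle is not a conceptual one but merely organising the calculation so that the combinatorial identity $\sgn(T) = \I^{-2\sum_{k\geq s} T_k}$ emerges cleanly. The only small subtlety is making sure that the density/continuity argument is valid on the $\CC^{1+n}$-side as well, which follows because the Wick product has the form \eqref{eq:starProduct:Wick} with the same $\falling{1/\hbar}{k}$-free coefficients as in Lemma~\ref{lemma:starproductstructure}, and hence continuity estimates on the seminorms $\seminorm{r}{\argument}$ follow by the same (simpler) argument as in Proposition~\ref{proposition:continuityEstimates}.
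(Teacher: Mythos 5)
Your strategy is the paper's own: verify the identities on (reduced) monomials, where Lemma~\ref{lemma:wickrotPol} says that the Wick rotation multiplies $\Monom{P}{Q}$ resp.\ $\MonomRed[\signature]{P}{Q}$ by the phase $\I^{\sum_{k\geq s}(P_k+Q_k)}$, so that the only signature dependence in the structure constants of Lemma~\ref{lemma:starproductstructure} and Proposition~\ref{proposition:starproductstructurered}, namely $\sgn(T)=\I^{-2\sum_{k\geq s}T_k}$, is exactly compensated; obtain the Poisson-bracket identities from the commutator limit as in Proposition~\ref{proposition:poissonBracketIsFirstOrderCommutator}; and pass to $\Analytic$ by bilinearity, density and continuity. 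For the reduced algebras this is complete and is literally how the paper argues, using Theorem~\ref{theorem:strictProduct}, Theorem~\ref{theorem:wickRotation} and Corollary~\ref{corollary:polynomialsdense}.

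The one step that does not hold up is your justification on the $\CC^{1+n}$-side. The estimate of Proposition~\ref{proposition:continuityEstimates} is not ``the same but simpler'' without the falling factorials: it is precisely the denominator $\falling{1/\hbar}{\abs{Q}}\,\falling{1/\hbar}{\abs{R}}$, of size comparable to $\abs{Q}!\,\abs{R}!$, that absorbs the combinatorial factor $T!\binom{Q}{T}\binom{R}{T}$. For the unreduced product $\star_\hbar$ this factor is uncompensated, and continuity with respect to the seminorms $\seminorm{r}{\argument}$ fails: already in one complex dimension, with $P=Q=R=S=(m)\in\NN_0^1$, the term $T=P$ in Lemma~\ref{lemma:starproductstructure} gives $\seminorm{r}{\Monom{P}{P}\star_\hbar\Monom{P}{P}}\geq\abs{\hbar}^m\,m!\,r^{2m}$, which no bound of the form $\seminorm{r'}{\Monom{P}{P}}^2=(r')^{4m}$ can dominate for large $m$; worse, the series defining $f\star_\hbar g$ need not even converge for arbitrary $f,g\in\Analytic(\CC^{1+n})$ and arbitrary $\hbar\in\CC$ (for $n=0$ and $f=\cc{g}$ with $\hat{g}(x,y)=\E^{x^2}$ one finds $(f\star_\hbar g)(0)=\sum_{j=0}^\infty\binom{2j}{j}\hbar^{2j}$, divergent for $\abs{\hbar}\geq 1/2$). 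So the first identity of the theorem cannot be deduced from the monomial case by a density-and-continuity argument in this topology; it has to be read on $\Polynomials(\CC^{1+n})$, or coefficientwise/wherever the series converge (where it does follow from your purely algebraic phase computation), or one must invoke continuity of $\star_\hbar$ in a strictly finer topology in which the polynomials remain dense and $\WickRot[\signature]$ remains continuous. In fairness, the paper's own proof is equally laconic at this point --- it only appeals to ``continuity of the star product'' --- and your argument for the Poisson brackets (via continuity of the brackets themselves rather than of $\star_\hbar$) and for the entire reduced half of the statement matches the paper's proof exactly.
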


\begin{proof}
  First note that as a consequence of the previous Lemma~\ref{lemma:wickrotPol}, the identity
  \begin{align*}
    \WickRot[\signature]\big( \sgn^{\signature}(T) \Monom{P+R-T}{Q+S-T} \big)
    =
    \I^{\sum_{k=s}^n (P_k+Q_k+R_k+S_k)} \Monom{P+R-T}{Q+S-T}
  \end{align*}
  holds for all $P,Q,R,S,T \in \NN_0^{1+n}$ with $T \le \min \{Q,R\}$, and similarly,
  \begin{align*}
    \WickRotRed[\signature]\big( \sgn^{\signature}(T) \MonomRed[\signature]{P+R-T}{Q+S-T} \big)
    =
    \I^{\sum_{k=s}^n (P_k+Q_k+R_k+S_k)} \MonomRed[{\signature[1+n]}]{P+R-T}{Q+S-T}
  \end{align*}
  holds for all $P,Q,R,S,T \in \NN_0^{1+n}$ with $\abs{P} = \abs{Q}$, $\abs{R} = \abs{S}$ and $T \le \min 
  \{Q,R\}$.
  Using this and the explicit formulas from Lemma~\ref{lemma:starproductstructure} and
  Proposition~\ref{proposition:starproductstructurered} and noting that $\sgn^{\signature[1+n]}(T) = 1$
  for all $T\in \NN_0^{1+n}$, it is easy to check the identities for the
  star products in the special case that $f$ and $g$ are monomials. The identities for the
  Poisson brackets are an immediate consequence thereof due to the representation of the
  Poisson brackets as a limit of the star product commutator like in 
  Proposition~\ref{proposition:poissonBracketIsFirstOrderCommutator}.
  The general case then follows by bilinearity and continuity of the star product and the Poisson bracket.
\end{proof}
While it is completely clear that the Wick rotations do not commute with the $^*$-involution given 
by pointwise complex conjugation, it is somewhat harder to show that the algebras 
$\Analytic(\Mred[{\signature[s]}])$ with product $\starreds{,\hbar}{\signature[s]}$ 
are in general not $^*$-isomorphic, not even via some other isomorphism. One possibility
to prove this is to examine their \neu{positive linear functionals}:
A linear functional $\phi \colon \Analytic(\Mred[{\signature[s]}]) \to \CC$ is called
\neu{positive} for the product $\starreds{,\hbar}{\signature}$ with 
$\hbar \in \StarDomain \cap \RR$ if
\begin{align}
 \phi \big( \cc{f} \starreds{,\hbar}{\signature} f\big) \ge 0
\end{align}
holds for all $f\in\Analytic(\Mred[{\signature[s]}])$. It is easy to see that the pullback
of a positive linear functional with a $^*$-homomorphism between two $^*$-algebras yields
again a positive linear functional. In the special case of $s = 1$, i.e.\ $\Mred[{\signature[1]}] \cong 
\DD^n$,
the existence of non-trivial positive linear functionals for negative $\hbar$ is known:

\begin{proposition} \label{proposition:evfunctionalsdisc}
  The evaluation functionals $\delta_{[\rho]}^{\signature[1]} \colon \Analytic(\DD^n) \to \CC$,
  \begin{align*}
    f \mapsto \delta_{[\rho]}^{\signature[1]}(f) \coloneqq f([\rho])
  \end{align*}
  with $[\rho]\in\Mred[{\signature[1]}] \cong \DD^n$
  are positive linear functionals on $\Analytic(\DD^n)$
  with product $\starreds{,\hbar}{\signature[1]}$ for all $\hbar \in {]{-\infty},0[}$.
\end{proposition}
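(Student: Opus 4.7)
The plan is to reduce via the $\Stab[{\signature[1]}] = \group{U}(1,n)$-symmetry and density of polynomials to a computation at a single point, where positivity will be read off term-by-term from the formula of Theorem~\ref{theorem:reducedWickProductFormula}. First I will use $\Stab$-invariance of $\starreds{,\hbar}{\signature[1]}$ (inherited from the Wick product on $\CC^{1+n}$ and preserved under the reduction, as noted after Definition~\ref{definition:reducedStar}) together with the transitive action of $\group{U}(1,n)$ on $\Mred[{\signature[1]}] \cong \DD^n$ by Möbius transformations: since the right action $\racts g$ commutes with pointwise complex conjugation, choosing $g \in \Stab[{\signature[1]}]$ with $[\rho_0] \racts g^{-1} = [\rho]$, where $[\rho_0] := [(1,0,\dots,0)]$, yields
\[
\delta_{[\rho]}^{\signature[1]}\big( \cc f \starreds{,\hbar}{\signature[1]} f \big) = \delta_{[\rho_0]}^{\signature[1]}\big( \cc{f \racts g} \starreds{,\hbar}{\signature[1]} (f \racts g) \big),
\]
so it suffices to establish positivity at $[\rho_0]$. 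By Corollary~\ref{corollary:polynomialsdense} together with the continuity of $\starreds{,\hbar}{\signature[1]}$ (Theorem~\ref{theorem:strictProduct}) and of $\delta_{[\rho_0]}^{\signature[1]}$, it further suffices to treat polynomials $p \in \Polynomials(\Mred[{\signature[1]}])$, for which the sum in Theorem~\ref{theorem:reducedWickProductFormula} terminates.

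Next I will compute at $[\rho_0]$, which corresponds to $w=0$ in projective coordinates. A direct calculation from the explicit form of $g_\red$ in Proposition~\ref{proposition:Dredchar} shows that all first partial derivatives of $g_\red$ vanish at $w = 0$, hence the Christoffel symbols of the Levi--Civita connection vanish there and iterated $D^\sym_{\red,\hol}$ at $w = 0$ reduces to iterated partial derivatives $\partial_{w^k}$. From Lemma~\ref{lemma:H:writtenWithW} together with $\nu_k = -1$ for $k \geq 1$, one has $H_\red\at{w=0} = -\sum_{k=1}^n \partial_{\cc w^k} \otimes \partial_{w^k}$, so $H_\red^r\at{w=0}$ equals $(-1)^r$ times the $r$-th symmetric power of a positive-definite tensor.

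Since $H_\red^r$ lies in $\Symten{r}\Tangent^{(0,1)}\otimes\Symten{r}\Tangent^{(1,0)}$, Propositions~\ref{proposition:holahol} and~\ref{proposition:Dsymredhol} show that only the $(0,r)$-part of $(D^\sym_\red)^r \cc p$ and the $(r,0)$-part of $(D^\sym_\red)^r p$ contribute to the pairing, and these are complex conjugates of each other. Expanding at $w=0$ then turns the pairing into $(-1)^r$ times a non-negative real combination of the squared moduli $|\partial_{w^{k_1}}\cdots\partial_{w^{k_r}} p\at{w=0}|^2$. Combined with the $\hbar^r$ in Theorem~\ref{theorem:reducedWickProductFormula}, the $r$-th contribution to $\delta_{[\rho_0]}^{\signature[1]}(\cc p \starreds{,\hbar}{\signature[1]} p)$ is proportional to
\[
\frac{(-\hbar)^r}{r!\,(1-\hbar)(1-2\hbar)\cdots(1-(r-1)\hbar)} \sum_{k_1,\dots,k_r=1}^n \big|\partial_{w^{k_1}} \cdots \partial_{w^{k_r}} p\at{w=0}\big|^2,
\]
and for $\hbar \in {]{-\infty},0[}$ every factor $(-\hbar)^r$ and $1 - k\hbar$ is strictly positive, so each summand is non-negative.

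The hard part will be the bookkeeping in the last step: carefully tracking the combinatorial factors in the pairing of the $(r,0)$- and $(0,r)$-bidegree parts of the iterated covariant derivatives against $H_\red^r\at{w=0}$, and confirming that the overall sign $(-1)^r$ from $H_\red^r\at{w=0}$ combines with $\hbar^r$ to give $(-\hbar)^r$ times a manifestly non-negative coefficient for each $r$.
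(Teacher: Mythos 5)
Your argument is correct in outline but takes a genuinely different route from the paper: the paper proves this proposition by citation, quoting the positivity of evaluation functionals from Beiser--Waldmann (Sec.~5.4) for a product $*_\hbar$ related to the reduced product by $f \starreds{,\hbar}{\signature[1]} g = g *_{-\hbar/2} f$ on polynomials, and then extending to $\Analytic(\DD^n)$ by continuity of the product, the involution and the evaluation functionals. Your proof is instead a self-contained computation: reduce to the point $w=0$ by $\group{U}(1,n)$-invariance and transitivity, reduce to $\Polynomials(\Mred[{\signature[1]}])$ by density and continuity, and read off positivity term by term from Theorem~\ref{theorem:reducedWickProductFormula}, using $H_\red\at{w=0} = -\sum_{k=1}^n \partial_{\cc{w}^k}\otimes\partial_{w^k}$ and the positivity of $(-\hbar)^r$ and of the factors $1-k\hbar$ for $\hbar<0$. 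This buys an explicit understanding of where $\hbar<0$ enters, at the price of two points that need tightening.

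First, the step ``the Christoffel symbols vanish at $w=0$, hence iterated $D^\sym_{\red,\hol}$ at $w=0$ reduces to iterated partial derivatives'' is not justified as stated: for $r\ge 3$ the components of $(D^\sym_{\red,\hol})^r p$ at a point contain \emph{derivatives} of the Christoffel symbols, which are not controlled by their vanishing at that single point. The conclusion is nevertheless true here, for a stronger reason: in projective coordinates the coefficients of $g_\red$ contain only monomials of equal holomorphic and antiholomorphic degree, so every purely holomorphic partial derivative of any order of the holomorphic Christoffel symbols vanishes at $w=0$ (Kähler normal coordinates at the origin). Better still, you can bypass the identification with partial derivatives entirely: positivity only needs that $(D^\sym_{\red,\antihol})^r\cc{p} = \cc{(D^\sym_{\red,\hol})^r p}$ and that pairing $\cc{\omega}\otimes\omega$ against $\big(\sum_{k=1}^n \partial_{\cc{w}^k}\otimes\partial_{w^k}\big)^r$ is a positive semidefinite Hermitian form on symmetric $(r,0)$-tensors, combined with the sign of $H_\red\at{w=0}$ and of the coefficients. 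Second, the invariance you invoke is stated in the paper only for the \emph{formal} product $\starred$; for the strict product on polynomials add one line: for fixed polynomials both $(f\racts A)\starred[,\hbar](g\racts A)$ and $(f\starred[,\hbar] g)\racts A$ are rational in $\hbar$, regular at $\hbar=0$, and have the same Taylor coefficients there by order-by-order invariance, hence coincide for all $\hbar\in\StarDomain$; continuity (Theorem~\ref{theorem:strictProduct}) then carries the invariance, and with it the positivity at an arbitrary $[\rho]$, over to all of $\Analytic(\DD^n)$. With these repairs your proof is complete and independent of the external reference used in the paper.
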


\begin{proof}
	Positivity of evaluation functionals has been proven in 
	\cite[Sec.~5.4]{beiser.waldmann:FrechetAlgebraicDeformationOfThePoincareDisc}
	on an algebra containing (at least) $\Polynomials(\DD^n)$
	with a product $*_{\hbar}$ fulfilling 
	$f \starreds{,\hbar}{\signature[1]} g = g *_{-\hbar/2} f$
	for all $f,g \in \Polynomials(\DD^n)$. By continuity
	of the evaluation functionals, the pointwise complex conjugation and the product 
	$\starreds{,\hbar}{\signature[1]}$, this extends to whole
	$\Analytic(\DD^n)$.
\end{proof}
However, there are some limitations to the existence
of positive linear functionals in the special case of $s = 1+n$,
i.e.\ $\Mred[{\signature[1+n]}] \cong \CC\PP^n$, at least if $n=1$:

\begin{lemma} \label{lemma:negativesqaure}
 Consider only the case $n=1$ and $s=1+n = 2$. Then the identity
  \begin{align}
    \sum_{i,j=0}^1
    \MonomRed[{\signature[2]}]{E_j}{E_i}
    \starreds{,\hbar}{{\signature[2]}}
    \MonomRed[{\signature[2]}]{E_i}{E_j} 
    =
    1+\hbar
    \label{eq:negativesqaure}
  \end{align}
  holds for all $\hbar \in \StarDomain$,
  where $E_0 = (1,0) \in \NN_0^{1+1}$ and $E_1 = (0,1) \in \NN_0^{1+1}$.
\end{lemma}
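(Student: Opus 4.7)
The identity follows by direct computation from the explicit formula \eqref{eq:starProduct:strict:reduced} in Proposition~\ref{proposition:starproductstructurered}, combined with the single algebraic constraint $\MonomRed[{\signature[2]}]{E_0}{E_0} + \MonomRed[{\signature[2]}]{E_1}{E_1} = 1$ on $\Mred[{\signature[2]}]$, which encodes that $\momentmap[{\signature[2]}]$ restricts to $1$ on $\Levelset[{\signature[2]}]$.

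The first step is to apply \eqref{eq:starProduct:strict:reduced} to each pair $(i, j) \in \{0, 1\}^2$ with $P = E_j$, $Q = E_i$, $R = E_i$, $S = E_j$. The small subtlety here is that $\min\{Q, R\} = \min\{E_i, E_i\} = E_i$ independently of whether $i = j$, so in every case the sum ranges over two multi-indices $T \in \{0, E_i\}$—not only $T = 0$ as one might naively think in the off-diagonal case $i \neq j$. Since $s = 2$ gives $\nu_0 = \nu_1 = +1$, every $\sgn(T)$ equals $1$. Evaluating the falling-factorial quotients yields the coefficient $1 - \hbar$ for $T = 0$ and $\hbar$ for $T = E_i$, while the binomial and factorial factors all equal $1$. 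This gives, uniformly in $(i, j) \in \{0, 1\}^2$,
\[
\MonomRed[{\signature[2]}]{E_j}{E_i} \starreds{,\hbar}{\signature[2]} \MonomRed[{\signature[2]}]{E_i}{E_j} = (1 - \hbar)\,\MonomRed[{\signature[2]}]{E_i+E_j}{E_i+E_j} + \hbar\,\MonomRed[{\signature[2]}]{E_j}{E_j}.
\]

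The second step is to sum over $i, j \in \{0, 1\}$ and apply the constraint on $\Levelset[{\signature[2]}]$. The $T = E_i$ contribution is $\hbar \sum_{i, j} \MonomRed[{\signature[2]}]{E_j}{E_j} = 2\hbar\,(\MonomRed[{\signature[2]}]{E_0}{E_0} + \MonomRed[{\signature[2]}]{E_1}{E_1}) = 2\hbar$. The $T = 0$ contribution is $(1 - \hbar) \sum_{i, j} \MonomRed[{\signature[2]}]{E_i + E_j}{E_i + E_j}$; by definition of the reduction map, this last sum corresponds to the restriction to $\Levelset[{\signature[2]}]$ of $\sum_{i,j=0}^1 |z^i|^2 |z^j|^2 = (|z^0|^2 + |z^1|^2)^2 = (\momentmap[{\signature[2]}])^2$, and hence equals $1$. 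Adding the two contributions gives $(1 - \hbar) + 2\hbar = 1 + \hbar$, as claimed. The only place where some care is required is the range of $T$ in the off-diagonal summands; the remainder is bookkeeping together with the single classical identity $\momentmap[{\signature[2]}]$ equals $1$ on $\Levelset[{\signature[2]}]$.
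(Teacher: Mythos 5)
Your proof is correct and follows essentially the same route as the paper: apply the explicit structure-constant formula of Proposition~\ref{proposition:starproductstructurered} to each pair $(i,j)$, obtaining $(1-\hbar)\,\MonomRed[{\signature[2]}]{E_i+E_j}{E_i+E_j}+\hbar\,\MonomRed[{\signature[2]}]{E_j}{E_j}$, then sum and use the relation $\MonomRed[{\signature[2]}]{E_0}{E_0}+\MonomRed[{\signature[2]}]{E_1}{E_1}=1$ (equivalently, that $\momentmap[{\signature[2]}]$ and its square reduce to $1$) to collapse both contributions. The only cosmetic difference is that the paper phrases the quadratic identity as the pointwise square of the reduced monomial sum rather than lifting to $(\momentmap[{\signature[2]}])^2$; the content is identical.
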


\begin{proof}
  Proposition~\ref{proposition:starproductstructurered} yields
  \begin{align*}
    \MonomRed[{\signature[2]}]{E_j}{E_i}
    \starreds{,\hbar}{{\signature[2]}}
    \MonomRed[{\signature[2]}]{E_i}{E_j} 
    &=
    \frac{\falling{1/\hbar}{2}}{\falling{1/\hbar}{1}\falling{1/\hbar}{1}}
    \MonomRed[{\signature[2]}]{E_i+E_j}{E_i+E_j}
    +
    \frac{\falling{1/\hbar}{1}}{\falling{1/\hbar}{1}\falling{1/\hbar}{1}}
    \MonomRed[{\signature[2]}]{E_j}{E_j}
    \\
    &=
    (1-\hbar)
    \MonomRed[{\signature[2]}]{E_i+E_j}{E_i+E_j}
    +
    \hbar
    \MonomRed[{\signature[2]}]{E_j}{E_j}
  \end{align*}  
  for all $i,j \in \{0,1\}$. By summation over $i$ and $j$ we get
  \begin{align*}
    \sum_{i,j=0}^1
    &\MonomRed[{\signature[2]}]{E_j}{E_i}
    \starreds{,\hbar}{{\signature[2]}}
    \MonomRed[{\signature[2]}]{E_i}{E_j} 
    =
    \\
    &(1-\hbar)
    \big(
      \MonomRed[{\signature[2]}]{2E_0}{2E_0}
      +
      2
      \MonomRed[{\signature[2]}]{E_0+E_1}{E_0+E_1}
      +
      \MonomRed[{\signature[2]}]{2E_1}{2E_1}
    \big)
    +
    2 \hbar
    \big(
      \MonomRed[{\signature[2]}]{E_0}{E_0}
      +
      \MonomRed[{\signature[2]}]{E_1}{E_1}
    \big)
  \punkt
  \end{align*}
  Keeping in mind that the reduced monomials are not linearly independent,
  this can be simplified: We find that
  $\MonomRed[{\signature[2]}]{E_0}{E_0} + \MonomRed[{\signature[2]}]{E_1}{E_1}
  = 
  \momentmap[{\signature[2]}]_\red$
  is the constant $1$-function, and the same is true for their pointwise
  square
  $(\MonomRed[{\signature[2]}]{E_0}{E_0} + \MonomRed[{\signature[2]}]{E_1}{E_1})^2
  =
  \MonomRed[{\signature[2]}]{2E_0}{2E_0}
  +
  2
  \MonomRed[{\signature[2]}]{E_0+E_1}{E_0+E_1}
  +
  \MonomRed[{\signature[2]}]{2E_1}{2E_1}
  $.
\end{proof}

\begin{proposition}
  Consider only the case $n=1$ and $\hbar \in {]{-\infty},{-1}[}$.
  For signature $s = 2$,
  the only linear functional
  $\phi \colon \Analytic(\CC\PP^1) \to \CC$, which is positive
  for the product $\starreds{,\hbar}{{\signature[2]}}$,
  is $\phi = 0$.
  But for signature $s=1$, the evaluation functionals
  from Proposition~\ref{proposition:evfunctionalsdisc} are non-trivial
  positive linear functionals for the product 
  $\starreds{,\hbar}{{\signature[1]}}$ on
  $\Analytic(\DD^1)$. 
  
  As a consequence, the $^*$-algebra
  $\Analytic(\DD^1)$ with product $\starreds{,\hbar}{{\signature[1]}}$
  and pointwise complex conjugation as $^*$-involution
  is not $^*$-isomorphic to the $^*$-algebra
  $\Analytic(\CC\PP^1)$ with product $\starreds{,\hbar}{{\signature[2]}}$
  and pointwise complex conjugation as $^*$-involution.
\end{proposition}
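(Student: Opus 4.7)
The plan is to deduce the first assertion from Lemma~\ref{lemma:negativesqaure} by combining it with the Cauchy--Schwarz inequality for positive linear functionals on a unital $^*$-algebra. The key observation is that pointwise complex conjugation swaps the two multiindices of a reduced monomial, $\cc{\MonomRed[{\signature[2]}]{E_i}{E_j}} = \MonomRed[{\signature[2]}]{E_j}{E_i}$. Setting $f_{ij} \coloneqq \MonomRed[{\signature[2]}]{E_i}{E_j}$, the identity~\eqref{eq:negativesqaure} becomes
\begin{equation*}
  \sum_{i,j=0}^{1} \cc{f_{ij}} \starreds{,\hbar}{{\signature[2]}} f_{ij} = 1+\hbar \komma
\end{equation*}
which exhibits $1+\hbar$ as an explicit sum of ``squares'' in the $^*$-algebra $\big(\Analytic(\CC\PP^1), \starreds{,\hbar}{{\signature[2]}}, \cc{\argument}\,\big)$.

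Now let $\phi$ be an arbitrary positive linear functional on this $^*$-algebra. Summing positivity over the four terms above yields $(1+\hbar)\phi(1) \ge 0$; since $1+\hbar<0$ for $\hbar<-1$, this forces $\phi(1) \le 0$. On the other hand $\phi(1) = \phi\big(\cc{1} \starreds{,\hbar}{{\signature[2]}} 1\big) \ge 0$ by positivity applied to the unit, so $\phi(1) = 0$. The sesquilinear form $(f,g) \mapsto \phi\big(\cc{f} \starreds{,\hbar}{{\signature[2]}} g\big)$ is positive semi-definite by definition of positivity of $\phi$, so the standard Cauchy--Schwarz inequality, applied with $g = 1$, gives
\begin{equation*}
  \abs{\phi(f)}^{2} = \abs[\big]{\phi\big(\cc{f} \starreds{,\hbar}{{\signature[2]}} 1\big)}^{2} \le \phi\big(\cc{f}\starreds{,\hbar}{{\signature[2]}} f\big)\,\phi(1) = 0
\end{equation*}
for every $f \in \Analytic(\CC\PP^1)$, hence $\phi \equiv 0$.

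For the second assertion, note that $\hbar < -1 < 0$, so Proposition~\ref{proposition:evfunctionalsdisc} supplies the non-trivial positive linear functionals $\delta_{[\rho]}^{\signature[1]}$ on $\Analytic(\DD^1)$ with product $\starreds{,\hbar}{{\signature[1]}}$: they send the constant function to $1$. A hypothetical $^*$-isomorphism $\Psi \colon \big(\Analytic(\DD^1),\starreds{,\hbar}{{\signature[1]}}\big) \to \big(\Analytic(\CC\PP^1),\starreds{,\hbar}{{\signature[2]}}\big)$ would then transport these to non-trivial positive linear functionals $\delta_{[\rho]}^{\signature[1]} \circ \Psi^{-1}$ on $\big(\Analytic(\CC\PP^1), \starreds{,\hbar}{{\signature[2]}}\big)$, contradicting the previous paragraph. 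The only step that goes beyond direct consequences of Lemma~\ref{lemma:negativesqaure} is the invocation of Cauchy--Schwarz, which I do not expect to present a real obstacle since it follows by the classical polarization argument from the positivity of $\phi\big(\cc{(f+\lambda g)}\starreds{,\hbar}{{\signature[2]}}(f+\lambda g)\big)$ for all $\lambda \in \CC$ together with the fact that $1$ is a unit for $\starreds{,\hbar}{{\signature[2]}}$.
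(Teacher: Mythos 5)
Your proposal is correct and follows essentially the same route as the paper: Lemma~\ref{lemma:negativesqaure} exhibits $1+\hbar$ as a sum of star-squares, forcing $\phi(1)=0$, Cauchy--Schwarz for the (possibly degenerate) form $(f,g)\mapsto\phi\big(\cc{f}\starreds{,\hbar}{{\signature[2]}}g\big)$ then gives $\phi=0$, and the second half transports the evaluation functionals of Proposition~\ref{proposition:evfunctionalsdisc} through a hypothetical $^*$-isomorphism. The only cosmetic deviation is that you place the unit in the second slot, estimating $\abs{\phi\big(\cc{f}\starreds{,\hbar}{{\signature[2]}}1\big)}=\abs{\phi(\cc{f})}$, which implicitly uses the (standard) hermiticity $\phi(\cc{f})=\cc{\phi(f)}$ of positive functionals, whereas the paper estimates $\abs{\phi\big(\cc{1}\starreds{,\hbar}{{\signature[2]}}f\big)}=\abs{\phi(f)}$ directly.
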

\begin{proof}
  Let $s=2$ and let $\phi\colon \Analytic(\CC\PP^1) \to \CC$ be a positive
  linear functional for the product $\starreds{,\hbar}{{\signature[2]}}$.
  Then the previous Lemma~\ref{lemma:negativesqaure} shows that there
  exist functions $f_1, \dots,f_4 \in \Analytic(\CC\PP^1)$ such that
  \begin{align*}
    0 
    \le 
    \sum_{k=1}^4
    \phi\big(
      \cc{f_k} \starreds{,\hbar}{{\signature[2]}} f_k
    \big)
    =
    \phi(1+\hbar)
    =
    (1+\hbar) \,\phi(1)
    =
    (1+\hbar)\,\phi\big(\cc{1} \starreds{,\hbar}{{\signature[2]}} 1 \big)
    \le
    0
  \end{align*}
  holds because $\hbar < -1$, so $\phi(1) = 0$. But then the Cauchy Schwarz inequality applied to
  the (possibly degenerate) inner product
  $\Analytic(\CC\PP^1) \ni (f,g) \mapsto \phi( \cc{f} \starreds{,\hbar}{{\signature[2]}} g ) \in \CC$
  shows that
  \begin{align*}
    \abs[\big]{\phi(f)}^2
    =
    \abs[\big]{\phi\big(\cc{1} \starreds{,\hbar}{{\signature[2]}} f\big)}^2
    \le
    \phi\big(\cc{1} \starreds{,\hbar}{{\signature[2]}} 1\big)\,
    \phi\big(\cc{f} \starreds{,\hbar}{{\signature[2]}} f\big)
    =
    \phi(1)\,
    \phi\big(\cc{f} \starreds{,\hbar}{{\signature[2]}} f\big)
    =
    0
  \end{align*}
  holds for all $f\in\Analytic(\CC\PP^1)$, and therefore $\phi = 0$. The rest is clear.
\end{proof}




\subsection{Applications}  \label{subsec:wickapplications}

In this section we use the reduced Wick rotation to transfer some of the results obtained in
\cite{kraus.roth.schoetz.waldmann:OnlineConvergentStarProductOnPoincareDisc} for the special
case of the hyperbolic disc, i.e.\ $s=1$, to general 
signatures. Note that one could also check that all the 
proofs in \cite{kraus.roth.schoetz.waldmann:OnlineConvergentStarProductOnPoincareDisc} work for an arbitrary 
signature, but the Wick rotation provides a more elegant way to generalize these results.
We will again drop the superscripts $^{\signature}$ most of the time,
the following is valid for every choice of signature $s \in \{1,\dots,1+n\}$.

\begin{proposition} \label{proposition:wickapplication_basis}
	The fundamental monomials $\MonomRedFund{P}{Q}$ with $P, Q \in \NN_0^n$ 
	form an absolute Schauder basis of $\Analytic(\Mred)$.
	More precisely, every $f \in \Analytic(\Mred)$ can be expanded
	in a unique way as an absolutely convergent series
	\begin{align}
	  f = \sum_{P,Q\in\NN_0^n} \expansionCoefficients{f}{P}{Q} \MonomRedFund{P}{Q}
	\end{align}
    with complex coefficients $\expansionCoefficients{f}{P}{Q}$ that fulfil the estimate
    \begin{align}
      \seminorm{\red,r}{f} 
      \coloneqq
      \sum_{P,Q\in\NN_0^n} \abs{\expansionCoefficients{f}{P}{Q}} r^{\abs{P}+\abs{Q}}
      <
      \infty
    \end{align}
    for all $r\in {[1,\infty[}$. Moreover, the topology of $\Analytic(\Mred)$
    (i.e.\ the topology of locally uniform convergence of the holomorphic extensions to $\MredExt$)
	can equivalently be described by these seminorms $\seminorm{\red,r}{f}$ and 
	the coefficients $\expansionCoefficients{f}{P}{Q}$ can be calculated explicitly by 
	means of the integral formula
	\begin{equation}
	\expansionCoefficients{f}{P}{Q} 
	=
	\frac{1}{(-4\pi^2)^n}
	\oint_C \dots \oint_C \hat f \frac{
		\big(1+\sum_{k=1}^n \nu_k u^k v^k\big)^{\max\{\abs P, \abs Q\}-1}
	}{
		u^{P+(1, \dots, 1)}	v^{Q+(1, \dots, 1)}
	} \D^n u \wedge \D^n v \label{eq:cauchydownstairs}
	\end{equation}
	for all $P,Q\in\NN_0^n$.
	Here $f \in \Analytic(\Mred)$ and 
	$\hat f \in \Holomorphic(\MredExt)$
	satisfies $\DiagM^*(\hat f) = f$. 
	The coordinates $u$ and $v$ were defined in Equation \eqref{eq:coordinates:extended}
	and $C \subseteq \CC$ is, in these projective coordinates, a circle around zero with radius
	in ${]0,1/\sqrt{n}[}$.
\end{proposition}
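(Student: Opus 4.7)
The plan is to split the statement into two essentially independent parts. The Schauder basis property and the description of the topology by the seminorms $\seminorm{\red,r}{\argument}$ will be transferred from the already-understood case $s=1$ via the reduced Wick rotation, while the explicit integral formula for the coefficients is verified by a direct orthogonality computation.

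For the transfer, Theorem~\ref{theorem:wickRotation} gives a homeomorphic isomorphism of Fréchet algebras $\WickRotRed[\signature] \colon \Analytic(\Mred[\signature]) \to \Analytic(\Mred[{\signature[1+n]}])$, and Lemma~\ref{lemma:wickrotPol} shows that it sends each $\MonomRedFund[\signature]{P}{Q}$ to a scalar of modulus one times $\MonomRedFund[{\signature[1+n]}]{P}{Q}$. Composing $\WickRotRed[{\signature[1]}]$ with $(\WickRotRed[\signature])^{-1}$ therefore gives a homeomorphic algebra isomorphism $\Analytic(\Mred[{\signature[1]}]) \to \Analytic(\Mred[\signature])$ that sends $\MonomRedFund[{\signature[1]}]{P}{Q}$ to $\I^{\sum_{k=1}^{s-1}(P_k+Q_k)} \MonomRedFund[\signature]{P}{Q}$. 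Since \cite{kraus.roth.schoetz.waldmann:OnlineConvergentStarProductOnPoincareDisc} establishes both the absolute Schauder basis statement and the equivalent description of the topology by the seminorms $\seminorm{\red,r}{\argument}$ in the case $s=1$, and since these seminorms depend only on the absolute values of the coefficients, both statements immediately transfer to arbitrary signature $s$.

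For the integral formula I would reduce everything to the orthogonality relation
\begin{equation*}
\frac{1}{(-4\pi^2)^n}\oint_C \cdots \oint_C \frac{u^{P'-P-(1,\dots,1)} v^{Q'-Q-(1,\dots,1)}}{\big(1+\sum_k \nu_k u^k v^k\big)^{1+\max\{\abs{P'},\abs{Q'}\}-\max\{\abs{P},\abs{Q}\}}} \D^n u \wedge \D^n v = \delta_{P,P'}\delta_{Q,Q'},
\end{equation*}
which arises from multiplying the kernel of \eqref{eq:cauchydownstairs} by the holomorphic extension of $\MonomRedFund{P'}{Q'}$ to $\MredExt$, written in projective coordinates as $u^{P'}v^{Q'}\big(1+\sum_k \nu_k u^k v^k\big)^{-\max\{\abs{P'},\abs{Q'}\}}$. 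Since $r<1/\sqrt n$ guarantees $\abs{\sum_k \nu_k u^k v^k}<1$ on the contour, one may expand the integrand as an absolutely convergent Laurent series in $(u,v)$ and extract the coefficient of $u^{-(1,\dots,1)} v^{-(1,\dots,1)}$. This forces $P-P'=Q-Q'=a$ componentwise nonnegative; in the case $a=0$ the standard multivariable Cauchy integral formula applied to the holomorphic function $\big(1+\sum_k \nu_k u^k v^k\big)^{-1}$ yields the value $1$ at the origin, while for $\abs{a}\ge 1$ the relevant binomial coefficient $\binom{\abs{a}-1}{\abs{a}}$ vanishes.

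With the orthogonality in hand, one substitutes the absolutely convergent expansion $\hat f = \sum_{P',Q'} \expansionCoefficients{f}{P'}{Q'} \MonomRedFund{P'}{Q'}$ (with the fundamental monomials viewed as holomorphic functions on $\MredExt$) into the integral and interchanges summation and the contour integral, which is justified by the uniform convergence of the series on the compact torus. The main technical subtlety is the case $P=P'$, $Q=Q'$ of the orthogonality, where $(1+\sum_k \nu_k u^k v^k)^{-1}$ is a genuine Neumann-type series rather than a polynomial; the remaining cases reduce to the vanishing of a single binomial coefficient, and the bookkeeping of the Wick-rotation transfer is routine.
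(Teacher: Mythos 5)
Your proposal is correct, and it coincides with the paper's proof for the first half: the paper likewise invokes \cite[Thm.~3.16]{kraus.roth.schoetz.waldmann:OnlineConvergentStarProductOnPoincareDisc} for $s=1$ and transfers the absolute Schauder basis property and the seminorm description of the topology to arbitrary signature via the homeomorphic isomorphism $\WickRotRed[\signature]$ together with Lemma~\ref{lemma:wickrotPol}, exactly as you do. Where you genuinely diverge is the integral formula \eqref{eq:cauchydownstairs}: the paper does \emph{not} reprove it, but instead shows it is compatible with the geometric Wick rotation, i.e.\ it computes the pullbacks under $\geomWickRotRed[\signature]$ of $\D^n u \wedge \D^n v$, of $1+\sum_k \nu_k u^k v^k$ and of $u^{P+(1,\dots,1)}v^{Q+(1,\dots,1)}$, and matches the resulting factor $\I^{\sum_{k=s}^n(P_k+Q_k)}$ against the transformation of the coefficients from Lemma~\ref{lemma:wickrotPol}, thereby carrying the $s=1$ formula first to $s=1+n$ and then to all $s$. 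You instead verify \eqref{eq:cauchydownstairs} directly by a residue computation: multiplying the kernel by the extension $u^{P'}v^{Q'}(1+\sum_k\nu_k u^kv^k)^{-\max\{\abs{P'},\abs{Q'}\}}$, expanding $(1+\sum_k\nu_k u^kv^k)^{-m}$ on the torus (where $nr^2<1$), and observing that a nonzero contribution forces $P-P'=Q-Q'=T\ge 0$, with the case $\abs{T}\ge 1$ killed because the numerator power $(1+\cdots)^{\abs{T}-1}$ has too low a degree, and the case $T=0$ giving $1$; the term-by-term integration is legitimate since the basis expansion (already transferred) converges in $\Holomorphic(\MredExt)$, hence uniformly on the compact torus. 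This is a valid and slightly more self-contained route: it does not rely on the $s=1$ integral formula from the cited reference at all (indeed it reproves it), at the cost of redoing the Cauchy-type orthogonality computation that the paper avoids by its change-of-variables bookkeeping; the paper's transfer argument is shorter but leans entirely on the external reference for the formula itself.
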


\begin{proof}
	For $s = 1$ this is exactly the statement of
	\cite[Thm.~3.16]{kraus.roth.schoetz.waldmann:OnlineConvergentStarProductOnPoincareDisc}.
	Because the Wick rotation is a homeomorphic isomorphism and using Lemma~\ref{lemma:wickrotPol},
	the generalization to arbitrary signatures is immediately clear for everything except
	the integral formula.
	In order to prove that \eqref{eq:cauchydownstairs} holds, we have to check that it is 
	compatible with the holomorphic isomorphisms $\geomWickRotRed[\signature]$: 
	
	We have to use superscripts $^{\signature}$ again to indicate the signature $s$.
	As $u^{\signature,k} \circ \geomWickRotRed[\signature] = u^{{\signature[1+n]},k}$ 
	and $v^{\signature,k} \circ \geomWickRotRed[\signature] = v^{{\signature[1+n]},k}$ 
	for all $k \in \{1,\dots,s-1\}$ as well as 
	$u^{\signature,k} \circ \geomWickRotRed[\signature] = \I u^{{\signature[1+n]},k}$ 
	and $v^{\signature,k} \circ \geomWickRotRed[\signature] = \I v^{{\signature[1+n]},k}$ 
	for all $k \in \{s,\dots,n\}$ hold, we get
	\begin{align*}
      \big(\geomWickRotRed[\signature]\big)^*(\D^n u^{\signature} \wedge \D^n v^{\signature})
	  &=
	  (-1)^{n+1-s} \D^n u^{\signature[1+n]} \wedge \D^n v^{\signature[1+n]}
    \shortintertext{as well as}
	  \big(\geomWickRotRed[\signature]\big)^*\bigg(1+\sum_{k=1}^n \nu_k^{\signature} u^{\signature,k} v^{\signature,k}\bigg)
	  &=
	  1+\sum_{k=1}^n \nu_k^{\signature[1+n]} u^{{\signature[1+n]},k} v^{{\signature[1+n]},k}
    \shortintertext{and}
      \big(\geomWickRotRed[\signature]\big)^*\big( (u^{\signature})^{P+(1,\dots,1)} (v^{\signature})^{Q+(1,\dots,1)} \big)
      &=
      \I^{\sum_{k=s}^n (P_k+Q_k+2) } (u^{\signature[1+n]})^{P+(1,\dots,1)} (v^{\signature[1+n]})^{Q+(1,\dots,1)}
	\end{align*}	
	for all $P,Q\in \NN_0^n$. So given $\hat{f} \in \Holomorphic(\MredExt[\signature])$, then
	the right-hand side of \eqref{eq:cauchydownstairs} for $\hat{f}$ in signature $s$,
	multiplied with the factor $\I^{\sum_{k=s}^n (P_k+Q_k) }$,
	gives the same result as for $\hat{f}\circ \geomWickRotRed[\signature] \in \Holomorphic(\MredExt[\signature])$ in signature $1+n$.
	This matches precisely with Lemma~\ref{lemma:wickrotPol}, which shows that
	\begin{align*}
	  \WickRotRed[\signature](f)
	  = 
	  \sum_{P,Q\in \NN_0^n} \expansionCoefficients{f}{P}{Q} 
	  \WickRotRed[\signature](\MonomRed[\signature]{P}{Q})
	  = 
	  \sum_{P,Q\in \NN_0^n} \expansionCoefficients{f}{P}{Q} \,
	  \I^{\sum_{k=s}^n (P_k+Q_k) } \MonomRed[{\signature[1+n]}]{P}{Q}
	\end{align*}
	for all $f\in \Analytic(\Mred[\signature])$ with expansion coefficients
	$\expansionCoefficients{f}{P}{Q}$.
    This way, one first sees that \eqref{eq:cauchydownstairs} holds not only for signature $s=1$
    but also for $s=1+n$, and then that it even holds for all $s\in \{1,\dots,n+1\}$.
\end{proof}
We would now like to generalize Corollary~\ref{corollary:rationalDependenceOnHbar} and
Proposition~\ref{proposition:poissonBracketIsFirstOrderCommutator} for analytic functions.
Because of the poles in $\hbar$ we only discuss one-sided limits:
For some function $f: \StarDomain \cap \RR \to \CC$ the limit when $\hbar$ approaches $0$ 
from the left is denoted by $\lim_{\hbar \to 0^-} f(\hbar)$ (if it exists).
 
\begin{proposition} \label{proposition:poissonBracketIsFirstOrderCommutator:Analytic}
	For any two analytic functions $f, g \in \Analytic(\Mred)$ the limits 
	$\lim_{\hbar \to 0^-} f \starred[,\hbar] g$ and $\lim_{\hbar \to 0^-} 
	\frac 1 {\I\hbar} \big(f \starred[,\hbar] g - g 
	\starred[,\hbar] f\big)$ exist. They are given by
	\begin{align}
	\lim_{\hbar \to 0^-} f \starred[,\hbar] g &= f g
	\shortintertext{and}
	\lim_{\hbar \to 0^-} \frac 1 {\I\hbar} \big(f \starred[,\hbar] g - g 
	\starred[,\hbar] f\big) 
	&= \poi{f}{g}_\red 
	\end{align}
	with the reduced Poisson bracket $\poi{\argument}{\argument}_\red$ on $\Mred$.
\end{proposition}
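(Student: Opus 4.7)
The strategy is to combine the density of polynomials in $\Analytic(\Mred)$ from Corollary~\ref{corollary:polynomialsdense} with a refined continuity estimate for $\starred[,\hbar]$ that is \emph{uniform} as $\hbar \to 0^-$. Proposition~\ref{proposition:continuityEstimates} as stated gives bounds only on compact subsets of $\StarDomain$, and the interval $[-1, 0)$ is not such a set because $0$ is an accumulation point of the poles $\{1/k : k\in \NN\}$. However, for negative $\hbar$ all of these poles lie on the opposite side of zero, and this is what will allow a uniform estimate.

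The first step is to revisit the proof of Proposition~\ref{proposition:continuityEstimates} in the special case $\hbar \in [-1, 0)$. Setting $M := 1/|\hbar| \ge 1$ one has $\falling{1/\hbar}{n} = (-1)^n \rising{M}{n}$, and the crucial ratio from Proposition~\ref{proposition:starproductstructurered} rewrites as $\rising{M+|Q|}{|R|-|T|}/\rising{M}{|R|}$. Using only $M \ge 1$, this admits a bound of the form $|\hbar|^{|T|} \cdot C^{|Q|+|R|}$ with $C > 0$ independent of $\hbar$. Summed over $T$ and plugged into the definition of the seminorms $\seminorm{\red, r}{\argument}$ from Proposition~\ref{proposition:wickapplication_basis}, this yields an estimate
\begin{equation*}
\seminorm[\big]{\red, r}{f \starred[,\hbar] g - fg}
\le C' \, |\hbar| \, \seminorm{\red, r'}{f} \, \seminorm{\red, r'}{g}
\quad \text{for all } \hbar \in [-1, 0),
\end{equation*}
with $r' > r$ and $C' > 0$ independent of $\hbar$, first for polynomials $f, g$ and then for arbitrary analytic $f, g$ by continuous extension via Theorem~\ref{theorem:strictProduct}. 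The extra factor $|\hbar|$ arises because the $T=0$ summand in $\starred[,\hbar]$ reproduces the pointwise product exactly, so only $|T| \ge 1$ terms survive in the difference.

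Given this estimate, both limits follow. Approximating $f, g \in \Analytic(\Mred)$ by polynomials $p, q$ in $\seminorm{\red, r'}{\argument}$ and writing
\begin{equation*}
f \starred[,\hbar] g - fg
= (f-p) \starred[,\hbar] g + p \starred[,\hbar] (g-q) + (p \starred[,\hbar] q - pq) + (pq - fg),
\end{equation*}
the first two and the last term can be made arbitrarily small uniformly in $\hbar$, while the middle polynomial term vanishes as $\hbar \to 0^-$ by Corollary~\ref{corollary:rationalDependenceOnHbar}. The commutator limit is handled analogously: the $T=0$ contribution to $f \starred[,\hbar] g - g \starred[,\hbar] f$ vanishes identically and the $|T|=1$ contribution is of order $\hbar$ with leading coefficient $\I \poi{f}{g}_\red$, so after dividing by $\I\hbar$ one is again left with a quantity controlled by a uniform estimate of the same shape as above. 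Reducing to polynomials, the limit is then supplied by Proposition~\ref{proposition:poissonBracketIsFirstOrderCommutator}.

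The main obstacle lies precisely in the first step: quantitatively bounding $\rising{M+a}{b-t}/\rising{M}{b}$ uniformly in $M \ge 1$ in a form that extracts the decaying factor $|\hbar|^t$ while leaving a remainder absorbable by enlarging the seminorm parameter. The crude bound $c^n n! \le |\falling{z}{n}| \le C^n n!$ used in the proof of Proposition~\ref{proposition:continuityEstimates} fails as $z = 1/\hbar \to -\infty$, and must be replaced by an estimate that exploits the positivity of $M$ and the structure of the rising factorial. Once that quantitative bound is in place, everything else is a soft density argument.
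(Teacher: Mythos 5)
Your route is genuinely different from the paper's: the paper proves this proposition without any new estimates, by quoting the case $s=1$ from \cite[Thm.~4.5]{kraus.roth.schoetz.waldmann:OnlineConvergentStarProductOnPoincareDisc} (for the auxiliary product $*_{-\hbar/2}$ on the disc) and transferring it to every signature with the reduced Wick rotation via Theorems~\ref{theorem:wickRotation} and \ref{theorem:wickstar}. Your direct strategy --- a continuity estimate that is uniform for $-1\le\hbar<0$ because all poles $1/k$ lie on the positive axis, combined with Corollary~\ref{corollary:polynomialsdense} and the polynomial-case limits --- would make the statement self-contained, but as written it has a gap exactly where you place it: the central bound is asserted, called ``the main obstacle'', and never proved. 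It is in fact true and elementary: with $M=1/\abs{\hbar}$, $a=\abs{Q}$, $b=\abs{R}$, $t=\abs{T}$ one has $\abs{\falling{1/\hbar}{a+b-t}}\big/\big(\abs{\falling{1/\hbar}{a}}\abs{\falling{1/\hbar}{b}}\big)=\rising{M+a}{b-t}/\rising{M}{b}$, and splitting $\rising{M}{b}=\rising{M}{b-t}\,\rising{M+b-t}{t}\ge\rising{M}{b-t}\,M^t$ while bounding $\rising{M+a}{b-t}/\rising{M}{b-t}=\prod_{j=0}^{b-t-1}\tfrac{M+a+j}{M+j}$ by its value at $M=1$ (each factor is decreasing in $M$) gives at most $\abs{\hbar}^t\binom{a+b-t}{a}\le\abs{\hbar}^t\,2^{a+b}$, which is exactly the shape you need; so the obstacle is surmountable, but a complete proof must contain this argument.

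Two further points need repair. First, your justification ``the $T=0$ summand in $\starred[,\hbar]$ reproduces the pointwise product exactly'' is false for the \emph{reduced} product (it holds only for the unreduced Wick product on $\CC^{1+n}$): by Proposition~\ref{proposition:starproductstructurered} the $T=0$ coefficient is $\falling{1/\hbar}{\abs{Q}+\abs{R}}\big/\big(\falling{1/\hbar}{\abs{Q}}\falling{1/\hbar}{\abs{R}}\big)$, e.g.\ $1-\hbar$ for $\abs{Q}=\abs{R}=1$, so $f\starred[,\hbar]g-fg$ also receives a $T=0$ contribution; you must additionally show that this ratio differs from $1$ by at most $\abs{\hbar}\,C^{\abs{Q}+\abs{R}}$ uniformly for $-1\le\hbar<0$ (true, since the ratio lies between $1$ and $\binom{\abs{Q}+\abs{R}}{\abs{Q}}$ and a short computation extracts the factor $\abs{\hbar}$). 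For the commutator, on the other hand, the $T=0$ cancellation you use is genuine, because $\abs{P}=\abs{Q}$ and $\abs{R}=\abs{S}$ make the two $T=0$ coefficients equal. Second, Equation~\eqref{eq:starProduct:strict:reduced} is written in the linearly dependent reduced monomials $\MonomRed{P}{Q}$, while the seminorms $\seminorm{\red,r}{\argument}$ of Proposition~\ref{proposition:wickapplication_basis} refer to the expansion in the fundamental monomials; the cleanest fix is to run your estimate upstairs for $*_\hbar$ on $\Polynomials(\CC^{1+n})^{\group{U}(1)}$ with the seminorms $\seminorm{r}{\argument}$, exactly as in Proposition~\ref{proposition:continuityEstimates}, and then pass to the quotient via Proposition~\ref{proposition:topologies:quotientTopology}. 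With these additions your density argument goes through and yields an alternative, purely analytic proof valid for all signatures at once; it simply costs more work than the paper's reduction to the known disc case.
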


\begin{proof}
    This was proven in \cite[Thm.~4.5]{kraus.roth.schoetz.waldmann:OnlineConvergentStarProductOnPoincareDisc}
    in the special case of signature $s=1$ for a product $*_\hbar$ with $-\hbar \in \StarDomain$ fulfilling
    $f \star_{\red,\hbar}^{\signature[1]} g = g *_{-\hbar/2} f$ 
    for all $f,g\in \Analytic(\Mred[{\signature[1]}])$ and the corresponding Poisson bracket 
    $\poi{\argument}{\argument}_* = -2\poi{\argument}{\argument}_\red$. 
    The statements for arbitrary signatures $s$ follow immediately from 
    Theorem~\ref{theorem:wickRotation} and Theorem~\ref{theorem:wickstar}.
\end{proof}
Note also that \cite[Expl.~4.2]{kraus.roth.schoetz.waldmann:OnlineConvergentStarProductOnPoincareDisc}
shows that there exist two functions $f,g\in \Analytic(\Mred)$ for which
$f\starred[,\hbar] g$ has non-trivial first order poles at all $\hbar = 1/m$ with $m\in \NN$.
As a consequence, the result of the above Proposition~\ref{proposition:poissonBracketIsFirstOrderCommutator:Analytic}
cannot be generalized to limits over arbitrary sequences $(\hbar_k)_{k\in \NN}$ in $\Omega$ with
limit $0$.




\appendix
\section{Symmetrized Covariant Derivatives}   \label{sec:derivatives}

On a smooth manifold $M$ we define the spaces of tensor fields
\begin{align*}
 \SymSec^\ell(M)
 \coloneqq
 \SmoothSections(\Symten{\ell}\CoTangent M)
 \quad\quad\text{and}\quad\quad
 (\ASymSec\otimes \SymSec)^{k,\ell}(M)
 \coloneqq
 \SmoothSections(\ASymten{k}\CoTangent M \otimes \Symten{\ell}\CoTangent M)
\end{align*}
for all $k,\ell \in \ZZ$, as well as the $\ZZ$-graded algebra $\SymSec^\bullet(M) \coloneqq 
\bigoplus_{\ell\in\ZZ} \SymSec^\ell(M)$
with the usual pointwise symmetric tensor product $\vee$ and the $\ZZ^2$-graded algebra $(\ASymSec \otimes 
\SymSec)^{\bullet,\bullet}(M) \coloneqq 
\bigoplus_{k,\ell\in\ZZ} (\ASymSec\otimes \SymSec)^{k,\ell}(M)$ with product $\circ$ given by the combination
of the pointwise antisymmetric and symmetric tensor products. In order to define graded commutators,
a $\ZZ_2$-grading on these two algebras is needed: In the case of $\SymSec^\bullet(M)$, this is the trivial
one, in which all elements of $\SymSec^\bullet(M)$ have even degree, and on 
$(\ASymSec \otimes \SymSec)^{\bullet,\bullet}(M)$ we consider the antisymmetric degree only,
i.e.~all elements of $(\ASymSec \otimes \SymSec)^{k,\ell}(M)$ with $k\in 2\ZZ, \ell \in \ZZ$ have even degree
and all elements of $(\ASymSec \otimes \SymSec)^{k,\ell}(M)$ with $k\in 1+2\ZZ, \ell \in \ZZ$ have odd degree.
This way, both $\SymSec^\bullet(M)$ and $(\ASymSec \otimes \SymSec)^{\bullet,\bullet}(M)$ are graded commutative.
For later use we also define the \neu{total degree} $\Deg$ on $(\ASymSec \otimes \SymSec)^{\bullet,\bullet}(M)$ by setting
\begin{align}
  \Deg \Omega = (k+\ell)\Omega
\end{align}
for all $\Omega \in (\ASymSec \otimes \SymSec)^{k,\ell}(M)$ with $k,\ell\in \ZZ$.
Clearly $\Deg$ is a graded derivation of degree $(0,0)$.
Note that
$\SymSec^0(M) \cong \Smooth(M) \cong (\ASymSec \otimes \SymSec)^{0,0}(M)$ and that 
$\SymSec^\bullet(M)$ is generated as a complex algebra by $\SymSec^0(M) \oplus \SymSec^1(M)$,
whereas $(\ASymSec \otimes \SymSec)^{\bullet,\bullet}(M)$ is generated as a complex algebra by
$(\ASymSec \otimes \SymSec)^{0,0}(M) \oplus 
 (\ASymSec \otimes \SymSec)^{1,0}(M) \oplus
 (\ASymSec \otimes \SymSec)^{0,1}(M)$.

We will need two other operators, the \neu{Koszul differentials}: 
There are unique $\CC$-linear graded derivations $\delta,\delta^*$
of $(\ASymSec \otimes \SymSec)^{\bullet,\bullet}(M)$ of degree $(+1,-1)$
and $(-1,+1)$ that fulfil
\begin{align}
 \delta(1\otimes \omega) = \omega\otimes 1
 \quad\quad\text{as well as}\quad\quad
 \delta^*(\rho\otimes 1) = 1 \otimes\rho
 \komma
\end{align}
respectively,
for all $\rho,\omega \in \SmoothSections(\CoTangent M)$.
In local coordinates, 
$\delta(\rho \otimes \omega) = \sum_i(\D x^i \wedge \rho) \otimes  (\iota_{\partial / \partial x^i} \omega)$
and 
$\delta^*(\rho \otimes \omega) = \sum_i (\iota_{\partial / \partial x^i} \rho) \otimes  (\D x^i \vee \omega)$
hold for all $\rho \in \SmoothSections(\ASymten{k}\CoTangent M)$
and $\omega \in \SmoothSections(\Symten{\ell}\CoTangent M)$. Of course,
$\delta$ and $\delta^*$ are not only $\CC$-linear but even $\Smooth(M)$-linear.

\begin{lemma}
	For the graded commutators we have
	\begin{gather*}
	\kom{\delta}{\delta} = 2\delta^2 = 0
	\komma\quad\quad
	\kom{\delta^*}{\delta^*} = 2(\delta^*)^2 = 0
	\komma\quad\quad
	\kom{\delta}{\delta^*} = \kom{\delta^*}{\delta} = \Deg 
	\komma\\
	\kom{\Deg}{\delta} = -\kom{\delta}{\Deg} = 0
	\quad\quad\text{and}\quad\quad
	\kom{\Deg}{\delta^*} = -\kom{\delta^*}{\Deg} = 0 \punkt
	\end{gather*}
\end{lemma}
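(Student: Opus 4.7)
The plan is to reduce every identity to a check on a generating set, exploiting the fact that each operator appearing on the left-hand side is a graded derivation and that the graded commutator of two graded derivations is again a graded derivation.

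First I would note that the algebra $(\ASymSec \otimes \SymSec)^{\bullet,\bullet}(M)$ is generated as a $\CC$-algebra by $(\ASymSec \otimes \SymSec)^{0,0}(M) \cong \Smooth(M)$ together with the bidegrees $(1,0)$ and $(0,1)$. Consequently, any graded derivation is determined by its values on $\Smooth(M)$ and on the subspaces $\set{\rho \otimes 1}{\rho \in \SmoothSections(\CoTangent M)}$ and $\set{1 \otimes \omega}{\omega \in \SmoothSections(\CoTangent M)}$. I would also observe that $\Deg$ is a graded derivation of degree $(0,0)$ (it multiplies a homogeneous element of bidegree $(k,\ell)$ by $k+\ell$), and by definition $\delta,\delta^*$ are graded derivations of degrees $(+1,-1)$ and $(-1,+1)$, respectively. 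Therefore $\delta^2 = \tfrac{1}{2}\kom{\delta}{\delta}$, $(\delta^*)^2 = \tfrac{1}{2}\kom{\delta^*}{\delta^*}$, $\kom{\delta}{\delta^*}$, $\kom{\Deg}{\delta}$ and $\kom{\Deg}{\delta^*}$ are all graded derivations of the appropriate degree, and it suffices to verify each asserted equality on $\Smooth(M)$ and on $\rho \otimes 1$, $1 \otimes \omega$.

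Next I would collect the values on generators. On $\Smooth(M)$ all operators $\delta, \delta^*, \Deg$ vanish because of degree reasons (and because $\Deg$ is multiplication by $0$ there). For $\rho \otimes 1$ and $1 \otimes \omega$, the definition together with bidegree considerations gives
\begin{align*}
  \delta(\rho \otimes 1) &= 0, & \delta(1 \otimes \omega) &= \omega \otimes 1, \\
  \delta^*(\rho \otimes 1) &= 1 \otimes \rho, & \delta^*(1 \otimes \omega) &= 0, \\
  \Deg(\rho \otimes 1) &= \rho \otimes 1, & \Deg(1 \otimes \omega) &= 1 \otimes \omega.
\end{align*}
The vanishing of $\delta(\rho \otimes 1)$ and $\delta^*(1 \otimes \omega)$ holds because the target bidegree has a negative entry, forcing the result to be zero.

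With these formulas the remaining checks are immediate. For $\delta^2$ one has $\delta^2(\rho \otimes 1) = 0$ trivially, and $\delta^2(1 \otimes \omega) = \delta(\omega \otimes 1) = 0$; hence $\delta^2 = 0$ on generators, so $\kom{\delta}{\delta} = 2\delta^2 = 0$. The identity $\kom{\delta^*}{\delta^*} = 2(\delta^*)^2 = 0$ is symmetric. For the mixed commutator, compute $\kom{\delta}{\delta^*}(\rho \otimes 1) = \delta(1 \otimes \rho) = \rho \otimes 1 = \Deg(\rho \otimes 1)$ and $\kom{\delta}{\delta^*}(1 \otimes \omega) = \delta^*(\omega \otimes 1) = 1 \otimes \omega = \Deg(1 \otimes \omega)$, so $\kom{\delta}{\delta^*} = \Deg$; by graded symmetry (both operators are odd) also $\kom{\delta^*}{\delta} = \kom{\delta}{\delta^*} = \Deg$. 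Finally, $\kom{\Deg}{\delta}(\rho \otimes 1) = -\delta(\rho \otimes 1) = 0$ and $\kom{\Deg}{\delta}(1 \otimes \omega) = \Deg(\omega \otimes 1) - \delta(1 \otimes \omega) = \omega \otimes 1 - \omega \otimes 1 = 0$, so $\kom{\Deg}{\delta} = 0$; the analogous calculation gives $\kom{\Deg}{\delta^*} = 0$.

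There is no real obstacle: the only point requiring minor care is keeping track of the $\ZZ_2$-grading used to define the graded commutators (only the antisymmetric degree counts for the sign), which ensures that $\kom{\delta}{\delta} = 2\delta^2$ and that $\kom{\delta}{\delta^*}$ is symmetric in its arguments.
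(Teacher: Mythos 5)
Your proposal is correct and follows essentially the same route as the paper: both reduce everything to a check on the generating subspaces $(\ASymSec\otimes\SymSec)^{0,0}(M)$, $(\ASymSec\otimes\SymSec)^{1,0}(M)$ and $(\ASymSec\otimes\SymSec)^{0,1}(M)$, using that graded derivations (and hence graded commutators of graded derivations) are determined by their values there. You merely spell out the generator computations that the paper leaves as ``one checks easily.''
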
 
\begin{proof}
	One checks easily that this holds on
	$(\ASymSec \otimes \SymSec)^{0,0}(M)$,
	$(\ASymSec \otimes \SymSec)^{1,0}(M)$ and 
	$(\ASymSec \otimes \SymSec)^{0,1}(M)$.
	But graded derivations are already uniquely determined by how they act on these spaces.
\end{proof}
One can also check that $\delta$ and $\delta^*$ commute with pullbacks. That is, whenever $\Psi : M \to N$ is 
smooth, then $\delta \circ \Psi^* = \Psi^* \circ \delta$ and $\delta^* \circ \Psi^* = \Psi^* \circ \delta^*$ where
$\Psi^* : (\ASymSec \otimes \SymSec)^{\bullet,\bullet}(N) \to (\ASymSec \otimes \SymSec)^{\bullet,\bullet}(M)$ 
denotes the usual pullback. 

Next we consider the insertion of vector fields into the antisymmetric and symmetric part:
Given $X \in \SmoothSections(\Tangent M)$, then there exist unique 
$\CC$-linear graded derivations $\iota^a_X, \iota^s_X$ of $(\ASymSec \otimes \SymSec)^{\bullet,\bullet}(M)$
of degree $(-1,0)$ and $(0,-1)$ that fulfil
\begin{align}
  \iota^a_X (\rho \otimes 1) = \dupr{\rho}{X}
  \quad\quad\text{as well as}\quad\quad
  \iota^s_X (1 \otimes \omega) = \dupr{\omega}{X}
  \komma
\end{align}
respectively, for all $\rho,\omega \in \SmoothSections(\CoTangent M)$.
Clearly, $\iota^a_X$ and $\iota^s_X$ are even $\Smooth(M)$-linear and:

\begin{lemma}
For the graded commutators we have
\begin{gather*}
\kom{\iota^a_X}{\iota^a_Y} 
= 
\kom{\iota^a_X}{\iota^s_Y} 
= 
\kom{\iota^s_X}{\iota^a_Y}
= 
\kom{\iota^a_X}{\delta^*}
= 
\kom{\iota^s_X}{\delta}
=
0 \komma \\
\kom{\iota^a_X}{\delta} = \iota^s_X
\komma\quad
\kom{\iota^s_X}{\delta^*} = \iota^a_X
\komma\quad
\kom{\Deg}{\iota^a_X} = -\iota^a_X
\komma\quad
\kom{\Deg}{\iota^s_X} = -\iota^s_X
\end{gather*}
for all $X,Y\in\Smooth(M)$.
\end{lemma}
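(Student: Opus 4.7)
The strategy exactly mirrors the proof of the preceding lemma. Every operator appearing in the statement -- $\iota^a_X$, $\iota^s_X$, $\delta$, $\delta^*$ and $\Deg$ -- is a graded derivation of $(\ASymSec \otimes \SymSec)^{\bullet,\bullet}(M)$ with respect to the $\ZZ_2$-grading coming from the antisymmetric degree, with $\iota^a_X$, $\delta$ and $\delta^*$ being odd, and $\iota^s_X$ and $\Deg$ being even. By the standard fact that the graded commutator of two graded derivations is again a graded derivation, every left-hand side in the statement is a graded derivation, and so is every right-hand side (zero, a scalar multiple of $\iota^a_X$, $\iota^s_X$, or a single such operator). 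Thus, to establish each identity it is enough to test both sides on the three generating spaces $(\ASymSec \otimes \SymSec)^{0,0}(M) \cong \Smooth(M)$, $(\ASymSec \otimes \SymSec)^{1,0}(M)$ (elements $\rho \otimes 1$ with $\rho \in \SmoothSections(\CoTangent M)$), and $(\ASymSec \otimes \SymSec)^{0,1}(M)$ (elements $1 \otimes \omega$ with $\omega \in \SmoothSections(\CoTangent M)$).

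On a smooth function $f \in \Smooth(M)$ all four operators $\iota^a_X, \iota^s_X, \delta, \delta^*$ vanish by degree count (the outputs would live in components of negative antisymmetric or symmetric degree), so every identity reduces to $0 = 0$. On generators of pure bidegree $(1,0)$ and $(0,1)$ I would then run through the five identities claimed to vanish: each of these produces, on any generator, a combination of terms that are individually zero either because an insertion operator is evaluated on a pure-bidegree $(0,\ell)$ or $(k,0)$ element in the wrong slot, or because $\delta$ kills $\rho \otimes 1$ and $\delta^*$ kills $1 \otimes \omega$. The non-trivial evaluations are the following four:
\begin{align*}
  \kom{\iota^a_X}{\delta}(1 \otimes \omega)
  &= \iota^a_X(\omega \otimes 1) + \delta(0)
  = \dupr{\omega}{X}
  = \iota^s_X(1 \otimes \omega) \komma \\
  \kom{\iota^s_X}{\delta^*}(\rho \otimes 1)
  &= \iota^s_X(1 \otimes \rho) - \delta^*(0)
  = \dupr{\rho}{X}
  = \iota^a_X(\rho \otimes 1) \komma \\
  \kom{\Deg}{\iota^a_X}(\rho \otimes 1)
  &= \Deg(\dupr{\rho}{X}) - \iota^a_X(\rho \otimes 1)
  = -\dupr{\rho}{X}
  = -\iota^a_X(\rho \otimes 1) \komma \\
  \kom{\Deg}{\iota^s_X}(1 \otimes \omega)
  &= \Deg(\dupr{\omega}{X}) - \iota^s_X(1 \otimes \omega)
  = -\dupr{\omega}{X}
  = -\iota^s_X(1 \otimes \omega) \punkt
\end{align*}
On the two generating spaces not listed in each line, both sides vanish trivially for degree reasons, so the identities of graded derivations hold on a generating set and therefore everywhere.

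There is no real obstacle here; the only thing to be careful about is the bookkeeping of $\ZZ_2$-signs in the graded commutators (e.g.\ remembering that $\kom{\iota^a_X}{\delta}$ is an anticommutator because both operators are odd, while $\kom{\iota^s_X}{\delta}$ is an ordinary commutator), and the matching of tensor bidegrees in each insertion step. Once these are tracked, the computations above are immediate and the proof is complete.
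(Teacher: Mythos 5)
Your proof is correct and follows exactly the paper's route: both sides of each identity are graded derivations (with the $\ZZ_2$-parity bookkeeping you describe), so it suffices to check them on the generating spaces $(\ASymSec\otimes\SymSec)^{0,0}(M)$, $(\ASymSec\otimes\SymSec)^{1,0}(M)$ and $(\ASymSec\otimes\SymSec)^{0,1}(M)$, which is precisely what the paper does, only more tersely. Your explicit evaluations and sign conventions (anticommutator for two odd operators, ordinary commutator otherwise) are all consistent with the definitions in the appendix.
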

\begin{proof}
 These identities are easy to check on 
$(\ASymSec \otimes \SymSec)^{0,0}(M)$,
$(\ASymSec \otimes \SymSec)^{1,0}(M)$ and 
$(\ASymSec \otimes \SymSec)^{0,1}(M)$.
\end{proof}
We see that the $\CC$-linear span of $\delta$, $\delta^*$, $\Deg$ 
and all $\iota^a_X$ and $\iota^s_X$ with $X\in\Smooth(M)$ in the graded Lie algebra
of $\CC$-linear graded derivations of $(\ASymSec \otimes \SymSec)^{\bullet,\bullet}(M)$
is a graded Lie subalgebra.
Now we can define exterior covariant derivatives:

\begin{definition}
	A $\CC$-linear graded derivation $D$ of $(\ASymSec\otimes \SymSec)^{\bullet,\bullet}(M)$
	of degree $(+1,0)$ that fulfils
	$D (\rho \otimes 1) = \D \rho \otimes 1$ for all $\rho \in \SmoothSections(\ASymten{\bullet} \CoTangent M)$
	is called an \neu{exterior covariant derivative} on $M$.
\end{definition}
For every covariant derivative $\nabla$ on $M$ there exists a unique exterior covariant derivative 
$D^\nabla$ on $M$
that fulfils 
\begin{equation}
  \iota^a_X D^\nabla (1\otimes \omega) = 1\otimes \nabla_X \omega
\end{equation}
for all $\rho \in \SmoothSections(\ASymten{\bullet} \CoTangent M)$,
$\omega\in \SymSec^\bullet(M)$ and $X \in \SmoothSections(\Tangent M)$.   
In local coordinates,
\begin{equation} \label{eq:ExtCovDerAssociatedToCovDerLocal}
  D^\nabla (\rho \otimes \omega)
  = 
  \D \rho \otimes \omega + \sum_i(\D x^i \wedge \rho) \otimes \nabla_{\partial / \partial x^i} \omega
\end{equation}
for all $\rho \in \SmoothSections(\ASymten{\bullet} \CoTangent M)$ and
$\omega\in \SymSec^\bullet(M)$.
Conversely, every exterior covariant derivative $D$ on $M$
determines a unique covariant derivative $\nabla^D$ on $M$ that fulfils
\begin{equation}\label{eq:CovDerAssociatedToExtCovDer}
\dupr{\omega}{\nabla^D_X Y} 
= 
X\big( \dupr{\omega}{Y}\big) - \dupr{\nabla^D_X \omega}{Y} 
= 
X\big( \dupr{\omega}{Y}\big) - \iota^s_Y \iota^a_X D(1\otimes \omega)
\end{equation}
for all $X,Y\in \SmoothSections(\Tangent M)$ and all $\omega \in \SmoothSections(\CoTangent M)$.
One can check that $\nabla^{D^\nabla} = \nabla$ for every covariant derivative $\nabla$ on $M$ 
and that $D^{\nabla^D} = D$ for every exterior covariant derivative on $M$. So there is
a $1$-to-$1$ correspondence between covariant derivatives and exterior covariant derivatives.

We say that an exterior covariant derivative $D$ is torsion-free if the associated covariant derivative 
$\nabla^D$ is torsion-free.

\begin{proposition}
	An exterior covariant derivative $D$ on $M$ is torsion-free if and only if $\kom{D}{\delta} = 0$.
\end{proposition}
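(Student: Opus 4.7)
The plan is to exploit that $\kom{D}{\delta} = D\delta + \delta D$ is again a graded derivation of $(\ASymSec\otimes\SymSec)^{\bullet,\bullet}(M)$, of bidegree $(+2,-1)$, since both $D$ and $\delta$ are odd with respect to the antisymmetric grading which defines the $\ZZ_2$-grading. A graded derivation is determined by its values on a generating set of the algebra, and here the relevant generators lie in $(\ASymSec\otimes\SymSec)^{0,0}(M) \cong \Smooth(M)$, $(\ASymSec\otimes\SymSec)^{1,0}(M)$ and $(\ASymSec\otimes\SymSec)^{0,1}(M)$. So checking whether $\kom{D}{\delta} = 0$ reduces to evaluating it on these three spaces.

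On the first two spaces the image of $\kom{D}{\delta}$ would have to lie in $(\ASymSec\otimes\SymSec)^{2,-1}(M)$ respectively $(\ASymSec\otimes\SymSec)^{3,-1}(M)$, which are trivially zero because tensors of negative symmetric degree do not exist. Consequently the only nontrivial check is on elements of the form $1\otimes\omega$ with $\omega \in \SmoothSections(\CoTangent M)$, and the outcome will live in $(\ASymSec\otimes\SymSec)^{2,0}(M)$, i.e.\ will be an antisymmetric 2-form times $1$.

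For such $\omega$ I would compute the two summands separately. By the defining property of an exterior covariant derivative, $D\delta(1\otimes\omega) = D(\omega\otimes 1) = \D\omega\otimes 1$. For $\delta D(1\otimes\omega)$ I would write $D(1\otimes\omega)$ in local coordinates using \eqref{eq:ExtCovDerAssociatedToCovDerLocal} as $\sum_i \D x^i \otimes \nabla^{D}_{\partial / \partial x^i}\omega$ and apply $\delta$ via its local description. A direct computation, using $\nabla^D_{\partial/\partial x^i}\partial/\partial x^j = \Gamma^k_{ij}\,\partial/\partial x^k$, then produces $\delta D(1\otimes\omega) = -\D\omega\otimes 1 + \big(\sum_{i,j,k}\Gamma^k_{ij}\omega_k\,\D x^i\wedge \D x^j\big)\otimes 1$. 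The two copies of $\D\omega\otimes 1$ cancel, and only the antisymmetric part of $\Gamma^k_{ij}$ in the indices $i,j$ survives in the wedge product, giving $\kom{D}{\delta}(1\otimes\omega) = \big(\omega\circ T^{\nabla^D}\big)\otimes 1$ with $T^{\nabla^D}(X,Y) = \nabla^D_X Y - \nabla^D_Y X - [X,Y]$ the torsion of $\nabla^D$.

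Since $\omega$ ranges over all 1-forms and the pairing $\omega\mapsto \omega\circ T^{\nabla^D}$ determines $T^{\nabla^D}$ uniquely (by running $\omega$ through a local coframe), one concludes that $\kom{D}{\delta}$ vanishes on $(\ASymSec\otimes\SymSec)^{0,1}(M)$, hence on all of $(\ASymSec\otimes\SymSec)^{\bullet,\bullet}(M)$, if and only if $T^{\nabla^D}=0$, i.e.\ if and only if $\nabla^D$ is torsion-free. The main potential pitfall is bookkeeping with the sign and factor conventions inherent to the paper's definitions of $\wedge$, $\vee$ and the dual pairing; beyond that the argument is mechanical and purely local.
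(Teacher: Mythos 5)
Your proof is correct and follows essentially the same route as the paper: reduce, via the observation that $\kom{D}{\delta}$ is a graded derivation of degree $(+2,-1)$ and hence determined by its (automatically vanishing on bidegrees $(0,0)$ and $(1,0)$) values on generators, to evaluating it on $1\otimes\omega$ for one-forms $\omega$, and identify that value with the torsion of $\nabla^D$. The only difference is that you perform this evaluation in local coordinates with Christoffel symbols, whereas the paper computes invariantly using the insertion operators $\iota^a_X$, $\iota^s_X$ and their graded commutation relations with $\delta$; both yield the same conclusion.
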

\begin{proof}
	Denote the torsion of $\nabla^D$ by $T$.
	We compute
	\begin{align*}
	\iota^a_Y \iota^a_X \kom{D}{\delta}(1 \otimes \omega) 
	&= 
	\iota^a_Y \iota^a_X (\D \omega \otimes 1) + \iota^a_Y \iota^a_X \delta D (1\otimes 
	\omega) 
	\\
	&=
	2\dupr{\D\omega}{X\wedge Y} - \iota^a_Y \delta \iota^a_X D (1\otimes \omega) + \iota^a_Y 
	\iota^s_X D (1\otimes \omega)
	\\
	&=
	2\dupr{\D\omega}{X\wedge Y} - \iota^a_Y \delta (1 \otimes \nabla_X \omega) + \iota^s_X 
	\iota^a_Y D 
	(1\otimes \omega)
	\\
	&=
	2\dupr{\D\omega}{X\wedge Y} - \dupr{\nabla_X \omega}{Y} + \dupr{\nabla_Y \omega}{X}
	\\
	&=
	2\dupr{\D\omega}{X\wedge Y} - X\big( \dupr{\omega}{Y} \big) +  Y\big( \dupr{\omega}{X} \big)
	+\dupr{\omega}{\nabla_XY} - \dupr{\omega}{\nabla_YX}
	\\
	&=
	\dupr{\omega}{-\kom{X}{Y}+\nabla_X Y - \nabla_YX }
	\\
	&=
	\dupr{\omega}{T_{X,Y}} \punkt
	\end{align*}
	In particular, if $\kom{D}{\delta} = 0$, then $\nabla^D$ is torsion-free. Conversely, if $\nabla^D$ is 
	torsion-free, then $\kom{D}{\delta}$ vanishes on $(\ASymSec\otimes\SymSec)^{0,1}(M)$ by the above 
	calculation.
	But $\kom{D}{\delta}$ is a $\CC$-linear graded derivation of 
	$(\ASymSec\otimes\SymSec)^{\bullet,\bullet}(M)$
	of degree $(+2,-1)$, so $\kom{D}{\delta} = 0$ in this case.
\end{proof}
If $g \in \SymSec^2(M)$ is real and non-degenerate,
then there exists a unique exterior covariant derivative $D$ on $M$ that fulfils $D(1\otimes g) = 0 = 
\kom{D}{\delta}$,
namely the one corresponding to the Levi-Civita connection.
This \neu{exterior Levi-Civita connection} will be interesting for us:

\begin{lemma} \label{lemma:derivinvariant}
	Let $M$ be a smooth manifold, $g \in \SmoothSections(\Symten{2} \CoTangent M)$ a real and 
	non-degenerate symmetric tensor with Levi-Civita connection $\nabla$, and 
	$\Phi\colon M\to M$ a diffeomorphism. If $\nabla_X \Phi^*(g) = 0$ for all $X\in\SmoothSections(\Tangent M)$, then
	the exterior Levi-Civita connection $D$ associated to $g$ commutes with the pullback $\Phi^*$,
	i.e.\ $D \Phi^*(\Omega) = \Phi^*(D \Omega)$ for all $\Omega \in (\ASymSec \otimes 
	\SymSec)^{\bullet,\bullet}(M)$.
\end{lemma}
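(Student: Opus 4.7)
The plan is to define a modified exterior covariant derivative $\tilde{D} := (\Phi^*)^{-1} \circ D \circ \Phi^*$ and show that it coincides with $D$ itself by uniqueness of the exterior Levi-Civita connection. Since $\Phi^*$ is an even algebra automorphism of $(\ASymSec \otimes \SymSec)^{\bullet,\bullet}(M)$, it is immediate that $\tilde{D}$ is a $\CC$-linear graded derivation of degree $(+1,0)$.

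First I would verify that $\tilde{D}$ is an exterior covariant derivative in the sense of the paper's definition. For any $\rho \in \SmoothSections(\ASymten{\bullet}\CoTangent M)$, we have $\Phi^*(\rho \otimes 1) = \Phi^*(\rho) \otimes 1$, and then $D(\Phi^*(\rho) \otimes 1) = \D\Phi^*(\rho) \otimes 1 = \Phi^*(\D\rho)\otimes 1$ using that pullback commutes with the exterior derivative. Applying $(\Phi^*)^{-1}$ gives $\tilde{D}(\rho \otimes 1) = \D\rho \otimes 1$, as required.

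Next I would check that $\tilde{D}$ is torsion-free. Since $\delta$ is defined algebraically on generators via $\delta(1\otimes \omega) = \omega \otimes 1$, a direct check on generators together with the derivation property shows $\Phi^* \circ \delta = \delta \circ \Phi^*$. Therefore
\begin{equation*}
  \kom{\tilde{D}}{\delta} = (\Phi^*)^{-1} \circ \kom{D}{\delta} \circ \Phi^* = 0
\end{equation*}
by the proposition just preceding the lemma, so $\tilde{D}$ is torsion-free. Then I would check that $\tilde{D}(1\otimes g) = 0$: the assumption $\nabla_X \Phi^*(g) = 0$ for all $X$ together with $\iota^a_X D(1\otimes \omega) = 1\otimes \nabla_X \omega$ and the fact that an element of $(\ASymSec\otimes\SymSec)^{1,\bullet}(M)$ vanishes iff all its $\iota^a_X$-insertions vanish yields $D(1\otimes \Phi^*(g)) = 0$. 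Hence $\tilde{D}(1\otimes g) = (\Phi^*)^{-1} D(1 \otimes \Phi^*(g)) = 0$.

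The final step is uniqueness: $\tilde{D}$ is a torsion-free exterior covariant derivative annihilating $1\otimes g$, and by the uniqueness of the Levi-Civita exterior covariant derivative associated to the non-degenerate $g$ we conclude $\tilde{D} = D$, i.e.\ $D\circ \Phi^* = \Phi^* \circ D$. There is no serious obstacle here; the only point requiring a little care is verifying that $\Phi^*$ commutes with $\delta$ and with $\D$, both of which are routine since they are determined by their action on the generators $(\ASymSec\otimes\SymSec)^{0,0} \oplus (\ASymSec\otimes\SymSec)^{1,0} \oplus (\ASymSec\otimes\SymSec)^{0,1}$.
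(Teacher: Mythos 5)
Your proposal is correct and follows essentially the same route as the paper: the paper also defines $D' \coloneqq (\Phi^{-1})^* \circ D \circ \Phi^*$, checks that it is an exterior covariant derivative with $\kom{D'}{\delta} = 0$ (since $\delta$ commutes with $D$ and with pullbacks) and $D'(1\otimes g) = 0$ (from $\nabla_X\Phi^*(g) = 0$), and then invokes uniqueness of the exterior Levi-Civita connection. Your verification of the individual steps, in particular the insertion argument for $D(1\otimes\Phi^*(g)) = 0$, matches the paper's reasoning, just spelled out in slightly more detail.
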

\begin{proof}
	It suffices to show that 
	$D' \colon (\ASymSec \otimes \SymSec)^{\bullet,\bullet}(M) \to (\ASymSec \otimes \SymSec)^{\bullet,\bullet}(M)$,
	$\Omega \mapsto D'\Omega \coloneqq (\Phi^{-1})^* ( D \Phi^*(\Omega))$ is an
	exterior covariant derivative and fulfils 
	$D'(1\otimes g) = 0 = \kom{D'}{\delta}$: It is easy to see that $D'$ is a
	$\CC$-linear graded derivation of $(\ASymSec\otimes \SymSec)^{\bullet,\bullet}(M)$ of degree $(+1,0)$
	that fulfils $D' (\rho \otimes 1) = \D \rho \otimes 1$ for all 
	$\rho \in \SmoothSections(\ASymten{\bullet} \CoTangent M)$, hence an exterior covariant derivative.
	It commutes with $\delta$ (in the graded sense) because $\delta$ commutes with $D$ and all pullbacks. 
	Finally, $D'(1\otimes g)$ holds because $\nabla_X \Phi^*(g) = 0$ for all $X\in\SmoothSections(\Tangent M)$.
\end{proof}
Note that the condition $\nabla \Phi^*(g) = 0$ is fulfilled e.g.\ if $\Phi^*(g) = g$, but also more generally 
if $\Phi^*(g) = \lambda g$ with $\lambda \in \CC$.

\begin{proposition} \label{proposition:covDer:reduction}
	Let $M$ be a smooth manifold endowed with a free and proper action 
	$\argument \acts \argument$ of a Lie group $G$ and a
	$G$-invariant exterior covariant derivative $D$ on $M$ (i.e.\ $D$ commutes with the action of $G$ on 
	$(\ASymSec \otimes \SymSec)^{\bullet,\bullet}(M)$ by pullbacks like in the previous Lemma~\ref{lemma:derivinvariant}). 
	Moreover, write $\bigpr \colon M \to M/G$  for the canonical projection onto the quotient manifold $M/G$ and 
	assume we have chosen a smooth $G$-invariant complement $\Xi = \bigcup_{p\in M} \Xi_p$ 
	of $\ker(\Tangent\bigpr)$, i.e.\ a linear subbundle of $\Tangent M$ such that 
	$\Tangent M = \Xi \oplus \ker(\Tangent \bigpr)$ and such that 
	$\Xi_{g\acts p} = \big(\Tangent_p (\argument \racts g)\big) (\Xi_p)$ for all $p\in M$.
	Let $\bigproj_\Xi \colon \SmoothSections(\Tangent M) \to  \SmoothSections(\Tangent M)$
	be the corresponding projection on this subbundle $\Xi$ and 
	$\bigproj_\Xi^* \colon  \SmoothSections(\CoTangent M) \to  \SmoothSections(\CoTangent M)$
	its dual projection. Then
	\begin{align}
    \bigpr^* \big(D_\red \Omega\big) \coloneqq (\bigproj_\Xi^*)^{\otimes (k+1+\ell)} D \bigpr^*(\Omega)
    \label{eq:def:Dred}
  \end{align}
  for all $\Omega \in (\ASymSec\otimes \SymSec)^{k,\ell}(M/G)$, $k,\ell \in \NN_0$
  defines an exterior covariant derivative on $M/G$. If $D$ is torsion-free, then $D_\red$
	also remains torsion-free.
\end{proposition}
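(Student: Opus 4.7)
My plan is to proceed in three stages: first, verify that $D_\red$ is well-defined by formula~\eqref{eq:def:Dred}; second, check that it satisfies the axioms of an exterior covariant derivative; and third, transfer the torsion-free property from $D$.

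For well-definedness, I would characterize the image of the pullback $\bigpr^* \colon (\ASymSec \otimes \SymSec)^{\bullet,\bullet}(M/G) \to (\ASymSec \otimes \SymSec)^{\bullet,\bullet}(M)$: because the $G$-action is free and proper, $\bigpr^*$ is injective, and its image consists precisely of the tensor fields that are both $G$-invariant and \emph{horizontal}, meaning they vanish whenever any tangent argument lies in $\ker(\Tangent\bigpr)$. The expression on the right-hand side of \eqref{eq:def:Dred} is $G$-invariant because $D$, $\bigpr^*$ and $\bigproj_\Xi^*$ are all $G$-invariant (the last point uses $G$-invariance of $\Xi$), and it is horizontal by construction, since each cotangent factor has been composed with $\bigproj_\Xi^*$, which annihilates any covector that does not already vanish on $\ker(\Tangent\bigpr)$. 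This gives both existence and uniqueness of $D_\red\Omega$.

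For the axioms: $\CC$-linearity is immediate. The graded Leibniz rule of degree $(+1,0)$ will follow from the fact that $(\bigproj_\Xi^*)^{\otimes N}$ is a graded algebra endomorphism of $(\ASymSec \otimes \SymSec)^{\bullet,\bullet}(M)$ for the product $\circ$ (since it acts diagonally on tensor factors, respecting both $\wedge$ and $\vee$), combined with the derivation property of $D$ and the observation that each $\bigpr^*\Omega_i$ is already horizontal and so fixed by the projections. For the normalization $D_\red(\rho \otimes 1) = \D \rho \otimes 1$ on antisymmetric generators $\rho$, I would simply note that $\bigpr^* \D \rho = \D \bigpr^* \rho$ lies in the image of $\bigpr^*$ and is hence already horizontal.

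For the torsion-free case, my plan is to invoke the earlier equivalence $\kom{D}{\delta} = 0 \Leftrightarrow D$ torsion-free. Pulling back $\kom{D_\red}{\delta_\red}\Omega$ via $\bigpr^*$ and using that $\delta$ commutes with arbitrary pullbacks (so $\bigpr^* \delta_\red = \delta \bigpr^*$) reduces matters to verifying that $\delta$ commutes with the operator $(\bigproj_\Xi^*)^{\otimes N}$ on $(\ASymSec \otimes \SymSec)^{\bullet,\bullet}(M)$; I expect this commutation to be the main technical point. I would verify it pointwise by choosing a basis $e_1,\dots,e_m$ of $\Tangent_p M$ adapted to the splitting $\Xi_p \oplus \ker(\Tangent_p \bigpr)$ with dual basis $e^1,\dots,e^m$, writing $\delta = \sum_i \epsilon(e^i) \otimes \iota(e_i)$, and observing that the terms with $e_i \in \ker(\Tangent_p \bigpr)$ drop out on both sides of the would-be commutator: they are killed on one side because $\bigproj_\Xi^* e^i = 0$ for such $i$, and on the other because such $\iota_{e_i}$ annihilates any symmetric tensor built from horizontal covectors. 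With this in hand, $\bigpr^*(\kom{D_\red}{\delta_\red}\Omega) = (\bigproj_\Xi^*)^{\otimes N} \kom{D}{\delta} \bigpr^*\Omega = 0$, and the conclusion follows from injectivity of $\bigpr^*$.
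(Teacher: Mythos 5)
Your proposal is correct and follows essentially the same route as the paper's (much terser) proof: well-definedness via $G$-invariance and horizontality (i.e.\ the characterization of the image of $\bigpr^*$), a direct check of the graded Leibniz rule and of $D_\red(\rho\otimes 1)=\D\rho\otimes 1$ using that pullbacks are fixed by $\bigproj_\Xi^*$, and torsion-freeness via $\kom{D_\red}{\delta}=0$, using that $\delta$ commutes with pullbacks and with the slot-wise projection $(\bigproj_\Xi^*)^{\otimes N}$. Your adapted-basis verification of that last commutation simply makes explicit a step the paper only asserts.
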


\begin{proof}
  Since $D$ and $\Xi$ are $G$-invariant, it follows that
  $(\bigproj_\Xi^*)^{\otimes (k+1+\ell)} D \bigpr^*(\Omega)$ is $G$-invariant, so
  \eqref{eq:def:Dred} does describe a well-defined $\CC$-linear endomorphism
  $D_\red$ of $(\ASymSec\otimes \SymSec)^{\bullet,\bullet}(M/G)$ of degree $(1,0)$
  and one can also check that $D_\red$
  is again a graded derivation.
  Using that $\Tangent \bigpr \circ \bigproj_\Xi = \Tangent \bigpr$ one sees that
  $D_\red(\rho \otimes 1) = \D \rho \otimes 1$ holds for all
  $\rho \in \SmoothSections(\Lambda^\bullet \CoTangent M)$.
  As $\delta$ commutes with pullbacks and $\bigproj_\Xi^*$, one also finds that
  $D_\red$ (graded) commutes with $\delta$ if $D$ does.
\end{proof}
Next we consider the graded commutator of an exterior covariant derivative $D$
on a smooth manifold $M$ with $\delta^*$, which is a $\CC$-linear graded derivation
of $(\ASymSec\otimes\SymSec)^{\bullet,\bullet}(M)$
of degree $(0,+1)$ and satisfies 
$\kom{D}{\delta^*} (f) = \delta^* D f = 1\otimes \D f$ for all $f\in \Smooth(M)$.
So $\kom{D}{\delta^*}$ restricts to a $\CC$-linear derivation $D^{\sym}$ of $\SymSec^\bullet(M)$
of degree $1$. 

\begin{definition}
	A $\CC$-linear derivation $\Delta$ of $\SymSec^\bullet(M)$ of degree $1$
	that fulfils $\Delta f = \D f$ for all $f\in \Smooth(M)$ is called a 
	\neu{symmetrized covariant derivative}, and for every exterior covariant
	derivative $D$ of $M$ we define its induced symmetrized covariant derivative
	$D^\sym \colon \SymSec^\bullet(M) \to \SymSec^\bullet(M)$ by
	\begin{align}
	1 \otimes D^\sym \omega \coloneqq \kom{D}{\delta^*} (1 \otimes \omega)
	\end{align}
	for all $\omega \in \SymSec^\bullet(M)$.
\end{definition}
Given an exterior covariant derivative $D$, we compute that its induced symmetrized covariant derivative 
$D^\sym$ fulfils 
\begin{align*}
1 \otimes \iota_Y \iota_X D^\sym \omega 
&=
\iota^s_Y \iota^s_X \kom{D}{\delta^*}(1 \otimes \omega) 
\\
&=
\iota^s_Y \iota^s_X \delta^* D (1 \otimes \omega) 
\\
&= 
\iota^s_Y  \iota^a_X D (1 \otimes \omega) + \iota^s_Y \delta^* \iota^s_X D (1 \otimes \omega)
\\
&= 
1 \otimes \iota_Y \nabla_X^D \omega + \iota^a_Y \iota^s_X D (1 \otimes \omega)
\\
&= 
1\otimes \big( \iota_Y \nabla_X^D \omega + \iota_X \nabla_Y^D \omega \big)
\\
&= 
1 \otimes \big( \dupr{\nabla_X^D \omega}{Y} + \dupr{\nabla_Y^D \omega}{X} \big)
\end{align*}
for all $\omega \in \SmoothSections(\CoTangent M)$. So in local coordinates,
$D^{\sym} \omega = \D x^i \vee \nabla^D_{\partial/\partial x^i} \omega$.

Conversely, every $\CC$-linear derivation $\Delta$ of $\SymSec^\bullet(M)$
that fulfils $\Delta f = \D f$ for all $f\in \Smooth(M)$ defines 
a covariant derivative $\nabla^\Delta$ on $M$ by
\begin{align}
  \dupr[\big]{\nabla^\Delta_X \omega }{Y}
  \coloneqq
  \dupr{\Delta \omega}{ X\vee Y }
  + 
  \frac{1}{2} \big( 
    X(\dupr{\omega}{Y})
    -
    Y(\dupr{\omega}{X})
    -
    \dupr{\omega}{\kom{X}{Y}}
  \big)
\end{align}
for all $\omega \in \SmoothSections(\CoTangent M)$ and all $X,Y \in \SmoothSections(\Tangent M)$.
This covariant derivative $\nabla^\Delta$ then is torsion-free because
\begin{align}
  \dupr{\nabla_X^\Delta \omega}{Y} - \dupr{\nabla_Y^\Delta \omega}{X}
  =
    X(\dupr{\omega}{Y})
    -
    Y(\dupr{\omega}{X})
    -
    \dupr{\omega}{\kom{X}{Y}}  
\end{align}
and fulfils
\begin{align}
  \dupr{\nabla_X^\Delta \omega}{Y} + \dupr{\nabla_Y^\Delta \omega}{X}
  =
  2 \dupr{\Delta \omega}{ X\vee Y }
  =
  \iota_Y \iota_X \Delta \omega
  \punkt
\end{align}
Consequently there is a $1$-to-$1$-correspondence between torsion-free covariant
derivatives (or their exterior covariant derivatives) and symmetrized
covariant derivatives. For the reduction of symmetrized covariant derivatives
we get:

\begin{proposition} \label{proposition:covsymDer:reduction}
  Let $M$, $G$, $D$, $\bigpr$ and $\Xi$ be like in Proposition~\ref{proposition:covDer:reduction}. Then 
  $D^\sym_\red$, the symmetrized covariant derivative on $M/G$ constructed out of
  the reduced exterior covariant derivative $D_\red$, fulfils
  \begin{align}
    \bigpr^* \big(D_\red^\sym \omega \big) = (\bigproj_\Xi^*)^{\otimes (k+1)} D^\sym \bigpr^*(\omega)
    \label{eq:Dredsym}
  \end{align}
  for all $\omega \in \SymSec^{k}(M/G)$, $k \in \NN_0$.
\end{proposition}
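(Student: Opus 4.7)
The plan is to unfold both $D^\sym_\red$ and $D^\sym$ through their defining relations in terms of $D_\red$ and $D$ with $\delta^*$, push $\bigpr^*$ through using the definition of $D_\red$ from Proposition~\ref{proposition:covDer:reduction}, and commute $\delta^*$ past the projection $(\bigproj_\Xi^*)^{\otimes\bullet}$. The technical core is step four below, and the rest is bookkeeping.

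First, by the definition of $D^\sym_\red$, we have $1 \otimes D^\sym_\red \omega = [D_\red, \delta^*](1 \otimes \omega)$. Since $\delta^*$ has bidegree $(-1,+1)$ and $1 \otimes \omega$ lives in bidegree $(0,k)$, we have $\delta^*(1 \otimes \omega) = 0$, so the commutator collapses to $1 \otimes D^\sym_\red \omega = \delta^* D_\red(1 \otimes \omega)$. An identical computation in $(\ASymSec \otimes \SymSec)^{\bullet,\bullet}(M)$ gives $1 \otimes D^\sym \tilde\omega = \delta^* D(1 \otimes \tilde\omega)$ for $\tilde\omega \in \SymSec^k(M)$.

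Next, apply $\bigpr^*$ to the first identity. Since pullbacks commute with $\delta^*$ (as noted in the appendix right after the introduction of the Koszul differentials), and since $\bigpr^*(1 \otimes \omega) = 1 \otimes \bigpr^*(\omega)$, the defining property \eqref{eq:def:Dred} of $D_\red$ yields
\begin{equation*}
 1 \otimes \bigpr^*(D^\sym_\red \omega)
 =
 \bigpr^* \delta^* D_\red(1\otimes\omega)
 =
 \delta^* \bigpr^* D_\red(1\otimes\omega)
 =
 \delta^*\,(\bigproj_\Xi^*)^{\otimes(k+1)}\, D(1 \otimes \bigpr^*\omega).
\end{equation*}

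The key step is to commute $\delta^*$ with $(\bigproj_\Xi^*)^{\otimes(k+1)}$ on the space $(\ASymSec\otimes\SymSec)^{1,k}(M)$. For a decomposable element $\rho \otimes \sigma$ with $\rho \in \SmoothSections(\CoTangent M)$ and $\sigma \in \SymSec^k(M)$, a direct computation from the defining property $\delta^*(\rho \otimes 1) = 1\otimes\rho$ and the derivation rule gives $\delta^*(\rho \otimes \sigma) = 1 \otimes (\rho \vee \sigma)$. Because $\bigproj_\Xi^*$ is a pointwise endomorphism of $\CoTangent M$, its tensor powers commute with the symmetrization operator, hence $(\bigproj_\Xi^*\rho) \vee (\bigproj_\Xi^*)^{\otimes k}\sigma = (\bigproj_\Xi^*)^{\otimes(k+1)}(\rho \vee \sigma)$. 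Combining these observations:
\begin{equation*}
 \delta^*\,(\bigproj_\Xi^*)^{\otimes(k+1)}(\rho\otimes\sigma)
 =
 1 \otimes (\bigproj_\Xi^*)^{\otimes(k+1)}(\rho \vee \sigma)
 =
 (\id \otimes (\bigproj_\Xi^*)^{\otimes(k+1)})\,\delta^*(\rho\otimes\sigma),
\end{equation*}
and this identity extends $\Smooth(M)$-linearly to all of $(\ASymSec\otimes\SymSec)^{1,k}(M)$.

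Applying this to $\eta := D(1\otimes\bigpr^*\omega) \in (\ASymSec\otimes\SymSec)^{1,k}(M)$ and using the second identity from step one gives
\begin{equation*}
 \delta^*\,(\bigproj_\Xi^*)^{\otimes(k+1)}\,\eta
 =
 1 \otimes (\bigproj_\Xi^*)^{\otimes(k+1)}\,D^\sym \bigpr^*(\omega).
\end{equation*}
Comparing with the earlier display and stripping the leading $1\otimes$ yields the claimed identity \eqref{eq:Dredsym}. The main obstacle is really only the projection-symmetrization commutation at the heart of step four; everything else is a formal unpacking of definitions, with the antisymmetric degree count ensuring that $\delta^*$ kills $1 \otimes \omega$ and thus reduces the graded commutator to a single term.
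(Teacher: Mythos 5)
Your proof is correct and follows essentially the same route as the paper, which simply observes that $\delta^*$ commutes with the pullback $\bigpr^*$ and with the projection $\bigproj_\Xi^*$ and then invokes the defining equation of $D_\red$; your "key step" is just an explicit verification of that commutation via $\delta^*(\rho\otimes\sigma)=1\otimes(\rho\vee\sigma)$. No gaps.
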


\begin{proof}
	As $\delta^*$ commutes with the pullback
	$\bigpr^*$ and the projection $\bigproj_\Xi^*$
	this follows immediately from \eqref{eq:def:Dred}.
\end{proof}
Being an endomorphism of $\SymSec^\bullet(M)$, a symmetrized covariant derivative
$D^\sym$ can be iterated. Given
$k \in \NN_0$, $X_0 \in \SmoothSections(\Symten{0} \Tangent M)$, \dots, 
$X_k \in \SmoothSections(\Symten{k} \Tangent M)$, then
\begin{align*}
\Smooth(M) \ni f \mapsto \sum_{r=0}^k \dupr[\big]{(D^\sym)^r f}{X_r} \in \Smooth(M)
\end{align*}
is a differential operator of degree $k$. Conversely, by induction over their symbols,
one can show that all differential operators of degree $k$ on $\Smooth(M)$ are of
this form. So symmetrized covariant derivatives allow to describe differential
operators rather explicitly but without requiring a choice of coordinates.

Finally, if $M$ is a complex manifold, then its tangent and cotangent space split into $(1,0)$ and $(0,1)$ 
parts.
Consequently
\begin{align}
  \SymSec^k(M) &= \bigoplus_{p+q=k} \SymSec^{(p,q)}(M)
\shortintertext{and}
  (\ASymSec\otimes\SymSec)^{k,\ell}(M) &= \bigoplus_{\substack{p+q=k\\r+s=\ell}} (\ASymSec\otimes\SymSec)^{(p,q),(r,s)}(M)
\end{align}
also split into subspaces
\begin{align}
  \SymSec^{(p,q)}(M) 
  &\coloneqq 
  \SmoothSections\big(\Symten p \Tangent^{*,(1,0)} M \vee \Symten q \Tangent^{*,(0,1)} M\big)
\shortintertext{and}
  (\ASymSec\otimes\SymSec)^{(p,q),(r,s)}(M)
  &\coloneqq 
  \SmoothSections\big(
    \ASymten p \Tangent^{*,(1,0)} M \wedge \ASymten q \Tangent^{*,(0,1)} M
    \otimes
    \Symten r \Tangent^{*,(1,0)} M \vee \Symten s \Tangent^{*,(0,1)} M
  \big)
  \punkt
\end{align}
Note that $\delta$ and $\delta^*$ are compatible with this splitting in the sense that
\begin{align}
  \delta \big( (\ASymSec\otimes\SymSec)^{(p,q),(r,s)}(M) \big)
  \subseteq
  (\ASymSec\otimes\SymSec)^{(p+1,q),(r-1,s)}(M) \oplus (\ASymSec\otimes\SymSec)^{(p,q+1),(r,s-1)}(M)
\shortintertext{and}
  \delta^* \big( (\ASymSec\otimes\SymSec)^{(p,q),(r,s)}(M) \big)
  \subseteq
  (\ASymSec\otimes\SymSec)^{(p-1,q),(r+1,s)}(M) \oplus (\ASymSec\otimes\SymSec)^{(p,q-1),(r,s+1)}(M)
  \label{eq:deltaStarCompwithI}
\end{align}
hold for all $p,q,r,s\in \NN_0$. For symmetrized covariant derivatives, there is a
similar compatibility condition:

\begin{definition}\label{definition:compatibleWithComplexStructure}
	Let $M$ be a complex manifold and $D$ an exterior covariant derivative on $M$.
	Then $D$ is said to be \neu{compatible with the complex structure} if
    \begin{equation} \label{eq:compatibilityWithComplexStructure}
        D\big((\ASymSec\otimes \SymSec)^{(p,q),(r,s)}(M)\big) 
        \subseteq  
        (\ASymSec\otimes \SymSec)^{(p+1,q),(r,s)}(M) \oplus (\ASymSec\otimes \SymSec)^{(p,q+1),(r,s)}(M)
    \end{equation}
	holds for all $p,q,r,s \in \NN_0$.
\end{definition}
If $D$ is the exterior covariant derivative associated to a covariant derivative $\nabla$,
then it follows from Equations \eqref{eq:ExtCovDerAssociatedToCovDerLocal} and \eqref{eq:CovDerAssociatedToExtCovDer}
that $D$ is compatible with the complex structure if and only if $\nabla_X$
preserves the holomorphic and antiholomorphic parts of the tangent bundle 
for all $X \in \SmoothSections(\Tangent \CC^{1+n})$, i.e.\ 
$\nabla_X (\SmoothSections(\Tangent^{(1,0)} \CC^{1+n})) \subseteq \SmoothSections(\Tangent^{(1,0)} 
\CC^{1+n})$ and
$\nabla_X (\SmoothSections(\Tangent^{(0,1)} \CC^{1+n})) \subseteq \SmoothSections(\Tangent^{(0,1)} 
\CC^{1+n})$.
As an example, this is well-known to be the case for the Levi-Civita covariant derivative
on a Kähler manifold.

Using Equation \eqref{eq:deltaStarCompwithI} it is also easy to check that condition 
\eqref{eq:compatibilityWithComplexStructure} implies that the symmetrized covariant derivative fulfils 
$D^\sym (\SymSec^{(p,q)}(M)) \subseteq \SymSec^{(p+1,q)}(M) \oplus \SymSec^{(p,q+1)}(M)$.

\begin{definition} \label{definition:holantiholofD}
	Let $M$ be a complex manifold and $D$ an exterior covariant derivative compatible
	with the complex structure. Then we define     
	\begin{align}
	D_\hol, D_{\antihol} 
	&\colon 
	(\ASymSec\otimes \SymSec)^{\bullet,\bullet}(M) \to (\ASymSec\otimes \SymSec)^{\bullet,\bullet}(M)
	\shortintertext{and}
	D^\sym_\hol, D^\sym_{\antihol} &\colon \SymSec^{\bullet}(M) \to \SymSec^{\bullet}(M)
	\end{align}
	as the $(1,0)$ and $(0,1)$-components of $D$ and $D^\sym$, 
	respectively, i.e.
	\begin{align}
	D_\hol &\coloneqq \sum_{p,q,r,s \in \NN_0} \bigproj^{*,(p+1,q),(r,s)} D \bigproj^{*,(p,q),(r,s)} 
	\komma
	& 
	D_{\antihol} &\coloneqq \sum_{p,q,r,s \in \NN_0} \bigproj^{*,(p,q+1),(r,s)} D \bigproj^{*,(p,q),(r,s)} 
	\shortintertext{and}
	D_\hol^\sym &\coloneqq \sum_{p,q \in \NN_0}\bigproj^{*,(p+1,q)} D^\sym \bigproj^{*,(p,q)}
	\komma& 
	D_{\antihol}^\sym &\coloneqq \sum_{p,q \in \NN_0} \bigproj^{*,(p,q+1)} D^\sym \bigproj^{*,(p,q)}
	\end{align}
	with 
	$\bigproj^{*,(p,q),(r,s)} \colon 
	(\ASymSec\otimes \SymSec)^{(\bullet,\bullet),(\bullet,\bullet)}(M) \to 
	(\ASymSec\otimes \SymSec)^{(p,q),(r,s)}(M)$
	and
	$\bigproj^{*,(p,q)} \colon \SymSec^{(\bullet,\bullet)}(M) \to \SymSec^{(p,q)}(M)$
	the projections on graded subspaces.
\end{definition}
Because of the required compatibility with the complex structure one gets
$D = D_\hol + D_{\antihol}$ and $D^\sym = D^\sym_\hol + D^\sym_{\antihol}$.
Furthermore,
\begin{align}
  \kom[\big]{D_\hol}{\delta^*}(1\otimes \omega) = 1 \otimes D^\sym_\hol \omega
\end{align}
holds for all $\omega \in \SymSec^\bullet(M)$, and analogously for the antiholomorphic part. Consequently:

\begin{proposition} \label{proposition:holahol}
  Let $M$ be a complex manifold, $D$ an exterior covariant derivative on $M$ that
  is compatible with the complex structure, and $D^\sym$ its symmetrized
  covariant derivative. Then
  \begin{equation}
    \bigproj^{*,(k,0)} (D^\sym)^k f
    =
    (D^\sym_\hol)^k f
    \quad\quad\text{and}\quad\quad
    \bigproj^{*,(0,k)} (D^\sym)^k f
    =
    (D^\sym_{\antihol})^k f
 \end{equation}
 hold for all $f\in \Smooth(M)$ and all $k\in \NN_0$, with $\bigproj^{*,(k,0)}$ like in the 
 previous Definition~\ref{definition:holantiholofD}.
\end{proposition}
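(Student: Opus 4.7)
The plan is to proceed by induction on $k \in \NN_0$, exploiting the bidegree grading on $\SymSec^\bullet(M)$ induced by the complex structure. The case $k=0$ is immediate since $\bigproj^{*,(0,0)}$ is the identity on $\Smooth(M) = \SymSec^{(0,0)}(M)$ and both $(D^\sym_\hol)^0$ and $(D^\sym_{\antihol})^0$ are the identity by convention. For the inductive step, the key observation is that compatibility of $D$ with the complex structure forces $D^\sym$ to map $\SymSec^{(p,q)}(M)$ into $\SymSec^{(p+1,q)}(M) \oplus \SymSec^{(p,q+1)}(M)$, so in particular
\begin{equation*}
\bigproj^{*,(k,0)} D^\sym \bigproj^{*,(p,q)} = 0 \quad \text{unless } (p,q) = (k-1,0)\punkt
\end{equation*}

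Now I would write, for $f \in \Smooth(M)$ and $k \in \NN$,
\begin{equation*}
\bigproj^{*,(k,0)} (D^\sym)^k f
= \bigproj^{*,(k,0)} D^\sym \sum_{p+q = k-1} \bigproj^{*,(p,q)} (D^\sym)^{k-1} f
= \bigproj^{*,(k,0)} D^\sym \bigproj^{*,(k-1,0)} (D^\sym)^{k-1} f\komma
\end{equation*}
where in the first equality I inserted a decomposition of the identity into the bidegree projections (valid because $(D^\sym)^{k-1} f$ has total degree $k-1$), and in the second I used the vanishing observed above. By the very definition of $D^\sym_\hol$ in Definition~\ref{definition:holantiholofD}, the composition $\bigproj^{*,(k,0)} D^\sym \bigproj^{*,(k-1,0)}$ agrees with $D^\sym_\hol \bigproj^{*,(k-1,0)}$, so the last expression equals $D^\sym_\hol \bigl( \bigproj^{*,(k-1,0)} (D^\sym)^{k-1} f \bigr)$. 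By the inductive hypothesis this is $D^\sym_\hol (D^\sym_\hol)^{k-1} f = (D^\sym_\hol)^k f$, completing the induction. The antiholomorphic case is identical, interchanging the roles of the two bidegree components.

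The argument is essentially bookkeeping, and I do not expect any genuine obstacle — the compatibility condition \eqref{eq:compatibilityWithComplexStructure} is designed precisely so that $D^\sym$ splits cleanly into a bidegree-$(1,0)$ part and a bidegree-$(0,1)$ part, and the vanishing of all ``mixed'' intermediate bidegree contributions when projecting back to pure $(k,0)$ (or $(0,k)$) is forced by the fact that $(k,-1)$ (respectively $(-1,k)$) is not a legal bidegree.
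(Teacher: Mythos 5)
Your proof is correct, and it is essentially the same bidegree bookkeeping that the paper compresses into the one-line remark that the claim ``follows immediately from the decomposition $D^\sym = D^\sym_\hol + D^\sym_{\antihol}$'': only the purely holomorphic (resp.\ antiholomorphic) chain of applications of $D^\sym$ can land in bidegree $(k,0)$ (resp.\ $(0,k)$). Your induction with the projections $\bigproj^{*,(p,q)}$ just makes this explicit; there is no gap.
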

\begin{proof}
  This follows immediately from the decomposition $D^\sym = D^\sym_\hol + D^\sym_{\antihol}$.
\end{proof}




{
	\footnotesize
	\renewcommand{\arraystretch}{0.5}

}

\end{onehalfspace}
\end{document}